


\documentclass[10pt]{amsart}

\usepackage{hyperref}
\usepackage{amsfonts}
\usepackage{amssymb}
\usepackage{amsmath}
\usepackage{amsthm}
\usepackage{enumerate}
\usepackage{mathrsfs}
\usepackage{tikz}

\newcommand{\mathscripty}{\mathscr}


\newcommand{\NN}{\mathbb{N}}

\newcommand{\er}{\mathbb{R}}

\newcommand{\Cstar}{\mathrm{C}^*}


\newcommand{\SI}{\mathscripty{I}}
\newcommand{\SJ}{\mathscripty{J}}

\newcommand{\SP}{\mathscripty{P}}



\newcommand{\ZFC}{\mathrm{ZFC}}

\newcommand{\CH}{\mathrm{CH}}
\newcommand{\MA}{\mathrm{MA}}

\newcommand{\OCA}{\mathrm{OCA}}
\newcommand{\PFA}{\mathrm{PFA}}

\newtheorem{theorem}{Theorem}[section]
\newtheorem*{theorem*}{Theorem}
\newtheorem{proposition}[theorem]{Proposition}
\newtheorem*{proposition*}{Proposition}
\newtheorem{lemma}[theorem]{Lemma}
\newtheorem*{lemma*}{Lemma}

\newtheorem*{corollary*}{Corollary}

\newtheorem*{fact*}{Fact}
\theoremstyle{definition}
\newtheorem{definition}[theorem]{Definition}
\newtheorem*{definition*}{Definition}
\newtheorem{claim}[theorem]{Claim}

\newtheorem*{claim*}{Claim}
\newtheorem{conjecture}[theorem]{Conjecture}
\newtheorem*{conjecture*}{Conjecture}
\newtheorem{notation}[theorem]{Notation}
\newtheorem{theoremi}{Theorem}

\newtheorem{questioni}[theoremi]{Question}
\newtheorem{conjecturei}{Conjecture}

\newtheorem*{OCAinfty}{$\mathbf{OCA_\infty}$}

\theoremstyle{remark}

\newtheorem*{example*}{Example}
\newtheorem{remark}[theorem]{Remark}
\newtheorem*{remark*}{Remark}

\newtheorem*{note*}{Note}
\newtheorem{question}[theorem]{Question}
\newtheorem*{question*}{Question}

\newcommand{\set}[2]{\left\{#1\mathrel{}\middle|\mathrel{}#2\right\}}

\newcommand{\norm}[1]{\left\lVert #1 \right\rVert}

\DeclareMathOperator{\intr}{int}

\DeclareMathOperator{\supp}{supp}

\DeclareMathOperator{\Fin}{Fin}

\DeclareMathOperator{\Aut}{Aut}

\DeclareMathOperator{\Ad}{Ad}

\usepackage{enumitem}

\title{Rigidity conjectures for continuous quotients}%

\author[A. Vignati]{Alessandro Vignati}
\address[A. Vignati]{Institut de Math\'ematiques de Jussieu - Paris Rive Gauche (IMJ-PRG)\\
UP7D - Campus des Grands Moulins\\
B\^atiment Sophie Germain\\
8 Place Aur\'elie Nemours\\ Paris, 75013, France.
}
\email{ale.vignati@gmail.com}
\urladdr{http://www.automorph.net/avignati}

\subjclass[2010]{46L40, 46L05, 03E50, 54D80}
\keywords{corona $\Cstar$-algebras; automorphisms; Forcing Axioms; \v{C}ech-Stone remainders. Francais : $\Cstar$-alg\`ebres de couronnes, automorphismes, Axioms du Forcing, restes de \v{C}ech-Stone}
\thanks{The author was partially supported by a Prestige co-fund program while in Paris and was supported by a FWO scholarship. This research is  partially funded by ANR  Project AGRUME (ANR-17-CE40-0026)}

\date{\today}%
\begin{document}

\begin{abstract}
We prove several rigidity results for corona $\Cstar$-algebras and \v{C}ech-Stone remainders under the assumption of Forcing Axioms. In particular, we prove that a strong version of Todor\v{c}evi\'c's $\OCA$ and Martin's Axiom at level $\aleph_1$ imply: (i) that if $X$ and $Y$ are locally compact second countable topological spaces, then all homeomorphisms between $\beta X\setminus X$ and $\beta Y\setminus Y$ are induced by homeomorphisms between cocompact subspaces of $X$ and $Y$; (ii) that all automorphisms of the corona algebra of a separable $\Cstar$-algebra are trivial in a topological sense; (iii) that if $A$ is a unital separable infinite-dimensional $\Cstar$-algebra, the corona algebra of $A\otimes \mathcal K(H)$ does not embed into the Calkin algebra. All these results do not hold under the Continuum Hypothesis.

\vspace{10pt}

 Nous prouvons plusieurs r\'esultats de rigidit\'e pour les $\Cstar$-alg\`ebres de couronnes et les restes de \v{C}ech-Stone sous l'hypoth\`ese des axiomes du forcing. En particulier, nous prouvons qu'une version forte de l'axiome du coloration ouvert de Todor\v{c}evi\'c et l'axiome de Martin au niveau $\aleph_1$ impliquent~: (i) que si $X$ et $Y$ sont des espaces topologiques localement compacts et \`a base d\'enombrable, alors tous les hom\'eomorphismes entre $\beta X \setminus X$ et $\beta Y \setminus Y$ sont induits par des hom\'eomorphismes entre sous-espaces cocompacts de $X$ et $Y$~; (ii) que tous les automorphismes des $\Cstar$-alg\`ebres de couronne s\'eparables sont triviaux en un sens topologique~; (iii) que si $A$ est une $\Cstar$-alg\`ebre de dimension infinie unitaire et s\'eparable, l'alg\`ebre couronne de $A\otimes \mathcal K(H)$ ne se plonge pas dans l'alg\`ebre de Calkin. Tous ces r\'esultats sont en d\'efaut sous l'hypoth\`ese du continu.
 \end{abstract}
\maketitle

\setcounter{tocdepth}{1}
\tableofcontents
\section{Introduction}

This paper is squarely placed in the interface between set theory and operator algebras, with feedback into general topology. Its main goal is to study automorphisms of corona $\Cstar$-algebras.

We begin by introducing the general framework. Let $X$ and $Y$ be two Polish spaces carrying an algebraic structure compatible with the topology. Let $I\subseteq X$ and $J\subseteq Y$ be two definable (e.g., Borel, or analytic) substructures inducing quotients $X/I$ and $Y/J$.\footnote{An operator algebraist should think at $X=\mathcal M(A)$ and $Y=\mathcal M(B)$, the multiplier algebras of two nonunital $\Cstar$-algebras $I=A$ and $J=B$.} Can we characterise the isomorphisms between the quotients $X/I$ and $Y/J$ in terms of the topological, or algebraic, structure of $X$, $Y$, $I$, and $J$? If $X/I$ and $Y/J$ are isomorphic,  what are the relations between $X$ and $Y$, and $I$ and $J$? 

Given an isomorphism $\phi\colon X/I\to Y/J$, one searches for a map $\Phi\colon X\to Y$ preserving some topological or algebraic structure, while making the following diagram commute:
\begin{center}
\begin{tikzpicture}
\node  (A1) at (2,5) {$X$};
\node (A2) at (2,3) {$X/I$};
\node  (B1) at (5,5){$Y$};
\node (B2) at (5,3) {$Y/J$}; 
\draw (A1) edge[->] node [below] {$\Phi$} (B1) ;
\draw (A1) edge[->] node [left] {} (A2) ;
\draw (A2) edge[->] node [above] {$\phi$} (B2) ;
\draw (B1) edge[->] node [left] {} (B2) ;
\end{tikzpicture}
\end{center}
An isomorphism $\phi\colon X/I\to Y/J$ is
\begin{itemize}
\item topologically trivial if there is a lifting map $\Phi\colon X\to Y$ preserving (some of) the topological structure;
\item algebraically trivial if there is a lifting map $\Phi\colon X\to Y$ preserving (some of) the algebraic structure.
\end{itemize} 
The quotients $X/I$ and $Y/J$ are topologically (algebraically) isomorphic if there is a topologically (algebraically) trivial isomorphism $X/I\to Y/J$. 
While the technical definition of (topologically or algebraically) trivial depends on the category one is interested in (e.g. Boolean algebras, groups, rings, $\Cstar$-algebras,...), there are two questions common throughout different categories.
\begin{questioni}\label{ques:main}
\begin{enumerate}[label=(\alph*)]
\item Are all isomorphisms $X/I\to Y/J$ topologically trivial?
\item Are all topologically trivial isomorphisms algebraically trivial?
\end{enumerate}
\end{questioni}
Shoenfield's Absoluteness Theorem (\cite[Theorem 13.15]{Kana})  implies that answers to Question~\ref{ques:main}(b) cannot be changed by forcing. On the contrary, the existence of isomorphisms that are not topologically trivial is in many cases subject to the axioms in play; more than that, thanks to Woodin's $\Sigma_1^2$-absoluteness (\cite{Woodin.Abs}), the Continuum Hypothesis\footnote{$\CH$ is the statement $2^{\aleph_0}=\aleph_1$, the latter being the first uncountable cardinal.} $\CH$ provides the optimal set-theoretic assumption for obtaining isomorphisms that are not topologically trivial. On the other hand, for large classes of quotients it is consistent with $\ZFC$ that all isomorphisms between the objects in consideration are topologically trivial. Often, these `rigidity' phenomena happen in presence of Forcing Axioms\footnote{Forcing Axioms can be viewed as higher versions of Baire's category theorem, see~\S\ref{subsec:ST}.}. One of the most important among Forcing Axioms is Shelah's Proper Forcing Axiom $\PFA$ (\cite{Shelah.PF}), which historically provides the environment for rigidity results, although there is no metatheorem showing that $\PFA$ (or some of its consequences) is the optimal environment for the statement `all isomorphisms are topologically trivial' to hold. We will prove our rigidity results using two Forcing Axioms which are consequences of $\PFA$, the axioms $\OCA$ and $\MA_{\aleph_1}$. Both axioms are in contradiction with $\CH$; differently from $\PFA$ whose consistency needs large cardinals, the consistency of $\OCA+\MA_{\aleph_1}$ can be proved within $\ZFC$ (\cite[\S2]{Velickovic.OCA1}).

The first concrete case of study was the one of quotients of the Boolean algebra $\mathcal P(\NN)$, considered with its natural Polish topology. An isomorphism between quotients $\mathcal P(\NN)/\SI$ and $\mathcal P(\NN)/\SJ$, for definable ideals $\SI,\SJ\subseteq\mathcal P(\NN)$, is algebraically trivial if it is induced by a Boolean algebra endomorphism of $\mathcal P(\NN)$, and topologically trivial if it admits a Baire-measurable lifting, without the requirement of preserving any algebraic operation. Thanks to Ulam stability phenomena for finite Boolean algebras, see~\cite[\S1.8]{Farah.AQ}, if $\SI$ is the ideal of finite sets $\Fin$, the notions of topologically and algebraically trivial automorphisms coincide, and we will simply refer to trivial automorphisms. (This is not yet proved to be the case for all quotients of $\mathcal P(\NN)$ by a Borel ideal. For more on this, see \cite{Farah.AH}.)

The question on whether all automorphisms of $\mathcal P(\NN)/\Fin$ are trivial arose from the study of the homogeneity properties of the space $\beta\NN\setminus\NN$. Rudin (\cite{Rudin}) showed that $\CH$ implies the existence of nontrivial automorphisms of $\mathcal P(\NN)/\Fin$. On the other hand, Shelah proved in \cite{Shelah.PF} that the assertion `all automorphisms of $\mathcal P(\NN)/\Fin$ are trivial' is consistent with $\ZFC$. This was later shown to be implied by $\PFA$ in \cite{Shelah-Steprans.PFA}. This assumption was weakened to that of $\OCA$ and $\MA_{\aleph_1}$ in \cite{Velickovic.OCA}. Extending the focus  to quotients by more general Borel ideals, the eye-opening work of Farah \cite{Farah.AQ} showed that in presence again of $\OCA$ and $\MA_{\aleph_1}$, in many cases\footnote{Assumptions on $\SI$ or $\SJ$ should be given here, see~\cite[\S3.4]{Farah.AQ}}, all isomorphisms between quotients $\mathcal P(\NN)/\SI$ and $\mathcal P(\NN)/\SJ$ are algebraically trivial. In this case, the quotients are isomorphic if and only if $\SI$ and $\SJ$ are isomorphic themselves. This fails in many situations in presence of $\CH$: in \cite{JustK} it was showed that all quotients over $F_\sigma$ ideals are countably saturated. This, together with a Cantor's back-and-forth argument, shows that under $\CH$ all quotients over $F_\sigma$ ideals are pairwise isomorphic, and additionally each such quotient has $2^{2^{\aleph_0}}$ distinct automorphisms that are not topologically trivial. (For more information on the situation in the Boolean algebra setting, see \cite{Farah-Shelah.TA, Farah.RC})

Such a general approach was undertaken in the study of other quotient structures such as groups and semilattices, sometimes with limited success (\cite{Farah.LHQS}). The next category where this way of thinking proved to be successful is the one of $\Cstar$-algebras, the focus being particular quotients known as  corona algebras, the forefather being the Calkin algebra $\mathcal Q(H)$. Since abelian $\Cstar$-algebras correspond, by a controvariant equivalence of categories, to locally compact topological spaces, the study of $\Cstar$-algebras is often viewed as `noncommutative topology'. Following this philosophy, one sees $\mathcal B(H)$, the algebra of bounded linear operators on a separable complex Hilbert space $H$, as the noncommutative analog of the algebra $\ell_\infty(\NN)$, and $\mathcal K(H)$, the ideal of compact operators on $H$, as the noncommutative $c_0(\NN)$. The Calkin algebra $\mathcal Q(H)=\mathcal B(H)/\mathcal K(H)$ is then thought as the noncommutative analog of $\ell_\infty(\NN)/c_0(\NN)$. Note that the Boolean algebras of projections in $\ell_\infty(\NN), c_0(\NN)$, and $\ell_\infty(\NN)/c_0(\NN)$ are, respectively, isomorphic to $\mathcal P(\NN)$, $\Fin$, and $\mathcal P(\NN)/\Fin$. (For the many parallels between $\mathcal Q(H)$ and $\mathcal P(\NN)/\Fin$ see \cite{FHV.Calkin, FKV.Calkin,Wofsey,Zamora}.)

The interest in the study of automorphisms of $\mathcal Q(H)$ has operator algebraic roots: while developing the connections between algebraic topology, index theory, and operator algebras, in introducing extensions and $K$-homology for $\Cstar$-algebras, Brown, Douglas, and Fillmore asked in \cite{BDF.Ext}  whether there can exist an outer automorphism of $\mathcal Q(H)$. Their main question, whether a $K$-theory reverting (therefore necessarily outer) automorphism of $\mathcal Q(H)$  can exist, has yet to be given a full answer, but it was shown that the existence of an outer automorphism depends on the set theoretical axioms in use. In \cite{Phillips-Weaver} Phillips and Weaver showed that outer automorphisms exist if one assumes $\CH$, while Farah in \cite{Farah.C} proved that under $\OCA$ all automorphisms of $\mathcal Q(H)$ are inner. 
This intuition was later generalised to the setting of more general corona $\Cstar$-algebras, leading to our notion of triviality. If $A$ is a nonunital $\Cstar$-algebra one constructs the multiplier algebra of $A$, $\mathcal M(A)$, and its corona algebra $\mathcal Q(A)=\mathcal M(A)/A$\footnote{If $A=\mathcal K(H)$ then $\mathcal Q(A)=\mathcal Q(H)$.}. These correspond to the \v{C}ech-Stone compactification and remainder of a locally compact space. $\mathcal M(A)$ is the largest unital algebra in which $A$ sits as a `dense' ideal; it carries a natural topology known as the strict topology, which makes the unit ball of $\mathcal M(A)$ a Polish space in case $A$ is separable (see~\S\ref{subsec:cstartrivial}).

\begin{definition}[\cite{Coskey-Farah}] Let $A$ and $B$ be separable $\Cstar$-algebras. An isomorphism $\Lambda\colon\mathcal Q(A)\to\mathcal Q(B)$ is topologically trivial\footnote{Coskey and Farah call these isomorphisms trivial, and do not make any distinction between topologically and algebraically trivial.} if its graph 
\[
\Gamma_\Lambda=\{(a,b)\in\mathcal M(A)\times\mathcal M(B)\colon \Lambda(\pi_A(a))=\pi_B(b)\}
\]
is Borel in strict topology when restricted to pairs of contractions, $\pi_A$ and $\pi_B$ being the quotient maps from the multiplier algebras to the coronas. 
\end{definition}
Since a Polish space has only $2^{\aleph_0}$ Borel subsets, there can be only $2^{\aleph_0}$ topologically trivial isomorphisms between $\mathcal Q(A)$ and $\mathcal Q(B)$. 
\begin{conjecturei}[\cite{Coskey-Farah}]\label{conj1}
Let $A$ and $B$ be a separable nonunital $\Cstar$-algebras. Then
\begin{itemize}
\item $\CH$ implies that if $\mathcal Q(A)$ and $\mathcal Q(B)$ are isomorphic, then there are isomorphisms between $\mathcal Q(A)$ and $\mathcal Q(B)$ that are not topologically trivial;
\item $\PFA$ implies that all isomorphisms between $\mathcal Q(A)$ and $\mathcal Q(B)$ are topologically trivial.
\end{itemize}
\end{conjecturei}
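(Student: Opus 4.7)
The conjecture splits into two sharply asymmetric halves, which I address in turn.

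\textbf{The $\CH$ half.} The plan is to exploit countable degree-$1$ saturation of coronas of separable $\Cstar$-algebras, established by Farah and Hart in the continuous-logic framework. Assume $\mathcal Q(A)$ and $\mathcal Q(B)$ are isomorphic via some fixed $\Lambda_0$. Under $\CH$ I enumerate both quotients as $\{a_\alpha:\alpha<\omega_1\}$ and $\{b_\alpha:\alpha<\omega_1\}$ and run a transfinite back-and-forth of length $\omega_1$. At each countable stage, the partial isomorphism constructed so far is determined by countably many constraints, and saturation of both quotients guarantees many non-equivalent extensions handling the next element. Branching binarily on these extensions produces $2^{\aleph_1}=2^\cc$ pairwise distinct isomorphisms $\mathcal Q(A)\to\mathcal Q(B)$. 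Since every topologically trivial isomorphism is coded by a Borel subset of the Polish space of pairs of contractions in $\mathcal M(A)\times\mathcal M(B)$, and there are only $\cc$ such Borel codes, all but at most $\cc$ of the constructed isomorphisms fail to be topologically trivial.

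\textbf{The $\PFA$ half.} This is the technical heart of the conjecture. My plan follows the template set by Farah's $\OCA+\MA_{\aleph_1}$ proof that all automorphisms of the Calkin algebra are inner, extended in the spirit of Coskey-Farah, McKenney, and the results announced in the abstract of the present paper. Given $\Lambda\colon\mathcal Q(A)\to\mathcal Q(B)$, I would proceed in four steps:
\begin{enumerate}
\item Fix a $\sigma$-directed skeleton $\{\A_\xi:\xi<\omega_1\}$ of separable $\Cstar$-subalgebras of $\mathcal M(A)$ that is cofinal modulo $A$, together with a parallel family on the target side.
\item For each $\xi$, apply an $\OCA$ dichotomy to a carefully designed open colouring on a Polish space of candidate local lifts of $\Lambda\rs\pi_A(\A_\xi)$: a $0$-homogeneous set yields a strict-Borel local lifting $\Phi_\xi\colon\A_\xi\to\mathcal M(B)$ of $\Lambda$, while $\MA_{\aleph_1}$ rules out any uncountable $1$-homogeneous alternative.
\item Verify that the family $\{\Phi_\xi\}_{\xi<\omega_1}$ is coherent modulo $B$: for $\xi<\eta$, the restriction $\Phi_\eta\rs\A_\xi$ agrees with $\Phi_\xi$ up to elements of $B$.
\item Apply $\MA_{\aleph_1}$ to a natural ccc poset whose conditions are finite approximations to a global lifting compatible with every $\Phi_\xi$, producing a strict-Borel map $\Phi\colon\mathcal M(A)\to\mathcal M(B)$ with $\pi_B\circ\Phi=\Lambda\circ\pi_A$ on the unit ball. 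The graph of $\Lambda$ on contractions is then the Borel image of the graph of $\Phi$, hence Borel.
\end{enumerate}

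The main obstacle will be step~(3), coherence of the Borel local liftings. In the Boolean setting of $\mathcal P(\NN)/\Fin$, coherence is cleanly encoded by almost-inclusion; in the $\Cstar$-algebraic setting, Borel liftings over subalgebras that are not linearly ordered by inclusion can have commutators falling outside $B$, and real work is needed to show these commutators are controlled by the skeleton. I expect this to require a reflection argument built around approximate units for $A$ and the $\sigma$-completeness of the skeleton, in the spirit of the corona automorphism arguments already in the literature. A secondary difficulty in step~(2) is packaging the strict topology on $\mathcal M(B)$ so that the relevant colouring is genuinely open on a Polish space of candidate lifts; for algebras $A$ that are not $\sigma$-unital this is likely to force a strengthening of $\OCA$ to $\OCA_\infty$, exactly as alluded to in the paper's abstract.
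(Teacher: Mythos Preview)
The statement you are addressing is a \emph{conjecture}, and the paper only establishes the $\PFA$ half (its Theorem~\ref{thmi:alltrivialnoncomm}); the $\CH$ half in full generality is explicitly listed as ``Unknown'' in the paper's own summary table. Your proposal therefore claims to settle an open problem.

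\textbf{The gap in your $\CH$ argument.} Countable degree-$1$ saturation is not enough to run a back-and-forth. To extend a partial elementary map by one element you must realize the \emph{full} type of that element over the parameters already placed; degree-$1$ saturation only realizes types whose conditions have at most one quantifier block, and there is no reason the resulting extension remains elementary. The paper itself points out (final remarks) that coronas of stabilizations are ``rarely countably saturated'' and that $\mathcal Q(H)$ is \emph{not} countably saturated. This is exactly why the Phillips--Weaver construction of outer automorphisms of $\mathcal Q(H)$ under $\CH$ is not a back-and-forth argument but a delicate hands-on construction, and why the Coskey--Farah $\CH$ results in the table all require additional structure on $A$ (stable, simple, direct sum of unital pieces). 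Your step ``saturation of both quotients guarantees many non-equivalent extensions handling the next element'' is precisely where the argument breaks.

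\textbf{On your $\PFA$ sketch.} Your four-step template (local Borel lifts via $\OCA$, coherence, $\MA_{\aleph_1}$-gluing through a ccc poset) is in the spirit of earlier work on $\mathcal Q(H)$ and Boolean quotients, but it is not how the paper proceeds. There is no $\sigma$-directed skeleton of subalgebras and no final ccc forcing to produce a global lift. Instead the paper builds a Polish space $\mathcal X$ of triples $(g,\mathbb I,\bar\alpha)$ encoding skeletal lifts of $\Lambda$ on pieces $B(g,\mathbb I,i,j)$ coming from a filtration of $\mathcal M(A)$; an $\OCA_\infty$ colouring on $[\mathcal X]^2$ has no uncountable $0$-homogeneous set (ruled out by $\mathfrak b>\omega_1$, not by $\MA_{\aleph_1}$ as you suggest), and the resulting cofinal $1$-homogeneous sets are used to exhibit the graph $\Gamma_\Lambda$ simultaneously as an analytic and a coanalytic set, hence Borel. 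So the conclusion is reached by a definability computation rather than by gluing a single lift. Your step~(4) in particular has no analogue in the paper.
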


The conjecture was confirmed for larger and larger classes of algebras, as summarised by the following table. ($A$ here is always a separable nonunital $\Cstar$-algebra). $\PFA$ can be weakened to $\OCA+\MA_{\aleph_1}$.

\vspace{7pt}

\begin{center}
\begin{tabular}{|c|c|c|}\hline
 & $\CH\Rightarrow \exists$ automorphisms &$\OCA+\MA_{\aleph_1}\Rightarrow $ all\\ 
 &of $\mathcal Q(A)$ that are not&automorphisms of $\mathcal Q(A)$\\
& topologically trivial &are topologically trivial\\ \hline\hline
 $A=\mathcal K(H)$&\cite{Phillips-Weaver}&\cite{Farah.C}\\\hline
 $A=B\otimes\mathcal K$, $B$ unital &\cite{Coskey-Farah}&partially \cite{MKAV.FA}\\\hline
 $A$ simple&\cite{Coskey-Farah}& Theorem~\ref{thmi:alltrivialnoncomm}\\\hline
 $A=\bigoplus A_n$, $A_n$ unital &\cite{Coskey-Farah} &partially \cite{MKAV.FA}\\
 \hline
 full generality & Unknown&Theorem~\ref{thmi:alltrivialnoncomm}\\\hline 
\end{tabular}
\end{center}

\vspace{7pt}

 More results were obtained in \cite{McKenney.UHF, V.Nontrivial}, while \cite{Ghasemi.FDD} contains consistency results. The results in \cite{MKAV.FA}, in addition to the hypotheses listed above, require the algebra $A$ to satisfy a weak form of finite-approximation property, known as the Metric Approximation Property of Grothendieck (\cite{Groth}). Such hypotheses are not needed in these notes, as we fully solve the `rigidity' part of the conjecture.

\begin{theoremi}\label{thmi:alltrivialnoncomm}
Assume $\OCA$ and $\MA_{\aleph_1}$, and let $A$ and $B$ be separable $\Cstar$-algebras. Then, all isomorphisms between $\mathcal Q(A)$ and $\mathcal Q(B)$ are topologically trivial.
\end{theoremi}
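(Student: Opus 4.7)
The goal is to extract, from a given isomorphism $\Lambda\colon \mathcal Q(A)\to \mathcal Q(B)$, a lifting $\Phi\colon \ball{\mathcal M(A)} \to \ball{\mathcal M(B)}$ whose graph is Borel in the strict topology. My plan, following the template initiated by Farah for $\mathcal Q(H)$ (\cite{Farah.C}) and partially extended in \cite{MKAV.FA, McKenney.UHF, V.Nontrivial}, has three stages: local rigidity via $\OCA_\infty$, uncountable coherence via $\MA_{\aleph_1}$, and amalgamation along an $\omega_1$-skeleton of separable subalgebras of $\mathcal M(A)$. Since both axioms follow from $\PFA$, this suffices.

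\textbf{Step 1 (local rigidity).} Fix a separable C$^*$-subalgebra $C\subseteq \mathcal M(A)$. I would consider the Polish space of pairs $(x,y)\in \ball{C}^2$ and color green those pairs whose strict-topology distance is small but whose $\Lambda$-images differ, uniformly, by at least some fixed $\e>0$ in $\mathcal Q(B)$; this is open in a suitable reformulation. Applying $\OCA_\infty$, either the pair space admits a countable red-cover (each red piece carrying a strict-continuous, hence Borel, partial lifting of $\Lambda|_{\pi_A[C]}$, which glue to a Borel lifting $\Phi_C$), or there is an uncountable green-homogeneous set producing $\aleph_1$ almost-disjoint elements in $\mathcal Q(A)$ whose $\Lambda$-images are pairwise separated by $\e$. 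The latter is incompatible with $\MA_{\aleph_1}$ applied inside the separable $\mathcal Q(B)$-side, in the style of \cite{Farah.C, Farah.AQ}.

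\textbf{Step 2 (coherence).} For $C\subseteq C'$ separable in $\mathcal M(A)$, any two choices of Borel liftings on $\ball{C}$ and $\ball{C'}$ differ on $\ball{C}$ by a Borel map into $B$. Using $\MA_{\aleph_1}$, I would select, uniformly across an $\aleph_1$-chain of separable subalgebras, a coherent system $\{\Phi_C\}$ such that $\Phi_{C'}\rs \ball{C}$ and $\Phi_C$ agree modulo $B$ with uniformly bounded error. The relevant partial order, whose conditions are finite approximations to such a selection together with Borel codes, is $\sigma$-linked (ccc suffices) by the Borel-code analysis already present in \cite{MKAV.FA}.

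\textbf{Step 3 (amalgamation).} A standard construction yields an $\omega_1$-skeleton $(C_\xi)_{\xi<\omega_1}$ of separable C$^*$-subalgebras of $\mathcal M(A)$ with strict-dense union. The coherent family $(\Phi_{C_\xi})_\xi$ assembles into a single $\Phi$ whose restriction to each $\ball{C_\xi}$ is Borel in strict topology. Since every Borel set is determined by countably much information, a Shoenfield/L\"owenheim--Skolem absoluteness argument upgrades these countable local certificates into a single Borel code for $\Gamma_\Lambda\cap(\ball{\mathcal M(A)}\times \ball{\mathcal M(B)})$.

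\textbf{Main obstacle.} The previously available proofs in the partial cases (summarized in the table) exploited either the metric approximation property, simplicity, or a direct-sum decomposition of $A$ to obtain a concrete coordinate system on $\mathcal M(A)$ against which to run the coloring and the coherence arguments. The hard part here is Step~1 (and the compatible setup of Step~2) in full generality: I expect one must define the open coloring \emph{intrinsically} on the strict-topology Polish structure of $\ball{\mathcal M(A)}$, using only approximate units of $A$ and the Kasparov--Stinespring-style flexibility of representations, rather than any preferred basis. Verifying that the green-homogeneous alternative is refuted by $\MA_{\aleph_1}$ without an MAP-style approximation is the principal new step, and is the place where all the novelty of Theorem~\ref{thmi:alltrivialnoncomm} over \cite{MKAV.FA} must be concentrated.
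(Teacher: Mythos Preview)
Your proposal does not match the paper's argument, and more importantly several of its steps are not close to being justifiable as stated.

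\textbf{Step 1 is not a usable $\OCA$ setup.} You propose to colour pairs $(x,y)$ in the unit ball of a separable $C\subseteq\mathcal M(A)$ according to whether their $\Lambda$-images in $\mathcal Q(B)$ are far apart. But $\Lambda$ is only defined on the quotient, so ``$\Lambda$-images differ by $\e$'' is not a condition on $(x,y)$ at all; it is vacuous (if $\pi_A(x)=\pi_A(y)$) or independent of any metric on $C$. Even granting some repair, the ``green alternative'' you hope to refute would produce $\aleph_1$ elements of $\mathcal Q(A)$ with pairwise separated images, and you assert this is incompatible with $\MA_{\aleph_1}$ ``in the style of \cite{Farah.C}''. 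There is no such argument: in all the existing proofs (including this paper) the colourings live on a Polish space of \emph{candidate lifts} (asymptotically additive or skeletal maps), not on elements of the algebra, and the bad alternative is refuted using $\mathfrak b>\omega_1$ together with the cofinality structure of that space, not by any density argument in $\mathcal Q(B)$.

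\textbf{Step 3 does not produce a Borel graph.} An $\omega_1$-chain of separable subalgebras with strictly dense union, each carrying a Borel partial lift, does not yield a Borel lift on $\ball{\mathcal M(A)}$: Borelness is not preserved under $\omega_1$-unions, and ``Shoenfield absoluteness'' does nothing here because you have no single countable code for the putative global lift. The paper avoids amalgamation entirely: it shows directly that $\Gamma_\Lambda$ is both analytic and coanalytic, by exhibiting two equivalent descriptions of membership in $\Gamma_\Lambda$, one $\Sigma^1_1$ and one $\Pi^1_1$, each quantifying over a fixed countable dense set $D_k$ of skeletal lifts.

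\textbf{What the paper actually does.} The space on which $\OCA_\infty$ is applied is not $\ball{\mathcal M(A)}$ but a Polish space $\mathcal X$ of triples $(g,\mathbb I,\bar\alpha)$, where $g\in\NN^{\NN\uparrow}$, $\mathbb I$ is a partition of $\NN$ into intervals, and $\bar\alpha$ is a quadruple of \emph{skeletal} lifts of $\Lambda$ restricted to certain finite-dimensional ``coordinate'' subspaces $\prod E_{n,g(n)}$. The removal of the metric approximation property (the genuine novelty over \cite{MKAV.FA}) is achieved by manufacturing these finite-dimensional $E_{n,m}$ from a fixed dense sequence $F_n\subseteq A$, and building explicit maps $\Phi^{i,j}_{g,\mathbb I}$, $\Psi^{i,j}_{g,\mathbb I}$ which push elements of $\mathcal M(A)$ into and out of $\prod E_{n,g(n)}$ modulo $A$ (Lemma~\ref{lemma:filtration}). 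The coloring $L_0^n$ detects disagreement between two such skeletal lifts on some finitely-supported element; the bad alternative is ruled out by a $\mathfrak b>\omega_1$ argument using $\leq^*\times\leq_1$-cofinality (Lemma~\ref{lemma:OCAfirstalternativecomplicated}). A separate and essential ingredient, with no analogue in your sketch, is Proposition~\ref{prop:liftonall}, which upgrades a lift on a nonmeager ideal to a lift on all of $B(g,\mathbb I,i,j)$ using that $\Lambda$ is an isomorphism and a Talagrand/Jalali-Naini argument. Your proposal engages with none of this machinery.
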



Although important from a set-theoretical point of view, the study of when automorphisms of coronas are topologically trivial is not satisfactory from an algebraist' point of view. Since there are coronas admitting in $\ZFC$ outer automorphisms (any outer automorphism of the Cuntz algebra $\mathcal O_2$ induces an outer automorphism of $\mathcal Q(c_0(\mathcal O_2))$), and automorphisms of $\mathcal Q(H)$ that do not admit a lift $\mathcal B(H)\to\mathcal B(H)$ that is a $^*$-homomorphism (consider an inner automorphism implement by a unitary of nonzero Fredholm index), the proper notion of algebraically trivial isomorphism between coronas has to be looser than `being inner' and than `admitting a lifting that is a $^*$-homomorphism'. An algebraic notion of triviality which works for all coronas of separable $\Cstar$-algebras was not given before. We get inspiration from the abelian setting.

If $A=C_0(X)$ for a locally compact space $X$, then $\mathcal M(A)=C_b(X)=C(\beta X)$ and $\mathcal Q(A)=C(X^*)$ where $X^*=\beta X\setminus X$. ($\beta X$ is the \v{C}ech-Stone compactification of $X$). By Gelfand's duality, automorphisms of abelian coronas correspond to homeomorphisms of \v{C}ech-Stone remainders.
Fix locally compact second countable spaces $X$ and $Y$. Following \cite{Farah-Shelah.RCQ}, a homeomorphism $\phi\colon X^*\to Y^*$ is trivial\footnote{In \cite{Farah-Shelah.RCQ} these were defined as, simply, trivial.} if there are compact sets  $K_X\subseteq X$ and $K_Y\subseteq Y$ and a homeomorphism $\Phi\colon X\setminus K_X\to Y\setminus K_Y$ such that $\beta\Phi=\phi$ on $X^*$. 

In case $X=\NN$, trivial homeomorphisms of $\NN^*$ correspond to almost permutations of $\NN$, and therefore to trivial automorphisms of $\mathcal P(\NN)/\Fin$. The following conjecture was made in~\cite{Farah-Shelah.RCQ}.
\begin{conjecturei}\label{conj2}
Let $X$ and $Y$ be second countable locally compact noncompact spaces. Then
\begin{itemize}
\item $\CH$ implies that if $X^*$ and $Y^*$ are homeomorphic, then there are nontrivial homeomorphisms between $X^*$ and $Y^*$;
\item $\PFA$ implies that all homeomorphisms between $X^*$ and $Y^*$ are trivial.
\end{itemize}
\end{conjecturei}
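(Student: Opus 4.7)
The conjecture has two halves, which I would handle separately.

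\textbf{CH half.} Assuming $X^*$ and $Y^*$ are homeomorphic, my plan is to exhibit strictly more than $\mathfrak c$-many homeomorphisms $X^*\to Y^*$: since any trivial homeomorphism is determined by a continuous map between second countable locally compact spaces, there can be at most $\mathfrak c$-many trivial ones, so a cardinality surplus forces nontrivial ones. I would use that $C(X^*)$ is countably saturated as a $C^*$-algebra (a general fact about coronas of separable $C^*$-algebras), then run a Cantor back-and-forth construction of length $\omega_1$ that branches at each step, producing $2^{\aleph_1}=2^{\mathfrak c}$ distinct homeomorphisms. This parallels the classical CH construction of nontrivial automorphisms of $\mathcal P(\mathbb N)/\Fin$ (\cite{Rudin}), lifted from the zero-dimensional to the general locally compact setting.

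\textbf{PFA half.} Here I would apply Theorem~\ref{thmi:alltrivialnoncomm} to the separable abelian algebras $A = C_0(X)$ and $B = C_0(Y)$. Given a homeomorphism $\phi: X^*\to Y^*$, Gelfand duality turns $\phi$ into a $^*$-isomorphism $\Lambda_\phi: \mathcal Q(C_0(Y))\to\mathcal Q(C_0(X))$, and Theorem~A furnishes a Borel lifting $\Phi$ of $\Lambda_\phi$ on strict-topology unit balls. What remains is the \emph{promotion step}: upgrading this Borel lift to a genuine continuous $^*$-homomorphism $\Psi: C_0(Y) \to C_b(X\setminus K_X)$, for some compact $K_X\subseteq X$, which still induces $\Lambda_\phi$ on the quotient. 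By Gelfand duality, such a $\Psi$ is exactly a proper continuous map $X\setminus K_X\to Y$ whose \v{C}ech-Stone extension induces $\phi$ on the remainders. Running the symmetric argument for $\phi^{-1}$ and intersecting the two cocompact domains then yields the desired homeomorphism between cocompact subspaces.

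\textbf{Main obstacle.} The work lies in the promotion step. From $\Lambda_\phi$ being multiplicative and self-adjoint, one easily gets that $\Phi$ is approximately a $^*$-homomorphism modulo $C_0(X)$ on every separable subalgebra of $C_b(Y)$; the task is to coherently select, across a cofinal system of such subalgebras, genuine continuous $^*$-homomorphism approximants that can be glued into a single $^*$-homomorphism defined on a cocompact piece of $X$. This is where $\mathrm{OCA}_\infty+\mathrm{MA}_{\aleph_1}$ does the heavy lifting, in the spirit of Veli\v{c}kovi\'c's OCA treatment of $\mathcal P(\mathbb N)/\Fin$ (\cite{Velickovic.OCA}) and the corona-focused coherence arguments of \cite{MKAV.FA, V.Nontrivial, Farah.AQ, Farah-Shelah.RCQ}. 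Concretely, I would extract from $\Phi$ an $\aleph_1$-indexed coherent family of continuous partial $^*$-homomorphisms on an increasing cofinal sequence of compact subsets of $X$, use $\mathrm{OCA}_\infty$ to avoid the obstructions to coherence, and use $\mathrm{MA}_{\aleph_1}$ to diagonalize through the $\aleph_1$-many approximation requirements; the gluing across the cofinal sequence yields $\Psi$. The abelian setting should, if anything, simplify the general noncommutative rigidity of Theorem~A, but the cocompactness requirement (rather than mere topological triviality) is what makes the promotion non-automatic and constitutes the principal technical hurdle.
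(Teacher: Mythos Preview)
First, note that this is a \emph{conjecture}: the paper proves only the PFA half (as Theorem~\ref{thmi:alltrivial}), while the CH half is explicitly marked ``Unknown'' in full generality in the paper's table. So your proposal is attempting more than the paper does.

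\textbf{CH half: genuine gap.} Your argument rests on the claim that $C(X^*)$ is countably saturated, which you call ``a general fact about coronas of separable $\Cstar$-algebras.'' This is false: the paper itself notes (final remarks of \S\ref{subsec:stab}) that coronas are \emph{rarely} countably saturated, citing Farah--Hart for the failure in $\mathcal Q(H)$. What holds in general is countable \emph{degree-1} saturation, which is not enough to run the na\"ive back-and-forth producing $2^{\aleph_1}$ automorphisms. The known CH results for connected $X$ (Yu for $[0,1)$ and $\RR$; \cite{V.Nontrivial} for manifolds) use bespoke constructions, not saturation. Your CH argument as written would resolve an open problem, and the gap is exactly the saturation claim.

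\textbf{PFA half: different route, and the promotion step is under-specified.} The paper does \emph{not} go through Theorem~\ref{thmi:alltrivialnoncomm} and then promote a Borel lift. Instead it applies the structural Theorem~\ref{thm:almostredprod} directly: this produces, on a sequence of blocks $C_0(U_n^i)$ covering $X$ modulo compact, maps $\gamma_n^i$ that are $\epsilon_n$-$^*$-monomorphisms into pieces of $C_0(Y)$. The crucial step you are missing is then \v{S}emrl's Ulam stability theorem for abelian $\Cstar$-algebras (Theorem~\ref{thm:semrl}), which converts each $\gamma_n^i$ into a genuine $^*$-homomorphism $\psi_n^i$; dually these are proper continuous maps $V_n^i\to U_n^i$. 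A further lemma shows the $\psi_n^e$ and $\psi_n^o$ agree on overlaps, and the glued map is the desired homeomorphism between cocompact sets. Your ``extract an $\aleph_1$-indexed coherent family and diagonalize with $\MA_{\aleph_1}$'' is not how the paper proceeds, and it is not clear it can be made to work: a Borel graph gives no obvious handle on local $^*$-homomorphism approximants, whereas the asymptotically additive structure of Theorem~\ref{thm:almostredprod} does. The abelian-specific Ulam stability is the engine that turns approximate structure into genuine topology, and it has no analogue in your outline.
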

As for topologically trivial isomorphisms of general coronas, the following table summarizes the results obtained so far. ($X$ always denote a locally compact noncompact second countable topological space). Once again, $\PFA$ can be weakened to $\OCA+\MA_{\aleph_1}$.

\begin{center}

\begin{tabular}{|c|c|c|}\hline
 & $\CH\Rightarrow \exists$ nontrivial & $\OCA+\MA_{\aleph_1}\Rightarrow X^*$ has only\\
 &homeomorphisms of $X^*$&trivial homeomorphisms
 \\\hline\hline
 $X=\NN$&\cite{Rudin}&\cite{Shelah.PF,Shelah-Steprans.PFA,Velickovic.OCA}\\\hline
 $X$ $0$-dimensional&Parovicenko's Theorem&\cite{Farah.AQ,Farah-McKenney.ZD}\\\hline
  $X=\bigsqcup X_i$, $X_i$ compact&\cite{Coskey-Farah}&\cite{MKAV.FA}\\\hline
 $X=[0,1)$, $X=\er$ &Yu, see \cite{KP.STCC}& Theorem~\ref{thmi:alltrivial}\\\hline
 $X$ manifold &\cite{V.Nontrivial} &Theorem~\ref{thmi:alltrivial}\\\hline
all spaces &Unknown&Theorem~\ref{thmi:alltrivial}\\
 \hline
\end{tabular}
\end{center}

\vspace{7pt}

Partial results were also obtained in~\cite{Farah-Shelah.RCQ}, where a weaker version of triviality was considered. As noted, we prove the following: 
\begin{theoremi}\label{thmi:alltrivial}
Assume $\OCA+\MA_{\aleph_1}$ and let $X$ and $Y$ be second countable locally compact spaces. All homeomorphisms between $X^*$ and $Y^*$ are trivial.
\end{theoremi}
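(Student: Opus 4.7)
The plan is to deduce Theorem~\ref{thmi:alltrivial} from Theorem~\ref{thmi:alltrivialnoncomm} via Gelfand duality, followed by an abelian-specific upgrade argument. Setting $A=C_0(X)$ and $B=C_0(Y)$, we have $\mathcal M(A)=C_b(X)=C(\beta X)$ and $\mathcal Q(A)=C(X^*)$, and similarly for $B$. A homeomorphism $\phi\colon X^*\to Y^*$ contravariantly induces the $^*$-isomorphism $\Lambda_\phi\colon\mathcal Q(B)\to\mathcal Q(A)$, $g\mapsto g\circ\phi$. Under $\PFA$, Theorem~\ref{thmi:alltrivialnoncomm} provides a lifting $\Theta\colon\ball{C_b(Y)}\to\ball{C_b(X)}$ whose graph is Borel in the strict topology and which satisfies $\pi_A\circ\Theta=\Lambda_\phi\circ\pi_B$ on contractions.

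Next I would extract a pointwise map from $\Theta$. For each $x\in X$, the evaluation $\mathrm{ev}_x$ on $C_b(X)$ extends uniquely to a character on $\beta X$, and dualizing $\Lambda_\phi$ yields a character on $C_b(Y)$ modulo $C_0(Y)$; combining with $\Theta$ one obtains a candidate point $\Phi(x)\in\beta Y$. The required conclusions are: (a) for cocompactly many $x\in X$ one has $\Phi(x)\in Y$; (b) the induced map $\Phi\colon X\setminus K_X\to Y$ is continuous; (c) by symmetry $\Phi$ is a homeomorphism onto $Y\setminus K_Y$ for some compact $K_Y$; (d) $\beta\Phi=\phi^{-1}$ on $X^*$. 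Once $\Phi$ is well-defined with values in $Y$ off a compact set, (a), (c), and (d) are formal consequences of $\Lambda_\phi$ being a $^*$-isomorphism of coronas, which forces ``point-at-infinity'' characters to map to ``point-at-infinity'' characters and preserves the ideal structure.

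The main obstacle is (b): upgrading the merely Borel lift $\Theta$ to a continuous pointwise map. Any two strictly-Borel lifts of $\Lambda_\phi$ differ by a map into $C_0(X)$, so the ``germ at infinity'' of $\Theta$ is canonical; I would exploit this by choosing a countable collection of compactly supported test functions on $Y$ that separate points in a controlled exhaustion, applying $\Theta$ to each, and reading off continuity of $\Phi$ outside some compact set from the joint behavior of the resulting $C_b(X)$-functions. This is where the abelian setting genuinely helps: characters are abundant and the Gelfand spectrum furnishes a pointwise object directly. The subtlety lies in ruling out pathological behaviour where $\Phi(x)$ escapes to $Y^*$ along a noncompact net in $X$, and in patching the locally defined pieces into a single continuous map for arbitrary second countable locally compact $X$; unlike the previously handled cases (0-dimensional \cite{Farah-McKenney.ZD}, manifolds \cite{V.Nontrivial}, disjoint unions of compacts \cite{MKAV.FA}), no intrinsic geometry of $X$ is available, so the patching must rely solely on the Borel structure of $\Theta$ and the strict topology, likely through an $\OCA$-type colouring argument inherited from the proof of Theorem~\ref{thmi:alltrivialnoncomm}.
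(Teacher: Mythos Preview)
Your route through Theorem~\ref{thmi:alltrivialnoncomm} does not match the paper's and contains a genuine gap at the point you yourself flag as the main obstacle. The lift $\Theta$ provided by Theorem~\ref{thmi:alltrivialnoncomm} has Borel graph but carries no algebraic structure whatsoever: it need not be linear, $^*$-preserving, or multiplicative. Consequently $\mathrm{ev}_x\circ\Theta$ is not a character on $C_b(Y)$, and there is no ``candidate point'' $\Phi(x)\in\beta Y$ to speak of. Your proposed fix---test against countably many compactly supported functions---cannot repair this, because without at least approximate multiplicativity of $\Theta$ there is no way to pin down a single point from its values. The upgrade from Borel to continuous lift is exactly the content of Question~\ref{ques:main}(b), which the paper treats as a separate (and in general open) problem; see \S\ref{section:algtriv}.

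The paper does not pass through Theorem~\ref{thmi:alltrivialnoncomm} at all. Instead it invokes the sharper Theorem~\ref{thm:almostredprod}, which yields an asymptotically additive lift $\sum\phi_n^i$ on $\prod C_0(U_n^i)$ (for an even/odd cover $\{U_n^e\},\{U_n^o\}$ of $X$ by open sets with compact closure) where each $\phi_n^i$ is an $\epsilon_n$-$^*$-homomorphism with $\epsilon_n\to 0$. This is the crucial ingredient you are missing: approximate multiplicativity. \v{S}emrl's Ulam stability result for abelian algebras (Theorem~\ref{thm:semrl}) then perturbs each $\phi_n^i$ to a genuine $^*$-homomorphism $\psi_n^i$, which dualizes to a continuous proper map $\gamma_n^i\colon V_n^i\to U_n^i$. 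A separate lemma shows the even and odd pieces agree on overlaps, giving a single continuous $\Gamma$ from a clopen-mod-compact $Z\subseteq Y$ onto a cocompact subset of $X$; two further lemmas (that the image of $\psi_n^i$ is hereditary, and that $Y\setminus Z$ is compact) finish the isomorphism case. The patching is thus driven by the algebraic structure of the lift, not by its Borel structure alone.
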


Let us turn a trivial homeomorphism $\phi\colon X^*\to Y^*$ into algebraic terms. The homeomorphism $\Phi\colon X\setminus K_X\to Y\setminus K_Y$ induces an isomorphism between the algebras $\overline{aC_0(X)a}$ and $\overline{bC_0(Y)b}$, where $a\in C_b(X)$ and $b\in C_b(Y)$ are positive elements (supported on $X\setminus K_X$ and $Y\setminus K_Y$ respectively) such that $1-a\in C_0(X)$ and $1-b\in C_0(Y)$. 

Viceversa, let $X$ and $Y$ be locally compact noncompact spaces. Suppose we have positive contractions $a\in C_b(X)$ and $b\in C_b(Y)$ with the property that $1-a\in C_0(X)$ and $1-b\in C_0(X)$, and suppose we are given an isomorphism between 
\[
\overline{aC_0(X)a}\text{ and }\overline{bC_0(Y)b}
\] which extends to a strictly continuous isomorphism between 
\[
\overline{a\mathcal M(C_0(X)a}\text{ and }\overline{b\mathcal M(C_0(Y))b}.
\]
 This isomorphism gives an isomorphism between $\mathcal Q(C_0(X))$ and $\mathcal Q(C_0(Y))$ whose dual maps is a trivial homeomorphism between $Y^*$ and $X^*$.

We generalise this to the noncommutative setting:
\begin{definition}
Let $A$ and $B$ be separable $\Cstar$-algebras and $\psi\colon \mathcal Q(A)\to\mathcal Q(B)$ be an isomorphism. $\psi$ is said algebraically trivial if there exist 
\begin{itemize}
\item positive $a\in\mathcal M(A)$ and $b\in\mathcal M(B)$ such that $1-a\in A$ and $1-b\in B$, and
\item an isomorphism $\phi\colon\overline{aAa}\to\overline{bBb}$ that extends to an isomorphism $\bar\phi\colon\overline{a\mathcal M(A)a}\to\overline{b\mathcal M(B)b}$
\end{itemize}
 such that
\[
\psi=\pi(\bar\phi).
\]
\end{definition}
Algebraically trivial isomorphisms of corona of separable algebras are topologically trivial, hence this notion strengthens the notion of triviality of Coskey and Farah. As an evidence that this is the correct notion of triviality for automorphisms of corona, we notice that, as sketched above (see also Proposition~\ref{prop:dual}), algebraically trivial isomorphisms of abelian coronas are dual to trivial homemorphisms of \v{C}ech-Stone remainders. Moreover, as unitaries in $\mathcal Q(H)$ can be lifted to partial isometries in $\mathcal B(H)$, and all automorphisms of $\mathcal B(H)$ are inner, algebraically trivial automorphisms of $\mathcal Q(H)$ correspond to inner ones. Theorems~\ref{thm:absolute} and ~\ref{thm:redprodFD} give further evidences that this is the correct notion of triviality.
 
 Having dealt with Question~\ref{ques:main}(a), and having given the appropriate notion of algebraically trivial isomorphism, we turn to study Question~\ref{ques:main}(b), by connecting it to Ulam stability and perturbation theory of $\Cstar$-algebras. Perturbation theory for $\Cstar$-algebra was developed after the seminal work of Kadison and Kastler \cite{KadKast}. The fundamental question on whether two $\Cstar$-algebras represented on the same Hilbert space that are close to each other (in the Hausdorff metric) are isomorphic can be rephrased as asking whether the relation of being isomorphic is stable under small perturbation for pairs of subalgebras of the same $\Cstar$-algebra.  Kadison and Kastler conjectured that this happens for all pairs of $\Cstar$-algebras\footnote{As every set theorist immediately guesses, separability is a needed assumption,  see \cite{ChoiCh.PertNonsep}.}. The conjecture was confirmed for separable nuclear\footnote{The class of nuclear is arguably the most important class of $\Cstar$-algebras. It consists of those algebras satisfying a certain internal approximation property and it coincides with the class of those algebras which are amenable Banach algebras (see \cite[\S II.9]{Blackadar.OA} for more information.)} algebras in \cite{CSSWW}; it has been the source of research in perturbation theory for decade, but a full solution is yet out of reach. In \cite{MKAV.UC}, the author and McKenney introduced a stronger notion of stability, named after Ulam's work on approximate homomorphisms (similarly to what was done for Boolean algebras and groups, see e.g.\cite{Farah.LHQS}). In short, a class $\mathcal C$ of $\Cstar$-algebras is said Ulam stable if approximate isomorphisms between elements of $\mathcal C$ can be perturbed to isomorphisms in a uniform way (see Definition~\ref{defin:ulam}. We link answers to Question~\ref{ques:main}(b) to whether certain classes of $\Cstar$-algebras are Ulam stable (see Theorem~\ref{thm:ulam} and~\ref{thm:ulam2}). We also show that a positive answer to Question~\ref{ques:main}(b) would confirm the conjecture of Kadison and Kastler (Theorem~\ref{thm:KK}).
 

After studying isomorphisms of coronas, we analyse their mutual embeddings. Again, this is inspired by the Boolean algebra case for Borel quotients of $\mathcal P(\NN)$: while under $\CH$ all quotients of $\mathcal P(\NN)$ embed into $\mathcal P(\NN)/\Fin$, Farah proved in \cite[\S3.5]{Farah.AQ} many non embedding results under Forcing Axioms. (Notably, if $\SI$ is a meager Borel dense ideal, under Forcing Axioms the Boolean algebra $\mathcal P(\NN)/\SI$ does not embed into $\mathcal P(\NN)/\Fin$.) We produce similar results for embeddings of corona $\Cstar$-algebras. Once again the role of $\mathcal P(\NN)/\Fin$ is played by $\mathcal Q(H)$: thanks to the main result of \cite{FHV.Calkin}, under $\CH$ all $\Cstar$-algebras of density continuum embed into $\mathcal Q(H)$, and in particular all coronas of separable $\Cstar$-algebras do. More than that, if $A$ and $B$ are unital separable $\Cstar$-algebras, under $\CH$ there is an embedding $\mathcal Q(A\otimes\mathcal K(H))\to\mathcal Q(B\otimes\mathcal K(H))$. On the other hand, assuming Forcing Axioms, one gets nonembedding results.

\begin{theoremi}\label{thmi:embedding}
Let $A$ and $B$ be unital separable infinite-dimensional $\Cstar$-algebras. Then
\begin{enumerate}[label=(\roman*)]
\item $\CH$ implies that $\mathcal Q(A\otimes\mathcal K(H))$ and $\ell_\infty(A)/c_0(A)$ embed in $\mathcal Q(H)$, and therefore into $\mathcal Q(B\otimes\mathcal K(H))$;
\item $\OCA$ and $\MA_{\aleph_1}$ imply that $\mathcal Q(A\otimes\mathcal K(H))$ and $\ell_\infty(A)/c_0(A)$ do not embed in $\mathcal Q(H)$. Moreover, if $B$ is stably finite and $A$ is not, then $\mathcal Q(A\otimes\mathcal K(H))$ does not embeds into $\mathcal Q(B\otimes K(H))$.
\end{enumerate}
\end{theoremi}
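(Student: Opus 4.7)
For part (i), the plan is to apply the Farah--Hirshberg--Vignati embedding theorem of \cite{FHV.Calkin}: under $\CH$ every $\Cstar$-algebra of density at most $\mathfrak c$ embeds into $\mathcal Q(H)$. Both $\mathcal Q(A\otimes\mathcal K(H))$ and $\ell_\infty(A)/c_0(A)$ have density exactly $\mathfrak c$ when $A$ is unital, separable, and infinite-dimensional, so the theorem applies. To land in $\mathcal Q(B\otimes\mathcal K(H))$ I would then compose with the unital $\ast$-homomorphism $\mathcal B(H)\to\mathcal M(B\otimes\mathcal K(H))$ sending $T\mapsto 1_B\otimes T$, which carries $\mathcal K(H)$ into $B\otimes\mathcal K(H)$ and descends to an injection between the coronas; injectivity follows from applying any slice map $\varphi\otimes\id$ with $\varphi(1_B)\neq 0$, which pulls $B\otimes\mathcal K(H)$ back into $\mathcal K(H)$.

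Part (ii) is the substantive part. First I would reduce both non-embedding claims into $\mathcal Q(H)$ to the single statement $\ell_\infty(A)/c_0(A)\not\hookrightarrow\mathcal Q(H)$: fixing matrix units $(e_{nn})_n$ in $\mathcal K(H)$, the strictly convergent sum $(a_n)_n\mapsto\sum_n a_n\otimes e_{nn}$ induces a canonical injection $\ell_\infty(A)/c_0(A)\hookrightarrow\mathcal Q(A\otimes\mathcal K(H))$, so any embedding of the latter into $\mathcal Q(H)$ yields one of the former. For the main non-embedding, assume toward contradiction that $\Lambda\colon\ell_\infty(A)/c_0(A)\to\mathcal Q(H)$ is an embedding, and work under $\OCA_\infty+\MA_{\aleph_1}$. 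The plan is to extend the rigidity technology of Theorem~A (as developed in \cite{Farah.C,MKAV.FA} and this paper) from isomorphisms to embeddings: the open colouring dichotomy would be applied to the colouring on countable collections of pairs of lifts that fail to be compatible mod $\mathcal K(H)$, producing Borel partial selectors for a lift of $\Lambda$, and $\MA_{\aleph_1}$ would then stitch these selectors along an $\aleph_1$-chain of separable subalgebras into a strictly continuous $\ast$-homomorphism from a corner $\overline{a\ell_\infty(A)a}$ (with $1-a\in c_0(A)$) into $\mathcal B(H)$ inducing $\Lambda$. The contradiction then comes from the structure of $\ell_\infty(A)$: it contains an uncountable family of pairwise orthogonal central projections, each commuting with a faithful unital copy of $A$, and such a configuration cannot be carried through a strictly continuous lift into $\mathcal B(H)$ once $A$ is noncommutative, while the case $A=\mathbb C$ already reduces to the Boolean non-embedding results of \cite{Farah.AQ, Farah-McKenney.ZD}. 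Upgrading the trivialisation machinery from isomorphisms to embeddings, where the usual back-and-forth symmetry between source and target is lost, is the step I expect to be the principal obstacle.

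For the moreover clause, the plan is to pair the lift produced above with K-theoretic obstructions. If $A$ is not stably finite then $A$ contains an isometry $v$ with $p=1-vv^*$ a nonzero full projection, which produces an infinite sequence of pairwise orthogonal, Murray--von Neumann equivalent copies of $p$ inside the diagonal image in $\mathcal M(A\otimes\mathcal K(H))$, hence an ``infinite excess'' projection in $\mathcal Q(A\otimes\mathcal K(H))$. On the other hand, the stable finiteness of $B$ provides a densely defined lower semicontinuous tracial weight on $B\otimes\mathcal K(H)$ which extends to $\mathcal M(B\otimes\mathcal K(H))$ and bounds the ``size'' of the image of any such excess mod $B\otimes\mathcal K(H)$; a topologically trivial embedding $\mathcal Q(A\otimes\mathcal K(H))\hookrightarrow \mathcal Q(B\otimes\mathcal K(H))$ obtained from the upgraded lifting step above would then produce a single projection of tracial mass both finite and infinite, giving the contradiction.
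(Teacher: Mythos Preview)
Your plan for part (i) is fine and matches the paper's citation of \cite[Corollary~3.7]{FHV.Calkin}.

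For part (ii), however, you are making the problem much harder than it is and, in doing so, your proposed contradiction becomes vague. The step you flag as the ``principal obstacle''---upgrading the trivialization machinery from isomorphisms to embeddings---is \emph{already done} in this paper: Theorem~\ref{thm:almostredprod} is stated for an arbitrary $^*$-homomorphism $\Lambda\colon\mathcal Q(A)\to\mathcal Q(B)$, and records that if $\Lambda$ is injective then the resulting maps $\gamma_n$ are $\epsilon_n$-monomorphisms. No back-and-forth symmetry is needed, because the paper never aims for a global strictly continuous lift of $\Lambda$; it only extracts, for any sequence of well-separated intervals $J_n$, approximate $^*$-monomorphisms $\gamma_n\colon A_{J_n}\to (e^B_{k_n}-e^B_{j_n})B(e^B_{k_n}-e^B_{j_n})$.

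That weaker conclusion is already enough. The paper packages the argument via the notion of a property $P$ that $B$ has ``at infinity'' and whose negation is preserved under $\epsilon$-monomorphisms for small $\epsilon$ (Theorem~\ref{thm:propertyP}). Taking $P=$ ``finite-dimensional'': $\mathcal K(H)$ is finite-dimensional at infinity, while for infinite-dimensional unital $A$ neither $A\otimes\mathcal K(H)$ nor $c_0(A)$ is; an embedding would give $\epsilon_n$-monomorphisms from infinite-dimensional corners into finite-dimensional matrix algebras, which is impossible once $\epsilon_n$ is small. Your proposed contradiction via ``an uncountable family of orthogonal central projections, each commuting with a faithful copy of $A$'' is both stronger than needed and keyed to the wrong hypothesis (you write ``once $A$ is noncommutative'', but the theorem covers abelian infinite-dimensional $A$ as well).

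For the moreover clause the paper again avoids traces and $K$-theory entirely: it runs the same template with $P=$ ``finite'' (no infinite projection). Since ``having an infinite projection'' is weakly stable, its negation is preserved under almost inclusions; $B$ stably finite means $B\otimes\mathcal K(H)$ is finite at infinity, so an embedding would force $A\otimes\mathcal K(H)$ to be finite at infinity, i.e.\ $A$ stably finite. Your tracial-weight argument may be salvageable, but it requires extra hypotheses (existence and extension of the trace) that the paper's route sidesteps.
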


Focusing on the abelian case, using Gelfand's duality, we analyze continuous surjections between \v{C}ech-Stone remainders. In \cite{DowHart.Images} Dow and Hart showed that under $\PFA$ that if $X^*$ is a continuous image of $\NN^*$ then $X$ must be homeomorphic, modulo compact, to $\NN$. We extend significantly their result:

\begin{theoremi}\label{thmi:contimgs}
Assume $\OCA$ and $\MA_{\aleph_1}$. Let $X$ and $Y$ be locally compact second countable topological spaces. Assume that $Y^*$ surjects onto $X^*$. Then there are $Z\subseteq Y$ and a compact $K\subseteq X$ such that $Z$ is clopen modulo compact and $Z$ surjects onto $X\setminus K$.
\end{theoremi}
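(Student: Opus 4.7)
The plan is to dualize the surjection $f : Y^* \to X^*$, apply the $\PFA$-rigidity machinery of the paper to obtain a continuous proper lift of $f$ on cocompact subsets, and then extract the desired $Z$ and $K$ using injectivity of the dual embedding.

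By Gelfand duality, $f$ corresponds to a unital $\Cstar$-embedding $\Lambda := f^\# : C(X^*) \to C(Y^*)$, with $f$ surjective if and only if $\Lambda$ is injective. I would invoke the $\PFA$-rigidity machinery developed in this paper (the embedding analogue of Theorem~\ref{thmi:alltrivial}, where essentially the same $\OCA_\infty$-coherence plus $\MA_{\aleph_1}$-gluing arguments produce a continuous representative) to obtain compact sets $K_X \subseteq X$, $K_Y \subseteq Y$ and a continuous map $F : Y \setminus K_Y \to X \setminus K_X$ satisfying $\beta F \rs Y^* = f$. Since $\beta F(Y^*) \subseteq X^*$, the map $F$ is automatically proper as a map into $X$: preimages of arbitrary compact sets of $X$ are compact in $Y \setminus K_Y$. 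I expect this lifting step to be the main obstacle, since the published rigidity theorems are phrased for isomorphisms and one must verify that the same arguments handle arbitrary unital $*$-embeddings; in the commutative case this should go through, as the constructions can be run coordinate-by-coordinate on generators of $C(X^*)$ without invoking bijectivity.

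Next I would prove $F(Y \setminus K_Y)$ covers a cocompact subset of $X$. Properness makes $F(Y \setminus K_Y)$ closed in $X \setminus K_X$, so $U := (X \setminus K_X) \setminus F(Y \setminus K_Y)$ is open in $X$. If $U$ were not relatively compact, second countability and local compactness would provide a sequence $(x_n)$ in $U$ escaping every compact set of $X$, together with pairwise disjoint relatively compact open neighborhoods $V_n \subseteq U$ of $x_n$ and bump functions $h_n \in C_c(X)$ with $h_n(x_n) = 1$ and $\supp h_n \subseteq V_n$. The locally finite sum $h := \sum_n h_n$ lies in $C_b(X) \setminus C_0(X)$ since $h(x_n) = 1$ for all $n$, so it represents a nonzero class in $C(X^*)$; but $h$ vanishes on $F(Y \setminus K_Y)$, forcing $\Lambda([h]) = [h \circ F] = 0$, contradicting injectivity of $\Lambda$. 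Hence $K := \overline{U} \cup K_X$ is a compact subset of $X$ with $F(Y \setminus K_Y) \supseteq X \setminus K$.

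Finally, I set $Z := F^{-1}(X \setminus K) \subseteq Y \setminus K_Y$, an open subset of $Y$. Its complement $Y \setminus Z = K_Y \cup F^{-1}(K)$ is compact by properness of $F : Y \setminus K_Y \to X$, so $Z$ has compact boundary and is therefore clopen modulo compact in $Y$. The restriction $F \rs Z : Z \to X \setminus K$ is a continuous surjection, which is the required witness.
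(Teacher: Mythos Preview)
Your lifting step is a genuine gap, and it is not a matter of checking that the isomorphism arguments go through for embeddings: they do not. A global continuous lift $F\colon Y\setminus K_Y\to X\setminus K_X$ with $\beta F\restriction Y^*=f$ need not exist. The paper itself flags this in Remark~\ref{rem:notall}: already for $X=Y=\NN$, \cite[Example~3.2.1]{Farah.AQ} gives an injective unital $^*$-homomorphism $\ell_\infty/c_0\to\ell_\infty/c_0$ with no asymptotically additive lift, hence no function $F$ between cofinite subsets of $\NN$ inducing it. Your argument, if it worked, would force $Z$ to be cocompact in $Y$ (since $Y\setminus Z=K_Y\cup F^{-1}(K)$ is compact), which is strictly stronger than the theorem's conclusion and false in general. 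The machinery in \S\ref{section:general} only produces lifts on a nonmeager dense ideal when $\Lambda$ is not an isomorphism; the upgrade to a full lift in Proposition~\ref{prop:liftonall} genuinely uses bijectivity.

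What the paper actually does is avoid a global lift entirely. It writes $C_b(X)$ as a sum of two reduced-product pieces $\prod C_0(U_n^e)+\prod C_0(U_n^o)$, applies Theorem~\ref{thm:almostredprod} to each to get $\epsilon_n$-$^*$-monomorphisms $\phi_n^i\colon C_0(U_n^i)\to C_0(W_n^i)$, uses \v{S}emrl's theorem to straighten these to genuine injective $^*$-homomorphisms $\psi_n^i$, and dualizes to proper surjections $\gamma_n^i\colon V_n^i\to U_n^i$. Surjectivity onto a cocompact set is then automatic (the $U_n^i$ cover $X$ cocompactly and each $\gamma_n^i$ is onto), so your bump-function argument is not needed. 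The work is in showing the $\gamma_n^e$ and $\gamma_n^o$ agree on overlaps, which uses Lemma~\ref{lem:liftingmore}; the domain $Z=\bigcup V_n^e\cup\bigcup V_n^o$ is then clopen modulo compact but may well miss a noncompact piece of $Y$.
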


\vspace{5pt}

All our results heavily use the `noncommutative $\OCA$ lifting' Theorem~\ref{thm:lifting}, which is Theorem 4.5 in~\cite{MKAV.FA}.
Theorem~\ref{thmi:alltrivialnoncomm} is then obtained via a filtration for $\mathcal Q(A)$ exploited in \S\ref{section:general}, a generalisation of the construction in \cite[\S7]{MKAV.FA}. Other than for the proof of Theorem~\ref{thmi:alltrivialnoncomm}, the main technicalities  come in proving Theorem~\ref{thm:almostredprod}, from which one deduces Theorem~\ref{thmi:alltrivial}--\ref{thmi:contimgs}.


The paper is structured as follows: in \S\ref{section:prel} we introduce the main concepts involved and prove some of their basic properties. In \S\ref{section:general}, after some some heavy lifting, we prove Theorem~\ref{thmi:alltrivialnoncomm}. In \S\ref{section:rigidity} we use the work in \S\ref{section:general} to prove Theorem~\ref{thm:almostredprod}. We then show how to obtain Theorems~\ref{thmi:alltrivial}--\ref{thmi:contimgs} as its consequences. In \S\ref{section:algtriv} we deal with the notion of algebraically trivial isomorphisms, and we show how answers to Question~\ref{ques:main}(b) are connected to Ulam stability phenomena.

\subsection*{Acknowledgements}
The author is indebted with Ilijas Farah, Stefaan Vaes, and Paul McKenney for many long conversations concerning the content of this manuscript and related topics, and the anonymous referee for useful remarks. These notes were completed partially while the author held a postdoctoral position at IMJ-PRG in Paris, financed by the FSMP and a PRESTIGE-co-Fund grant, and partially while being a FWO postdoc at KU Leuven. The author would like to thank both institutions, as well as the funding bodies.
\section{Preliminaries}\label{section:prel}
We work in the interface between set theory and $\Cstar$-algebras. For a good introduction of these topics, see \cite{Kunen.ST} and \cite{Blackadar.OA}. Definitions of the main objects in use are given below.
\subsection{Set theory}\label{subsec:ST}
In this paper we show results which are consequences of Forcing Axioms. Forcing Axioms are strengthenings of the Baire category theorem to higher cardinals. The most famous of these is Shelah's Proper Forcing Axiom $\PFA$ (\cite{Shelah.PF}), of which we will use two consequences: Todor\v{c}evi\`c' Open Coloring Axiom $\OCA$ and Martin's Axiom $\MA_{\aleph_1}$. As we will not use these axioms expliticitely, we will not give a definition of $\PFA$ and of $\MA_{\aleph_1}$. Indeed, $\PFA$ is not used at all in these notes, and $\MA_{\aleph_1}$ is only used to access the lifting theorem of the author and McKenney, Theorem~\ref{thm:lifting}, (see \cite{MKAV.FA}).

An axiom we are going to frequently use (e.g, the proofs of Theorems~\ref{thmi:alltrivialnoncomm} and~\ref{thm:almostredprod}) is a modification of Todor\v{c}evi\`c' $\OCA$ known as $\OCA_\infty$. $\OCA$ and $\OCA_\infty$ were recently proved to be equivalent by Moore in \cite{Moore.OCA}. We will therefore assume $\OCA$, but often use $\OCA_\infty$.

If $\mathcal X$ is a set, $[\mathcal X]^2$ is the set of unordered pairs of elements of $\mathcal X$.
Subsets of $[\mathcal X]^2$ are identified with symmetric subsets of $\mathcal X^2\setminus \Delta_{\mathcal X}$, 
thus giving meaning to  the phrase `an open subset of $[\mathcal X]^2$'. 
The following axiom was introduced in~\cite{Farah.Cauchy}\footnote{The name $\OCA_\infty$  was used for a weaker axiom in \cite{Farah.Cauchy}. What is presently known as $\OCA_\infty$ is the dichotomy between (a) and (b') on \cite[p. 4]{Farah.Cauchy}.}, where it was also proved that it is relatively consistent with $\ZFC$.
\begin{OCAinfty}
Let $\mathcal X$ be a metric space and let $(K_0^n)_n$  be a sequence of open 
subsets of $[\mathcal X]^2$ with $K_0^{n+1}\subseteq    K_0^n$ for all $n\in\NN$. One of the following applies. 
\begin{enumerate}
  \item There is a sequence  $(\mathcal X_n)_n$ of subsets of $\mathcal X$  such that $\mathcal X=\bigcup_n \mathcal X_n$ and
  $[\mathcal X_n]^2\cap     K_0^n=\emptyset$ for every $n\in\NN$, or
  \item there is an uncountable $Z\subseteq    2^\NN$ and a continuous injection
  $f\colon  Z\to \mathcal X$ such that for all $x\neq y\in Z$ we have
  \[
    \{f(x),f(y)\}\in K_0^{\Delta(x,y)},
  \]
  where $\Delta(x,y)=\min\set{n}{x(n)\neq y(n)}$.
\end{enumerate}
\end{OCAinfty}

$\OCA$ is the statement $\OCA_\infty$ when $K_0^n=K_0^{n+1}$ for all $n\in\NN$. Both $\OCA_\infty$ and $\MA_{\aleph_1}$, are consequences of $\PFA$ (\cite{Shelah.PF}). Unlike $\PFA$,   their relative consistency with $\ZFC$ does not require any large cardinal assumptions (see \cite{Farah.Cauchy} and \cite{Kunen.ST}).   

Each of these axioms  contradicts the Continuum Hypothesis $\CH$. More than that, $\OCA$ implies that the bounding number $\mathfrak b$ is $\aleph_2$. If $f$ and $g$ are functions in $\NN^\NN$, we write $f\leq_*g$ if there is $n_0$ such that $f(n)\leq g(n)$ whenever $n\geq n_0$. The cardinal $\mathfrak b$ is the minimal cardinality of a subset of $\NN^\NN$ which is $\leq_*$-unbounded. A diagonalisation argument gives that $\omega_1\leq\mathfrak b$, and clearly $\mathfrak b\leq2^{\aleph_0}$. We will use that $\OCA$ implies that $\mathfrak b>\omega_1$ in Lemmas~\ref{lemma:OCAfirstalternativecomplicated} and~\ref{lem:liftingmore}.

\subsubsection*{Ideals in $\mathcal P(\NN)$}

A subset $\SI\subseteq\mathcal P(\NN)$ is an ideal on $\NN$ if it is hereditary
and closed under finite unions. An ideal $\SI$ is dense if for every infinite $S\subseteq\NN$ there is an infinite $T\subseteq S$ with $T\in\SI$. 

When $\mathcal P(\NN)$ is endowed with the product topology from $2^\NN$, we can study the topological properties of its ideals. The following combinatorial condition characterizes meagerness:  an ideal $\SJ\subseteq\SP(\NN)$ is meager if and only if there is a partition  $\NN = \bigcup\set{E_n}{n\in\NN}$ into finite  intervals such that for any  infinite set $L$, $\bigcup\set{E_n}{n\in L} \not\in\SJ$.
  
Recall that the intersection of countably many nonmeager ideals is nonmeager, if each ideal contains all finite sets (e.g. \cite[\S 3.10]{Farah.AQ}). In case one wants to intersect more than countably many nonmeager ideals, one has to look at the cardinal invariant $\mathfrak g$, the groupwise density cardinal. Although defined in different terms, $\mathfrak g$ can be viewed as the minimal size of a set of nonmeager ideals, each containing all finite sets, whose intersection is nonempty. By \cite[Corollary 7.9]{Blass.HB}, if $\MA$ holds then $\mathfrak g=2^{\aleph_0}$, therefore under $\MA_{\aleph_1}$ the intersection of $\aleph_1$ nonmeager ideals is nonmeager. This will be used in Lemma~\ref{lem:liftingmore}.

\subsection{$\Cstar$-algebras, their multipliers and coronas, and trivial isomorphisms}\label{subsec:cstartrivial}
$\Cstar$-algebras are Banach complex algebras together with an involutive $^*$ operation satisfying $\norm{aa^*}=\norm{a}^2$. Thanks to the GNS construction (\cite[II.6.4]{Blackadar.OA}), every $\Cstar$-algebra can be viewed as a $^*$-closed Banach subalgebra of the algebra of bounded linear operators $\mathcal B(H)$ for some Hilbert space $H$. By Gelfand's duality all abelian $\Cstar$-algebras arise as algebras of continuous complex-valued functions vanishing at infinity on some locally compact $X$.

For a $\Cstar$-algebra $A$, especially a nonunital one, we  want to examine how $A$ can be embedded as an ideal into a unital $\Cstar$-algebra $B$, similarly as one studies the ways of embedding a locally compact space $X$ as an open subset of a compact space. The minimal such $B$ is the unitization of $A$, denoted $A^\dagger$, which corresponds to the one-point compactification. The `maximal' such unital algebra is the multiplier algebra $\mathcal M(A)$, which corresponds to the \v{C}ech-Stone compactification in the commutative case, and we introduce below.

If $A\subseteq B$ are $\Cstar$-algebras, $A$ is an essential ideal of $B$ if $A$ is a $^*$-closed norm closed ideal, and  $bA=Ab=0$  implies $b=0$ whenever $b\in B$. This is a notion of denseness, as it says that $A$ is `large' in $B$. The multiplier algebra of $A$, $\mathcal M(A)$, can be thought as `the largest unital $\Cstar$-algebra in which $A$ is an essential ideal'. More precisely, $\mathcal M(A)$ is defined as the unital algebra that has the property that whenever $A$ sits as an ideal into a $\Cstar$-algebra $B$ then the identity map on $A$ extends uniquely to a $^*$-homomorphism from $B$ into $\mathcal M(A)$ of kernel $\{b\in B\mid \forall a\in A(ab=ba=0)\}$. For every $\Cstar$-algebra $A$, its multiplier algebra exists, and it is unique up to isomorphism. The construction of $\mathcal M(A)$ can be made in different ways. The first construction can be traced back to the work of Busky (\cite{Busby}), but (perhaps more naturally) one can see $\mathcal M(A)$ as the closure of $A$ in its strict topology (see below), once $A$ is represented faithfully on some Hilbert space. Notably, if $A=C_0(X)$, $\mathcal M(A)$ corresponds to the \v{C}ech-Stone compactification of $X$: in this case $\mathcal M(A)\cong C_b(X)\cong C(\beta X)$. For more on multipliers, see e.g. \cite[II.7.3]{Blackadar.OA}.

\begin{definition}
The quotient $\mathcal Q(A)=\mathcal M(A)/A$ is the corona of $A$ and   $\pi_A\colon\mathcal M(A)\to\mathcal Q(A)$ is the quotient map.
\end{definition}
The main objects of interest of this paper are isomorphisms of corona algebras. To follow the pattern established in the introduction, we need to endow $\mathcal M(A)$ with a suitable Polish topology compatible with its algebraic structure.

If $A$ is nonunital, the multiplier algebra is nonseparable in norm, therefore we have to consider a different topology. The strict topology is the topology generated by the seminorms
\[
l_a(x)=\norm{ax}, \,\, r_a(x)=\norm{xa}, \, a\in A.
\]
If $A$ is a $\sigma$-unital, that is, if it has a countable approximate identity, the strict topology is Polish when restricted to norm bounded subsets of $\mathcal M(A)$. This turns $\mathcal M(A)$ into a standard Borel space.
If $X$ is locally compact, the strict topology of $C_b(X)\cong C(\beta X)$ generated by $C_0(X)$ coincides with the topology of uniform convergence on compact subsets of $X$. In case $A_n$ are unital $\Cstar$-algebras, the multiplier algebra of $\bigoplus A_n$ (the algebra of sequences whose norm converges to $0$) is the algebra of bounded sequence, $\prod A_n$. In this case, the strict topology on $\prod A_n$ generated by $\bigoplus A_n$ coincides with the product of the norm topologies on $A_n$. The corona algebra $\prod A_n/\bigoplus A_n$ is called the reduced product of the $A_n$'s.

\begin{definition}[\cite{Coskey-Farah}]\label{defin:trivialiso}
Let $A$ and $B$ be separable $\Cstar$-algebras. An isomorphism $\Lambda\colon\mathcal Q(A)\to\mathcal Q(B)$ is topologically trivial if its graph 
\[
\Gamma(\Lambda)=\{(a,b)\in\mathcal M(A)\times\mathcal M(B)\colon\norm{a},\norm{b}\leq 1,\,\,\, \Lambda(\pi_A(a))=\pi_B(b)\}
\]
is Borel in the product of the strict topologies.
\end{definition}
Since the unit balls of $\mathcal M(A)$ and $\mathcal M(B)$ are Polish spaces there can be only $2^{\aleph_0}$ Borel subsets of the unit ball of $\mathcal M(A)\times\mathcal M(B)$, therefore there exist only $2^{\aleph_0}$ topologically trivial isomorphisms.

White the notion of topologically trivial isomorphism is completely satisfactory from the point of view of set theory, it is not from the $\Cstar$-algebraist' view. Topologically trivial isomorphisms might not have liftings which are preserving even the more basics algebraic properties. For this reason, we introduce a stronger notion of triviality.

 Let $a\in \mathcal M(A)$ and $b\in\mathcal M(B)$ be positive contractions with $1-a\in A$ and $1-b\in B$. Suppose that there is an isomorphism 
\[
\phi\colon \overline{aAa}\to\overline{bBb}
\]
that extends to a strictly continuous isomorphism 
\[
\bar\phi\colon\overline{a\mathcal M(A)a}\to\overline{b\mathcal M(B)b}.
\]
 Since for all $x\in\mathcal M(A)$ we have that $x=axa+(1-a)xa+ax(1-a)+(1-a)x(1-a)$, we have that $x-axa\in A$. In particular, $\bar\phi$ induces a unique $^*$-isomorphism
\[
\tilde\phi=\pi(\bar\phi)\colon \mathcal Q(A)\to\mathcal Q(B),
\]
which we call the isomorphism induced by $\phi$.
\begin{definition}\label{def:algtriv}
Let $\psi\colon \mathcal Q(A)\to\mathcal Q(B)$ be an isomorphism. $\psi$ is algebraically trivial if there exist positive elements $a\in\mathcal M(A)$, $b\in\mathcal M(B)$ and an isomorphism $\phi\colon\overline{aAa}\to\overline{bBb}$ such that 
\begin{itemize}
\item $1-a\in A$, $1-b\in B$, and
\item $\phi$ extends to a strongly continuous isomorphism $\bar\phi\colon\overline{a\mathcal M(A)a}\to\overline{b\mathcal M(B)b}$ such that
\[
\psi=\pi(\bar\phi).
\]
\end{itemize}
We say that $\mathcal Q(A)$ and $\mathcal Q(B)$ are algebraically isomorphic if there is an algebraically trivial isomorphism between them.
\end{definition}
\begin{remark}\label{remark:notallisos}
If a net in $A$ strictly converges to an element of $A$, then it converges in norm. Therefore a norm continuous map $\overline{aAa}\to\overline{bBb}$ is automatically strictly continuous. On the other hand, even when $1-a\in A$ and $1-b\in B$, an isomorphism between $\overline{aAa}$ and $\overline{bBb}$ does not necessarily maps strictly convergent sequences to strictly convergent sequences, and therefore might not extend to an isomorphism at the level of the multiplier algebra.

For this, fix unital algebras $A_n$. Let $C=(\bigoplus A_{2n})^\dagger$, the unitisation of $(\bigoplus A_{2n})$, $B=\bigoplus A_{n}$ and $A=C\oplus \bigoplus A_{2n+1}$. Let 
\[
c=(\frac{1}{n})_n\in \bigoplus A_{2n},\,\,d=1\in\prod A_{2n+1},\,\,a=c\oplus d\text{ and }b=1\in \mathcal M(B).
\]
Then $a\in \mathcal M(A)$, and $1-a\in A$. The obvious isomorphism between $\overline{aAa}$ and $B$ does not extend to an isomorphism between $\overline{a\mathcal M(A)a}$ and $\mathcal M(B)$. In fact, if $p_{2n}\in A_{2n}$ is the unit of $A_{2n}$, the sequence $q_m=\sum_{n\leq m} p_{2n}$ is strictly convergent in $B$ but not in $A$.

A commutative example can by constructed by considering $A=B=C_0([0,1))$. Let $a\in C_b([0,1))$ be given by $a(t)=t$ for $t\in [0,1)$. The automorphism of $C_0((0,1))=\overline{aAa}$ dual to the homeomorphism given by $t\mapsto 1-t$ does not extend to an automorphism of $\overline{a\mathcal M(A)a}$.
\end{remark}

We relate this notion of triviality to other notions of triviality for isomorphisms of corona $\Cstar$-algebras.
\begin{theorem}\label{thm:absolute}
Let $A$ and $B$ be separable $\Cstar$-algebras. Then
\begin{enumerate}[label=(\arabic*)]
\item every algebraically trivial isomorphism $\phi\colon\mathcal Q(A)\to\mathcal Q(B)$ is topologically trivial, and
\item whether there is a topologically trivial isomorphisms $\mathcal Q(A)\to\mathcal Q(B)$ that is not algebraically trivial cannot be changed by forcing.
\end{enumerate}
\end{theorem}
\begin{proof}
For (1), let $\phi\colon \mathcal Q(A)\to\mathcal Q(B)$ be an algebraically trivial isomorphism, and let $a\in\mathcal M(A)$, $b\in \mathcal M(B)$, and $\phi\colon \overline{aAa}\to\overline{bBb}$ be as in Definition~\ref{def:algtriv}. Since $1-a\in A$, the map $\rho_a\colon \mathcal M(A)\to \mathcal M(A)$ given by $x\to axa$ is a strictly continuous map which induces the identity on $\mathcal Q(A)$. Similarly, the inclusion $\overline{b\mathcal M(B)b}\subseteq \mathcal M(B)$ is strictly continuous. Moreover, $\phi$ is strictly continuous, and so is $\bar\phi\colon \overline{a\mathcal M(A)a}\to\overline{b\mathcal M(B)b}$. Then $\bar\phi\circ\rho_a$ is a continuous lift of $\phi$, and so its graph is Borel.

For (2), note that the statement `$\phi$ is topologically trivial' and `$\phi$ is algebraically trivial' are, respectively, $\Pi_2^1$ and $\Sigma_2^1$. Shoenfield's Absoluteness Theorem (\cite[Theorem 13.17]{Kana}) affirms that their validity cannot be changed by forcing.
\end{proof}

In the abelian case we can reformulate, using Gelfand's duality, our algebraic notion of triviality in terms of maps between the spectra of the abelian algebras in consideration. In this case, isomorphisms of corona of abelian algebra correspond to homeomorphism between their respective spectra, that is, if $A=C_0(X)$ and $B=C_0(Y)$, an isomorphism between $\mathcal Q(A)$ and $\mathcal Q(B)$ gives a homeomorphism between the \v{C}ech-Stone remainders $Y^*=\beta Y\setminus Y$ and $X^*=\beta X\setminus X$. Viceversa, a homeomorphism between $X^*$ and $Y^*$ defines dually an isomorphism between $\mathcal Q(Y)$ and $\mathcal Q(X)$.

 The following notion of triviality was formulated by Farah and Shelah in \cite{Farah-Shelah.RCQ}. It extends naturally the notion of trivial homeomorphisms of $\beta\NN\setminus\NN$ originally formulated by Rudin (\cite{Rudin}).

\begin{definition}[\cite{Farah-Shelah.RCQ}]\label{defin:trivialhomeo}
Let $X$ and $Y$ be locally compact second countable topological spaces. A homeomorphism $\Lambda^*\colon X^*\to Y^*$ is trivial if there are compact sets $K_X\subseteq X$ and $K_Y\subseteq Y$ and a homeomorphism $\phi\colon X\setminus K_X\to Y\setminus K_Y$ such that $\beta\phi\restriction X^*=\Lambda^*$. 
\end{definition}

Our definition of algebraic triviality is dual to that of Farah and Shelah.
\begin{proposition}\label{prop:dual}
Let $X$ and $Y$ be locally compact second countable topological spaces. Then trivial homeomorphisms between $X^*$ and $Y^*$ are dual to algebraically trivial isomorphisms between $C(X^*)$ and $C(Y^*)$.
\end{proposition}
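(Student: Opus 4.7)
The plan is to apply Gelfand duality throughout, setting $A = C_0(X)$ and $B = C_0(Y)$ so that $\mathcal M(A) = C(\beta X)$, $\mathcal Q(A) = C(X^*)$, and similarly for $B$. The main dictionary to establish first: positive contractions $a \in C_b(X)$ with $1 - a \in C_0(X)$ correspond precisely to compact subsets of $X$. Given such an $a$, the zero set $K(a) := \{x \in X : a(x) = 0\}$ is compact (because $1-a \in C_0(X)$ forces $a \geq \tfrac{1}{2}$ off a compact set), and a direct norm-approximation argument (approximating $f$ vanishing on $K(a)$ by $a^2 f/(a^2 + \tfrac{1}{n})$) identifies
\[
\overline{a C_0(X) a} \cong C_0(X \setminus K(a)), \qquad \overline{a C_b(X) a} \cong C_0(\beta X \setminus K(a)),
\]
where $\beta X \setminus K(a)$ is viewed as an open subset of $\beta X$. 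Conversely, every compact $K \subseteq X$ is a $G_\delta$ by second countability, so Urysohn produces a positive contraction $a$ vanishing exactly on $K$ with $1 - a \in C_0(X)$.

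For the forward direction, take a trivial homeomorphism $\Lambda^* \colon X^* \to Y^*$ witnessed by $\phi \colon X \setminus K_X \to Y \setminus K_Y$. The requirement $\beta\phi \restriction X^* = \Lambda^*$ is equivalent, by uniqueness of the Stone-\v{C}ech extension, to $\phi$ extending to a homeomorphism $\hat\phi \colon \beta X \setminus K_X \to \beta Y \setminus K_Y$ restricting to $\Lambda^*$ on $X^*$. Choose $a, b$ with zero sets $K_X, K_Y$ as in the dictionary, and define $\bar\varphi \colon \overline{a C_b(X) a} \to \overline{b C_b(Y) b}$ by precomposition with $\hat\phi^{-1}$. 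This is a $^*$-isomorphism by Gelfand duality, and its restriction $\varphi$ to $\overline{a C_0(X) a}$ lands in $\overline{b C_0(Y) b}$ because $\hat\phi$ sends $X \setminus K_X$ onto $Y \setminus K_Y$. On norm-bounded sets the strict topology of $\mathcal M(C_0(X))$ is uniform convergence on compact subsets of $X$, and since $\hat\phi$ is a homeomorphism this is preserved by $\varphi$ (using vanishing of the functions on $K_X$ to deal with compacts intersecting the zero set). The induced map $\pi(\bar\varphi)$ then has Gelfand dual $\hat\phi \restriction X^* = \Lambda^*$.

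Conversely, starting from an algebraically trivial $\psi = \pi(\bar\varphi)$ with witnesses $a, b, \varphi$, set $K_X := \{a = 0\}$ and $K_Y := \{b = 0\}$. Gelfand duality converts $\bar\varphi \colon \overline{a C_b(X) a} \to \overline{b C_b(Y) b}$ into a homeomorphism $\hat\phi \colon \beta Y \setminus K_Y \to \beta X \setminus K_X$ of the underlying spectra. The compatibility $\varphi(\overline{a C_0(X) a}) = \overline{b C_0(Y) b}$ forces $\hat\phi$ to restrict to a homeomorphism $Y \setminus K_Y \to X \setminus K_X$, whose inverse is the required underlying space map. Complementing inside $\beta Y \setminus K_Y$ and $\beta X \setminus K_X$ yields a homeomorphism $Y^* \to X^*$ which, by uniqueness of the Stone-\v{C}ech extension of that map, is exactly the Gelfand dual of $\psi$, showing that $\psi$ is dual to a trivial homeomorphism.

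The main subtlety, flagged in Remark~\ref{remark:notallisos}, is that $\overline{a \mathcal M(A) a}$ need not equal $\mathcal M(\overline{a A a})$, so Gelfand duality applied to $\bar\varphi$ produces a homeomorphism of the open subsets $\beta X \setminus K_X \subseteq \beta X$ and $\beta Y \setminus K_Y \subseteq \beta Y$ rather than of the full Stone-\v{C}ech compactifications $\beta(X \setminus K_X)$ and $\beta(Y \setminus K_Y)$. This is exactly the spectrum Definition~\ref{defin:trivialhomeo} demands, since the triviality requirement only constrains $\beta\phi$ on $X^*$ and $Y^*$.
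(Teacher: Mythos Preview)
Your proof is correct and follows essentially the same Gelfand-duality approach as the paper's own argument: identify $\overline{aC_0(X)a}$ and $\overline{aC_b(X)a}$ with function algebras on the cocompact open set $X\setminus K(a)$ (respectively $\beta X\setminus K(a)$), and pass back and forth between homeomorphisms of these opens and $^*$-isomorphisms of the hereditary subalgebras. The paper's proof is considerably terser---it simply names the relevant supports and leaves the verification that the induced corona map is dual to $\Lambda^*$ to the reader---whereas you fill in the dictionary explicitly, verify the strict-continuity clause of Definition~\ref{def:algtriv}, and flag the distinction between $\beta X\setminus K_X$ and $\beta(X\setminus K_X)$ that Remark~\ref{remark:notallisos} hints at. None of this departs from the paper's strategy; it just makes the implicit steps explicit.
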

\begin{proof}
Consider a trivial homeomorphism $\Lambda^*\colon X^*\to Y^*$, and let $\phi\colon X\setminus K_X\to Y\setminus K_Y$ be as in Definition~\ref{defin:trivialhomeo}. Let $a\in C_b(X)$ be a positive element whose support equals $X\setminus K_X$ and such that $1-a\in C_0(X)$, and similarly define $b\in C_0(Y)$ to be an element of support equal to $Y\setminus X_Y$. Then $\phi$ dually induces an isomorphism 
\[
\psi\colon C_0(Y\setminus K_Y)\cong\overline{bC_0(Y)b}\to \overline{aC_0(X)a}=C_0(X\setminus K_X).
\]
We leave to the reader to verify that $\tilde\psi\colon \mathcal Q(C_0(Y))\to \mathcal Q(C_0(X))$ is dual to $\Lambda^*$.

For the opposite direction, let 
\[
\tilde\phi=\pi(\bar\phi)\colon \mathcal Q(C_0(Y))\to\mathcal Q(C_0(X))
\]
 be an isomorphism induced by 
 \[
 \phi\colon \overline{bC_0(Y)b}\to\overline{aC_0(X)a}
 \]
  as in Definition~\ref{def:algtriv}. Since $1-a\in C_0(X)$ and $1-b\in C_0(Y)$, the supports of $a$ and $b$ are open cocompact subsets of $X$ and $Y$ respectively. Notice that that 
  \[
  \overline {aC_0(X)a}\cong C_0(\supp(a))\text{ and } \overline {bC_0(Y)b}\cong C_0(\supp(b)).
  \]
  The dual map $\psi\colon \supp(a)\to\supp(b)$ provides the required homeomorphism ensuring triviality of the dual of $\tilde\phi$.
\end{proof}

In particular, our Theorem~\ref{thmi:alltrivial} below asserts that under $\OCA$ and $\MA_{\aleph_1}$ all isomorphisms of corona of separable abelian algebras are algebraically trivial.

The next corona $\Cstar$-algebra we consider is the Calkin algebra $\mathcal Q(H)$, the quotient of $\mathcal B(H)$, the algebra of bounded linear operator on a complex valued separable Hilbert space, by the ideal of compact operators. Let $\pi\colon\mathcal B(H)\to\mathcal Q(H)$ be the canonical quotient map.

\begin{proposition}
Inner automorphisms of the Calkin algebra correspond to algebraically trivial ones.
\end{proposition}
\begin{proof}
Suppose that $u\in\mathcal Q(H)$ induces the inner automorphism $\phi\in\Aut(\mathcal Q(H))$. Let $s\in\mathcal B(H)$ be a partial isometry between subspaces of $H$ of cofinite dimension such that $\pi(s)=u$. Let $p$ be the projection onto the domain the $s$. Then $s$ defines an inner automorphism of $\mathcal B(p\mathcal H)$ by $x\mapsto s^*xs$. In particular $\phi$ is algebraically trivial.  Viceversa, suppose that $\phi$ is an algebraically trivial automorphism of $\mathcal Q(H)$, and let $a,b\in\mathcal B(H)$ and $\psi\colon\overline{a\mathcal K(H)a}\to\overline{b\mathcal K(H)b}$ be as in Definition~\ref{def:algtriv}. Since $1-a$ is compact and positive, it is diagonalizable. Let $p$ be the projection cofinite range onto the support of $(1-a-\frac{1}{4})_+$. Then $\psi\restriction p\mathcal K(H)p$ is a $^*$-isomorphism onto its range, and therefore it is induced by a contractive isometry $u\colon pH\to\psi(p)H$. This is not a unitary in $\mathcal B(H)$, but $\pi(u)\in\mathcal Q(H)$ is, and so $\phi$ is inner. The above discussion shows
\end{proof}
Therefore Farah's result (\cite{Farah.C}) asserting that under $\OCA$ all automorphism of $\mathcal Q(H)$ are inner gives that all automorphisms of $\mathcal Q(H)$ are algebraically trivial.

One more evidence that this provides the correct notion of triviality is given by reduced products and their isomorphisms. We will investigate this case further in \S\ref{subsec:redprod} (see specifically Proposition~\ref{prop:redprodalgtriv}). We report what is known for the simplest reduced products: the ones of finite-dimensional $\Cstar$-algebras. Below \cite[Corollary~6.9]{MKAV.FA} stated in our terms. (The thesis of Theorem~\ref{thm:redprodFD} was proved to be consistent with $\ZFC$ by Ghasemi in \cite{Ghasemi.FDD}.)
\begin{theorem}\label{thm:redprodFD}
Assume $\OCA$ and $\MA_{\aleph_1}$. Let $A_n$ are finite-dimensional $\Cstar$-algebras. Then all automorphisms of $\prod A_n/\bigoplus A_n$ are algebraically trivial.\qedhere
\end{theorem}

\subsection{Asymptotically additive and skeletal maps}\label{ss:am}
\begin{definition}\label{defin:asadd}
Let $E_n$, $n\in\NN$ be Banach spaces and let $A$ be a separable $\Cstar$-algebra. Let $\{e_n\}$ be an approximate identity of positive contractions\footnote{The fact that we are using contractions here and later (\S\ref{sub:prep},\S\ref{sub:thmalltriv} and \S\ref{subsec:abelianproof}) is purely to reduce notation. Any bounded approximate identity with the lister properties would do, even though in this case the notation would exponentially increase.} for $A$ with $e_{n+1}e_n=e_n$ for all $n$. A map $\alpha\colon \prod E_n\to \mathcal M(A)$ is asymptotically additive if there are sequences $j_n<k_n$ and 
\[
\alpha_n\colon E_n\to (e_{k_n}-e_{j_n})A(e_{k_n}-e_{j_n})
\]
such that $\alpha_n(0)=0$, $j_n\to\infty$ as $n\to\infty$, and for all $a=(a_n)\in\prod E_n$ the sequence of partial sums $(\sum_{n\leq m}\alpha_n(a_n))_m$ converges in strict topology to $\alpha(a)$, that is
\[
\alpha(a)=\sum\alpha_n(a_n).
\]
\end{definition}
When writing $\alpha=\sum\alpha_n\colon\prod E_n\to\mathcal M(A)$ we always mean that $\alpha$ is an asymptotically additive map with $\alpha_n\colon E_n\to A$. 
We are interested in particularly well-behaved asymptotically additive maps: the ones determined by their values on a product of finite sets.
\begin{definition}
Let $\{E_n\}$ be finite-dimensional Banach spaces. We say that $\bar Y=(Y_{n},<_{n})$ is a dense system for $\{E_n\}$, if 
\begin{itemize}
\item $Y_n$ is a finite $2^{-n}$-dense subset of the unit ball of $E_n$, and $0\in Y_n$ and
\item $Y_n$ is linearly ordered by $<_n$, and $0$ is the $<_n$-minimal element.
\end{itemize}
If $E_n$ is a sequence of finite-dimensional Banach spaces with dense system $(Y_n,<_n)$, the map $\rho_{E_n,Y_n}\colon E_n\to E_n$ is the Borel map defined as
\begin{itemize}
\item $\rho_{E_n,Y_n}(x)=x$ if $x\in Y_n$,
\item if $x\notin Y_n$ is a contraction, $\rho_{E_n,Y_n}(x)$ is the $<_n$-minimal element of $Y_n$ which is $2^{-n}$-close to $x$,
\item if $\norm{x}>1$, then $\rho_{E_n,Y_n}(x)=\norm{x}\rho_{E_n,Y_n}(x/\norm{x})$.
\end{itemize}
Let $\rho_{\bar E,\bar Y}=\prod \rho_{E_n,Y_n}\colon\prod E_n\to\prod E_n$.
\end{definition}
\begin{definition}\label{defin:skeletal1}
Let $A$ be a separable $\Cstar$-algebra, $\{e_n\}$ be an approximate identity of positive contractions for $A$ with $e_{n+1}e_n=e_n$ for all $n$. Let $\{E_n\}$ be finite-dimensional Banach spaces and let $\bar Y=(Y_{n},<_{n})$ be a dense system for $\{E_n\}$. A map $\alpha_n\colon E_n\to A$ is skeletal w.r.t. $Y_n$ if 
 \[
 \alpha_n(x)=\alpha_n\circ\rho_{E_n,Y_n}(x), \,\, \forall x\in E_n,\, \norm{x}\leq 1.
 \] 
  If $\alpha=\sum\alpha_n\colon\prod E_n\to\mathcal M(A)$ is asymptotically additive and each $\alpha_n$ is skeletal w.r.t. $Y_n$ we say that $\alpha$ is skeletal w.r.t. $\bar Y$.
\end{definition}

We work with skeletal maps with different domains. This justifies the following.

\begin{definition}\label{defin:skeletal2}
Let $\{E_{n,m}\}$ be finite-dimensional Banach spaces. We say that $\bar Y=(Y_{n,m},<_{n,m})$ is a dense system for $\{E_{n,m}\}$ if
\begin{itemize}
\item $Y_{n,m}$ is a finite $2^{-n}$-dense subset of the unit ball of $E_{n,m}$, with $0\in Y_{n,m}$;
\item $Y_{n,m}$ is linearly ordered by $<_{n,m}$, and $0$ is the $<_{n,m}$-minimal element.
\end{itemize}
A map $\gamma\colon\prod_{n,m}E_{n,m}\to \mathcal M(A)$ is skeletal w.r.t. $\bar Y$ if there is $g\in\NN^\NN$ such that
\begin{itemize}
\item $\gamma$ is completely determined by its values on $\prod_n E_{n,g(n)}$, that is $\gamma(y)=\gamma(x)$ whenever $x_{n,g(n)}=y_{n,g(n)}$ for all $n$.
\item The map $\alpha\colon \prod E_{n,g(n)}\to \mathcal M(A)$ defined as 
\[
\alpha(x)=\gamma(y)\,\,\,\text{where } y_{n,g(n)}=x_{n}
\] 
is skeletal w.r.t. $(Y_{n,g(n)})$.
\end{itemize}
\end{definition}

\begin{proposition}\label{prop:skeletalpolish}
Let $\{E_{n,m}\}$ be finite dimensional Banach spaces and let $\bar Y=(Y_{n,m},<_{n,m})$ be a dense system for $\{E_{n,m}\}$. Let $A$ be a separable $\Cstar$-algebra. Then there is a compact metric (and therefore Polish) topology on the set of all maps $\alpha\colon\prod_{n,m} E_{n,m}\to\mathcal M(A)$ that are skeletal w.r.t. $\bar Y$. 
\end{proposition}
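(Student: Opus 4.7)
The strategy is to realize the set $\mathcal{S}$ of all maps $\alpha\colon\prod_{n,m}E_{n,m}\to\mathcal M(A)$ that are skeletal with respect to $\bar Y$ as a closed subspace of a countable product of compact metric spaces; Tychonoff's theorem then yields the desired compact metric topology.

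\textbf{Step 1 (coding).} Unpacking Definitions~\ref{defin:skeletal1} and~\ref{defin:skeletal2}, every $\alpha\in\mathcal S$ is uniquely specified by a countable package of data: the function $g\in\NN^\NN$ witnessing that $\alpha$ depends only on the slice $\prod_n E_{n,g(n)}$; sequences $(j_n),(k_n)$ of natural numbers with $j_n<k_n$ and $j_n\to\infty$; and, for each $n$, the finite family of values $\alpha_n(y)\in (e_{k_n}-e_{j_n})A(e_{k_n}-e_{j_n})$ indexed by $y\in Y_{n,g(n)}$, together with the extension to non-contractions dictated by the rule $\alpha_n(x)=\alpha_n(\rho_{E_{n,g(n)},Y_{n,g(n)}}(x))$. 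The key observation is that, since $Y_{n,g(n)}$ is finite and $\rho_{E,Y}$ is determined by $Y$, the skeletal condition reduces the data at level $n$ to a \emph{finite} tuple in a specified corner of $A$.

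\textbf{Step 2 (compact ambient).} I construct a compact metric space $X$ carrying this data. Each $\NN$-valued coordinate ($g(n)$, $j_n$, $k_n$) is replaced by its one-point compactification $\NN\cup\{\infty\}$, a compact metric space, while each coordinate carrying a value $\alpha_n(y)$ is placed in the closed unit ball of $\mathcal M(A)$ equipped with the strict topology (Polish, since $A$ is separable); we then compactify each such ball by embedding it into the weak-$*$ compact unit ball of $A^{**}$ (which inherits a compatible metric from separability of $A$). Taking the countable product of all these factors and invoking Tychonoff gives a compact metric space $X$. The coding map $c\colon\mathcal S\to X$ is injective, and the image of $c$ is closed in $X$: each of the constraints ``$j_n<k_n$'', ``$j_n\to\infty$'', ``$\alpha_n(0)=0$'', ``$\alpha_n(y)\in (e_{k_n}-e_{j_n})A(e_{k_n}-e_{j_n})$'', and the skeletal condition is a closed (or countable intersection of closed) condition on the code.

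\textbf{Main obstacle.} The non-trivial point is obtaining genuine compactness rather than mere Polishness: both the discrete index set $\NN$ and the ball of $A$ fail to be compact on their own. The one-point compactification handles the discrete coordinates cleanly, but the values $\alpha_n(y)$ need a bona fide compactification; this is done by treating them inside the weak-$*$ compact unit ball of $A^{**}$. One must then verify that the boundary points introduced by compactification either lie in $\mathcal S$ (corresponding to the degenerate skeletal maps in which the relevant $\alpha_n$ vanishes, e.g.\ when $j_n$ or $k_n$ has been sent to $\infty$) or are ruled out by the closed conditions listed above; the accounting is routine but is the technical heart of the argument.
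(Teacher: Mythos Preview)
Your coding idea in Step~1 is exactly the paper's approach: a skeletal map is determined by $g\in\NN^\NN$ together with the finite tuple of values $(\alpha_n(y))_{y\in Y_{n,g(n)}}$ in $A$, so the whole set embeds into a countable product of separable metric spaces and is therefore Polish. That is all the paper actually proves, and all that is used downstream (the applications of $\OCA_\infty$ only need a separable metric topology in which the colourings $L_0^n$ are open).

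The gap is in Step~2 and your ``Main obstacle'' paragraph: you are trying to upgrade Polishness to genuine compactness, and this cannot succeed. The set of skeletal maps is \emph{not} compact in any topology compatible with the coding. Two obstructions: first, $g$ ranges freely over $\NN^\NN$, and a point with $g(n)=\infty$ does not code any skeletal map (there is no space $E_{n,\infty}$), so such boundary points are neither in $\mathcal S$ nor excluded by a closed condition---they are excluded by an open one, which means the image of $\mathcal S$ is not closed in your compactification. Second, there is no a~priori norm bound on $\alpha_n(y)$, so the value coordinates escape any compact ball. Your proposed dichotomy ``boundary points either lie in $\mathcal S$ or are ruled out by closed conditions'' therefore fails on both fronts.

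The word ``compact'' in the statement is an overstatement in the paper; read it as ``separable metric, hence Polish''. Drop the one-point and weak-$*$ compactifications entirely and simply embed $\mathcal S$ into $\NN^\NN\times\prod_{n,m}A^{Y_{n,m}}$ with the product topology.
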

\begin{proof}
Let $g\in\NN^\NN$, and let $\alpha=\sum\alpha_n\colon \prod E_{n,g(n)}\to \mathcal M(A)$ with $\alpha_n\colon E_{n,g(n)}\to A$ be witnessing that $\alpha$ is skeletal w.r.t. $\bar Y$. Note that $\alpha_n$ is only determined by its values on the finite set $Y_{n,g(n)}$. Such values lie in $A$, so we can identify $\alpha$ as an element of the space $\prod (Y_{n,m}\times A)$ with the product topology, which is Polish.
\end{proof}
The reason we need to work with skeletal maps instead of just asymptotically additive ones resides in that the set of all asymptotically additive maps cannot be given a suitable separable topology. In particular cases, when passing to quotients, is enough to consider skeletal maps. The proof of the following is obvious.
\begin{proposition}\label{prop:skeletalareenough}
Let $\{E_{n}\}$ be finite dimensional Banach spaces and let $\bar Y=(Y_{n},<_{n})$ be a dense system for $\{E_{n}\}$. Suppose that $\alpha=\sum\alpha_n\colon\prod E_n\to\mathcal M(A)$ is an asymptotically additive map. Suppose that $\pi_A(\alpha(x))=\pi_A(\alpha(y))$ whenever $\pi_E(x)=\pi_E(y)$, where $\pi_E\colon\prod E_n\to\prod E_n/\bigoplus E_n$ is the quotient map. Define $\beta_n=\alpha_n\circ\rho_{E_n,Y_n}$ and $\beta=\sum\beta_n$. Then 
\[
\pi_A(\alpha(x))=\pi_A(\beta(x)),\,\, x\in(\prod E_n), \,\norm{x}\leq 1,
\]
 and $\beta$ is skeletal w.r.t. $\bar Y$. 
\end{proposition}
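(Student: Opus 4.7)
The plan is to verify both assertions directly from the definitions by exploiting the factorization $\beta(x)=\alpha(\rho_{\bar E,\bar Y}(x))$. First I would check that $\beta=\sum\beta_n$ is a well-defined asymptotically additive map. Since $\rho_{E_n,Y_n}$ sends the unit ball of $E_n$ into the finite set $Y_n\subseteq E_n$ and satisfies $\norm{\rho_{E_n,Y_n}(x)}\le\norm{x}$ in general, the sequence $(\rho_{E_n,Y_n}(x_n))_n$ lies in $\prod E_n$ whenever $(x_n)_n$ does. The partial sums $\sum_{n\le m}\beta_n(x_n)=\sum_{n\le m}\alpha_n(\rho_{E_n,Y_n}(x_n))$ are then exactly the partial sums defining $\alpha(\rho_{\bar E,\bar Y}(x))$, which converge strictly because $\alpha$ is asymptotically additive; the auxiliary requirements $\beta_n(0)=\alpha_n(0)=0$ (using $\rho_{E_n,Y_n}(0)=0$, since $0\in Y_n$ is fixed) and $\beta_n(E_n)\subseteq(e_{k_n}-e_{j_n})A(e_{k_n}-e_{j_n})$ are inherited from the corresponding properties of $\alpha_n$. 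That each $\beta_n$ is skeletal w.r.t. $Y_n$ is then immediate: $\rho_{E_n,Y_n}$ is idempotent on the unit ball, since it fixes every element of $Y_n$, so $\beta_n\circ\rho_{E_n,Y_n}=\alpha_n\circ\rho_{E_n,Y_n}\circ\rho_{E_n,Y_n}=\alpha_n\circ\rho_{E_n,Y_n}=\beta_n$.

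Next, for the quotient identity $\pi_A(\alpha(x))=\pi_A(\beta(x))$ on contractions, the key observation is that $\pi_E(x)=\pi_E(\rho_{\bar E,\bar Y}(x))$. Indeed, if $\norm{x}\le 1$ then $\norm{x_n}\le 1$ for every $n$, so by the definition of a dense system $\norm{x_n-\rho_{E_n,Y_n}(x_n)}\le 2^{-n}\to 0$, whence $x-\rho_{\bar E,\bar Y}(x)\in\bigoplus E_n$. The hypothesis on $\alpha$ then yields $\pi_A(\alpha(x))=\pi_A(\alpha(\rho_{\bar E,\bar Y}(x)))=\pi_A(\beta(x))$.

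There is no real obstacle to hide here: the whole argument collapses once one notices the factorization $\beta=\alpha\circ\rho_{\bar E,\bar Y}$, together with the idempotence of $\rho_{E_n,Y_n}$ on the unit ball and its $2^{-n}$-proximity to the identity there. This is consistent with the paper's assertion that the proof is obvious.
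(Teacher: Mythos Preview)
Your argument is correct and matches the paper's own treatment, which simply declares the proof obvious: the factorization $\beta=\alpha\circ\rho_{\bar E,\bar Y}$, the idempotence of $\rho_{E_n,Y_n}$ on the unit ball, and the $2^{-n}$-proximity estimate are exactly the ingredients needed. One tiny imprecision: the inequality $\norm{\rho_{E_n,Y_n}(x)}\le\norm{x}$ need not hold for contractions of small norm (the nearest point in $Y_n$ might have larger norm), but you only need $\norm{\rho_{E_n,Y_n}(x)}\le 1$ on the unit ball and $\norm{\rho_{E_n,Y_n}(x)}\le\norm{x}$ for $\norm{x}>1$, both of which do hold and suffice for $\rho_{\bar E,\bar Y}(x)\in\prod E_n$.
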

If $E_n$ are Banach spaces and $S\subseteq\NN$, we denote by $E[S]\subseteq\prod E_n$ the subspace of elements with support in $S$ (i.e., $x\in E[S]$ only when $n\notin S\Rightarrow x_n=0$). With this notation, $\prod E_n=E[\NN]$. If $x\in E[\NN]$, let $x\restriction S\in E[S]$ be defined as
\[
x\restriction S=\begin{cases}
x_n&n\in S\\
0&n\notin S.
\end{cases}
\]
The map $\pi_E\colon\prod E_n\to\prod E_n/\bigoplus E_n$ denotes the quotient map.
\begin{definition}\label{defin:preserving}
A function $\phi\colon\prod E_n/\bigoplus E_n\to\mathcal Q(A)$  preserves the coordinate algebra if there are positive contractions $p_S\in\mathcal M(A)$, for $S\subseteq \NN$, such that
    \begin{itemize}
    \item for all $S\subseteq\NN$ and $x\in E[\NN]$, 
    \begin{eqnarray*}
    \pi_A(p_S)\phi(\pi_E(x))&=&\pi_A(p_S)\phi(\pi_E(x\restriction S))=\phi(\pi_E(x\restriction S))\\&=&\phi(\pi_E(x\restriction S))\pi_A(p_S)=\phi(\pi_E(x))\pi_A(p_S),
    \end{eqnarray*}
    \item if $S$ is finite, $\pi_A(p_S)=0$,
    \item for disjoint $S,T\subseteq \NN$, $\pi_A(p_{S\cup T})=\pi_A(p_T)+\pi_A(p_S)$ and
    \item if $S\cap T$ is finite then $\pi_A(p_S)\pi_A(p_T)=0$.
    \end{itemize}
\end{definition}
Let $\phi\colon \prod E_n/\bigoplus E_n\to\mathcal M(A)/A$ be a map and let $Z\subseteq E[\NN]$. Suppose that $\Phi\colon E[\NN]\to\mathcal M(A)$ is such that the following diagram
\begin{center}
\begin{tikzpicture}
 \matrix[row sep=1cm,column sep=2cm] {
&\node  (A1) {$\prod E_n$};

& \node (A2) {$\mathcal M(A)$};
\\
&\node  (B1) {$\prod E_n/\bigoplus E_n$};
& \node (B2) {$\mathcal Q(A)$};
\\
}; 
\draw (A1) edge[->] node [below] {$\Phi$} (A2) ;
\draw (A1) edge[->]  node[left]{$\pi_E$} (B1) ;
\draw (A2) edge[->]   node[right] {$\pi_A$} (B2) ;
\draw (B1) edge[->] node [above] {$\phi$} (B2) ;
\end{tikzpicture}
\end{center}
commutes on $Z$.
We say that $\Phi$ is a lift of $\phi$ on $Z$. If $\SI\subseteq\mathcal P(\NN)$ is an ideal and for all $S\in\SI$, $\Phi$ is a lift of $\phi$ on $E[S]$, we say that $\Phi$ is a lift of $\phi$ on $\SI$. 

The following is a noncommutative version of the ``OCA lifting theorem" (\cite[Theorem~3.3.5]{Farah.AQ}). It also appeared in a weaker version (involving the existence of an approximate identity of projections) in \cite[\S 5]{V.PhDThesis}. It was proved as \cite[Theorem 4.5]{MKAV.FA}.
\begin{theorem}\label{thm:lifting}
Assume $\OCA$ and $\MA_{\aleph_1}$, let $\{E_n\}$ be finite dimensional Banach spaces and let $\bar Y$ be a dense system for $\{E_n\}$. Let $A$ be a separable $\Cstar$-algebra. If 
\[
\phi\colon\prod E_n/\bigoplus E_n\to \mathcal Q(A)
\] 
is a linear bounded map that preserves the coordinate algebra, then there are a nonmeager dense ideal $\SI\subseteq\mathcal P(\NN)$ and a skeletal (w.r.t. $\bar Y$) map
\[
\alpha\colon\prod E_n\to\mathcal M(A)
\]
such that $\alpha$ is a lift of $\phi$ on $\SI$.
\end{theorem}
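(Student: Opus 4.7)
The plan is to mirror the proof of the abelian $\OCA$ lifting theorem (\cite[Theorem~3.3.5]{Farah.AQ}), adapting each step to the noncommutative setting. First, I would fix an approximate identity $\{e_n\}$ of positive contractions for $A$ with $e_{n+1}e_n=e_n$. The preservation of the coordinate algebra provides representatives $p_S\in\mathcal M(A)$ of $\pi_A(p_S)$ which, after a standard cutdown using $\{e_n\}$, yield mutually almost-orthogonal corners of the form $(e_{k_n}-e_{j_n})A(e_{k_n}-e_{j_n})$ --- precisely the targets of the sought-after $\alpha_n$. For each $g\in\NN^\NN$ and each $y\in Y_{n,g(n)}$ I would then fix a contractive preimage $a^g_{n,y}$ in such a corner, so that partial sums of the form $\sum_{n\leq m}a^g_{n,y_n}$ strictly approximate $\phi(\pi_E(y))$ on sufficiently many $y$; Proposition~\ref{prop:skeletalareenough} lets one restrict attention to data drawn from the $Y_{n,g(n)}$. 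These selections live in the Polish space $\mathcal F$ of skeletal maps supplied by Proposition~\ref{prop:skeletalpolish}.

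Second, for each rational $\delta>0$ and each $n\in\NN$ I would set
\[
K_0^{\delta,n}=\bigl\{\{\alpha,\beta\}\in[\mathcal F]^2 : \exists m\geq n,\ \norm{\alpha_m-\beta_m}>\delta\bigr\},
\]
which is open in $[\mathcal F]^2$. Arranging a diagonal subsequence $(K_0^{\delta_n,n})_n$ into a decreasing chain and feeding it to $\OCA_\infty$ produces two alternatives. The second alternative would give an uncountable continuous injection $z\mapsto\alpha^z$ of a perfect subset of $2^\NN$ into $\mathcal F$ whose images have coordinates pairwise diverging in norm. Ruling this out is the main obstacle I anticipate: combining linearity and strict continuity of $\phi$ on bounded sets with an almost-disjoint-family argument indexed by branches of $2^{<\NN}$, I would extract from the $\alpha^z$'s a single bounded $x\in\prod E_n$ whose image $\phi(\pi_E(x))$ is forced to lie in uncountably many pairwise strictly separated cosets of $A$, a contradiction. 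Here the noncommutativity costs real effort, because the ``Boolean'' orthogonality automatic in the abelian case has to be reconstructed from the coordinate-algebra data before the disagreement between $\alpha^z$'s can be isolated to a single coordinate.

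In the surviving first alternative I obtain a countable cover $\mathcal F=\bigcup_k\mathcal F_k$ in which any two elements of $\mathcal F_k$ are asymptotically norm-close past coordinate $k$. At this point I would invoke $\MA_{\aleph_1}$ to glue these classes into a global lift: let $\PP$ be the ccc poset whose conditions are finite partial assignments $(F,s,k)$ specifying, for each $n\in F$, a class index $k(n)$ and representative data $s(n)$, with extensions forced to respect the asymptotic-agreement promise of each $\mathcal F_{k(n)}$. For each vector $x$ drawn from a fixed $\aleph_1$-sized strict-topology-dense family in $\prod E_n$, the conditions forcing $\pi_A(\alpha(x))=\phi(\pi_E(x))$ on a cofinite piece of the support of $x$ form a dense subset of $\PP$. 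A filter meeting these $\aleph_1$ dense sets, provided by $\MA_{\aleph_1}$, yields both the skeletal map $\alpha$ and an ideal $\SI$ of supports on which the lift succeeds. The ideal $\SI$ arises as an intersection of $\aleph_1$ nonmeager ideals containing $\Fin$ and is therefore itself dense and nonmeager, using that $\MA_{\aleph_1}$ implies $\mathfrak g=\mathfrak c$ as recalled in \S\ref{subsec:ST}.
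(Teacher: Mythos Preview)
The paper does not contain a proof of Theorem~\ref{thm:lifting}; it is quoted as \cite[Theorem~4.5]{MKAV.FA} (see the sentence immediately preceding the statement), so there is no in-paper argument to compare your sketch against.

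Independently of that, your outline has a structural gap. Your Polish space $\mathcal F$ and your colourings $K_0^{\delta,n}$ are formulated purely in terms of pairwise coordinatewise disagreement of skeletal maps; neither the space nor the colouring records any relationship between a point of $\mathcal F$ and $\phi$. In the abelian template \cite[Theorem~3.3.5]{Farah.AQ} and in the proof in \cite{MKAV.FA}, the points of the coloured space carry a domain parameter together with the datum that the associated map \emph{already} lifts $\phi$ on that domain, and the $K_0$-colour asserts disagreement on a common element of the two domains; this is what makes the second $\OCA_\infty$ alternative contradict a single value of $\phi$ and what makes the first alternative feed into a ccc poset. Your first paragraph attempts to build the link by ``fixing a contractive preimage $a^g_{n,y}$ \ldots\ so that partial sums strictly approximate $\phi(\pi_E(y))$'', but the existence of such coordinatewise choices assembling into a lift on any infinite support is exactly the conclusion you are trying to establish, so this step is circular. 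Likewise, the ccc of your poset $\PP$ is asserted rather than derived; in the standard argument ccc is a consequence of the $K_1$-homogeneity coming out of the first $\OCA_\infty$ alternative, and your setup has not produced that homogeneity in a form tied to $\phi$.
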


\subsection{Approximate maps}\label{subsec:approxmaps}
Let $A$ and $B$ be Banach spaces, let $\epsilon>0$, and let $\phi\colon A\to B$ be a contractive map. We say that $\phi$ is $\epsilon$-linear if 
\[
\sup_{a,b\in A_1}\norm{\phi(a+b)-\phi(a)-\phi(b)}<\epsilon.
\]
$\phi$ is $\epsilon$-injective if 
\[
\sup_{a\in A_1}|\norm{a}-\norm{\phi(a)}|<\epsilon,
\]
and $\epsilon$-surjective if for every contraction $b\in B$ there is $a\in A$ with $\norm{\phi(a)-b}<\epsilon$. 

If $A$ and $B$ are $\Cstar$-algebras, we define similarly the notions of $\epsilon$-multiplicative and $\epsilon$-$^*$-preserving maps. An $\epsilon$-linear $\epsilon$-$^*$-preserving and $\epsilon$-multiplicative map $\phi\colon A\to B$ is an $\epsilon$-$^*$-homomorphism. An $\epsilon$-$^*$-homomorphism that is $\epsilon$-injective is an $\epsilon$-$^*$-monomorphism, and if in addition $\epsilon$-surjective, an $\epsilon$-$^*$-isomorphism.

The study of approximate $^*$-homomorphisms is motivated largely by perturbation theory and the study of isomorphisms of reduced products, see \cite[\S2]{MKAV.FA}, \cite{MKAV.UC} and \S\ref{section:algtriv}.
Here we record three facts: the first one is Proposition~5.1.5 in \cite{V.PhDThesis}.
\begin{proposition}\label{prop:liftsandalmost}
Suppose that $E_n$ are metric spaces, $A$ is a $\Cstar$-algebra and $\phi\colon\prod E_n/\bigoplus E_n\to\mathcal Q(A)$ is a function such that there is an asymptotically additive $\Phi=\sum\phi_n\colon\prod E_n\to\mathcal M(A)$ such that
\begin{itemize}
\item  $\phi_n\phi_m=0$ if $n\neq m$ (i.e., for every $x\in E_n$ and $y\in E_m$ we have that $\phi_n(x)\phi_m(y)=0$, and
\item for every $x\in \prod E_n$ there is a dense ideal $\SI=\SI_x$ such that $\pi_A(\Phi(x\restriction S))=\phi(\pi_E(x\restriction S))$ for all $S\in \SI$.
\end{itemize}
Then
\begin{itemize}
\item if $\phi$ is linear (injective) for all $\epsilon>0$ there is $n_0$ such that $\phi_n$ is $\epsilon$-linear (injective) whenever $n\geq n_0$;
\item if in addition every $E_n$ is a $^*$-algebra and $\phi$ is $^*$-preserving  (multiplicative) then for all $\epsilon>0$ there is $n_0$ such that $\phi_n$ is $\epsilon$-$^*$-preserving (multiplicative) whenever $n\geq n_0$;
\end{itemize}
\end{proposition}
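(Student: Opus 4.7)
The plan is to prove all four assertions by the same contradiction template: negate the conclusion to extract, on an infinite set of coordinates, uniform witnesses to failure of the relevant $\epsilon$-property; repackage these into elements of $\prod E_n$; use the lift hypothesis together with the assumed property of $\phi$ on $\prod E_n/\bigoplus E_n$ to force the corresponding coronal defect to vanish; and then derive a contradiction from the orthogonality assumption $\phi_n\phi_m=0$ together with the shell structure, which produce a lower bound on the coronal norm of the defect.

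I will sketch the linearity case in detail; the others are variants. Suppose that, for some $\epsilon>0$, infinitely many $n$ carry contractions $a_n,b_n\in E_n$ with $\norm{\phi_n(a_n+b_n)-\phi_n(a_n)-\phi_n(b_n)}\geq \epsilon$. Extend these to $a,b\in\prod E_n$ by zeros on the remaining coordinates, and let $\SI_a,\SI_b,\SI_{a+b}$ be the dense ideals supplied by the hypothesis. The intersection of finitely many dense ideals is again dense, so choose an infinite $T\subseteq\NN$ inside $\SI_a\cap\SI_b\cap\SI_{a+b}$ on which the $\epsilon$-defect persists, and then thin $T$ further to arrange $k_n<j_{n'}$ for consecutive $n<n'$ in $T$. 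Under this sparsification and using $e_ie_j=e_ie_j=e_i$ for $i\leq j$, one checks that $(e_{k_n}-e_{j_n})(e_{k_m}-e_{j_m})=0$ for $n\neq m$ in $T$, so the shells hosting the $\phi_n$ are pairwise orthogonal. Set $c_n=\phi_n(a_n+b_n)-\phi_n(a_n)-\phi_n(b_n)$ and $C=\sum_{n\in T}c_n$; asymptotic additivity and the identity $(a+b)\rs T=a\rs T+b\rs T$ give
\[
C=\Phi((a+b)\rs T)-\Phi(a\rs T)-\Phi(b\rs T).
\]
Applying $\pi_A$, the lift property on $T$ together with linearity of $\phi$ yield $\pi_A(C)=0$. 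On the other hand, the shell orthogonality forces $\norm{\pi_A(C)}\geq\limsup_{n\in T}\norm{c_n}\geq\epsilon$, the desired contradiction.

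For $^*$-preservation and multiplicativity the same scheme applies, using the identities
\[
\Phi(a\rs T)^*=\sum_{n\in T}\phi_n(a_n)^*,\qquad \Phi(a\rs T)\Phi(b\rs T)=\sum_{n\in T}\phi_n(a_n)\phi_n(b_n),
\]
valid because $\phi_n\phi_m=0$ for $n\neq m$, so that $\pi_A$ of the corresponding defect vanishes by the lift and the property of $\phi$. For $\epsilon$-injectivity one first thins $T$ so that both $\norm{a_n}$ and $\norm{\phi_n(a_n)}$ converge, with limits differing by at least $\epsilon$, and then uses $\norm{\pi_E(a\rs T)}=\lim_{n\in T}\norm{a_n}$ and $\norm{\pi_A(\Phi(a\rs T))}=\lim_{n\in T}\norm{\phi_n(a_n)}$ against isometricity of $\phi$. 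The main technical point throughout is the coronal lower bound $\norm{\pi_A(\sum_{n\in T}c_n)}\geq\limsup_n\norm{c_n}$, which is what upgrades a punctual failure into an eventual $\epsilon$-property. It is exactly here that one needs the sparsification step: once the shells are mutually orthogonal with $j_n\to\infty$, cutting by $1-e_{k_{n_F}}$ on both sides for any finite initial segment $F\subseteq T$ leaves $\sum_{n\in T\setminus F}c_n$ unchanged and sends any fixed element of $A$ to norm $o(1)$, so $\norm{\pi_A(C)}\geq\sup_{n\notin F}\norm{c_n}$ for every finite $F$.
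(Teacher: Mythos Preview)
Your argument is correct and follows the same contradiction template as the paper's proof, which also treats only the linearity case in detail: negate the conclusion, extract an infinite set of coordinates carrying a uniform $\epsilon$-defect, pass to an infinite subset lying in the intersection of the relevant dense ideals, and compare the vanishing coronal defect (from the property of $\phi$) with the $\limsup$ lower bound coming from orthogonality. You supply more justification than the paper does for the key inequality $\norm{\pi_A(\sum_{n\in T}c_n)}\geq\limsup_{n\in T}\norm{c_n}$, via the sparsification of $T$ to force the shells $(e_{k_n}-e_{j_n})$ to be pairwise orthogonal and the cutting-by-$1-e_{k_{n_F}}$ argument; the paper simply asserts the corresponding equality.
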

\begin{proof}
We prove only the first statement and leave the rest to the reader. Suppose that there is $\epsilon>0$ and an infinite sequence $n_k$ such that no $\phi_{n_k}$ is an $\epsilon$-linear map. Then there are contractions $x_k,y_k\in E_{n_k}$ such that 
\[
\norm{\phi_{n_k}(x_{k}+y_k)-\phi_{n_k}(x_{k})-\phi_{n_k}(y_{k})}>\epsilon.
\]
Let $x=(x_k)$ and $y=(y_k)$. By density of $\SI_x\cap\SI_y\cap\SI_{x+y}$ we can assume that $\{n_k\}\in \SI_x\cap\SI_y\cap \SI_{x+y}$. Then 
\begin{eqnarray*}
0&=&\norm{\Phi(\pi_E(x+y))-\Phi(\pi_E(x))-\Phi(\pi_E(y))}\\&=&\limsup_k\norm{\phi_{n_k}(x_{k}+y_k)-\phi_{n_k}(x_{k})-\phi_{n_k}(y_{k})}\geq\epsilon,
\end{eqnarray*}
a contradiction.
\end{proof}
In certain cases approximate maps can be perturbed to actual morphisms uniformly over the class of objects one considers. This phenomenon is known as Ulam stability (see~\S\ref{section:algtriv}). For now, we record a result of S\v{e}rml which will be needed in the proof of Theorem~\ref{thmi:alltrivial}:
\begin{theorem}\label{thm:semrl} \cite[Theorem 5.1]{Semrl.USAbel}
There is $\epsilon_0>0$ such that whenever $A$ and $B$ are abelian $\Cstar$-algebras, $\epsilon<\epsilon_0$ and $\phi\colon A \to B$ is an $\epsilon$-$^*$-homomorphism then there is a $^*$-homomorphism $\psi\colon A\to B$ with $\norm{\psi(a)-\phi(a)}<10\sqrt{\epsilon}\norm{a}$ for all $a\in A$. 
\end{theorem}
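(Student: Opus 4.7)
The plan is to invoke Gelfand duality and reduce to the Hyers--Ulam stability of approximately multiplicative linear functionals on a commutative $\Cstar$-algebra. Write $A = C_0(X)$ and $B = C_0(Y)$ with $X,Y$ locally compact Hausdorff. For each $y \in Y$, composition with the evaluation character $\chi_y$ produces a contractive linear functional $f_y := \chi_y \circ \phi$ on $A$ which is $\e$-approximately multiplicative and $\e$-$^*$-preserving, and the assignment $y \mapsto f_y$ is weak-$^*$ continuous.

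The core lemma to establish is the following dichotomy: there exist $\e_0 > 0$ and a constant $C \le 10$ such that for every contractive linear functional $f$ on a commutative $\Cstar$-algebra satisfying $|f(ab) - f(a)f(b)| \le \e\norm{a}\norm{b}$ (with $\e < \e_0$), either $\norm{f} \le C\sqrt{\e}$, or there is a \emph{unique} character $\chi$ of that algebra with $\norm{f - \chi} \le C\sqrt{\e}$. The proof of the lemma is a quadratic estimate: approximate multiplicativity applied to $a = a^*$ with $f(a)$ close to $\norm{f}$ forces $|\norm{f}^2 - \norm{f}| = O(\e)$, so $\norm{f}$ lies either in $[0, C\sqrt\e]$ or in $[1 - C\sqrt\e, 1]$; in the latter case one locates where $f$ concentrates on the spectrum by using the smallness of $f(a^2) - f(a)^2$ on a dense family of self-adjoint contractions to show that $f$ is peaked at a single Gelfand point. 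Having the lemma, define a partial map $g \colon Y \to X$ by letting $g(y)$ be the unique $x \in X$ with $\chi_x$ within $C\sqrt\e$ of $f_y$, whenever this exists. Weak-$^*$ continuity of $y \mapsto f_y$ combined with uniqueness of $g(y)$ gives continuity of $g$ on its (open) domain, and the fact that $f_y(a) \to 0$ as $y \to \infty$ in $Y$ (since $\phi(a) \in C_0(Y)$) yields the properness needed for $g$ to induce a morphism of $\Cstar$-algebras.

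Finally, define $\psi \colon C_0(X) \to C_0(Y)$ by $\psi(a)(y) = a(g(y))$ where $g(y)$ is defined, and $\psi(a)(y) = 0$ otherwise; this is a $^*$-homomorphism by construction. The pointwise bound is immediate from the dichotomy: if $g(y)$ is defined then $|\phi(a)(y) - \psi(a)(y)| = |f_y(a) - \chi_{g(y)}(a)| \le C\sqrt{\e}\norm{a}$; otherwise $|\phi(a)(y) - 0| \le \norm{f_y}\cdot\norm{a} \le C\sqrt{\e}\norm{a}$. The main obstacle is the dichotomy lemma: obtaining the sharp $\sqrt{\e}$ rate rather than a weaker power such as $\e^{1/3}$, together with an explicit constant in the single digits, requires exploiting the quadratic defect $f(a^2) - f(a)^2$ and a careful bootstrap from self-adjoint contractions to arbitrary ones, for which the commutativity of $A$ is essential.
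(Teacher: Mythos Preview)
The paper does not give a proof of this statement; it is recorded as a citation to \v{S}emrl's result and used as a black box. Your outline via Gelfand duality and a pointwise dichotomy for approximately multiplicative scalar maps is the standard route to this kind of Ulam stability and is essentially correct.

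One point to fix: by the paper's definition an $\epsilon$-$^*$-homomorphism is a contractive map that is only $\epsilon$-\emph{linear} (the bound $\norm{\phi(a+b)-\phi(a)-\phi(b)}<\epsilon$ is assumed for contractions $a,b$), not genuinely linear. Hence $f_y=\chi_y\circ\phi$ is a contractive $\epsilon$-additive, $\epsilon$-multiplicative map, not a linear functional as you state. This is not fatal: either first invoke Hyers' stability of approximate additivity to replace $\phi$ by a genuinely linear contraction within $O(\epsilon)$ (which keeps the multiplicative defect $O(\epsilon)$), or run your dichotomy lemma for $\epsilon$-additive maps directly, since the quadratic estimate $|f(a^2)-f(a)^2|\le\epsilon$ and the subsequent peaking argument need only approximate additivity. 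Either way the $\sqrt{\epsilon}$ rate and single-digit constant survive.

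A second small point: you should note that the dichotomy has a genuine gap (either $\norm{f_y}\le C\sqrt{\epsilon}$ or $\norm{f_y}\ge 1-C\sqrt{\epsilon}$), so that by lower semicontinuity of $y\mapsto\norm{f_y}$ the domain of $g$ is open and any boundary sequence $y_n\to y_0$ with $y_0$ outside the domain forces $g(y_n)\to\infty$ in $X$; this is what makes $\psi(a)$ continuous across the boundary of the domain of $g$, not just the properness you mention.
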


Lastly, we relate approximate maps with topologically trivial isomorphisms. 
\begin{proposition}\label{prop:borelredprod}
Let $A_n,B_n$ be unital separable $\Cstar$-algebras. Suppose there are a sequence $\epsilon_n\to 0$ and maps $\phi_n\colon A_n\to B_n$ such that each $\phi_n$ is an $\epsilon_n$-$^*$-isomorphism. Then the isomorphism $\prod A_n/\bigoplus A_n\to \prod B_n/\bigoplus B_n$ given by quotienting $\prod \phi_n$ is topologically trivial.
\end{proposition}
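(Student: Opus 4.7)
The plan is to exhibit an explicit Borel lift of $\Lambda$ and then read off from it that the graph $\Gamma(\Lambda)$ is Borel. Recall from \S\ref{subsec:cstartrivial} that when each $A_n$ is unital, the strict topology on $\prod A_n = \mathcal{M}(\bigoplus A_n)$ restricted to the unit ball coincides with the product of the norm topologies on the $A_n$; in particular the unit ball is Polish, and the same holds for $\prod B_n$.

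The natural candidate for the lift is the coordinate-wise map $\Phi \colon \prod A_n \to \prod B_n$ defined by $\Phi(a) = (\phi_n(a_n))_n$. I would check that $\Phi$ sends the unit ball to the unit ball (immediate from contractivity of each $\phi_n$), that $\Phi$ is Borel in the strict topology (which under the above description reduces to Borel measurability coordinate-by-coordinate, since each $\phi_n$ is a Borel map between Polish spaces; in the typical applications each $\phi_n$ is even continuous), and that $\pi_B \circ \Phi = \Lambda \circ \pi_A$ on the unit ball, which is precisely what ``quotienting $\prod \phi_n$ yields $\Lambda$'' means.

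With such a $\Phi$ fixed, I would express the graph as
\[
\Gamma(\Lambda) = \bigcap_{k \in \NN}\;\bigcup_{N \in \NN}\;\bigcap_{n \geq N} \bigl\{(a,b) : \norm{\phi_n(a_n) - b_n} \leq 1/k,\ \norm{a}, \norm{b} \leq 1 \bigr\},
\]
since the condition $\Lambda(\pi_A(a)) = \pi_B(b)$ on contractions is equivalent to $\Phi(a) - b \in \bigoplus B_n$, i.e.\ to $\lim_n \norm{\phi_n(a_n) - b_n} = 0$. Each of the innermost sets is Borel (closed, when $\phi_n$ is continuous), because the coordinate projection $(a,b) \mapsto (a_n, b_n)$ is continuous in the product topology and the norm is continuous. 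Hence $\Gamma(\Lambda)$ is a countable Boolean combination of Borel sets, and therefore Borel.

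The only subtle point---rather than a genuine obstacle---is the Borel measurability of each $\phi_n$. In the situations of interest here one may appeal to Theorem~\ref{thm:semrl} in the abelian case (or to the Ulam-stability machinery developed in \S\ref{section:algtriv}) to replace each $\phi_n$ by a genuine $^*$-homomorphism $\psi_n$ with $\norm{\psi_n - \phi_n} \to 0$; then $\prod \psi_n$ is continuous and defines the same quotient isomorphism as $\prod \phi_n$, so no regularity hypothesis on the original $\phi_n$ is actually needed for the conclusion.
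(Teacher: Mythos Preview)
Your overall architecture matches the paper's: exhibit a strictly Borel lift of $\Lambda$ and then read off that $\Gamma(\Lambda)$ is Borel as a countable Boolean combination of Borel sets (equivalently, as the preimage of the Borel set $\bigoplus B_n$ under a Borel map $(a,b)\mapsto \Psi(a)-b$).

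The gap is in your handling of the measurability of the $\phi_n$. You correctly flag that an $\epsilon_n$-$^*$-isomorphism need not be Borel, but your proposed remedy---replacing each $\phi_n$ by a genuine $^*$-homomorphism via Theorem~\ref{thm:semrl} or the material of \S\ref{section:algtriv}---does not cover the general case stated in the proposition. Theorem~\ref{thm:semrl} applies only to abelian algebras, and Ulam stability for arbitrary classes of separable $\Cstar$-algebras is precisely the open problem discussed in \S\ref{section:algtriv}. Worse, Proposition~\ref{prop:borelredprod} is \emph{invoked} in the proofs of Theorems~\ref{thm:ulam} and~\ref{thm:ulam2} in the direction where Ulam stability is the conclusion, so appealing to it here is circular.

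The paper's fix is far more elementary and fully general: since each $A_n$ is separable, one can find a (norm--norm) Borel map $\psi_n\colon A_n\to B_n$ with $\norm{\psi_n-\phi_n}<2^{-n}$, for instance by sending each $a$ to $\phi_n(d)$ where $d$ is a nearest point to $a$ in a fixed countable dense subset of the ball of $A_n$ (with a Borel tie-breaking rule). No algebraic structure on $\psi_n$ is required; all that matters is that $\prod\psi_n$ is strictly Borel and induces the same quotient map as $\prod\phi_n$. With this correction your argument goes through exactly as written.
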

\begin{proof}
For every $n$ there is a (norm-norm) Borel $\psi_n\colon A_n\to B_n$ such that $\|\phi_n-\psi_n\|<2^{-n}$. Therefore $\prod\psi_n$ and $\prod\phi_n$ induce the same isomorphisms between reduced products. Since the strict topology on $\prod A_n$ is the product of the norm topologies on the $A_n$'s (and the same happens for $B_n$), the map $\prod\psi_n$ is strictly-strictly Borel, and so its graph is Borel. Since $\bigoplus A_n$ is Borel in $\prod A_n$, we are done.
\end{proof}
\section{Filtrations of coronas and liftings}\label{section:general}
We fix separable nonunital $\Cstar$-algebras $A$ and $B$ and a $^*$-homomorphism \[
\Lambda\colon \mathcal Q(A)\to\mathcal Q(B).
\]
 $\pi_A\colon\mathcal M(A)\to\mathcal Q(A)$ and $\pi_B\colon\mathcal M(B)\to\mathcal Q(B)$ denote the quotient maps, with $\pi_0\colon\ell_\infty\to\ell_\infty/c_0$. If $S\subseteq\NN$, $\chi_S\in\ell_\infty$ is the projection corresponding to the characteristic function of $S$.

 In \S\ref{sub:prep} we study general properties of $\Lambda$ setting the stage for the proofs of our main results. In \S\ref{sub:thmalltriv} we prove Theorem~\ref{thmi:alltrivialnoncomm}.
The axioms $\OCA$ and $\MA_{\aleph_1}$ are assumed everywhere, mainly (but not only) to have access to Theorem~\ref{thm:lifting}.

\subsection{Structure of $^*$-homomorphisms between coronas}\label{sub:prep}
Let $\{\tilde e_n\}$ be an approximate identity of positive contractions for $A$ with the following properties:
\begin{itemize}
\item $\tilde e_{n+1}\tilde e_n=\tilde e_n$ for all $n$;
\item if $I\subseteq \NN$ is a finite interval, there is a positive contraction $h_I\leq (\tilde e_{\max I+1}-\tilde e_{\min I-2})$ such that $h_I(\tilde e_{\max I}-\tilde e_{\min I-1})=(\tilde e_{\max I}-\tilde e_{\min I-1})$ and with the property that $h_Ih_J=0$ whenever $\max I+1<\min J$.
\end{itemize}
(Such an approximate identity can be obtained from any approximate identity of positive contractions with $\tilde{\tilde {e}}_{n+1}\tilde{\tilde {e}}_{n}=\tilde{\tilde {e}}_{n}$, by letting $\tilde e_n=\tilde{\tilde {e}}_{3n}$ and $h_I=\tilde{\tilde {e}}_{3\max I+1}-\tilde{\tilde {e}}_{3(\min I-1)-1}$).
We also fix $\{e_n^B\}$, an approximate identity of contractions for $B$ such that $e_{n+1}^Be_n^B=e_n^B$.

Let $\tilde\rho_1,\tilde\rho_2\colon\ell_\infty/c_0\to\mathcal Q(B)$ be defined as
\begin{eqnarray*}
\tilde\rho_1(\pi_0(\chi_S))=&\Lambda(\pi_A(\sum_{n\in S}(\tilde {e}_{2n+1}-\tilde {e}_{2n}))),\\
\tilde\rho_2(\pi_0(\chi_S))=&\Lambda(\pi_A(\sum_{n\in S}(\tilde {e}_{2n}-\tilde {e}_{2n-1}))).
\end{eqnarray*}
\begin{lemma}
$\tilde\rho_1$ and $\tilde\rho_2$ preserve the coordinate algebra.
\end{lemma}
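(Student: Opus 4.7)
The plan is to exhibit the witness contractions $p_S\in\mathcal M(B)$ of Definition~\ref{defin:preserving} as positive lifts of $\Lambda\bigl(\pi_A(\sum_{n\in S}h_n)\bigr)$ for a well-chosen sequence $(h_n)\subseteq A$, and then verify the four clauses algebraically. Before doing so I will extend $\tilde\rho_1$ (and similarly $\tilde\rho_2$) from characteristic classes to all of $\ell_\infty/c_0$: for $x=(x_n)\in\ell_\infty$ I set $\tilde\rho_1(\pi_0(x))=\Lambda\bigl(\pi_A\bigl(\sum_n x_n f_n\bigr)\bigr)$, where $f_n:=\tilde e_{2n+1}-\tilde e_{2n}$. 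The sum converges strictly in $\mathcal M(A)$ since $f_nf_m=0$ for $n\neq m$ (a one-line check from $\tilde e_{n+1}\tilde e_n=\tilde e_n$), and it descends to the quotient because $x\in c_0$ forces $\sum x_nf_n\in A$.

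The key step is extracting the right orthogonality data from the hypotheses on $\{\tilde e_n\}$. Setting $h_n:=h_{\{2n+1\}}$, the second bullet in the choice of $\{\tilde e_n\}$ yields a positive contraction in $A$ with $h_n\leq\tilde e_{2n+2}-\tilde e_{2n-1}$ and $h_nf_n=f_n$. The disjointness $h_nh_m=0$ for $n\neq m$ is immediate from that same bullet, since for $I=\{2n+1\}$ and $J=\{2m+1\}$ with $m>n$ the required inequality $\max I+1<\min J$ reads $2n+2<2m+1$. The more delicate orthogonality $h_nf_m=0$ for $n\neq m$ is deduced from $h_n\leq\tilde e_{2n+2}-\tilde e_{2n-1}$: a routine computation with $\tilde e_k\tilde e_l=\tilde e_{\min(k,l)}$ gives $(\tilde e_{2n+2}-\tilde e_{2n-1})f_m=0$, hence $f_m(\tilde e_{2n+2}-\tilde e_{2n-1})f_m=0$; dominating $h_n$ by $\tilde e_{2n+2}-\tilde e_{2n-1}$ then yields $f_mh_nf_m=0$, so $h_n^{1/2}f_m=0$ and finally $h_nf_m=0$.

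With this package in place, choose $p_S\in\mathcal M(B)$ to be any positive contraction lifting $\Lambda\bigl(\pi_A(\sum_{n\in S}h_n)\bigr)$ (positive contractions lift through $\Cstar$-quotient maps). Clauses (ii) and (iii) of Definition~\ref{defin:preserving} are immediate: $\sum_{n\in S}h_n\in A$ when $S$ is finite, and strict additivity of the sum combined with $\Lambda$ being a $^*$-homomorphism handles disjoint unions. Clause (iv) follows from the computation $(\sum_{n\in S}h_n)(\sum_{m\in T}h_m)=\sum_{n\in S\cap T}h_n^2\in A$ whenever $S\cap T$ is finite. The main clause (i) reduces to the single algebraic identity
\[
\bigl(\textstyle\sum_{n\in S}h_n\bigr)\bigl(\textstyle\sum_m x_mf_m\bigr)=\textstyle\sum_{n\in S}x_nf_n,
\]
which uses exactly the relations $h_nf_n=f_n$ and $h_nf_m=0$ for $n\neq m$. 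Applying $\Lambda\circ\pi_A$ and the multiplicativity of $\Lambda$ delivers the left-handed equalities in (i); the right-handed ones follow by taking adjoints.

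The argument for $\tilde\rho_2$ is identical after replacing $\{2n+1\}$ by $\{2n\}$ and $f_n$ by $g_n:=\tilde e_{2n}-\tilde e_{2n-1}$. No step is genuinely difficult; the main bookkeeping is the derivation of the orthogonality relations, for which the two bullets defining $\{\tilde e_n\}$ in \S\ref{sub:prep} were engineered.
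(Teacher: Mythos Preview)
Your approach is exactly the paper's: take $p_S$ to be a positive contractive lift of $\Lambda\bigl(\pi_A(\sum_{n\in S}h_{\{2n+1\}})\bigr)$ and verify Definition~\ref{defin:preserving}. The paper's proof is a one-liner and leaves the verification implicit; you fill in the details, which is fine.

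There is, however, a genuine gap in your orthogonality derivation. The identity $\tilde e_k\tilde e_l=\tilde e_{\min(k,l)}$ that you invoke holds only for $k\neq l$; when $k=l$ you get $\tilde e_k^2$, not $\tilde e_k$, since the $\tilde e_n$ are positive contractions, not projections. This matters precisely at the boundary cases $m=n\pm 1$: for $m=n+1$ one has $f_{n+1}=\tilde e_{2n+3}-\tilde e_{2n+2}$, and
\[
(\tilde e_{2n+2}-\tilde e_{2n-1})f_{n+1}=\tilde e_{2n+2}-\tilde e_{2n+2}^2,
\]
so that $f_{n+1}(\tilde e_{2n+2}-\tilde e_{2n-1})f_{n+1}=\tilde e_{2n+2}(1-\tilde e_{2n+2})^2$, which is not zero in general. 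Thus the chain $f_mh_nf_m=0\Rightarrow h_n^{1/2}f_m=0\Rightarrow h_nf_m=0$ does not go through from the bare inequality $h_n\leq\tilde e_{2n+2}-\tilde e_{2n-1}$ alone. Your argument is correct for $|m-n|\geq 2$, but the nearest-neighbour cases survive, and without killing them the key identity $(\sum_{n\in S}h_n)(\sum_m x_mf_m)=\sum_{n\in S}x_nf_n$ fails modulo $A$.

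The fix is to use the explicit construction the paper records in parentheses: with $\tilde e_n=\tilde{\tilde e}_{3n}$ and $h_{\{2n+1\}}=\tilde{\tilde e}_{6n+4}-\tilde{\tilde e}_{6n-1}$, one has $f_{n+1}=\tilde{\tilde e}_{6n+9}-\tilde{\tilde e}_{6n+6}$, and now all the indices appearing in $h_nf_{n+1}$ are distinct, so the computation $h_nf_{n+1}=0$ goes through honestly (and symmetrically for $m=n-1$). In other words, the ``extra room'' built into the $\tilde{\tilde e}$-construction is exactly what guarantees the adjacent orthogonalities; the two abstract bullets by themselves are not quite enough for your argument as written.
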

\begin{proof}
Let $q_S$, for $S\subseteq\NN$, be positive contractions such that 
\[
\pi_B(q_S)=\Lambda(\pi_A(\sum_{n\in S}h_{\{2n+1\}})).
\]
 Then $\{q_S\colon S\subseteq\NN\}$ satisfies Definition~\ref{defin:preserving} for $\tilde\rho_1$. The case of $\tilde\rho_2$ is analogous.
\end{proof}
By Theorem~\ref{thm:lifting} and thanks to $\OCA+\MA_{\aleph_1}$, there are asymptotically additive maps  (w.r.t. the approximate identity $e^B_n$)
\[
\rho_1=\sum\rho_{1,n}\colon \ell_\infty\to\mathcal M(B)\text{, and } \rho_2=\sum\rho_{2,n}\colon \ell_\infty\to\mathcal M(B)
\] which lift $\tilde\rho_1$ and $\tilde\rho_2$ on nonmeager dense ideals $\SI_1$ and $\SI_2$. Let 
\[
\rho'_{2n}=\rho_{2,n}\text{ and }\rho'_{2n+1}=\rho_{1,n},
\]
and
\[
\rho\colon\ell_\infty\to \mathcal M(B)\text{ defined by }\rho=\sum \rho'_n.
\]
Since $\rho_1$ and $\rho_2$ are asymptotically additive, so is $\rho$. Let
\[
p_n=\rho'_{n}(\chi_{\{n\}})
\]
and
\[
\SI_\rho=\{S\subseteq\NN\colon  S\cap\{n\colon n\text{ is even}\}\in\SI_2\text{ and }S\cap\{n\colon n\text{ is odd}\}\in\SI_1\}.
\]
$\SI_\rho$ is a nonmeager dense ideal and if $S\in\SI_\rho$, then 
\[
\Lambda(\pi_A(\sum_{n\in S}(\tilde{e}_n-\tilde{e}_{n-1})))=\pi_B(\sum_{n\in S}p_n).
\]
\begin{lemma}\label{lem:structural}
There is an increasing sequence $\{M_n\}$ such that if $M_n\leq \ell_1\leq M_{n+1}$ and $M_m\leq \ell_2\leq M_{m+1}$ for $|n-m|\geq 2$ then $p_{\ell_1}p_{\ell_2}=0$.
\end{lemma}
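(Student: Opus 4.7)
The plan is to use the support structure given by asymptotic additivity of $\rho_1$ and $\rho_2$ to pigeonhole a block decomposition of $\NN$ that forces orthogonality of the $p_\ell$'s coming from non-adjacent blocks.

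First, I would unpack Definition~\ref{defin:asadd} for $\rho_1=\sum\rho_{1,n}$ and $\rho_2=\sum\rho_{2,n}$, obtaining sequences $j_n^{(i)}<k_n^{(i)}$ with $j_n^{(i)}\to\infty$ such that the range of $\rho_{i,n}$ sits inside $(e_{k_n^{(i)}}^B-e_{j_n^{(i)}}^B)B(e_{k_n^{(i)}}^B-e_{j_n^{(i)}}^B)$ for $i=1,2$. Interleaving according to the definition $\rho'_{2n}=\rho_{2,n}$ and $\rho'_{2n+1}=\rho_{1,n}$, one gets a single pair of sequences $J_\ell<K_\ell$ with $p_\ell\in(e_{K_\ell}^B-e_{J_\ell}^B)B(e_{K_\ell}^B-e_{J_\ell}^B)$ and with the property that $\{\ell:J_\ell\leq N\}$ is finite for every $N\in\NN$.

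The key algebraic observation to record is that from $e_{n+1}^Be_n^B=e_n^B$ (and its adjoint) one gets $e_a^Be_b^B=e_{\min(a,b)}^B$, and a direct expansion then yields $(e_{K_{\ell_1}}^B-e_{J_{\ell_1}}^B)(e_{K_{\ell_2}}^B-e_{J_{\ell_2}}^B)=0$ whenever $K_{\ell_1}\leq J_{\ell_2}$. Hence it suffices to construct the $M_n$ so that this inequality holds for all $\ell_1\leq M_{n+1}$ and $\ell_2\geq M_{n+2}$.

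I would then build $\{M_n\}$ inductively: set $M_0=0$, and having defined $M_0<\cdots<M_{n+1}$, let $C_{n+1}=\max\{K_\ell:\ell\leq M_{n+1}\}$ and choose $M_{n+2}>M_{n+1}$ large enough that $J_\ell>C_{n+1}$ for all $\ell\geq M_{n+2}$, which is possible by the finiteness noted above. For $|n-m|\geq 2$, say $m\geq n+2$, any $\ell_1\in[M_n,M_{n+1}]$ and $\ell_2\in[M_m,M_{m+1}]$ then satisfy $\ell_1\leq M_{n+1}$ and $\ell_2\geq M_{n+2}$, so $K_{\ell_1}\leq C_{n+1}<J_{\ell_2}$ and therefore $p_{\ell_1}p_{\ell_2}=0$. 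I do not expect any substantial obstacle: the forcing axioms and the $\OCA$ lifting theorem have done their work upstream in producing $\rho_1$ and $\rho_2$, and all that remains is a pigeonhole argument on the support sequences extracted from asymptotic additivity.
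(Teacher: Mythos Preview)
Your proposal is correct and follows essentially the same approach as the paper's proof: extract from asymptotic additivity the support data $p_\ell\in(e^B_{K_\ell}-e^B_{J_\ell})B(e^B_{K_\ell}-e^B_{J_\ell})$ with $J_\ell\to\infty$, then build $\{M_n\}$ recursively so that the supports of non-adjacent blocks are separated. The paper is slightly terser (it skips the explicit interleaving discussion and the expansion of $(e^B_{K_{\ell_1}}-e^B_{J_{\ell_1}})(e^B_{K_{\ell_2}}-e^B_{J_{\ell_2}})$), but the argument is the same; the only cosmetic issue in your write-up is that the induction starts by assuming $M_0<\cdots<M_{n+1}$ while you have only set $M_0=0$, so you should either set $M_1$ explicitly or re-index the step to define $M_{n+1}$ from $M_0,\dots,M_n$.
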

\begin{proof}
First note that, by definition of $\rho$, for every $n$ there are $j_n<k_n$ such that $p_{n}\in (e^B_{k_n}- e^B_{j_n})B(e^B_{k_n}-e^B_{j_n})$, where $j_n\to\infty$ as $n\to\infty$. We define the sequence $M_n$ inductively. Let $M_0=0$; if $M_n$ has been defined let 
\[
M_{n+1}=\min\{i'> M_n\colon \forall i\geq i' (j_i>\max\{k_\ell\colon \ell\leq M_n\})\}.
\]
As $j_n\to\infty$ as $n\to\infty$ the sequence $\{M_n\}$ is well defined. Suppose that $M_n\leq \ell_1<M_{n+1}<M_m\leq \ell_2$. Then $j_{\ell_1}<k_{\ell_1}<j_{\ell_2}<k_{\ell_2}$ by the choice of $M_{n+2}$, and therefore 
\[
p_{\ell_1}p_{\ell_2}\in (e^B_{k_{\ell_1}}-e^B_{j_{\ell_1}})B(e^B_{k_{\ell_1}}-e^B_{j_{\ell_1}})(e^B_{k_{\ell_2}}-e^B_{j_{\ell_2}})B(e^B_{k_{\ell_1}}-e^B_{j_{\ell_1}})=0.
\qedhere\]
\end{proof}

From now on we will use the following notation:
\begin{notation}\label{not:autos}
\begin{itemize}
\item $e_n=\tilde{e}_{M_n}$, $e_{-1}=0$;
\item if $I\subseteq\NN$ is finite, define $f_I=e_{\max I}-e_{\min I-1}$ and $A_I=\overline{f_IAf_I}$. If $I=[n,m]$ for $n>0$, let $I^+=[n-1,m+1]$;
\item if $I\subseteq\NN$ is an interval, let $r_I=\sum_{k\in [M_{\min I-1},M_{\max I+1})}p_k$;
\item if $G=\bigcup I_n$ where each $I_n$ is finite and $\max I_n+2<\min I_{n+1}$, let $ r_G=\sum r_{I_n}$; 
\item the ideal $\SI_\rho$ is fixed as before Lemma~\ref{lem:structural};
\item $\mathbb P$ is the set of all partition of $\NN$ into consecutive finite intervals $\mathbb I=\langle I_n\colon n\in\NN\rangle$ with $|I_n|\geq 3$ for all $n$. We order $\mathbb P$ by 
\[
\mathbb I\leq_1\mathbb J\iff\exists n_0\forall n\geq n_0\exists m (I_{n}\cup I_{n+1}\subseteq J_m\cup J_{m+1});
\]
\item If $\mathbb I=\langle I_n\colon n\in\NN\rangle\in\mathbb P$, let
\[
I^e_n=I_{2n}\cup I_{2n+1},\,\,\, I^o_n=I_{2n+1}\cup I_{2n+2}
\]
and
\[
\mathbb I^{e}=\langle I^e_{n}\colon n\in\NN\rangle \text{ and }\mathbb I^{o}=\langle I^o_{n}\colon n\in\NN\rangle.
\]
Here the letters $e$ and $o$ refer to even and odd partitions.
\end{itemize}
\end{notation}
\begin{proposition}\label{prop:properties1}
Using Notation~\ref{not:autos} we have:
\begin{enumerate}
\item $I^+\cap J^+=\emptyset$ implies $A_IA_J=0$ whenever $I,J\subseteq\NN$
\item if $x\in A_I$ then $xf_{I^+}=f_{I^+}x=x$
\item if $\{I_n\}$ is a sequence of finite intervals with $\max I_n+2<\min I_{n+1}$, $x\in\prod A_{I_n}$, and $\bigcup_n[M_{\min I_n}-1,M_{\max I_n}+1]\in\SI_\rho$ then $r_{\bigcup I_n}\Lambda(\pi_A(x))=\Lambda(\pi_A(x))$.
\item For every $x\in\mathcal M(A)$ there is $\mathbb I\in\mathbb P$, $x_e\in\prod A_{I^e_n}$ and $x_o\in \prod A_{I_n^o}$ such that $x-x_e-x_o\in A$.
\end{enumerate}
\end{proposition}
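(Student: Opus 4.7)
The plan. Parts (1) and (2) are direct algebraic manipulations. The approximate identity satisfies $\tilde e_{n+1}\tilde e_n=\tilde e_n$, and combining this with self-adjointness of the $\tilde e_n$ and iteration yields the multiplication table $e_ne_m=e_{\min(n,m)}$ whenever $n\neq m$. For (1), assume WLOG that $\max I+1<\min J-1$ (so $\max I\leq\min J-3$), and expand $f_If_J=(e_{\max I}-e_{\min I-1})(e_{\max J}-e_{\min J-1})$; the four resulting terms cancel in pairs to give $f_If_J=0$, whence $A_IA_J\subseteq\overline{f_IAf_I\cdot f_JAf_J}=0$. For (2), the identical computation shows $f_{I^+}f_I=f_I$ — the padding in $I^+$ is exactly what the rule $e_ae_b=e_{\min(a,b)}$ needs to absorb $f_I$ — and symmetrically $f_If_{I^+}=f_I$; the identity extends to $A_I$ by norm continuity.

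For (3), I would write $x=\sum_nx_n$ with $x_n\in A_{I_n}$, use (2) to get $x_n=f_{I_n^+}x_n$, and note that the separation $\max I_n+2<\min I_{n+1}$ makes the $I_n^+$ pairwise disjoint, so $f:=\sum_nf_{I_n^+}$ converges strictly in $\mathcal M(A)$ with $fx=x$. Then $\Lambda(\pi_A(x))=\Lambda(\pi_A(f))\Lambda(\pi_A(x))$. Expanding $f$ as a telescope $\sum_{k\in S}(\tilde e_k-\tilde e_{k-1})$ with $S$ the set of $\tilde e$-indices arising from the telescope — which is exactly what the hypothesis places in $\SI_\rho$ — the defining property of the lift $\rho$ yields $\Lambda(\pi_A(f))=\pi_B(\sum_{k\in S}p_k)$. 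A case check using Lemma~\ref{lem:structural}, which forces $p_kp_\ell=0$ whenever $k,\ell$ lie in $M$-blocks at distance $\geq 2$, shows that $\pi_B(r_G)\pi_B(\sum_{k\in S}p_k)=\pi_B(\sum_{k\in S}p_k)$, giving $r_G\Lambda(\pi_A(x))=\Lambda(\pi_A(x))$.

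For (4), my plan is a partition-of-unity decomposition tailored to $x$. Take $v_n:=f_{I_n}\in A$. Since $|I_n|\geq 3$, the separation forces $I_n^+\cap I_m^+=\emptyset$ for $|n-m|\geq 2$, so (1) gives $v_nv_m=0$ off the tridiagonal, and $\sum_nv_n=\lim_Ne_{\max I_N}=1$ strictly. Hence $x=\sum_{n,m}v_nxv_m$ as a strict double sum. I build $\mathbb I$ inductively: given $I_0,\dots,I_{n-1}$, each $v_jx\in A$, so $\|v_jx(1-e_K)\|\to 0$ as $K\to\infty$, and I can force $\min I_n$ large enough that $\|v_jxv_m\|<2^{-\max(j,m)}$ whenever $j<n\leq m-2$. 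This guarantees that $F:=\sum_{|n-m|\geq 2}v_nxv_m$ converges in norm and lies in $A$. The remaining tridiagonal terms split by parity:
\[
x_e:=\sum_k\bigl[v_{2k}xv_{2k}+v_{2k}xv_{2k+1}+v_{2k+1}xv_{2k}\bigr],\quad x_o:=\sum_k\bigl[v_{2k+1}xv_{2k+1}+v_{2k+1}xv_{2k+2}+v_{2k+2}xv_{2k+1}\bigr],
\]
and one checks $x=x_e+x_o+F$ by construction.

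The main obstacle is verifying that each summand of $x_e$ really lies in $A_{I_k^e}=\overline{f_{I_k^e}Af_{I_k^e}}$ rather than in the slightly larger corner $f_{(I_k^e)^+}Af_{(I_k^e)^+}$. Because the $v_n=f_{I_n}$ are not projections, adjacent ones need not combine cleanly into $f_{I_k^e}$, and the naive bookkeeping leaves a small boundary defect. The remedy is to rebuild the partition from the $h_I$ bumps provided by the approximate identity: choose $u_n\in f_{I_n}Af_{I_n}$ pairwise orthogonal off the tridiagonal with $\sum_nu_n=1-r$ for some $r\in A$. The deficit $r$ is absorbed into the error term $F\in A$, and each refined summand of $x_e$ now sits genuinely inside $A_{I_k^e}$, completing the proof of (4).
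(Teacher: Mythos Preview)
Your treatment of (1) and (2) is correct and more explicit than the paper's, which simply says they follow from the definition.

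For (3) there is a genuine gap. Your route goes through an auxiliary $f=\sum_n f_{I_n^+}$ and then needs the absorption identity $\pi_B(r_G)\,\pi_B\bigl(\sum_{k\in S}p_k\bigr)=\pi_B\bigl(\sum_{k\in S}p_k\bigr)$. Lemma~\ref{lem:structural} only gives \emph{orthogonality} $p_{\ell_1}p_{\ell_2}=0$ for indices in distant $M$-blocks; it says nothing about one sum of $p_k$'s acting as an identity on another, and the $p_k$ are not projections, so no such absorption is available. Also, the index set $S$ you obtain from telescoping $f_{I_n^+}=e_{\max I_n+1}-e_{\min I_n-2}$ is not the set the hypothesis places in $\SI_\rho$, so your appeal to the lifting property is not justified either. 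The paper avoids all of this by a single observation: by the very definition of $r_I$ in Notation~\ref{not:autos}, $r_{\bigcup I_n}=\sum_{k\in H}p_k$ with $H$ the set from the hypothesis, so $\pi_B(r_{\bigcup I_n})=\Lambda(\pi_A(y))$ directly, where $y=\sum_{k\in H}(\tilde e_k-\tilde e_{k-1})$; one then checks $yx=x$ (each block of $y$ absorbs $f_{I_n}$ and the distant blocks annihilate it, using the separation $\max I_n+2<\min I_{n+1}$), and the conclusion follows from multiplicativity of $\Lambda$. No intermediate $f$ is needed.

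For (4) the paper does not argue at all but cites \cite[Lemma~2.6]{MKAV.FA} (with antecedents in Elliott and Farah). Your outline---inductive choice of a partition making the far-off-diagonal terms summable in norm, then a parity split of the tridiagonal---is exactly the standard scheme behind that lemma. You are right that with $v_n=f_{I_n}$ the summands of $x_e$ need not land in $A_{I^e_k}$ (because $e_a^2\neq e_a$); your proposed remedy via auxiliary $u_n$ built from the $h_I$ bumps is pointed in the right direction but is only sketched. Since the paper defers to a reference here, your write-up is in fact more informative than the paper's proof, but to be complete you should actually exhibit the $u_n$ and verify that the refined summands lie in $A_{I^e_k}$ and $A_{I^o_k}$ rather than assert it.
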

\begin{proof}
(1) and (2) follow from the definition. For (3), since 
\[
H=\bigcup_n[M_{\min I_n}-1,M_{\max I_n}+1]\in\SI_\rho
\] then 
\[
\pi_B(r_{\bigcup I_n})=\pi_B(\sum_n r_{I_n})=\pi_B(\sum_{k\in H}p_k)=\Lambda(\pi_A(y))
\]
where $y=\sum_{k\in H}(\tilde{e}_k-\tilde{e}_{k-1})$.
Since $yx=x$ the thesis follows. (4) is \cite[Lemma 2.6]{MKAV.FA} (see also \cite[Theorem~3.1]{Elliott.Der2}, \cite[Lemma 1.2]{Farah.C}).
\end{proof}
Fix finite sets of contractions $F_n\subseteq A$ with the following properties:
\begin{itemize}
\item $F_n\subseteq F_{n+1}$ for all $n$, and $0\in F_0$,
\item $\bigcup F_n$ is dense in the unit ball of $A$, and
\item each $F_n$ is linearly ordered by $<_n$, where $0$ is the $<_n$-minimal element.
\end{itemize} 
Denote by $E(F_n)$ the finite-dimensional Banach subspace of $A$ generated by $F_n$, and let 
\[
U_{n,m}=\{x\colon\exists y\in F_m, \norm{x-y}\leq2^{-n}\}.
\]
This is the open neighborhood of $F_m$ of radius $2^{-n}$. Note that $U_{n,m}\subseteq U_{n,m'}$ if $m\leq m'$, since $F_m\subseteq F_{m'}$. Let $\phi_{n,m}\colon U_{n,m}\to E(F_m)$ be obtained by sending $x$ to $<_m$-minimum element of $F_m$ minimizing $\norm{x-y}$, and let $\psi_n\colon E(F_n)\to A$ be the inclusion map.

(Let $E_{n,m}=E(F_m)$. Fix $\bar Y=(Y_{n,m},<_{n,m})$ a dense system for $\{E_{n,m}\}$ as in Definition~\ref{defin:skeletal2}. From now on, we will refer to skeletal maps w.r.t. this fixed dense system $\bar Y$, and omit the reference to $\bar Y$.)

If $g\in\NN^{\NN\uparrow}$, $\mathbb I\in\mathbb P$, $i\in \{e,o\}$ and $j\in\{0,1\}$, define
\[
B(g,\mathbb I,i,j)=\prod(U_{n,g(n)}\cap A_{I_{2n+j}^i}).
\]
Noticee that $B(g,\mathbb I,i,j)\cap A=\bigoplus (U_{n,g(n)}\cap A_{I_{2n+j}^i})$. Let
\[
\Phi^{i,j}_{g,\mathbb I}\colon\prod B(g,\mathbb I,i,j)\to\prod E_{n,g(n)}
\]
be defined as 
\[
\Phi^{i,j}_{g,\mathbb I}(x)=\prod\phi_{n,g(n)}(x),
\]
and let
\[
\Psi^{i,j}_{g,\mathbb I}\colon E_{n,g(n)}\to\prod A_{(I^i_{2n+j})^+}
\]
be defined as 
\[
\Psi^{i,j}_{g,\mathbb I}((x_n)_n)=(f_{(I^i_{2n+j})^+}\psi_{g(n)}(x_n)f_{(I^i_{2n+j})^+})_n.
\]
The map $\Psi^{i,j}_{g,\mathbb I}$ is linear and contractive, and it induces a well defined map 
\[
\tilde\Psi^{i,j}_{g,\mathbb I}\colon \prod E_{n,g(n)}/\bigoplus E_{n,g(n)}\to\prod A_{(I^i_{2n+j})^+}/\bigoplus A_{(I^i_{2n+j})^+}.
\]
(It is not necessary to appeal to $\mathbb P$ when considering abelian $\Cstar$-algebras, and that the use of the index $j$ is redundant in case $A$ has an approximate identity of projections.)
We leave the proof of the following to the reader.
\begin{proposition}\label{prop:properties2}
For every $g\in\NN^\NN$, $\mathbb I\in\mathbb P$, $i\in\{e,o\}$ and $j\in\{0,1\}$:
\begin{enumerate}
\item\label{prop:propBorel} each $\Phi^{i,j}_{g,\mathbb I}\circ\Psi^{i,j}_{g,\mathbb I}$ is strict-strict-Borel
\item\label{prop:propWelldefined} if $x,y\in B(g,\mathbb I,i,j)$ are such that $x-y\in A$ then 
\[
\Phi^{i,j}_{g,\mathbb I}(x)-\Phi^{i,j}_{g,\mathbb I}(y)\in\bigoplus E_{n,g(n)}.
\]
In particular $\Phi^{i,j}_{g,\mathbb I}[B(g,\mathbb I,i,j)\cap A]\subseteq\bigoplus E_{n,g(n)}$, so $\Phi^{i,j}_{g,\mathbb I}$ induces a well defined map 
\[
\tilde\Phi^{i,j}_{g,\mathbb I}\colon B(g,\mathbb I,i,j)/(B(g,\mathbb I,i,j)\cap A) \to \prod E_{n,g(n)}/\bigoplus E_{n,g(n)};
\]
\item the map $\tilde\Psi^{i,j}_{g,\mathbb I}$ preserves the coordinate function (as in Definition~\ref{defin:preserving});
\item For all $x\in B(g,\mathbb I,i,j)$, we have that 
\[
\Psi^{i,j}_{g,\mathbb I}\circ\Phi^{i,j}_{g,\mathbb I}(x)-x\in A.
\]
\end{enumerate}
\end{proposition}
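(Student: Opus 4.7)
The proposition bundles four technical facts. Items (1), (2), and (4) are routine, while (3) is the only one requiring a nontrivial construction inside $\mathcal M(A)$.

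For (1), each $\phi_{n,g(n)}$ is a norm-Borel distance-minimizing selector into the finite set $F_{g(n)}$, $\psi_{g(n)}$ is norm-continuous, and left/right multiplication by $f_{(I^i_{2n+j})^+}$ is norm-continuous; hence each coordinate of $\Phi^{i,j}_{g,\mathbb I}\circ\Psi^{i,j}_{g,\mathbb I}$ is Borel in the matching coordinate of its input. Since the intervals $\{(I^i_{2n+j})^+\}_n$ are pairwise disjoint for fixed $i,j$, the strict topology restricted to bounded strict sums $\sum a_n$ with $a_n\in A_{(I^i_{2n+j})^+}$ coincides with the product of the norm topologies on summands, so coordinatewise Borelness yields the claim. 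For (4), Proposition~\ref{prop:properties1}(2) yields $f_{(I^i_{2n+j})^+}x_nf_{(I^i_{2n+j})^+}=x_n$ for $x_n\in A_{I^i_{2n+j}}$, so the $n$-th component of $\Psi^{i,j}_{g,\mathbb I}\circ\Phi^{i,j}_{g,\mathbb I}(x)-x$ is $f_{(I^i_{2n+j})^+}(\phi_{n,g(n)}(x_n)-x_n)f_{(I^i_{2n+j})^+}$, of norm at most $2^{-n}$; the resulting strict sum of orthogonal, norm-null pieces lies in $A$.

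For (2), write $x-y=\sum_n(x_n-y_n)$ as a strict sum with summands in the pairwise orthogonal $A_{I^i_{2n+j}}$. The assumption $x-y\in A$ forces $\|x_n-y_n\|\to 0$, whence by the defining $2^{-n}$-approximation property of $\phi_{n,g(n)}$,
\[
\|\phi_{n,g(n)}(x_n)-\phi_{n,g(n)}(y_n)\|\le 2\cdot 2^{-n}+\|x_n-y_n\|\to 0,
\]
so $\Phi^{i,j}_{g,\mathbb I}(x)-\Phi^{i,j}_{g,\mathbb I}(y)\in\bigoplus E_{n,g(n)}$ and $\tilde\Phi^{i,j}_{g,\mathbb I}$ is well-defined.

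The main obstacle is (3): producing positive contractions $p_S\in\mathcal M(A)$, $S\subseteq\NN$, witnessing Definition~\ref{defin:preserving}. Writing $(I^i_{2n+j})^+=[c_n,d_n]$ in the $e$-indexing, the condition $|I_k|\ge 3$ built into $\mathbb P$ forces $c_{n+1}-d_n\ge 3$, so passing to the $\tilde e$-indexing via $e_k=\tilde e_{M_k}$ (and using that $M$ is strictly increasing) there is ample room to set
\[
h_n:=\tilde e_{M_{d_n}+1}-\tilde e_{M_{c_n-1}-1}.
\]
A direct calculation using $\tilde e_{k+1}\tilde e_k=\tilde e_k$ then yields $h_nf_{(I^i_{2n+j})^+}=f_{(I^i_{2n+j})^+}$, together with $h_nh_m=0$ and $h_nf_{(I^i_{2m+j})^+}=0$ whenever $m\ne n$. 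Consequently $p_S:=\sum_{n\in S}h_n$, a strict sum of pairwise orthogonal positive contractions, is a positive contraction in $\mathcal M(A)$, and the four items of Definition~\ref{defin:preserving} follow immediately: the absorption identities reduce coordinatewise to $h_nf_{(I^i_{2n+j})^+}=f_{(I^i_{2n+j})^+}$ for $n\in S$ together with $h_mf_{(I^i_{2n+j})^+}=0$ for $m\ne n$; $\pi_A(p_S)=0$ for finite $S$ since then $p_S\in A$; and additivity on disjoint unions together with multiplicative vanishing on finite intersections are immediate from the pairwise orthogonality of the $h_n$.
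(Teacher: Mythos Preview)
The paper explicitly leaves this proof to the reader, so there is no ``paper's proof'' to compare against. Your argument is correct in all four parts and is exactly the kind of verification the authors intend.

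One minor comment on part~(3): your explicit $h_n=\tilde e_{M_{d_n}+1}-\tilde e_{M_{c_n-1}-1}$ works, but note that the paper has already arranged (just before Notation~\ref{not:autos}) for elements $h_I$ playing precisely this role, and later, in the paragraph following Lemma~\ref{lemma:filtration}, it spells out the intended witnesses as $p_S=\sum_{n\in S}f_{(I^i_{2n+j})^{++}}$. Your construction and this one differ only in whether one steps outward by one unit in the $\tilde e$-indexing or in the $e$-indexing; since $M$ is strictly increasing and $|I_k|\ge 3$, both give pairwise orthogonal positive contractions absorbing $f_{(I^i_{2n+j})^+}$, so the verifications are interchangeable. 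A small caveat: strictly speaking your $h_n$ need not lie in $A_{(I^i_{2n+j})^+}$, so $p_S$ is naturally an element of $\mathcal M(A)$ rather than of $\prod_n A_{(I^i_{2n+j})^+}$; this is harmless since the only use of item~(3) is for the composite $\Lambda\circ\tilde\Psi^{i,j}_{g,\mathbb I}$ into $\mathcal Q(B)$, where one simply pushes $p_S$ forward via $\Lambda\circ\pi_A$.
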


The following is an extension of Proposition~\ref{prop:properties1}(4).
\begin{lemma}\label{lemma:filtration}
Let $x\in\mathcal M(A)$. Then there are $\mathbb I\in\mathbb P$, $g\in\NN^{\NN\uparrow}$ and $x_{i,j}\in B(g,\mathbb I,i,j)$, for $i\in\{e,o\}$ and $j\in\{0,1\}$, such that 
\[
x-\sum_{i,j}\Psi^{i,j}_{g,\mathbb I}\circ\Phi^{i,j}_{g,\mathbb I}(x_{i,j})\in A.
\]
If $x$ is a contraction, so is each $x_{i,j}$.
\end{lemma}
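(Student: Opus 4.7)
The plan is to refine the decomposition from Proposition~\ref{prop:properties1}(4) by further partitioning along even and odd sub-indices, and then approximating componentwise by elements drawn from $\bigcup_m F_m$.

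First, I would apply Proposition~\ref{prop:properties1}(4) to obtain $\mathbb I \in \mathbb P$, together with $x_e \in \prod_n A_{I^e_n}$ and $x_o \in \prod_n A_{I^o_n}$ such that $x - x_e - x_o \in A$. When $x$ is a contraction, the construction behind that proposition (referenced to \cite[Lemma~2.6]{MKAV.FA}) produces contractive $x_e$ and $x_o$; this is precisely where the positive bumps $h_I$ built into our choice of approximate identity $\{\tilde e_n\}$ are used, via a localization of the form $x \equiv h x h \pmod{A}$ on each of the two staggered scales.

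Next, for each $i \in \{e,o\}$ and $j \in \{0,1\}$, set
\[
x_{i,j} = \bigl((x_i)_{2n+j}\bigr)_n \in \prod_n A_{I^i_{2n+j}},
\]
which is a contraction whenever $x_i$ is. The reason we split each of $x_e$ and $x_o$ further by parity of the index is to guarantee orthogonality: for fixed $(i,j)$, the intervals $I^i_{2n+j}$ and $I^i_{2(n+1)+j}$ are separated by the intermediate interval $I^i_{2n+1-j}$ of length $\geq 6$ (since $|I_k|\geq 3$ for all $k$), so $(I^i_{2n+j})^{++}$ and $(I^i_{2m+j})^{++}$ are disjoint for $n \neq m$, and Proposition~\ref{prop:properties1}(1) gives that $\{A_{(I^i_{2n+j})^+}\}_n$ is a pairwise orthogonal family of subalgebras of $A$.

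Now choose an increasing $g \in \NN^{\NN\uparrow}$ such that $(x_{i,j})_n \in U_{n,g(n)}$ for every $n$ and every $(i,j)$; this is possible because $\bigcup_m F_m$ is norm-dense in the unit ball of $A$ and each $(x_{i,j})_n$ is a contraction. Then $x_{i,j} \in B(g,\mathbb I, i, j)$. Using $f_{(I^i_{2n+j})^+} f_{I^i_{2n+j}} = f_{I^i_{2n+j}}$, each element $(x_{i,j})_n \in A_{I^i_{2n+j}}$ is fixed by the cut-down $a \mapsto f_{(I^i_{2n+j})^+}\, a\, f_{(I^i_{2n+j})^+}$, so
\[
z_n^{i,j} := (x_{i,j})_n - f_{(I^i_{2n+j})^+}\, \phi_{n,g(n)}((x_{i,j})_n)\, f_{(I^i_{2n+j})^+} \in A_{(I^i_{2n+j})^+}
\]
has norm at most $2^{-n}$. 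The strict sum $\sum_n z_n^{i,j} = x_{i,j} - \Psi^{i,j}_{g,\mathbb I} \circ \Phi^{i,j}_{g,\mathbb I}(x_{i,j})$ is therefore a sum of pairwise orthogonal elements of $A$ with norms $\leq 2^{-n}$, and hence converges in norm and lies in $A$. Summing the four contributions over $(i,j) \in \{e,o\}\times\{0,1\}$ and combining with $x - x_e - x_o \in A$ yields the conclusion.

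The hardest part is the first step: arranging for $x_e$ and $x_o$ to be simultaneously contractions when $x$ is. This is a nontrivial localization argument using the cut-off contractions $h_I$ built into the approximate identity, but it is exactly the content of Proposition~\ref{prop:properties1}(4) (i.e., \cite[Lemma~2.6]{MKAV.FA}); once this is in hand, the remaining componentwise approximation is clean because the parity splitting forces genuine orthogonality among the pieces.
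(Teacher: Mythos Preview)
Your proof is correct and follows essentially the same approach as the paper's: apply Proposition~\ref{prop:properties1}(4), then use density of $\bigcup_m F_m$ to find $g$ with $(x_{i,j})_n\in U_{n,g(n)}$. You are simply more explicit about the parity split $x_i=x_{i,0}+x_{i,1}$ and about why $\Psi^{i,j}_{g,\mathbb I}\circ\Phi^{i,j}_{g,\mathbb I}(x_{i,j})-x_{i,j}\in A$ (which the paper leaves to Proposition~\ref{prop:properties2}(4)); your orthogonality check via $(I^i_{2n+j})^{++}\cap(I^i_{2m+j})^{++}=\emptyset$ and your remark that $g$ may be taken increasing since $U_{n,m}\subseteq U_{n,m'}$ for $m\le m'$ are both accurate refinements of what the paper takes for granted.
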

\begin{proof}
By Proposition~\ref{prop:properties1}(4), it  is enough to show that if $x=(x_n)\in \prod A_{I^i_{2n+j}}$ then there is $g$ such that $x_n\in U_{n,g(n)}$ for all $n$. Since $\bigcup F_n$ is dense in $A$, for all $n$ we can find $g(n)$ and $y\in F_{g(n)}$ such that $\norm{x_n-y}<2^{-n}$, so $x_n\in U_{n,g(n)}$. In particular 
\[
\prod A_{I^i_{2n+j}}=\bigcup_g B(g,\mathbb I,i,j).\qedhere
\]
\end{proof}
If $\mathbb I\in\mathbb P$ and $g\in\NN^{\NN\uparrow}$, for $i\in\{e,o\}$ and $j\in\{0,1\}$, define
\[
\Lambda^{i,j}_{g,\mathbb I}=\Lambda\circ\tilde\Psi^{i,j}_{g,\mathbb I}\colon \prod E_{n,g(n)}/\bigoplus E_{n,g(n)}\to\mathcal Q(B).
\]
Each $\Lambda^{i,j}_{g,\mathbb I}$ preserves the coordinate function (for $S\subseteq\NN$, let $q_S\in \mathcal M(B)$ be any positive contraction lifting $\Lambda(\pi_A(\sum_{n\in S}f_{(I_{2n+j}^i)^{++}}))$. Since $|I_n|\geq 3$, $\{q_S\colon S\subseteq\NN\}$ satisfies Definition~\ref{defin:preserving}).
By Theorem~\ref{thm:lifting}, there is a skeletal map $\alpha=\alpha(\mathbb I,g,i,j)$ with $\alpha\colon\prod E_{n,g(n)}\to\mathcal M(B)$ and a nonmeager dense ideal $\SI=\SI_{\alpha,g,\mathbb I,i,j}$ such that $\alpha$ is a lift of $\Lambda^{i,j}_{g,\mathbb I}$ on elements with support in $\SI$. Note that if $x=(x_n)\in B(g,\mathbb I,i,j)$ with $\supp(x)\in \SI$ then
\[
\pi_B(\alpha(\Phi^{i,j}_{g,\mathbb I}(x)))=\Lambda^{i,j}_{g,\mathbb I}(\tilde\Phi^{i,j}_{g,\mathbb I}(\pi_A(x)))=\Lambda(\tilde\Psi^{i,j}_{g,\mathbb I}(\tilde\Phi^{i,j}_{g,\mathbb I}(\pi_A(x))))=\Lambda(\pi_A(x)).
\]
If $\alpha=\alpha(\mathbb I,g,i,j)$ is as above and $F\subseteq\NN$, let $\alpha_F=\sum_{n\in F}\alpha_n$.

\begin{lemma}\label{lemma:cut1}
Let $g\in\NN^\NN$, $\mathbb I\in\mathbb P$, $i\in \{e,o\}$ and $j\in\{0,1\}$. Suppose that $\alpha\colon\prod E_{n,g(n)}\to\mathcal M(B)$ is an asymptotically additive lift of $\Lambda^{i,j}_{g,\mathbb I}$ on elements with support in a nonmeager dense ideal $\SI$. Then for every $\epsilon>0$ there is $n_0\in\NN$ such that, if $F\subseteq\NN$ is finite  with $\min F>n_0$ then, with $r=r_{\bigcup_{n\in F}I_{2n+j}^i}$, we have
\[
\norm{r\alpha_F(\Phi_{g,\mathbb I}^{i,j}(x))-\alpha_F(\Phi_{g,\mathbb I}^{i,j}(x))}<\epsilon
\]
and
\[
\norm{\alpha_F(\Phi_{g,\mathbb I}^{i,j}(x))r-\alpha_F(\Phi_{g,\mathbb I}^{i,j}(x))}<\epsilon
\]
for all $x\in\prod_{n\in F}(U_{n,g(n)}\cap A_{I_{2n+j}^i})$.
\end{lemma}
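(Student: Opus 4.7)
The plan is to argue by contradiction for the left bound (the right bound is symmetric), concatenating a subsequence of counterexamples into a single element whose support lies in $\SI$, then using the lifting property of $\alpha$ on $\SI$ together with Proposition~\ref{prop:properties1}(3) to reach a contradiction. Specifically, suppose there are $\epsilon>0$, finite sets $F_k\subseteq\NN$ with $\min F_k\to\infty$, and contractions $x^{(k)}\in\prod_{n\in F_k}(U_{n,g(n)}\cap A_{I_{2n+j}^i})$ such that, writing $r_k=r_{\bigcup_{n\in F_k}I_{2n+j}^i}$ and $a_k=\alpha_{F_k}(\Phi^{i,j}_{g,\mathbb I}(x^{(k)}))$, we have $\norm{r_ka_k-a_k}\geq\epsilon$ for every $k$.

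First I would thin the sequence $(F_k)$ to secure three orthogonality properties. Writing $\alpha_n\colon E_{n,g(n)}\to (e^B_{k_n}-e^B_{j_n})B(e^B_{k_n}-e^B_{j_n})$ for the asymptotic-additive decomposition of $\alpha$, these are: (a) the corners hosting $\alpha_n$ for $n\in F_k$ are, as $k$ varies, pairwise orthogonal in $\mathcal M(B)$ (using $j_n\to\infty$); (b) the $r_k$'s are pairwise orthogonal via Lemma~\ref{lem:structural}; (c) the intervals $\bigcup_{n\in F_k}I_{2n+j}^i$ across different $k$'s satisfy the gap condition of Notation~\ref{not:autos}, so the $r_k$'s add up strictly to the single projection $r_{\bigcup_k\bigcup_{n\in F_k}I_{2n+j}^i}$. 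All three are achievable by passing to a sufficiently sparse subsequence, since the relevant indices diverge.

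Next I would apply nonmeagerness. The ideal $\SI$ is nonmeager by Theorem~\ref{thm:lifting}, and $\SI'_\rho:=\{S\subseteq\NN\colon\bigcup_{n\in S}[M_{\min I^i_{2n+j}-1},M_{\max I^i_{2n+j}+1}]\in\SI_\rho\}$ is nonmeager since it is the pullback of the nonmeager ideal $\SI_\rho$ along a map with pairwise disjoint finite fibers. The intersection $\SI\cap\SI'_\rho$ is then nonmeager, and treating the (thinned) $F_k$'s as blocks of a partition of $\NN$ into finite intervals yields an infinite $L$ with $S':=\bigcup_{k\in L}F_k\in\SI\cap\SI'_\rho$. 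I would then define $x'\in\prod(U_{n,g(n)}\cap A_{I_{2n+j}^i})$ by $x'_n=x^{(k)}_n$ when $n\in F_k$ for some $k\in L$, and $x'_n=0$ otherwise; this is a contraction supported on $S'$.

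Finally, since $S'\in\SI$, the lifting property of $\alpha$ together with Proposition~\ref{prop:properties2}(4) yields $\pi_B(\alpha(\Phi^{i,j}_{g,\mathbb I}(x')))=\Lambda(\pi_A(x'))$; since $S'\in\SI'_\rho$, Proposition~\ref{prop:properties1}(3) yields $\pi_B(r_{S'})\Lambda(\pi_A(x'))=\Lambda(\pi_A(x'))$, where by (c) we have $r_{S'}=\sum_{k\in L}r_k$ strictly. Combining, $r_{S'}\alpha(\Phi^{i,j}_{g,\mathbb I}(x'))-\alpha(\Phi^{i,j}_{g,\mathbb I}(x'))\in B$. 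On the other hand, asymptotic additivity together with the orthogonality in (a)--(b) reduces this element to $\sum_{k\in L}(r_ka_k-a_k)$ (the cross terms $r_ka_{k'}$ for $k\neq k'$ vanish), a strictly convergent sum of pairwise orthogonal summands of norm $\geq\epsilon$; such a sum cannot lie in $B$, since its distance to any finite partial sum is $\geq\epsilon$, whereas $B$-membership would force this distance to tend to zero along $(e^B_n)$. The main obstacle is the book-keeping required to arrange the orthogonality in (a)--(c) tightly enough that the cross terms $r_ka_{k'}$ vanish exactly (or at worst after an $\epsilon$-perturbation absorbed into the constants), so that the diagonal computation collapses cleanly.
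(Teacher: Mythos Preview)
Your proposal is correct and follows essentially the same approach as the paper's proof: argue by contradiction, thin the counterexample sets $F_k$ to achieve the needed orthogonalities, use nonmeagerness of $\SI$ and (the pullback of) $\SI_\rho$ to pick an infinite subset $L$ on which the concatenated element has support in both ideals, then combine the lifting property with Proposition~\ref{prop:properties1}(3) to contradict the assumed lower bound on the norms. Your write-up is in fact more explicit about the bookkeeping (the pullback ideal $\SI'_\rho$, the use of Proposition~\ref{prop:properties2}(4), and the orthogonality needed to kill the cross terms $r_k a_{k'}$) than the paper, which simply asserts these steps.
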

\begin{proof}
Let $\Phi=\Phi^{i,j}_{g,\mathbb I}$ and $J_n=I_{2n+j}^i$. To simplify the notation, for $S\subseteq\NN$, let 
\[
q_S=r_{\bigcup_{n\in S}J_n}.
\]
Suppose towards a contradiction that there are finite sets $F_k$ with $\max F_k< \min F_{k+1}$ and $x_k\in\prod_{F_k}(U_{n,g(n)}\cap A_{J_n})$ with 
\[
\norm{q_{F_k}\alpha_{F_k}(\Phi(x_k))-\alpha_{F_k}(\Phi(x_k))}>\epsilon
\]
for all $k$. Recall that an ideal $\SJ$ of $\NN$ which contains all finite sets is nonmeager if and only if for any given strictly increasing sequence of naturals$\{k_i\}$ there is an infinite $L$ with $\bigcup_{i\in L}[k_i,k_{i+1})\in \SJ$, see e.g. \cite[Proposition 2.4]{MKAV.FA}. Therefore, without loss of generality since both $\SI_\rho$ and $\SI$ are nonmeager and dense we can assume that 
\[
\bigcup F_k\in\SI\text{ and }\bigcup_k[M_{\min J_{\min F_k}-1},M_{\max J_{\max F_k}+1}]\in\SI_\rho.
\]
 By passing to a subsequence, since the support of each $\alpha_{F_k}(\Phi(x_k))$ and $q_{F_k}$ is finite, we can assume that 
\[
\alpha_{F_k}(\Phi(x_k))\alpha_{F_{k'}}(\Phi(x_{k'}))=\alpha_{F_k}(\Phi(x_k))q_{F_{k'}}=q_{F_k}q_{F_{k'}}=0,\]
for $k\neq k'$. Let $G=\bigcup F_k$, $q=\sum_kq_{F_k}$ and $x=(x_k)\in B(g,\mathbb I,i,j)$, where $\supp x\in\bigcup F_k$. 
Then 
\[
\norm{\pi_B(\alpha_G(\Phi_{2n+j}^j(x))-q\alpha_G(\Phi_{2n+j}^j(x)))}>\epsilon.
\]
Noticing that 
\[
q=\sum_k q_{F_k}=\sum_kr_{\bigcup_{n\in F_k}J_n}=r_{\bigcup_{n\in F_k}I_{2n+j}^i}.
\]
and by our choice of $G$, $q$, $\SI$, and $\SI_\rho$, this contradicts Proposition~\ref{prop:properties1}(3). This finishes the proof.
\end{proof}

\begin{proposition}\label{prop:cut2}
Let $g\in\NN^\NN$, $\mathbb I\in\mathbb P$, $i\in \{e,o\}$ and $j\in\{0,1\}$. Suppose that $\alpha\colon\prod E_{n,g(n)}\to\mathcal M(B)$ is an asymptotically additive lift of $\Lambda^{i,j}_{g,\mathbb I}$ on elements with support in a nonmeager dense ideal $\SI$ and let
\[
\alpha'_n=r_{I_{2n+j}^i}\alpha_nr_{I_{2n+j}^i}, \,\,\, \alpha'=\sum\alpha'_n.
\]
Then $\alpha'$ is a lift of $\Lambda^{i,j}_{g,\mathbb I}$ on elements of the form $\Phi^{i,j}_{g,\mathbb I}(x)$ for $x\in B(g,\mathbb I,i,j)$ and such that the support of $x$ is in $\SI$.

Moreover if $\alpha$ is skeletal, so is $\alpha'$.
\end{proposition}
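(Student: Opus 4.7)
The moreover clause is the easy part: $\alpha'_n(x)=r_{I^i_{2n+j}}\alpha_n(x)r_{I^i_{2n+j}}$ inherits skeletalness from $\alpha_n$ because $\rho_{E_{n,g(n)},Y_{n,g(n)}}$ is applied before $\alpha_n$, and both $\alpha'_n(0)=0$ and the asymptotic block structure $\alpha'_n\colon E_{n,g(n)}\to(e^B_{k'_n}-e^B_{j'_n})B(e^B_{k'_n}-e^B_{j'_n})$ with $j'_n\to\infty$ survive since $r_{I^i_{2n+j}}$ itself is a finite sum of $p_k$'s landing in appropriate cornered subalgebras of $B$, by the definition of $r_I$ and Lemma~\ref{lem:structural}.

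For the main assertion, fix $x=(x_n)\in B(g,\mathbb I,i,j)$ with $\supp(x)\in\SI$ and write $y_n=\phi_{n,g(n)}(x_n)$. Then
\[
\alpha(\Phi^{i,j}_{g,\mathbb I}(x))-\alpha'(\Phi^{i,j}_{g,\mathbb I}(x))=\sum_n z_n,\qquad z_n:=\alpha_n(y_n)-r_{I^i_{2n+j}}\alpha_n(y_n)r_{I^i_{2n+j}},
\]
the sum being strictly convergent. Since $\alpha$ already lifts $\Lambda^{i,j}_{g,\mathbb I}$ on $\SI$-supports, it suffices to show $\sum z_n\in B$. Applying Lemma~\ref{lemma:cut1} to singletons $F=\{n\}$ gives, for every $\epsilon>0$ and $n$ sufficiently large, both $\|\alpha_n(y_n)-r_{I^i_{2n+j}}\alpha_n(y_n)\|<\epsilon$ and $\|\alpha_n(y_n)-\alpha_n(y_n)r_{I^i_{2n+j}}\|<\epsilon$; combining with the triangle inequality yields $\|z_n\|\to 0$ (uniformly in the choice of $y_n\in\phi_{n,g(n)}[U_{n,g(n)}\cap A_{I^i_{2n+j}}]$). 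Moreover each $z_n$ actually lies in $B$, because $\alpha_n(y_n)\in B$ and $r_{I^i_{2n+j}}\in B$ (a finite sum of $p_k$'s, each in $B$ by construction of $\rho$).

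It remains to upgrade ``$\sum z_n$ strictly convergent with $\|z_n\|\to 0$, $z_n\in B$'' to ``$\sum z_n\in B$''. This is where one uses the block/orthogonality structure from Lemma~\ref{lem:structural}: after passing to the even-odd decomposition $\NN=\{2k\}\sqcup\{2k+1\}$, the $p_\ell$'s appearing in $r_{I^i_{2n+j}}$ for $n$ of fixed parity are $\{M_m\}$-separated enough that for $|n-n'|\ge 2$ the corresponding $r_{I^i_{2n+j}}$'s are pairwise orthogonal (one inflates $|I_n|\ge 3$ or coarsens $\mathbb I$ if needed). By Proposition~\ref{prop:properties1}(1) and the range structure of $\alpha_n$ inherited from $\Lambda^{i,j}_{g,\mathbb I}$, the $z_n$'s for $n$ of fixed parity can thus be sorted into finitely many subsums, each consisting of terms in pairwise orthogonal corners of $B$; such a subsum has norm $\sup_n\|z_n\|$ on its tail, so converges in norm to an element of $B$. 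Summing the finitely many pieces yields $\sum z_n\in B$, hence $\pi_B\alpha'(\Phi^{i,j}_{g,\mathbb I}(x))=\pi_B\alpha(\Phi^{i,j}_{g,\mathbb I}(x))=\Lambda^{i,j}_{g,\mathbb I}(\tilde\Phi^{i,j}_{g,\mathbb I}(\pi_A(x)))$, as required.

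\textbf{Main obstacle.} The delicate step is the final one: upgrading strict convergence to membership in $B$. Norm decay $\|z_n\|\to 0$ alone does not guarantee $\sum z_n\in B$, so one must extract genuine orthogonality among the $z_n$'s from the combinatorics of the $M_n$'s fixed in Lemma~\ref{lem:structural} together with the $|I_n|\ge 3$ constraint built into $\mathbb P$. This is precisely the purpose of the even/odd partition $\mathbb I^e,\mathbb I^o$ and the $j\in\{0,1\}$ parameter introduced in Notation~\ref{not:autos}.
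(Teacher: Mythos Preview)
Your first reductions are sound: the skeletal clause is immediate, and applying Lemma~\ref{lemma:cut1} with $F=\{n\}$ does give $\|z_n\|\to 0$. The gap is in the last paragraph. You claim that the $z_n$'s can be sorted into finitely many subsums living in pairwise orthogonal corners of $B$, citing Proposition~\ref{prop:properties1}(1) and ``the range structure of $\alpha_n$ inherited from $\Lambda^{i,j}_{g,\mathbb I}$''. Neither of these gives what you need. Proposition~\ref{prop:properties1}(1) concerns corners of $A$, not $B$. More importantly, the only range information on $\alpha_n$ is that $\alpha_n$ lands in $(e^B_{k_n}-e^B_{j_n})B(e^B_{k_n}-e^B_{j_n})$ with $j_n\to\infty$; the definition of asymptotically additive places no constraint on $k_n$, so the intervals $[j_n,k_n]$ need not split into finitely many pairwise disjoint families. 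The orthogonality of the $r_{I^i_{2n+j}}$'s (which you correctly extract from Lemma~\ref{lem:structural} and $|I_n|\ge 3$) controls the $\alpha'_n$-part of $z_n$ but says nothing about the $\alpha_n$-part. So ``$\|z_n\|\to 0$ and $z_n\in B$'' does not, by itself, force $\sum z_n\in B$.

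The paper's argument is genuinely different. It does not attempt to prove $\sum z_n\in B$ directly. Instead it argues by contradiction: if $\|\pi_B(\alpha(\Phi(x))-\alpha'(\Phi(x)))\|>\epsilon$, one extracts finite intervals $F_k$ with $\min F_k\to\infty$ and $\|\alpha_{F_k}-\alpha'_{F_k}\|>\epsilon$. Lemma~\ref{lemma:cut1} applied to $F_k$ (not to singletons) gives $\|\alpha_{F_k}-q_{F_k}\alpha_{F_k}q_{F_k}\|$ small, so the problem reduces to bounding the cross-terms
\[
q_{F_k}\alpha_{F_k}q_{F_k}-\sum_{n\in F_k}q_n\alpha_n q_n=\sum_{n\neq m}q_n\alpha_{F_k}q_m+\sum_n q_n\alpha_{F_k\setminus\{n\}}q_n.
\]
Here the orthogonality of the $q_n$'s is used, together with a $C^*$-trick (compute $zz^*$ and use that it becomes a supremum over $n$) and further applications of Lemma~\ref{lemma:cut1} to subsets $G\subsetneq F_k$, to show each piece has norm $<\epsilon/10$. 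The essential point your argument misses is precisely this cross-term analysis: passing from $q_{F}\alpha_{F}q_{F}$ to $\sum_{n\in F}q_n\alpha_n q_n$ is where the work lies, and the orthogonality that matters is that of the $q_n$'s, not of the $z_n$'s.
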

\begin{proof} Let $\Phi=\Phi^{i,j}_{g,\mathbb I}$ and $J_n=I_{2n+j}^i$. As before, let 
\[
q_S=r_{\bigcup_{n\in S}J_n}
\] if $S\subseteq\NN$, so that $q_n=r_{J_n}$. Since $|I_n|\geq 3$, $q_nq_m=0$ whenever $n\neq m$, and therefore $\alpha'$ is well defined and asymptotically additive. Notice that with this notation, 
\[
\alpha'_{F}=\sum_{n\in F}\alpha'_n=\sum_{n\in F}q_n\alpha_nq_n.
\]
 Moreover if each $\alpha_n$ is completely determined by finitely many values, so is $\alpha'_n$, hence if $\alpha$ is skeletal, so is $\alpha'$.

Let $\epsilon>0$ and suppose that $x\in B(g,\mathbb I,i,j)$ is such that 
\[
\norm{\pi_B(\alpha(\Phi(x))-\alpha'(\Phi(x)))}>\epsilon.
\]
Then there are finite intervals $F_k\subseteq\NN$ with $\min F_k\to\infty$ such that 
\begin{equation}\label{eqn1}\tag{$\star$}
\norm{\alpha_{F_k}(\Phi(x))-\alpha_{F_k}'(\Phi(x))}>\epsilon
\end{equation}
 for all $k\in\NN$. By passing to a subsequence we can assume that $\max F_k<\min F_{k+1}$
By Lemma~\ref{lemma:cut1} for all $k$ and all $G_k\subseteq F_k$ we have that
\begin{equation}\label{eqn2}\tag{$\star\star$}
\norm{\alpha_{G_k}(\Phi(x))-q_{G_k}\alpha_{G_k}(\Phi(x))q_{G_k}}<\epsilon^2/1000.
\end{equation}
With
\[
y_k=q_{F_k}\alpha_{F_k}(\Phi(x))q_{F_k}-\sum_{n\in F_k}q_n\alpha_{n}(\Phi(x))q_n=q_{F_k}\alpha_{F_k}(\Phi(x))q_{F_k}-\alpha'_{F_k}(\Phi(x)),
\]
we have therefore that $\norm{y_k}>\epsilon/2$ for all $k$, by \eqref{eqn1}, \eqref{eqn2} and the triangle inequality.
 
 Notice that since $q_{F_k}=\sum_{n\in F_k}q_n$ and $\alpha_{F_k}=\sum_{n\in F_k}\alpha_n$, then
\[
y_k=\sum_{n<m\in F_k}q_n\alpha_{F_k}(\Phi(x))q_m+\sum_{n>m\in F_k}q_n\alpha_{F_k}(\Phi(x))q_m+\sum_{n\in F_k}q_n\alpha_{F_k\setminus\{n\}}q_n.
\]
Suppose that 
\[
z_k=\sum_{n, m \in F_k\colon n<m}q_n\alpha_{F_k}(\Phi(x))q_m
\]
 is such that $\norm{z_k}>\epsilon/10$. Since $q_nq_m=0$ for all $n,m\in F_k$ and $k\in\NN$, we have that 
\[
\norm{z_kz_k^*}=\norm{\sum_n q_n\alpha_{F_k}(\Phi(x))(\sum_{n, m \in F_k, \, n<m}q_m^2)\alpha_{F_k}(\Phi(x))q_n}>\epsilon^2/100
\]
Since all the summands are orthogonal to each other, 
\[
\norm{z_kz_k^*}=\max_n\norm{q_n\alpha_{F_k}(\Phi(x))(\sum_{n, m \in F_k, \, n<m}q_m^2)\alpha_{F_k}(\Phi(x))q_n}>\epsilon^2/100
\]
Pick $\bar n$ such that $\norm{q_{\bar n}\alpha_{F_k}(\Phi(x))(\sum_{m>\bar{n}}q_m^2)\alpha_{F_k}(\Phi(x))q_{\bar n}}>\epsilon^2/100$. By~(\ref{eqn1}) we have that 
\[
\norm{\alpha_{\bar n}(\Phi(x))-\alpha_{\bar n}(\Phi(x))q_{\bar n}}<\epsilon^2/1000
\]
 and 
 \[
 \norm{\alpha_{F_k\setminus\bar n}(\Phi(x))-q_{F_k\setminus\bar n}\alpha_{F_k\setminus\bar n}(\Phi(x))}<\epsilon^2/1000.
 \] 
The fact that $\alpha_{F_k}(\Phi(x))=\alpha_{F_k\setminus\bar n}(\Phi(x))+\alpha_{\bar n}(\Phi(x))$ and the triangle inequality lead to a contradiction. The same arguments shows that 
\[
\norm{\sum_{n>m\in F_k}q_n\alpha_{F_k}(\Phi(x))q_m}, \norm{\sum_{n\in F_k}q_n\alpha_{F_k\setminus n}(\Phi(x))q_n}<\epsilon/10,
\]
 giving therefore $\norm{y_k}<\epsilon/2$, a contradiction.
\end{proof}

Suppose that $g\in\NN^\NN$ and $\mathbb I\in\mathbb P$. By Theorem~\ref{thm:lifting} and Propositions~\ref{prop:cut2}, there are maps $\alpha^{i,j}$ and nonmeager dense ideals $\SI_{i,j}$, for $i\in\{e,o\}$ and $j\in\{0,1\}$, such that $\alpha^{i,j}\colon\prod E_{n,g(n)}\to\mathcal M(B)$ is a skeletal lift of $\Lambda$ on elements of $\Phi^{i,j}_{g,\mathbb I}(B(g,\mathbb I,i,j))$ whose support is in $\SI_{i,j}$ where the range of each $\alpha^{i,j}_n$ is included in $r_{I_{2n+j}^i}Br_{I_{2n+j}^i}$. Suppose that $x\in B(g,\mathbb I,e,0)\cap B(g,\mathbb I,e,1)$. Then we can modify $\alpha^{o,0}$ so that 
\[
\alpha^{o,0}(\Phi^{o,0}_{g,\mathbb I}(x))=\alpha^{e,0}(\Phi^{e,0}_{g,\mathbb I}(x)).
\]
Since the finite intersection of finitely many dense nonmeager ideals is still a dense and nonmeager ideal, this can be done pointwise on every $x$, and we would still have that $\alpha^{o,0}$ is a skeletal lift of $\Lambda$ on elements in $\Phi^{o,0}_{g,\mathbb I}(B(g,\mathbb I,o,0))$ whose support is in a dense and nonmeager ideal. By doing so for every pair $i,j$ we can assume that
\begin{equation}\label{eqn22}\tag{$\star\star$}
x\in B(g,\mathbb I,i,j)\cap B(g,\mathbb I,i',j')\Rightarrow \alpha^{i,j}(\Phi^{i,j}_{g,\mathbb I}(x))=\alpha^{i',j'}(\Phi^{i',j'}_{g,\mathbb I}(x))
\end{equation}

Recall that the finite-dimensional Banach spaces $E_{n,m}$ were define before Proposition~\ref{prop:properties2}.
\begin{notation}\label{notation:thespace}
Let $\mathcal X=\{(g,\mathbb I,\bar\alpha)\}$ where 
\begin{itemize}
\item $g\in\NN^{\NN\uparrow}$, $\mathbb I\in\mathbb P$ and, $\bar\alpha=(\alpha^{i,j})_{i\in\{e,o\},j\in\{0,1\}}$,
\item $\alpha^{i,j}\colon \prod E_{n,g(n)}\to\mathcal M(B)$ is a skeletal map,
\item $\alpha^{i,j}$ is a lift of $\Lambda$ on elements of $\Phi^{i,j}_{g,\mathbb I}(B(g,\mathbb I,i,j))$ whose support is a nonmeager dense ideal $\SI_{g,\mathbb I,\alpha,i,j}$, where the range of each $\alpha^{i,j}_n$ is included in $r_{I_{2n+j}^i}Br_{I_{2n+j}^i}$, and
\item $\alpha^{i,j}$ respect~(\ref{eqn22}).
\end{itemize}
\end{notation}

For all $g\in\NN^{\NN\uparrow}$ and $\mathbb I\in\mathbb P$ there is $\bar\alpha=(\alpha^{i,j})_{i,j}$ such that $(g,\mathbb I,\bar\alpha)\in\mathcal X$.  By Proposition~\ref{prop:skeletalpolish}, $\mathcal X$ has a natural Polish topology.
\begin{lemma}\label{lemma:afteracertainpoint}
Let $(g,\mathbb I,\bar \alpha),(g',\mathbb J,\bar\beta)\in \mathcal X$. Then for all $\epsilon>0$ there is $n>0$ such that for all $m>0$ such that if $x\in A_{[n,m]}\cap  B(g,\mathbb I,i,j)\cap B(g',\mathbb J,i',j')$
for some $i,i'\in\{e,o\}$ and $j,j'\in\{0,1\}$ then 
\[
\norm{\alpha^{i,j}(\Phi^{i,j}_{g,\mathbb I}(x))-\beta^{i',j'}(\Phi^{i',j'}_{g',\mathbb J}(x))}<\epsilon.
\]
\end{lemma}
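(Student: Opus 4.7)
The plan is to argue by contradiction. Suppose the conclusion fails: there exist $\epsilon>0$ and sequences $n_k\to\infty$, $m_k\geq n_k$, choices $(i_k,j_k,i'_k,j'_k)\in\{e,o\}^2\times\{0,1\}^2$, and contractions
\[
x_k\in A_{[n_k,m_k]}\cap B(g,\mathbb I,i_k,j_k)\cap B(g',\mathbb J,i'_k,j'_k)
\]
with $\norm{\alpha^{i_k,j_k}(\Phi^{i_k,j_k}_{g,\mathbb I}(x_k))-\beta^{i'_k,j'_k}(\Phi^{i'_k,j'_k}_{g',\mathbb J}(x_k))}\geq\epsilon$. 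By pigeonhole on the sixteen possible quadruples, I pass to a subsequence on which $(i_k,j_k,i'_k,j'_k)$ is constantly equal to some $(i,j,i',j')$. I then thin further so that the intervals $[n_k,m_k]$ are so widely spaced that: (a) the components of $x_k$ in both product decompositions $B(g,\mathbb I,i,j)$ and $B(g',\mathbb J,i',j')$ sit on disjoint index sets as $k$ varies; and (b) invoking Lemma~\ref{lem:structural} together with the divergence $j_n\to\infty$ from Definition~\ref{defin:asadd}, the elements $\alpha^{i,j}(\Phi^{i,j}_{g,\mathbb I}(x_k))$ and $\beta^{i',j'}(\Phi^{i',j'}_{g',\mathbb J}(x_k))$ all lie in pairwise orthogonal corners $(e^B_{u_k}-e^B_{v_k})B(e^B_{u_k}-e^B_{v_k})$ of $B$ with $v_k\to\infty$.

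I then set $x=\sum_k x_k\in\mathcal M(A)$. The disjointness just arranged guarantees that $x$ lies simultaneously in $B(g,\mathbb I,i,j)$ and $B(g',\mathbb J,i',j')$, with its components obtained by concatenating those of the $x_k$'s. Using that the intersection of two dense ideals is itself a dense ideal (given an infinite set, refine first inside $\SI_{g,\mathbb I,\alpha,i,j}$ and then inside $\SI_{g',\mathbb J,\beta,i',j'}$, mimicking the subsequence trick used in the proof of Lemma~\ref{lemma:cut1}), a final thinning of the subsequence arranges the supports of $x$ in both product structures to lie in the corresponding ideals. By the defining property of $\alpha^{i,j}$ and $\beta^{i',j'}$ in Notation~\ref{notation:thespace}, both $\alpha^{i,j}(\Phi^{i,j}_{g,\mathbb I}(x))$ and $\beta^{i',j'}(\Phi^{i',j'}_{g',\mathbb J}(x))$ then lift $\Lambda(\pi_A(x))$, so their difference belongs to $B$.

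On the other hand, asymptotic additivity combined with the disjointness of components yields the exact identity
\[
\alpha^{i,j}(\Phi^{i,j}_{g,\mathbb I}(x))-\beta^{i',j'}(\Phi^{i',j'}_{g',\mathbb J}(x))=\sum_{k}\bigl(\alpha^{i,j}(\Phi^{i,j}_{g,\mathbb I}(x_k))-\beta^{i',j'}(\Phi^{i',j'}_{g',\mathbb J}(x_k))\bigr),
\]
where the $k$-th summand has norm $\geq\epsilon$ and lives in a corner of $B$ with index window $[v_k,u_k]$. Such an element cannot lie in $B$: if it did, there would be $N$ with $\norm{(1-e^B_N)y(1-e^B_N)}<\epsilon/2$, yet any $k$ with $v_k>N$ supplies a summand of norm $\geq\epsilon$ sitting inside $(1-e^B_N)B(1-e^B_N)$, contradicting the estimate. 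This contradicts the conclusion of the previous paragraph and finishes the proof.

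I expect the main technical obstacle to be the bookkeeping in the first paragraph: a single spacing bound $n_{k+1}\gg m_k$ must be chosen to force simultaneously disjointness in the $\mathbb I$-decomposition, in the $\mathbb J$-decomposition, and of the induced $B$-side supports via Lemma~\ref{lem:structural}, while keeping enough flexibility to then land supports inside the prescribed ideals by passing to further subsequences. Once this combinatorial setup is arranged, the rest is the standard ``tail sum cannot be compact'' argument that recurs throughout \S\ref{section:general}.
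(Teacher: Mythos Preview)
Your argument is correct and follows essentially the same route as the paper's proof: contradict, thin to a subsequence with disjoint $[n_k,m_k]$'s, arrange supports to land in the relevant ideals, sum up, and observe that the resulting element yields a quotient norm at least $\epsilon$ while both skeletal maps must lift $\Lambda(\pi_A(x))$. Your explicit pigeonhole step to fix $(i,j,i',j')$ and your remark that the supports live in two \emph{different} product indexings (for $\mathbb I$ and for $\mathbb J$) are details the paper leaves implicit; the one imprecision is that the refinement into the ideals uses their \emph{nonmeagerness} (the interval characterization from \S\ref{subsec:ST}), not merely density, but since you invoke the subsequence trick from Lemma~\ref{lemma:cut1} this is only a wording issue.
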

\begin{proof}
Suppose that there are $(g,\mathbb I,\bar\alpha), (g',\mathbb J,\bar\beta)$, $\epsilon>0$, sequences $n_k<m_k$, and contractions $x_k\in A_{[n_k,m_k]}$ leading to a contradiction where $n_k\to\infty$. By going to a subsequence we can assume that $n_k<m_k<n_{k+1}$ for all $k$, and, by nonmeagerness of $\SI_{g,\mathbb I,\alpha,i,j}\cap\SI_{g',\mathbb J,\beta,i,j}$, that $\bigcup [n_k,m_k]\in \SI_{g,\mathbb I,\alpha,i,j}\cap\SI_{g',\mathbb J,\beta,i,j}$. Let $x=\sum x_k$. Since $x_kx_{k'}=0$, $x$ is a contraction and $x\in B(g,\mathbb I,i,j)\cap B(g',\mathbb J,i',j')$, but
\[
\norm{\pi_B(\alpha^{i,j}(\Phi^{i,j}_{g,\mathbb I}(x))-\beta^{i',j'}(\Phi^{i',j'}_{g',\mathbb J}(x)))}\geq\epsilon,
\]
a contradiction to the choice of $\SI_{g,\mathbb I,\alpha,i,j}$ and $\SI_{g',\mathbb J,\beta,i,j}$.
\end{proof}
We define the main colouring to which we will apply $\OCA_{\infty}$.
Let  
\[
[\mathcal X]^2=L_0^n\sqcup L_1^n
\] where 
\[
\{(g,\mathbb I,\bar\alpha),(g',\mathbb J,\bar\beta)\}\in L_0^n
\]
 if and only if there are $m\in\NN$, $i,i'\in\{e,o\}$, $j,j'\in\{0,1\}$ and a positive contraction 
\[
x\in e_mAe_m\cap B(g,\mathbb I,i,j)\cap B(g,\mathbb J,i',j')
\]
with
\[
\norm{\alpha^{i,j}(\Phi^{i,j}_{g,\mathbb I}(x))-\beta^{i',j'}(\Phi^{i',j'}_{g',\mathbb J}(x))}>2^{-n}.
\]
Each $L_0^n$ is open in the Polish topology of $[\mathcal X]^2$ given by Proposition~\ref{prop:skeletalpolish}. Recall that the order $<_1$ on $\mathbb P$ was defined by 
\[
\mathbb I<_1\mathbb J\iff\exists n_0\forall n\geq n_0\exists m (I_n\cup I_{n+1}\subseteq J_m\cup J_{m+1}).
\]
Recall that if $g$ and $g'$ are functions in $\NN^\NN$, we write $g\leq_* g'$ if there is $n_0$ such that $g(n)\leq g'(n)$ whenever $n\geq n_0$.
\begin{lemma}\label{lemma:OCAfirstalternativecomplicated}
If $\mathfrak b>\omega_1$ then for all $n$ there is no uncountable $L_0^n$-homogeneous set.
\end{lemma}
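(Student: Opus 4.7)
Assume for contradiction that $\{X_\xi : \xi < \omega_1\} \subseteq \mathcal{X}$ is uncountable and $L_0^n$-homogeneous, with $X_\xi = (g_\xi, \mathbb{I}_\xi, \bar\alpha_\xi)$. The strategy is to use $\mathfrak{b} > \omega_1$ to uniformize the dominating data, then use Lemma~\ref{lemma:afteracertainpoint} through a fixed intermediary $X_{\xi_0}$ to confine any potential $L_0^n$-witness to the bounded region $e_N A e_N$, and finally exploit separability of $B$ to find two members of the family whose local data are so close that no such witness can exist, contradicting homogeneity.

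Since $\mathfrak{b} > \omega_1$, fix $g^* \in \NN^{\NN\uparrow}$ that $\leq^*$-dominates every $g_\xi$ and, coding partitions by $n \mapsto \max I_n$, an $\mathbb{I}^* \in \mathbb{P}$ with $\mathbb{I}_\xi \leq_1 \mathbb{I}^*$ for all $\xi$. Countable pigeonhole produces an uncountable $T_1 \subseteq \omega_1$ and $k_0 \in \NN$ such that, for every $\xi \in T_1$, both dominations hold past index $k_0$ (with a uniform $\leq_1$-threshold) and the restrictions $g_\xi \rs k_0$ and $\mathbb{I}_\xi \rs k_0$ are constant. Fix $\xi_0 \in T_1$; for each $\eta \in T_1$, Lemma~\ref{lemma:afteracertainpoint} applied to $X_{\xi_0}, X_\eta$ with $\epsilon = 2^{-n-3}$ yields a threshold $N_\eta \in \NN$, and a further pigeonhole --- simultaneously over the $16$ possible witness tuples $(i_\eta, j_\eta, i'_\eta, j'_\eta)$ --- refines $T_1$ to an uncountable $T_2$ on which $N_\eta = N$ and the tuple of witness indices are constant. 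Enlarge $N$ so that it is far past any uniform $\leq_1$-threshold and sits at an endpoint of an $\mathbb{I}^*$-interval, so that below-$N$ and above-$N$ splittings are compatible with every $\mathbb{I}_\xi$-decomposition for $\xi \in T_2$ up to finitely many boundary intervals that can be absorbed into $[0,N]$.

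For each $\eta \in T_2$, the $L_0^n$-witness $x_\eta$ for the pair $\{\xi_0, \eta\}$ decomposes along $\mathbb{I}_{\xi_0}$ (via the map $\Phi^{i,j}_{g_{\xi_0},\mathbb{I}_{\xi_0}}$) into a below-$N$ piece and an above-$N$ band piece in $A_{[N+1, m_\eta]}$; asymptotic additivity and Lemma~\ref{lemma:afteracertainpoint} bound the above-$N$ contribution by $2^{-n-3}$, forcing the below-$N$ piece to witness disagreement $> 2^{-n-1}$. Now the restriction of each $\alpha_\eta^{i,j}$ to inputs supported in $e_N A e_N$ is, by the skeletal property with respect to the fixed dense system $\bar Y$, determined by finitely many values in the separable $\Cstar$-algebra $B$ --- namely $\alpha_{\eta, k}^{i,j}(y)$ for $y$ in the finite grid $Y_{k, g_\eta(k)}$ and $k$ ranging over the (finitely many) indices with $I_{2k+j}^i \cap [0, M_N] \neq \emptyset$. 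Since every $g_\eta(k) \leq g^*(k)$ and the $\mathbb{I}_\eta$-data in the relevant range take only finitely many values, one more finite pigeonhole refines $T_2$ to an uncountable $T_3$ on which all the relevant grids and partition-pieces are literally identical. The ``local datum'' of $X_\eta$, $\eta \in T_3$, thus lies in a fixed finite product of copies of $B$, which is a separable metric space; so we may pick $\eta_1 \neq \eta_2 \in T_3$ whose local data agree coordinatewise to within any preassigned $\delta > 0$.

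To derive the contradiction, we check that no $x$ can witness $\{\eta_1, \eta_2\} \in L_0^n$. Given $(i, j, i', j')$ and a positive contraction $x \in e_m A e_m$ in the required $B$-intersection for $\{\eta_1, \eta_2\}$, split $x = y + z$ with $y \in e_N A e_N$ and $z \in A_{[N+1, m]}$: the $y$-contribution to the lift-disagreement is bounded by a constant multiple of $\delta$ (using the near-orthogonality of the range projections $r_{I_{2k+j}^i}$ from Lemma~\ref{lem:structural}, together with the coincidence of the local data on $T_3$), and the $z$-contribution is bounded by $2\cdot 2^{-n-3} = 2^{-n-2}$ via the triangle inequality through $\alpha_{\xi_0}$. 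Taking $\delta$ small enough drives the total disagreement strictly below $2^{-n}$ for every admissible $x$, contradicting $L_0^n$-homogeneity. The main obstacle is the triangle-inequality step just invoked: the witness for $\{\eta_1, \eta_2\}$ need not lie in any prescribed $B$-set for $X_{\xi_0}$, so one must first re-decompose the band piece $z$ along $\mathbb{I}_{\xi_0}$ using Lemma~\ref{lemma:filtration}, and only then apply Lemma~\ref{lemma:afteracertainpoint} summand by summand, absorbing the finitely many boundary-straddling intervals into the below-$N$ encoding.
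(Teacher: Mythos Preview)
Your overall strategy---dominate via $\mathfrak b>\omega_1$, use Lemma~\ref{lemma:afteracertainpoint} through an intermediary to control tails, then separability for the initial segment---is exactly the paper's, but your choice of intermediary creates a gap you do not close. You route the triangle inequality through a family member $X_{\xi_0}$, whereas the paper constructs a \emph{fresh} element $(\hat g,\hat{\mathbb I},\hat{\bar\beta})\in\mathcal X$ at a pair $(\hat g,\hat{\mathbb I})$ that $\leq^*\times\leq_1$-dominates the entire family. This is possible precisely because $\mathcal X$ projects onto all of $\NN^{\NN\uparrow}\times\mathbb P$ (the remark after Notation~\ref{notation:thespace}, via Theorem~\ref{thm:lifting} and Proposition~\ref{prop:cut2}); you found $(g^*,\mathbb I^*)$ but never built an $\mathcal X$-element there, and that omission is the missing idea.

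The reason your fix fails: for the pair $\{\eta_1,\eta_2\}$ the witness $x$ lies in $B(g_{\eta_1},\mathbb I_{\eta_1},i,j)\cap B(g_{\eta_2},\mathbb I_{\eta_2},i',j')$, but there is no reason for the band piece $z$ (or any re-decomposition of it) to lie in any $B(g_{\xi_0},\mathbb I_{\xi_0},\cdot,\cdot)$, since $\mathbb I_{\xi_0}$ and $\mathbb I_{\eta_1}$ are $\leq_1$-incomparable. If you re-decompose $z$ along $\mathbb I_{\xi_0}$ the resulting pieces are no longer supported on single $\mathbb I_{\eta_1}$-blocks, so Lemma~\ref{lemma:afteracertainpoint} (which requires simultaneous membership in both $B$-sets) does not apply ``summand by summand.'' Domination is exactly what makes this work: when $\mathbb I_{\eta_1}\leq_1\hat{\mathbb I}$ and $g_{\eta_1}\leq^*\hat g$, each $\mathbb I_{\eta_1}$-block of $x$ sits inside an $\hat{\mathbb I}$-block and the $U$-conditions are inherited, so $x$ automatically lands in some $B(\hat g,\hat{\mathbb I},i'',j'')$ and the triangle inequality through $\hat{\bar\beta}$ goes through directly. (Your third paragraph, analysing the $\{\xi_0,\eta\}$-witnesses, is a detour: those witnesses are unrelated to the $\{\eta_1,\eta_2\}$-witness you must ultimately defeat.) Once you replace $\xi_0$ by a dominating fresh element, the rest of your write-up---including the further pigeonholing on initial segments up to the threshold and the separability argument for the skeletal data---lines up with the paper's proof.
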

\begin{proof}
We argue by contradiction and suppose that there is $Z\subseteq\mathcal X$ such that $|Z|=\omega_1$ and $Z$ is $L_0^n$-homogeneous for some $n\in\NN$. Let 
\[
Z'=\{g\colon \exists \mathbb I,\bar\alpha ((g,\mathbb I,\bar\alpha)\in Z)\}.
\]
 Set $\epsilon=2^{-n}$. Since $\mathfrak b>\omega_1$ we can find $\hat g$ and $\hat {\mathbb I}$ such that if $(g,\mathbb I,\bar\alpha)\in Z$ then $\mathbb I<_1\hat{\mathbb I}$ and $g\leq_*\hat g$.
By the definition of $\leq_*$ and $\leq_1$, we can refine $Z$ to an uncountable subset of it (which we will call $Z$ to avoid redundant notation) such that there is $n_0$ so that whenever $(g,\mathbb I,\bar\alpha)\in Z$ then if $n\geq n_0$ then $g(n)<\hat g(n)$ and there is $m$ (depending on $\mathbb I)$ such that $I_n\cup I_{n+1}\subseteq \hat I_m\cup \hat I_{m+1}$.

Using Theorem~\ref{thm:lifting} and Propositions~\ref{prop:cut2} and \ref{prop:liftonall}, fix $\hat{\bar\beta}$ such that $(\hat g,\hat {\mathbb I},\hat{\bar\beta})\in\mathcal X$.
By Lemma~\ref{lemma:afteracertainpoint}, for all $(g,\mathbb I,\bar\alpha)\in Z$ there is $n_g\geq n_0$ such that whenever $m>n_g$, $i\in\{e,o\}$, $j\in\{0,1\}$ and a contraction $x\in A_{[n_g,m]}\cap B(g,\mathbb I,i,j)\cap B(g,\hat{\mathbb I},i',j')$ for some $i,i'\in\{e,o\}$ and $j,j'\in\{0,1\}$ then 
\begin{eqnarray}\label{eqo1}
\norm{\alpha^{i,j}(\Phi^{i,j}_{g,\mathbb I}(x))-\hat \beta^{i',j'}(\Phi^{i',j'}_{\hat g,\hat{\mathbb I}}(x))}<\frac{\epsilon}{4}.
\end{eqnarray}
Again refining $Z$ to an uncountable subset of it, we can assume that $n_1=n_g=n_{g'}$ for all $g,g'\in Z'$, and, by a further refinement, that for all $n\leq 4n_1+8$ and $(g,\mathbb I,\bar\alpha),(g',\mathbb J,\bar\beta)\in Z$ we have that $g(n)=g'(n)$ and $I_n=J_n$. Since $I^i_{2n+j}=J^i_{2n+j}$ whenever $n\leq n_1$ and the spaces of all skeletal maps $\gamma\colon\prod_{n\leq 2n_1} E_{n,g(n)}\to A$ is separable, we can assume that 
\begin{eqnarray}\label{eqo2}
\norm{\alpha^{i,j}_n-\beta^{i,j}_n}<\frac{\epsilon}{2}.
\end{eqnarray}
for all $i,j$, whenever $(g,\mathbb I,\bar\alpha),(g',\mathbb J,\bar\beta)\in Z$. This is our final refinement.  

Let $\overline m=\max I_{2n_1+2}$ for some (indeed, any!) $\mathbb I$ such that $(g,\mathbb I,\bar\alpha)\in Z$. Suppose that $(g,\mathbb I,\bar\alpha)$ and $(g',\mathbb J,\bar\beta)$ are in $Z$. Let  $k'\in\NN$, $i,i'\in\{e,o\}$, $,j,j'\in\{0,1\}$ and $x\in e_{k'}Ae_{k'}$ be witnessing that $\{(g,\mathbb I,\bar\alpha),(g',\mathbb J,\bar\beta)\}\in L_0^n$, $x$ is a positive contraction such that
\[
x\in B(g,\mathbb I,i,j)\cap B(g,\mathbb J,i',j')
\]
with
\[
\norm{\alpha^{i,j}(\Phi^{i,j}_{g,\mathbb I}(x))-\beta^{i',j'}(\Phi^{i',j'}_{g',\mathbb J}(x))}>2^{-n}.
\]
Since $\alpha^{i,j}(\Phi^{i,j}_{g,\mathbb I}(x))$ agrees with $\alpha^{i',j'}(\Phi^{i,j}_{g,\mathbb I}(x))$ and similarly $\beta^{i',j'}(\Phi^{i',j'}_{g',\mathbb J}(x))$ agrees with $\beta^{i,j}(\Phi^{i',j'}_{g',\mathbb J}(x))$ by definition of $\mathcal X$ (see Equation~(\ref{eqn22})), we can assume that $i=i'$ and $j=j'$. Fix $k<k'$ with $x\in A_{[k,k']}$. We will get a contradiction by cases:
\begin{itemize}
\item if $k'\leq \overline m$, then by our choice of $\bar m$, we have that $x\in \prod_{n\leq n_1} A_{I^i_{2n+j}}\cap \prod_{n\leq n_1} A_{J^{i}_{2n+j}}$. Since $n_1\leq \bar m=\max I_{2n_1+2}$, we have that $\Phi_{g,\mathbb I}^{i,j}(x)=\Phi_{g',\mathbb J}^{i,j}(x)$ and $\norm{\alpha^{i,j}_{[0,n_1]}-\beta^{i,j}_{[0,n_1]}}<\epsilon/2$, and we have therefore a contradiction to that $x$ is a witness for $\{(g,\mathbb I,\bar\alpha),(g',\mathbb J,\bar\beta)\}\in L_0^n$.
\item if $k\geq n_1$. Notice that by our choice of $n_1$, we have that 
\[
\norm{\alpha^{i,j}(\Phi^{i,j}_{g,\mathbb I}(x))-\hat \beta^{i,j}(\Phi^{i,j}_{\hat g,\hat{\mathbb I}}(x))}<\frac{\epsilon}{4}
\]
and
\[
\norm{\beta^{i,j}(\Phi^{i,j}_{g',\mathbb J}(x))-\hat \beta^{i,j}(\Phi^{i,j}_{\hat g,\hat{\mathbb I}}(x))}<\frac{\epsilon}{4}
\]
by Equation~\eqref{eqo1}. The triangle inequality leads to a contradiction.
\item if $k<n_1< \overline m<k'$ then since $n_1<\max I_{n_1}<\max I_{2n_1}<\overline m$ and $x\in\prod A_{I_{2n+j}^i}$ there is $k''$ such that $k<n_1<k''< \overline m<k'$ and contractions $x_1\in\prod A_{I_{2n+j}^i}\cap A_{[k,k'']}$, $x_2\in \prod A_{I_{2n+j}^i}\cap A_{[k'',k']}$ such that $x=x_1+x_2$. In particular
\[
\norm{\alpha^{i,j}(\Phi^{i,j}_{g,\mathbb I}(x))-\beta^{i,j}(\Phi^{i,j}_{g',\mathbb J}(x))}=\max_{\ell=1,2}\{\norm{\alpha^{i,j}(\Phi^{i,j}_{g,\mathbb I}(x_\ell))-\beta^{i,j}(\Phi^{i,j}_{g',\mathbb J}(x_\ell))}\}.
\]
The first two observations lead to a contradiction.\qedhere
\end{itemize}
\end{proof}

The nonexistence of uncountable $L_n^0$-homogeneous sets in $\mathcal X$ has several consequences: we will use it to obtain all our main results.
\subsection{The proof of Theorem~\ref{thmi:alltrivialnoncomm}}\label{sub:thmalltriv}
In this subsection $\Lambda\colon \mathcal Q(A)\to\mathcal Q(B)$ is assumed to be an isomorphism. First, we need a technical lemma.

\begin{lemma}\label{lemma:nonmeagernoncomm}
Let $I_n\subseteq\NN$ be finite intervals such that $\max I_n+2<\min I_{n+1}$, and let $a\in\mathcal M(A)$ and $b\in\prod A_{I_n}$ be positive contractions. If there is a nonmeager dense ideal $\SI$ such that $\pi_A(ab_S)=\pi_A(b_S)$ for all $S\in\SI$, then $\pi_A(ab)=\pi_A(b)$.
Similarly, if $\pi_A(ab_S)=0$ for all $S\in\SI$, then $\pi_A(ab)=0$.
\end{lemma}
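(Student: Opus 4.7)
The plan is to argue by contradiction. Suppose $\pi_A(ab)\neq\pi_A(b)$, equivalently $c:=(1-a)b\notin A$; I will produce $S\in\SI$ with $(1-a)b_S\notin A$, directly contradicting $S\in\SI$. The spacing hypothesis $\max I_n+2<\min I_{n+1}$ forces the $f_{I_n^+}$ to be pairwise orthogonal, whence $b_nb_m=0$ for $n\neq m$ and $(b^{>m})^2=\sum_{n>m}b_n^2$, where $b^{>m}:=\sum_{n>m}b_n$. Since $b_n(1-e_{\max I_m})=0$ for $n\leq m$ while $b_n(1-e_{\max I_m})=b_n$ for $n>m$ (by direct calculation with the $e_n$'s), one gets $(1-a)b(1-e_{\max I_m})=(1-a)b^{>m}$; consequently the sequence $\norm{(1-a)b^{>m}}^2=\norm{(1-a)(b^{>m})^2(1-a)}$ is monotonically decreasing in $m$ and tends to $0$ precisely when $c\in A$. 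The assumption $c\notin A$ therefore yields $\epsilon>0$ with $\norm{(1-a)b^{>m}}>\epsilon$ for every $m$.

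I next convert this operator-theoretic bound into finitely-supported witnesses. The partial sums $\sum_{n\in(m,M]}b_n^2$ increase strictly to $(b^{>m})^2$ as $M\to\infty$; conjugation by the positive contraction $(1-a)$ preserves the positive order, and since the norm of a positive multiplier $X$ equals $\sup_\xi\langle\xi,X\xi\rangle/\norm{\xi}^2$, a Dini-type argument gives $\norm{(1-a)b_{(m,M]}}\uparrow\norm{(1-a)b^{>m}}$ as $M\to\infty$. Fixing $\epsilon'<\epsilon$, I inductively choose integers $m_1<M_1<m_2<M_2<\cdots$ with $m_{k+1}>M_k$ and $\norm{(1-a)b_{J_k}}>\epsilon'$, where $J_k:=(m_k,M_k]$; these intervals $J_k$ are pairwise disjoint by construction.

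I now invoke nonmeagerness of $\SI$. Partition $\NN$ into consecutive finite intervals $E_1<E_2<\cdots$ with $J_k\subseteq E_k$ (filling in any gaps between $M_k$ and $m_{k+1}$). By the partition characterization of meagerness, nonmeagerness supplies an infinite $L\subseteq\NN$ such that $S:=\bigcup_{k\in L}E_k$ belongs to $\SI$. Given any $n_0$, pick $k\in L$ with $\min J_k>n_0$, which exists since $L$ is infinite; then $J_k\subseteq S\cap(n_0,\infty)$, and hence
\[
(b_S^{>n_0})^2=\sum_{n\in S,\,n>n_0}b_n^2\;\geq\;\sum_{n\in J_k}b_n^2=b_{J_k}^2.
\]
Conjugating by $(1-a)$ preserves this positive-operator inequality, so
\[
\norm{(1-a)b_S^{>n_0}}\;\geq\;\norm{(1-a)b_{J_k}}\;>\;\epsilon'.
\]
Since $(1-a)b_S(1-e_{\max I_{n_0}})=(1-a)b_S^{>n_0}$ (by the same calculation as above applied inside $S$) and $n_0$ is arbitrary, we obtain $\norm{(1-a)b_S(1-e_N)}>\epsilon'$ for the unbounded set of $N$ of the form $\max I_{n_0}$, ruling out convergence to $0$. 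Therefore $(1-a)b_S\notin A$, contradicting $S\in\SI$. The second assertion of the lemma follows by the identical argument with $(1-a)$ replaced by $a$ throughout.

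The principal technical point is the monotone-norm-convergence step $\norm{(1-a)b_{(m,M]}}\uparrow\norm{(1-a)b^{>m}}$ used to extract the finitely-supported witnesses $b_{J_k}$: strict convergence of positive multipliers does not in general imply norm convergence, and the step relies on combining the increasing order of the partial sums $\sum_{n\in(m,M]}b_n^2$ with the vector-state representation of the norm of a positive multiplier. Everything else is a direct manipulation of the block structure, and the key conceptual input is the positive-order inequality $(b_S^{>n_0})^2\geq b_{J_k}^2$, whose validity is enabled precisely by the pairwise orthogonality of the $b_n$'s coming from $\max I_n+2<\min I_{n+1}$.
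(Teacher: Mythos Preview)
Your argument is correct and reaches the same contradiction as the paper, but the execution differs. The paper works with two-sided compressions: it chooses intervals $K_n$ refining the $J_n=[\min I_n-1,\min I_{n+1}-1]$, finds infinitely many $n_i$ with $\norm{f_{K_{n_i}}(ab-b)f_{K_{n_i}}}>\epsilon/2$, and then uses nonmeagerness to place the relevant index set in $\SI$, getting a direct contradiction from $\pi_A(ab_{L'}-b_{L'})=0$. You instead work with one-sided tails $(1-a)b^{>m}$ and exploit the positive-order inequality $(b_S^{>n_0})^2\geq b_{J_k}^2$, which is enabled by the orthogonality of the $b_n$'s. Your route is arguably cleaner in that it avoids the two-sided $f_K$ machinery and makes the role of the spacing hypothesis more transparent; the paper's route is more direct and avoids any appeal to representations.

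The one step that deserves a fuller justification is your ``Dini-type'' claim $\norm{(1-a)b_{(m,M]}}\uparrow\norm{(1-a)b^{>m}}$. As you note, strict convergence alone does not give this. What makes it work is that in any faithful \emph{nondegenerate} representation $\pi\colon A\to\mathcal B(H)$, strict convergence in $\mathcal M(A)$ implies SOT convergence on $H$ (by Cohen factorization every $\xi\in H$ is of the form $\pi(a)\eta$, so $X_M\xi=\pi(X_Ma)\eta\to\pi(Xa)\eta=X\xi$). Once you have SOT convergence of the increasing net $(1-a)(\sum_{m<n\leq M}b_n^2)(1-a)$ to $(1-a)(b^{>m})^2(1-a)$, the interchange $\sup_M\sup_\xi=\sup_\xi\sup_M$ over vector states gives the norm convergence you need. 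With that remark added, your proof is complete.
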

\begin{proof}
Suppose not and let $\epsilon>0$ such that $\norm{\pi_A(ab-b)}>\epsilon$. Let $J_n=[\min I_n-1,\min I_{n+1}-1]$. We can then find intervals $K_n$ such that each $J_i$ is a subset of $K_n$ for some $n$, and there is an infinite set $\{n_i\}$ with $\norm{f_{K_{n_i}}(ab-b)f_{K_{n_i}}}>\epsilon/2$. By going to a subsequence we can assume that $n_i+1<n_{i+1}$. By nonmeagerness of $\SI$ we can find an infinite set $L$ such that $L'=\bigcup_{i\in L}\{k\colon J_k\subseteq K_{n_i}\}\in \SI$. Then, since $\pi_A(ab_{L'}-b_{L'})=0$, we have that for all $i\in L$, $\norm{f_{K_{n_i}}(ab-b)f_{K_{n_i}}}<\epsilon/4$, since 
\[
\lim_{i\in L}\norm{f_{K_{n_i}}(ab-b)f_{K_{n_i}}}\leq \norm{(ab_{\{k\colon J_k\subseteq K_i\}}-b_{\{k\colon J_k\subseteq K_i\}})f_{K_{n_i}}}=0.
\]
This is a contradiction.

For the second statement, note that if $\pi_A(ab_S)=0$, then $\pi_A((1-a)b_S)=\pi_A(b_S)$ for all $S\in\SI$. Hence $\pi_A((1-a)b)=\pi_A(b)$, and therefore $\pi_A(-ab)=0$.
\end{proof}

\begin{proposition}\label{prop:liftonall}
Let $\mathbb I\in\mathbb P$, $g\in\NN^\NN$ and $i\in\{e,o\}$, $j\in\{0,1\}$. Suppose that $\alpha=\sum_n\alpha_n\colon\prod E_{n,g(n)}\to \mathcal M(B)$ is an asymptotically additive lift of $\Lambda^{i,j}_{g,\mathbb I}$ on elements with support in a nonmeager dense ideal $\SI$, and that the range of each $\alpha_n$ is included in $r_{I_{2n+j}^i}Br_{I_{2n+j}^i}$. If $x\in B(g,\mathbb I,i,j)$ is positive then 
\[
\pi_B(\alpha(\Phi^{i,j}_{g,\mathbb I}(x)))=\Lambda(\pi_A(x)).
\]
\end{proposition}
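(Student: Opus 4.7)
The plan is to mimic the proof of Lemma~\ref{lemma:nonmeagernoncomm}, exploiting the block structure of $y := \alpha(\Phi^{i,j}_{g,\mathbb I}(x))$ in $\mathcal M(B)$. Writing $\Phi=\Phi^{i,j}_{g,\mathbb I}$, $J_n=I^i_{2n+j}$, and $y_n=\alpha_n(\phi_{n,g(n)}(x_n))\in r_{J_n}Br_{J_n}$, I would first use the hypothesis $|I_n|\geq 3$ together with Lemma~\ref{lem:structural} to show that the $r_{J_n}$ are pairwise orthogonal: for $n\neq m$, the index ranges $[M_{\min J_n-1},M_{\max J_n+1})$ and $[M_{\min J_m-1},M_{\max J_m+1})$ lie in $M$-bands differing by at least $2$. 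Consequently the $y_n$ are pairwise orthogonal, and combined with asymptotic additivity of $\alpha$ one can pick intervals $T_n$ in the $(e^B_k)$-indexing with $\max T_n+2<\min T_{n+1}$ such that $y_n\in\overline{f^B_{T_n}Bf^B_{T_n}}$, exhibiting $y$ as an element of $\prod B_{T_n}$.

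Suppose for contradiction $\|\Lambda(\pi_A(x))-\pi_B(y)\|>\epsilon$. Fix a lift $z\in\mathcal M(B)$ of $\Lambda(\pi_A(x))$ and put $d=y-z$. Group the $T_n$ into consecutive blocks $W_m=\bigcup_{n\in N_m}T_n$ (still pairwise orthogonal in the $(e^B_k)$-indexing), and extract a subsequence $m_k$ with $\max N_{m_k}+1<\min N_{m_{k+1}}$ and $\|f^B_{W_{m_k}}df^B_{W_{m_k}}\|>\epsilon/2$. Using nonmeagerness of $\SI$, intersected (via $\MA_{\aleph_1}$ and the countable intersection property for nonmeager ideals) with the ideal $\{T\subseteq\NN\colon \bigcup_{n\in T}(M_{\min J_n-2},M_{\max J_n+1}]\in\SI_\rho\}$, I would find an infinite $L\subseteq\NN$ for which both $L'=\bigcup_{k\in L}N_{m_k}\in\SI$ and the $K$-indices of the complement $\NN\setminus L'$ lie in $\SI_\rho$.

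The hypothesis of the proposition then gives $\pi_B(y_{L'})=\Lambda(\pi_A(x\restriction L'))$, so $y_{L'}-z_{L'}\in B$ for any lift $z_{L'}$ of $\Lambda(\pi_A(x\restriction L'))$. Block orthogonality yields $f^B_{W_{m_k}}y_{L'}f^B_{W_{m_k}}=y_{N_{m_k}}=f^B_{W_{m_k}}yf^B_{W_{m_k}}$ for $k\in L$, and since $y_{L'}-z_{L'}\in B$ has its support at infinity in the $(e^B_k)$-indexing, $\|f^B_{W_{m_k}}(y-z_{L'})f^B_{W_{m_k}}\|\to0$. For the remaining piece $\|f^B_{W_{m_k}}(z-z_{L'})f^B_{W_{m_k}}\|$, I would exploit that $\Lambda$ is a $^*$-homomorphism and that $h=\sum_{n\notin L'}f_{(J_n)^+}$ satisfies $h\cdot(x-x\restriction L')=x-x\restriction L'$: the $\SI_\rho$ condition on the $K$-indices of $\NN\setminus L'$ yields a lift $\tilde h=\rho(\chi_{K})\in\mathcal M(B)$ of $\Lambda(\pi_A(h))$ supported in the bands of the $p_k$ with $k\notin\bigcup_{n\in L'}(M_{\min J_n-2},M_{\max J_n+1}]$. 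Since these bands are disjoint from the $W_{m_k}$'s for $k\in L$ by construction, $f^B_{W_{m_k}}\tilde h=0$, which forces $\|f^B_{W_{m_k}}(z-z_{L'})f^B_{W_{m_k}}\|\to0$, yielding the required contradiction.

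The main obstacle is this last step: arranging the lift $\tilde h$ to be effectively supported away from the $W_{m_k}$'s for $k\in L$. This forces the simultaneous control of the two ideals $\SI$ and $\SI_\rho$ through Martin's Axiom, and is where the detailed $M_n$-bookkeeping set up in Notation~\ref{not:autos} and Lemma~\ref{lem:structural} does its real work; the rest of the argument is a fairly direct adaptation of Lemma~\ref{lemma:nonmeagernoncomm}.
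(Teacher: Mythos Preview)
There is a genuine gap in your last step. You claim you can find an infinite $L$ with $L'=\bigcup_{k\in L}N_{m_k}\in\SI$ \emph{and} with the $K$-indices of $\NN\setminus L'$ lying in $\SI_\rho$. But $\SI$ and $\SI_\rho$ are proper ideals: if $L'$ is small (in $\SI$), then $\NN\setminus L'$ is large, and its $K$-index set is the complement of something in $\SI_\rho$, not an element of $\SI_\rho$. Intersecting $\SI$ with the ideal $\{T:\bigcup_{n\in T}(M_{\min J_n-2},M_{\max J_n+1}]\in\SI_\rho\}$ only gives you control over the $K$-indices of $L'$, not of its complement. Consequently you have no lift $\tilde h$ of $\Lambda(\pi_A(h))$ with controlled support, and the argument that $\|f^B_{W_{m_k}}(z-z_{L'})f^B_{W_{m_k}}\|\to 0$ collapses. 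The difficulty is real: knowing $\Lambda$ on an ideal of supports tells you nothing directly about $\Lambda(\pi_A(x\restriction(\NN\setminus L')))$.

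The paper's proof avoids this by two devices you have not used. First, it exploits that in this subsection $\Lambda$ is an \emph{isomorphism}: one pulls $q_S$ back to $p_S=\Lambda^{-1}(\pi_B(q_S))$ in $\mathcal Q(A)$, and then Lemma~\ref{lemma:nonmeagernoncomm} is applied on the $A$-side to $p_S$ and $x_{\NN\setminus S}$ to get $\pi_A(p_S x_{\NN\setminus S})=0$, hence $\pi_B(q_S F(x_{\NN\setminus S}))=0$. Second, rather than a direct block contradiction, the paper introduces the ideal $\SI_x=\{S:q_S(\alpha(\Phi(x))-F(x))\in B\}$, observes that $S\mapsto q_S$ is strictly continuous so $\SI_x$ is Borel, shows $\SI_x$ contains the nonmeager $\SI\cap\SJ$, and then invokes the Talagrand--Jalali-Naini theorem (a Borel nonmeager ideal containing $\Fin$ equals $\mathcal P(\NN)$) to conclude $\NN\in\SI_x$. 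Your approach recovers neither the use of $\Lambda^{-1}$ nor this Borel-ideal dichotomy, and at least one of them appears to be essential.
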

\begin{proof}
To simplify the notation, let $\Phi=\Phi^{i,j}_{g,\mathbb I}$, $\Psi=\Psi^{i,j}_{g,\mathbb I}$ and $J_n=I_{2n+j}^i$. For $S\subseteq\NN$, let $q_S=r_{\bigcup_{n\in S}J_n}$ and note that the map $S\mapsto q_S$ is a continuous map $\mathcal P(\NN)\to\mathcal M(B)$ when $\mathcal P(\NN)$ is endowed with the Cantor topology and $\mathcal M(B)$ has the strict topology. Note moreover that for all $x\in B(g,\mathbb I,i,j)$ we have that $q_S\alpha(\Phi(x))=\alpha(\Phi(x_S))$ and $q_S\alpha(\Phi(x_T))\in A$ if $S\cap T$ is finite.

Let $x\in B(g,\mathbb I,i,j)$ be positive, and 
\[
\SI_x=\{S\subseteq\NN\colon q_S(\alpha(\Phi(x))-F(x))\in B\}.
\]
Since $q_S\in A$ when $S$ is finite, $\Fin\subseteq\SI_x$. 

Let $\SJ$ be the ideal of all $T\subseteq\NN$ such that $T=\bigcup T_n$ where each $T_n$ is an interval and $\bigcup_n[M_{\min J_{\min T_n}-1},M_{\max J_{\max T_n}+1}) \in\SI_{\rho}$. By the characterization of nonmeagerness given in \S\ref{subsec:ST}, $\SJ$ is a nonmeager dense ideal, and so is $\SI_1=\SI\cap\SJ$.

\begin{claim}
If $S\in\SI_1$. Then
\begin{enumerate}
\item $q_S\alpha(\Phi(x))-\alpha(\Phi(x_S))\in B$, and
\item $q_SF(x)-F(x_S)\in B$.
\end{enumerate} 
\end{claim}
\begin{proof}
The first statement is obvious by our choice of $q_S$ and $\alpha$. For the second one, fix $S\in\SI_1$. Note that $\pi_B(q_SF(x))=\pi_B(q_SF(x_S)+q_SF(x_{\NN\setminus S}))$. By our choice of $\SI_1$, we have that $q_SF(x_S)=q_S\alpha(\Phi(x_S))=\alpha(\Phi(x_S))=F(x_S)$, so we are left to prove that $\pi_B(q_SF(x_{\NN\setminus S}))=0$.

Pick $p_S$ such that $\Lambda^{-1}(\pi_B(q_S))=\pi_A(p_S)$. Note that, by our choice of $\SI_1$, whenever $T\subseteq \NN\setminus S$ with $T\in\SI_1$ we have that $\pi_A(p_Sx_T)=0$. By applying Lemma~\ref{lemma:nonmeagernoncomm}, since $\SI_1\cap\mathcal P(\NN\setminus S)$ is nonmeager and dense in $\mathcal P(\NN\setminus S)$, we have that $\pi_A(p_Sx_{\NN\setminus S})=0$, hence $\Lambda(\pi_A(p_Sx_{\NN\setminus S}))=\pi_B(q_SF(x_{\NN\setminus S}))=0$ as required.
\end{proof}
Since $S\in\SI_1$ implies that $\alpha(\Phi(x_S))-F(x_S)\in B$, by the claim we have that $\SI_x\supseteq\SI_1$. Since the latter is nonmeager, so is $\SI_x$. Since $\alpha(\Phi(x))$ and $F(x)$ are fixed, the map $S\mapsto q_S$ is continuous, and $B$ is strictly Borel in $\mathcal M(B)$, we have that $\SI_x$ is Borel. Since the only Borel nonmeager ideal containing all finite sets is $\mathcal P(\NN)$ (\cite{Talagrand.Compacts,Jalali-Naini}), $\SI_x=\mathcal P(\NN)$, in particular $q_{\NN}(\alpha(\Phi(x))-F(x))\in B$. By construction of $q_\NN$, we have that $q_\NN\alpha(\Phi(x))=\alpha(\Phi(x))$. Applying again Lemma~\ref{lemma:nonmeagernoncomm} to $p_\NN$ and $x$, we have that $\pi_A(p_\NN x_S))=\pi_A(x_S))$ for all $S\in\SI_1$, and therefore $\pi_A(p_\NN x)=\pi_A(x)$, meaning $q_\NN F(x)-F(x)\in B$. This concludes the proof.
\end{proof}

By  $\OCA$ and the thesis of Lemma~\ref{lemma:OCAfirstalternativecomplicated} applied to each coloring $L_0^n\sqcup L_1^n$, we can find $L_1^n$-homogeneous sets $\mathcal X_{m,n}$ such that $\mathcal X=\bigcup_m \mathcal X_{m,n}$ for all $n$. As $\leq_*\times\leq_1$ is a $\sigma$-directed partial order\footnote{A partial order $\mathbb Q$ is $\sigma$-directed if every countable set has an upper bound} on $\NN^{\NN\uparrow}\times \mathbb P$, by \cite[Lemma 2.2.2 and 2.4.3]{Farah.AQ}, and the fact that $L_0^n\subseteq L_0^{n+1}$ for all $n$, we can find sets $D_k\subseteq\mathcal Y_k\subseteq\mathcal X$ and an increasing sequence $n_k$ such that
\begin{itemize}
\item $\mathcal Y_{k+1}\subseteq \mathcal Y_k$,
\item $D_k$ is countable and dense in $\mathcal Y_k$, and 
\item $\mathcal Y_k$ is $L_1^k$-homogeneous and the set 
\[
\{(g,\mathbb I)\mid\exists\bar\alpha ((g,\mathbb I,\bar\alpha)\in\mathcal Y_k)\}
\]
 is $\leq_{n_k}\times\leq_1$-cofinal in $\NN^\NN\times\mathbb P$.
\end{itemize}
\begin{lemma}\label{lem:almostthere}
Let $i\in\{e,o\}$, $j\in\{0,1\}$ and $k\in\NN$. Let $x\in\mathcal M(A)$ be a contraction, and suppose that there is a sequence $\langle (g_l,\mathbb I_l,\bar\alpha_l)\rangle\subseteq \mathcal Y_k$, and an increasing sequence of natural numbers $N_l>\max (I_l)_{4l+4}$, where $\mathbb I_l=(I_l)_n$, with the following properties:
\begin{enumerate}
\item\label{Borelc1} $e_{N_l}xe_{N_l}\in B(g_l,\mathbb I_l,i,j)$ and 
\item\label{Borelc2} if $l<l'$ and $\max (I_l)_n\leq N_l$ then $(I_l)_n=(I_{l'})_n$ and $g_l(n)=g_{l'}(n)$.
\end{enumerate}
Then 
\[
\norm{\pi_B(\sum (\alpha^{i,j}_{n})_n(\Phi^{i,j}_{g_{n},\mathbb I_{n}}(y_n))-\Lambda(\pi_A(x))}\leq4\cdot2^{-k}.
\]
where 
\[
y_n=(f_{((I_n)^i_{2n+j})^+}xf_{((I_n)^i_{2n+j})^+}).
\]
\end{lemma}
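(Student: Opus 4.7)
The plan is to fix a sufficiently large $l$ and estimate the difference between the diagonal sum $\sum_n(\alpha^{i,j}_n)_n(\Phi^{i,j}_{g_n,\mathbb I_n}(y_n))$ and the single-pair evaluation $\alpha^{i,j}_l(\Phi^{i,j}_{g_l,\mathbb I_l}(w))$ where $w=\sum_n y_n$, and then match the latter with $\Lambda(\pi_A(x))$ via Proposition~\ref{prop:liftonall}.

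First I would use hypothesis (\ref{Borelc2}) to stabilize the data: since $N_n>\max (I_n)_{4n+4}$, each of the indices appearing in $(I_n)^i_{2n+j}$ (which are $4n+2j,4n+2j+1$ when $i=e$ and $4n+2j+1,4n+2j+2$ when $i=o$) is at most $4n+4$, so the stabilization clause gives $(I_n)^i_{2n+j}=(I_l)^i_{2n+j}$ and $g_n(n)=g_l(n)$ for every $l\ge n$. Consequently each $y_n$ is unambiguously the $n$-th coordinate of an input to either $\Phi^{i,j}_{g_n,\mathbb I_n}$ or $\Phi^{i,j}_{g_l,\mathbb I_l}$. Moreover the intervals $(I_n)^i_{2n+j}\subseteq \NN$ for distinct $n$ are separated by at least two partition intervals of $\mathbb I_l$ (each of length $\ge 3$ by definition of $\mathbb P$), so Lemma~\ref{lem:structural} yields $r_{(I_n)^i_{2n+j}}r_{(I_{n'})^i_{2n'+j}}=0$ for $n\ne n'$. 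Thus the summands $(\alpha^{i,j}_n)_n(\Phi^{i,j}_{g_n,\mathbb I_n}(y_n))$ have pairwise orthogonal support, so both the diagonal sum and its constant-$l$ counterpart converge strictly in $\mathcal M(B)$ and their norms equal the suprema of the individual terms.

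Second, I would apply $L^k_1$-homogeneity of $\mathcal Y_k$ to the pairs $(g_n,\mathbb I_n,\bar\alpha_n)$ and $(g_l,\mathbb I_l,\bar\alpha_l)$ using as witnesses the four positive contractions in the real/imaginary and positive/negative decomposition of $y_n$. By hypothesis (\ref{Borelc1}) and the stabilization above, each of these lies in $e_{N_l}Ae_{N_l}\cap B(g_n,\mathbb I_n,i,j)\cap B(g_l,\mathbb I_l,i,j)$, so the coloring rules out a disagreement larger than $2^{-k}$ on any one piece. Since $y_n$ is supported in the single coordinate $n$ (where $(\alpha^{i,j})_m$ annihilates $0$ for $m\ne n$), these estimates collapse onto the $n$-th summand alone; combining the four pieces via the (approximate) linearity of the skeletal maps gives a per-$n$ bound of $4\cdot 2^{-k}$, and orthogonality promotes this to the global estimate
\[
\norm{\sum_n(\alpha^{i,j}_n)_n(\Phi^{i,j}_{g_n,\mathbb I_n}(y_n))-\sum_n(\alpha^{i,j}_l)_n(\Phi^{i,j}_{g_l,\mathbb I_l}(y_n))}\le 4\cdot 2^{-k}.
\]
By asymptotic additivity the second sum equals $\alpha^{i,j}_l(\Phi^{i,j}_{g_l,\mathbb I_l}(w))$ for $w=\sum_n y_n$. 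Hypothesis (\ref{Borelc1}) together with the approximate-identity properties of $(e_n)$ ensures that $w\in B(g_l,\mathbb I_l,i,j)$ and $w-x\in A$; decomposing $w$ into four positive contractions and applying Proposition~\ref{prop:liftonall} to each gives $\pi_B(\alpha^{i,j}_l(\Phi^{i,j}_{g_l,\mathbb I_l}(w)))=\Lambda(\pi_A(w))=\Lambda(\pi_A(x))$, which combined with the previous estimate yields the claim.

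The main obstacle I foresee is the bookkeeping needed to verify the coherence statements: namely that $w-x\in A$, that the positive-part decomposition of $y_n$ (and of $w$) produces contractions still carried by the correct intervals so as to sit inside the relevant $B$-sets, and that the "linearity gap" introduced by the rounding maps $\phi_{n,g_n(n)}$ is truly negligible (so that combining the four positive pieces absorbs into the bound $4\cdot 2^{-k}$ rather than inflating it). These are routine but delicate verifications, all ultimately resting on condition (\ref{Borelc2}), the spacing built into $\mathbb P$, and the structural orthogonality from Lemma~\ref{lem:structural}; once they are in place, the interaction between diagonal asymptotic sums, $L^k_1$-homogeneity, and Proposition~\ref{prop:liftonall} does the rest.
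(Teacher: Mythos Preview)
Your plan has a genuine gap at the step ``fix a sufficiently large $l$ and show $w\in B(g_l,\mathbb I_l,i,j)$''. The stabilization clause~(\ref{Borelc2}) only tells you that $(I_n)^i_{2n+j}=(I_l)^i_{2n+j}$ and $g_n(n)=g_l(n)$ when $l\ge n$; it says nothing about coordinates $n>l$. Hypothesis~(\ref{Borelc1}) likewise only controls the \emph{truncation} $e_{N_l}xe_{N_l}$, not the full tail of $w=\sum_n y_n$. So for any fixed $l$ there are infinitely many $n>l$ for which there is no reason for $y_n$ to sit in the $n$-th coordinate $U_{n,g_l(n)}\cap A_{(I_l)^i_{2n+j}}$ of $B(g_l,\mathbb I_l,i,j)$; hence neither $\alpha^{i,j}_l(\Phi^{i,j}_{g_l,\mathbb I_l}(w))$ nor Proposition~\ref{prop:liftonall} applies to $w$ via this $l$. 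No single member of the given sequence $(g_l,\mathbb I_l,\bar\alpha_l)$ can absorb the whole diagonal element $w$.

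The paper avoids this by \emph{not} using an element of the sequence. It first assembles the diagonal partition $\mathbb J=\langle (I_n)_n\rangle$, observes that $x\in B(h,\mathbb J,i,j)$ for some $h$, and then invokes the $\leq_*\times\leq_1$-cofinality of $\mathcal Y_k$ (part of the construction of the $\mathcal Y_k$'s) to produce a \emph{new} $(\hat g,\hat{\mathbb I},\bar\beta)\in\mathcal Y_k$ dominating $(h,\mathbb J)$. This is the missing idea: you need a genuinely fresh element of $\mathcal Y_k$ to handle all of $x$ at once. The $L_1^k$-homogeneity is then applied between each $(g_n,\mathbb I_n,\bar\alpha_n)$ and this single $(\hat g,\hat{\mathbb I},\bar\beta)$. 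A secondary point: the factor $4$ does not come from a real/imaginary positive/negative split of $y_n$ (the colouring $K_0^n$ is already defined via all contractions, not only positive ones), but from the fact that relative to $\hat{\mathbb I}$ the element $y_n$ may fall into any of the four sets $B(\hat g,\hat{\mathbb I},i',j')$, $i'\in\{e,o\}$, $j'\in\{0,1\}$; one partitions $\NN$ into the corresponding $Z'_{i',j'}$ and sums the four resulting $2^{-k}$ estimates.
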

\begin{proof}
Let $J_n=(I_n)_n$.  Since $(I_n)_n=(I_{n+1})_n$, we have $\min (I_{n+1})_{n+1}=\max (I_n)_n+1$, hence $\langle J_n\colon n\in\NN\rangle\in\mathbb P$. Since $x\in\prod A_{J_{2n+i}^j}$, there is $h\in\NN^\NN$ such that $x\in B(h,\mathbb J,i,j)$. By $\leq_*\times\leq_1$-cofinality of $\mathcal Y_k$ there are $\hat g\geq_* h$, $\hat {\mathbb I}\geq_1\mathbb J$ and $\bar\beta$ such that $(\hat g,\hat {\mathbb I},\bar\beta)\in\mathcal Y_k$. If $h(n)\nleq\hat g(n)$, or there is no $m$ such that $J_n\cup J_{n+1}\subseteq\hat I_m\cup\hat I_{m+1}$, set $y_n=0$. Note that this happens only on finitely many indices. For $i'\in\{e,o\}$ and $j\in\{0,1\}$, let $Z_{i',j'}=\{n\colon y_n\in B(\hat g,\hat{\mathbb I},i',j')\}$. Then $\bigcup_{i',j'}Z_{i',j'}=\NN$. Let $Z'_{e,0}=Z_{e,0}$ and
\[
Z'_{e,1}=Z_{e,1}\setminus Z_{e,0},\,Z'_{o,0}=Z_{o,0}\setminus (Z_{e,0}\cup Z_{e,1}),\,Z'_{o,1}=Z_{o,1}\setminus (Z_{e,0}\cup Z_{e,1}\cup Z_{o,0}).
\]
Let $y_{i',j'}=\sum_{n\in Z'_{i',j'}}y_n$. By definition we have that 
\[
\pi_B(\beta^{i',j'}(\Phi_{\hat g,\hat{\mathbb I}}^{i',j'}(y_{i',j'})))=\Lambda(\pi_A(y_{i',j'}))
\]
for all $i'\in\{e,o\}$, $j'\in\{0,1\}$, and $\pi_A(x)=\pi_A(\sum_{i',j'}y_{i',j'})$.
Fix $n$ and suppose that $n\in Z'_{i',j'}$. Let $m$ such that $y_n\in U_{m,\hat g(m)}\cap A_{J_{2m+j'}^{i'}}$. Since $\{(g_{n},\mathbb I_{n},\bar\alpha_{n}),(\hat g,\hat I,\bar\beta)\}\in \mathcal Y_k$ and $\mathcal Y_k$ is $L_1^k$-homogeneous, we have that 
\[
\norm{(\alpha^{i,j}_{n})_n(\Phi_{g_{n},\mathbb I_{n}}^{i,j}(y_n))-(\beta^{i',j'})_{m}(\Phi_{\hat g,\hat{\mathbb I}}^{i',j'}(y_n))}<2^{-k}
\]
hence 
\begin{eqnarray*}
\norm{\pi_B(\sum_{n\in Z'_{i',j'}}(\alpha^{i,j}_{n})_n(\Phi_{g_{n},\mathbb I_{n}}^{i,j}(y_n)) -\beta^{i',j'}(\Phi_{\hat g,\hat{\mathbb I}}^{i',j'}(y_{i',j'})))}&=&\\\norm{\pi_B(\sum_{n\in Z'_{i',j'}}(\alpha^{i,j}_{n})_n(\Phi_{g_{n},\mathbb I_{n}}^{i,j}(y_n)))-\Lambda(\pi_A(y_{i',j'}))}&\leq& 2^{-k}.
\end{eqnarray*}
Since $\NN=\sqcup Z'_{i',j'}$ and $\pi_A(x)=\pi_A(\sum_{i',j'}y_{i',j'}$, summing over $i\in\{e,o\}$ and $j\in\{0,1\}$ we have the thesis.
\end{proof}
Recall that 
\[
\Gamma_{\Lambda}=\{(a,b)\in\mathcal M(A)\times\mathcal M(B)\colon\norm{a},\norm{b}\leq 1, \,\,\Lambda(\pi_A(a))=\pi_B(b).\}.
\]
is the graph of $\Lambda$.
\begin{theorem}
Let $(x,y)\in\mathcal M(A)\times\mathcal M(B)$ be a pair of contractions. The following are equivalent:
\begin{enumerate}
\item\label{Borel:inthegraph} $(x,y)\in\Gamma_\Lambda$
\item\label{Borel:analytic} For every $k\in\NN$ there are  contractions $x_{i,j}\in\mathcal M(A)$ and $y_{i,j}\in\mathcal M(B)$, for $i\in\{e,o\}$ and $j\in\{0,1\}$, $\sum_{i,j}\pi_A(x_{i,j})=x$ and $\sum_{i,j}\pi_B(y_{i,j})=\pi_B(y)$ and there are sequences $\langle(g_l,\mathbb I_l,\bar\alpha_l)\rangle\subseteq D_k$ and $N^{i,j}_l$ with $N^{i,j}_l\geq\max (I_l)_{4l+4}$ and satisfying 
\begin{enumerate}[label=(\roman*)]
\item\label{a1} $e_{N^{i,j}_l}x_{i,j}e_{N^{i,j}_l}\in B(g_l,\mathbb I_l,i,j)$ 
\item\label{a2} if $l<l'$ and $\max (I_l)_n\leq \max_{i,j}N^{i,j}_l$ then $(I_l)_n=(I_{l'})_n$ and $g_l(n)=g_{l'}(n)$ and
\item\label{a3} 
\[\norm{\sum (\alpha_{l})_l(\Phi^{i,j}_{f_{l}(l)}(f_{(I^i_{2l+j})^+}x_{i,j}f_{(I^i_{2l+j})^+})))-y_{i,j}}<20\cdot 2^{-k}\]
\end{enumerate}
\item\label{Borel:coanalytic} For all contractions $x_{i,j}\in\mathcal M(A)$ and $y_{i,j}\in\mathcal M(B)$, for $i\in\{e,o\}$ and $j\in\{0,1\}$, if $\pi_A(x)=\sum_{i,j}\pi_A(x_{i,j})$ and for every $k\in\NN$ there are sequences $\langle(g_l,\mathbb I_l,\bar \alpha_l)\rangle\subseteq D_k$ and $N^{i,j}_l$ satisfying \ref{a1}, \ref{a2} and \ref{a3}, then $\pi_B(y)=\sum_{i,j}\pi_B(y_{i,j})$.
\end{enumerate}
\end{theorem}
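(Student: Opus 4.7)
\emph{Strategy.} The plan is to establish the cycle $(1)\Leftrightarrow(2)\Leftrightarrow(3)$. Condition $(2)$ is $\Sigma^1_1$ in $(x,y)$: the quantifier $\exists x_{i,j},y_{i,j},\ldots$ ranges over Polish data (the countable set $D_k$ and the integer parameters $N^{i,j}_l$), and conditions (i)--(iii) are closed in the relevant product of strict topologies. Dually, condition~(3) is $\Pi^1_1$. So the equivalence exhibits $\Gamma_\Lambda\subseteq\ball{\mathcal M(A)}\times\ball{\mathcal M(B)}$ as both analytic and coanalytic, hence Borel by Souslin---this is topological triviality of $\Lambda$ and delivers Theorem~\ref{thmi:alltrivialnoncomm}.

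\emph{Proof of $(1)\Rightarrow(2)$.} Fix a contraction $x$ with $\Lambda(\pi_A(x))=\pi_B(y)$ and $k\in\NN$. Lemma~\ref{lemma:filtration} supplies $g\in\NN^{\NN\uparrow}$, $\mathbb I\in\mathbb P$, and contractions $x_{i,j}\in B(g,\mathbb I,i,j)$ with $x-\sum_{i,j}\Psi^{i,j}_{g,\mathbb I}\circ\Phi^{i,j}_{g,\mathbb I}(x_{i,j})\in A$. Using $\leq_*\times\leq_1$-cofinality of the projection of $\mathcal Y_k$ to $\NN^{\NN\uparrow}\times\mathbb P$ and density of $D_k$ in $\mathcal Y_k$, inductively choose $(g_l,\mathbb I_l,\bar\alpha_l)\in D_k$ and $N^{i,j}_l>\max(I_l)_{4l+4}$ so that each $(g_l,\mathbb I_l)$ refines $(g,\mathbb I)$ through level $N^{i,j}_l$ (forcing $e_{N^{i,j}_l}x_{i,j}e_{N^{i,j}_l}\in B(g_l,\mathbb I_l,i,j)$, which is (i)) and the truncations $(g_l,\mathbb I_l)\rs N^{i,j}_l$ stabilize in $l$, giving (ii). Set
\[
S_{i,j}:=\sum_l(\alpha^{i,j}_l)_l\bigl(\Phi^{i,j}_{g_l,\mathbb I_l}(f_{((I_l)^i_{2l+j})^+}\,x_{i,j}\,f_{((I_l)^i_{2l+j})^+})\bigr).
\]
Lemma~\ref{lem:almostthere} yields $\|\pi_B(S_{i,j})-\Lambda(\pi_A(x_{i,j}))\|\leq 4\cdot 2^{-k}$ for each $(i,j)$; summing and using (1), the quotient norm of $\pi_B(y)-\pi_B(\sum_{i,j}S_{i,j})$ is at most $16\cdot 2^{-k}$. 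Lift this discrepancy to $\eta\in\mathcal M(B)$ with $\|\eta\|<17\cdot 2^{-k}$ and put $y_{e,0}:=S_{e,0}+\eta$, $y_{i,j}:=S_{i,j}$ otherwise; then $\sum_{i,j}\pi_B(y_{i,j})=\pi_B(y)$ and (iii) holds with slack $<20\cdot 2^{-k}$.

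\emph{The remaining implications.} Combining Lemma~\ref{lem:almostthere} with (iii) and the triangle inequality gives the uniform estimate
\[
\bigl\|\textstyle\sum_{i,j}\pi_B(y^{(k)}_{i,j})-\Lambda(\pi_A(x))\bigr\|\leq\textstyle\sum_{i,j}\bigl(4\cdot 2^{-k}+20\cdot 2^{-k}\bigr)<96\cdot 2^{-k},
\]
which handles both $(2)\Rightarrow(1)$ (the identity $\sum\pi_B(y^{(k)}_{i,j})=\pi_B(y)$ is given) and $(1)\Rightarrow(3)$ (we are given $\Lambda(\pi_A(x))=\pi_B(y)$), upon letting $k\to\infty$. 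For $(3)\Rightarrow(1)$, rerun the $(1)\Rightarrow(2)$ construction \emph{without} reference to $y$, simultaneously for all $k$: since $D_{k+1}\subseteq\mathcal Y_{k+1}\subseteq\mathcal Y_k$ and $\mathcal Y_k$ is $L^k_1$-homogeneous, the stage-$k$ sums $S^{(k)}_{i,j}$ can be arranged compatibly so that $\|S^{(k+1)}_{i,j}-S^{(k)}_{i,j}\|\leq C\cdot 2^{-k}$, giving a norm-Cauchy sequence with limit $y_{i,j}\in\mathcal M(B)$. These $y_{i,j}$ satisfy the hypothesis of (3) with $\sum\pi_B(y_{i,j})=\Lambda(\pi_A(x))$, and then (3) forces $\pi_B(y)=\Lambda(\pi_A(x))$.

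\emph{Main obstacle.} The crux is the inductive construction of $(g_l,\mathbb I_l,\bar\alpha_l)\in D_k$ in $(1)\Rightarrow(2)$: hitting the countable $D_k$ while simultaneously (a) refining $(g,\mathbb I)$ deeply enough to obtain (i), (b) respecting $N^{i,j}_l\ge\max(I_l)_{4l+4}$, and (c) locking in the stabilization (ii). $\sigma$-directedness of $(\NN^{\NN\uparrow}\times\mathbb P,\leq_*\times\leq_1)$, the cofinality of $\mathcal Y_k$'s projection (inherited from the $\OCA_\infty$ dichotomy via Lemma~\ref{lemma:OCAfirstalternativecomplicated} under $\bb>\omega_1$), and density of $D_k$ supply the raw ingredients, but the interlock between initial-segment refinement of partitions, cut heights, and $B(g_l,\mathbb I_l,i,j)$-membership must be tracked at every step. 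A secondary difficulty in $(3)\Rightarrow(1)$ is producing the norm-Cauchy control on $S^{(k)}_{i,j}$, which requires compatible choices of inputs across stages in addition to quantitative $L^k_1$-homogeneity.
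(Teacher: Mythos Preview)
Your proposal is essentially correct and follows the paper's route: use Lemma~\ref{lem:almostthere} to pass between the patched sum $\sum_l(\alpha_l)_l(\Phi(\cdots))$ and $\Lambda(\pi_A(x_{i,j}))$, and exploit cofinality of $\mathcal Y_k$ together with density of $D_k$ to produce the required sequences. The implications $(2)\Rightarrow(1)$ and $(1)\Rightarrow(3)$ are handled exactly as in the paper.

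The one substantive difference is your treatment of $(3)\Rightarrow(1)$. The paper avoids the Cauchy construction entirely: it picks any $(g,\mathbb I,\bar\alpha)\in\mathcal X$ with $x_{i,j}\in B(g,\mathbb I,i,j)$, sets $y_{i,j}:=\alpha^{i,j}(\Phi^{i,j}_{g,\mathbb I}(x_{i,j}))$, and invokes Proposition~\ref{prop:liftonall} (which uses that $\Lambda$ is an \emph{isomorphism}) to get $\pi_B(y_{i,j})=\Lambda(\pi_A(x_{i,j}))$ exactly. With $x_{i,j},y_{i,j}$ now fixed once and for all, the hypothesis of~(3) is verified for each $k$ separately by rerunning the density-of-$D_k$ argument from $(1)\Rightarrow(2)$; then~(3) forces $\pi_B(y)=\sum\pi_B(y_{i,j})=\Lambda(\pi_A(x))$. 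Your limit construction is a valid alternative, but it is more work and your Cauchy claim needs care: Lemma~\ref{lem:almostthere} only controls $\pi_B(S^{(k)}_{i,j})$, so to get a genuine $\mathcal M(B)$-norm Cauchy estimate you must argue coordinatewise via $L^k_1$-homogeneity (both elements lie in $\mathcal Y_k$, the inputs at coordinate $l$ can be arranged to coincide, and orthogonality of the $r_{I^i_{2l+j}}$ passes the $2^{-k}$ bound to the sup). This works, but the paper's use of Proposition~\ref{prop:liftonall} makes it unnecessary.
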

\begin{proof}
We first prove that \ref{Borel:inthegraph} implies \ref{Borel:analytic}. By $\leq_*\times\leq_1$-cofinality of $\mathcal Y_k$ there are $g\in\NN^\NN$, $\mathbb I\in\mathbb P$ and $\bar\alpha$ and $x_{i,j}$, for $i\in\{e,o\}$, $j\in\{0,1\}$ such that $x_{i,j}\in B(g,\mathbb I,i,j)$, $x=\sum x_{i,j}$ and $(g,\mathbb I,\bar\alpha)\in\mathcal Y_k$.

Let $y_{i,j}=\alpha^{i,j}(\Phi_{g,\mathbb I}^{i,j}(x_{i,j}))$. Since $(g,\mathbb I,\bar\alpha)\in\mathcal X$ and $x_{i,j}\in B(g,\mathbb I,i,j)$ we have that $\sum_{i,j}\pi_B(y_{i,j})=\sum_{i,j}\Lambda(\pi_A(x_{i,j}))=\Lambda(\pi_A(x))$.

Let $N^{i,j}_l=\max I_{4l+j}^i$. By density of $D_k$ there are $g_l\in\NN^\NN$, $\mathbb I_l\in\mathbb P$ and $\bar\alpha_l$ such that for all $l$, $g_l(n)=g(n)$ and $(I_l)_n=I_n$ whenever $n\leq \max_{i,j} N_l^{i,j}$, and $(g_l,\mathbb I_l,\bar\alpha_l)\in D_k$. Note that for all $i,j$ we have that conditions \ref{Borelc1} and \ref{Borelc2} of Lemma~\ref{lem:almostthere} are satisfied for $x_{i,j}$, and so are \ref{a1} and \ref{a2}. Applying Lemma~\ref{lem:almostthere}, we have condition \ref{a3}.

Assume now \ref{Borel:analytic}. Then conditions~\ref{a1}--\ref{a3} imply that
$\norm{\Lambda(\pi_A(x_{i,j}))-\pi_B(y_{i,j})}\leq 20\epsilon_k$, therefore
\ref{Borel:inthegraph} follows. For this reason, we also have that
\ref{Borel:inthegraph} implies \ref{Borel:coanalytic}. Similarly pick contractions $x\in
\mathcal M(A)$ and $y\in\mathcal M(B)$, and suppose that \ref{Borel:coanalytic} holds. As in \ref{Borel:inthegraph}$\Rightarrow$\ref{Borel:analytic}, we can find $(g,\mathbb I,\bar\alpha)\in Y_k$ and $x_{i,j}\in B(g,\mathbb I,i,j)$ such that $x=\sum_{i,j} x_{i,j}$. Let $y_{i,j}=\alpha^{i,j}(\Phi_{g,\mathbb I}(x_{i,j}))$. By definition of $\mathcal X$ we have that $\Lambda(\pi_A(x_{i,j}))=\pi_B(y_{i,j})$ for all $i,j$, and by \ref{Borel:coanalytic} we have that $\pi_B(y)=\pi_B(\sum_{i,j}y_{i,j})$, hence $\Lambda(\pi_A(x))=\pi_B(y)$, which is \ref{Borel:inthegraph}.
\end{proof}

\begin{proof}[Proof of Theorem~\ref{thmi:alltrivialnoncomm}]
Condition \ref{Borel:analytic} defines an analytic set. Condition \ref{Borel:coanalytic} defines a coanalytic one, so $\Gamma_\Lambda$ restricted to pairs of contractions is Borel, and therefore $\Lambda$ is topologically trivial.
\end{proof}

\section{Rigidity results}\label{section:rigidity}
In this section we work with a simplified version of $\mathcal X$ (see Notation~\ref{notation:thespace}) to prove a technical rigidity result, Theorem~\ref{thm:almostredprod}, for subalgebras of corona $\Cstar$-algebras that resemble reduced products. This will then lead to the proof of Theorems~\ref{thmi:alltrivial}--\ref{thmi:contimgs}.
We keep all notation from \S\ref{section:general}, especially Notation~\ref{not:autos}.

Let $J_n\subseteq\NN$ be finite intervals such that $\max J_n+6<\min J_{n+1}$, and fix $\mathbb I\in\mathbb P$ such that $I_{2n}^e=J_n$. Let
\[
\mathcal X_1=\{(g,\alpha_0)\colon \exists \alpha_1,\alpha_2,\alpha_3 ((g,\mathbb I,\alpha_0,\alpha_1,\alpha_2,\alpha_3)\in\mathcal X)\},
\]
and consider the colouring $[\mathcal X_{1}]^2= K_0^n\cup K_1^n$ where $\{(g,\alpha),(g,\alpha')\}\in K_0^n$ if and only if there are $m\in\NN$ and a contraction $x\in U_{m,g(m)}\cap U_{m,g'(m)}\cap J_m$ such that 
\[
\norm{\alpha(\Phi_{g,\mathbb I}^{e,0}(x))-\alpha'(\Phi_{g',\mathbb I}^{e,0}(x))}>2^{-n}.
\]
\begin{claim}
If $\mathfrak b>\omega_1$, then for all $n$ there is no $K_0^n$-homogeneous set.
\end{claim}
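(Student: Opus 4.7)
The plan is to mimic the proof of Lemma~\ref{lemma:OCAfirstalternativecomplicated}, exploiting two simplifications afforded by our setup: the partition $\mathbb I$ is fixed throughout $\mathcal X_1$ (so only $\mathfrak b > \omega_1$ is needed, with no $\leq_1$-cofinality argument), and a witness to membership in $K_0^n$ is a single contraction in $A_{J_m}$ rather than a sum with arbitrarily large support (eliminating the intermediate splitting case present in Lemma~\ref{lemma:OCAfirstalternativecomplicated}).

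Suppose towards contradiction that $Z \subseteq \mathcal X_1$ is uncountable and $K_0^n$-homogeneous, and set $\epsilon = 2^{-n}$. Let $Z' = \{g : \exists \alpha_0\, (g, \alpha_0) \in Z\}$. Using $\mathfrak b > \omega_1$ fix $\hat g \in \NN^{\NN\uparrow}$ with $g \leq_* \hat g$ for every $g \in Z'$, and refine $Z$ to an uncountable subset on which a single $n_0$ witnesses $g(n) < \hat g(n)$ for all $n \geq n_0$. By Theorem~\ref{thm:lifting} and Proposition~\ref{prop:cut2} produce $\hat{\bar\beta}$ with $(\hat g, \mathbb I, \hat{\bar\beta}) \in \mathcal X$. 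For each $(g, \alpha_0) \in Z$ fix $\bar\alpha$ extending $\alpha_0$ so that $(g, \mathbb I, \bar\alpha) \in \mathcal X$, and apply Lemma~\ref{lemma:afteracertainpoint} to $(g, \mathbb I, \bar\alpha)$ and $(\hat g, \mathbb I, \hat{\bar\beta})$, obtaining $n_g$ such that for every contraction $x$ supported in some $J_m$ with $\min J_m \geq n_g$ and lying in $B(g, \mathbb I, e, 0) \cap B(\hat g, \mathbb I, e, 0)$,
\[
\| \alpha_0(\Phi^{e,0}_{g,\mathbb I}(x)) - \hat\beta^{e,0}(\Phi^{e,0}_{\hat g, \mathbb I}(x)) \| < \epsilon / 4.
\]
Refine once so that $n_g$ equals a fixed constant $n_1$, then again so that $g(n) = g'(n)$ for all $n \leq n_1$ and all $g, g' \in Z'$. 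Using separability of the space of skeletal maps from $\prod_{n \leq n_1} E_{n, g(n)}$ into $\mathcal M(B)$ (Proposition~\ref{prop:skeletalpolish}), refine a final time so that $\|(\alpha_0)_n - (\alpha'_0)_n\| < \epsilon / 2$ for every $n \leq n_1$ and every $(g, \alpha_0), (g', \alpha'_0) \in Z$.

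Now pick distinct $(g, \alpha_0), (g', \alpha'_0) \in Z$ with a witness $m$ and a contraction $x \in U_{m, g(m)} \cap U_{m, g'(m)} \cap A_{J_m}$. Since $x \in A_{J_m}$, the conditions $x \in U_{m, g(m)}$ and $x \in U_{m, g'(m)}$ already give $x \in B(g, \mathbb I, e, 0) \cap B(g', \mathbb I, e, 0)$; since $g(m), g'(m) < \hat g(m)$ when $m \geq n_0$ and $U_{m, \ell} \subseteq U_{m, \ell'}$ for $\ell \leq \ell'$, also $x \in B(\hat g, \mathbb I, e, 0)$. If $m \leq n_1$ then $g(m) = g'(m)$, so $\Phi^{e,0}_{g,\mathbb I}(x)$ and $\Phi^{e,0}_{g',\mathbb I}(x)$ agree on coordinate $m$, and the uniformization $\|(\alpha_0)_m - (\alpha'_0)_m\| < \epsilon / 2$ contradicts $\{(g, \alpha_0), (g', \alpha'_0)\} \in K_0^n$. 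If $m > n_1$, the triangle inequality factoring through $\hat\beta^{e, 0}(\Phi^{e, 0}_{\hat g, \mathbb I}(x))$, with each of the two pieces bounded by $\epsilon/4$, yields $\|\alpha_0(\Phi^{e,0}_{g,\mathbb I}(x)) - \alpha'_0(\Phi^{e,0}_{g', \mathbb I}(x))\| < \epsilon/2 < 2^{-n}$, again contradicting homogeneity.

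The main technical point, as in Lemma~\ref{lemma:OCAfirstalternativecomplicated}, lies in the bookkeeping of the successive refinements: the skeletal data $(g, \alpha_0)$ must uniformize on the initial segment $[0, n_1]$ while the comparison with $\hat{\bar\beta}$ handles the tail. The one point that could conceivably slip is the verification that the witness $x$ lies in $B(\hat g, \mathbb I, e, 0)$, but this reduces to $g(m) < \hat g(m)$ together with the monotonicity $U_{m, \ell} \subseteq U_{m, \ell'}$, which is automatic for $m \geq n_0$.
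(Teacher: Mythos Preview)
Your argument is correct, but it takes a much longer route than the paper. The paper's proof is a two-line reduction: given an uncountable $K_0^n$-homogeneous $Z_1\subseteq\mathcal X_1$, for each $(g,\alpha_0)\in Z_1$ choose some $\bar\alpha$ with $\alpha^{e,0}=\alpha_0$ and $(g,\mathbb I,\bar\alpha)\in\mathcal X$; the resulting set $Z\subseteq\mathcal X$ is then uncountable and $L_0^n$-homogeneous (any $K_0^n$-witness $x\in U_{m,g(m)}\cap U_{m,g'(m)}\cap A_{J_m}$ is, since $J_m=I^e_{2m}$, also an $L_0^n$-witness with $i=i'=e$, $j=j'=0$), contradicting Lemma~\ref{lemma:OCAfirstalternativecomplicated} directly.

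What you do instead is unfold the proof of Lemma~\ref{lemma:OCAfirstalternativecomplicated} in this restricted setting, carrying out the $\hat g$-domination, the refinement to a common $n_1$, the initial-segment uniformization, and the skeletal separability step by hand. This is sound and, as you note, genuinely simpler than the general case (no $\leq_1$-cofinality, no mixed $(i,j)$ cases, no splitting of the witness); but it duplicates work already done. The paper's reduction buys brevity and makes explicit that $K_0^n$ is just the restriction of $L_0^n$ to the slice $\{\cdot\}\times\{\mathbb I\}\times\{\cdot\}$ with $(i,j)=(e,0)$ fixed. Your approach buys a self-contained argument that would survive even if Lemma~\ref{lemma:OCAfirstalternativecomplicated} were not already in hand. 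One small bookkeeping point: you should ensure $n_1\geq n_0$ so that the case $m>n_1$ automatically gives $g(m),g'(m)<\hat g(m)$; this is implicit but worth stating.
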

\begin{proof}
If $Z_1\subseteq\mathcal X_1$, let $Z=\{(g,\mathbb I,\bar\alpha)\colon (g,\alpha^{e,0})\in Z_1\}$. If $Z_1$ is uncountable and $K_0^n$-homogeneous, then $Z$ is uncountable and $L_0^n$-homogeneous, where $L_0^n$ is the colouring defined on $\mathcal X$ after Lemma~\ref{lemma:afteracertainpoint}. This contradicts Lemma~\ref{lemma:OCAfirstalternativecomplicated}.
\end{proof}
For $n\in\NN$, define the partial order $<_{n}$ on $\NN^\NN$ by 
\[
g\leq_n g' \iff \forall m\geq n (g(m)\leq g'(m)).
\]
Since $\leq_*$ is $\sigma$-directed, and $\leq_*$-cofinal sets must be $\leq_n$-cofinal for some $n\in\NN$ (\cite[Lemma 2.4.3]{Farah.AQ}), using the argument reproduced before Lemma~\ref{lem:almostthere}, we can find an increasing sequence $\{n_k\}\subseteq\NN$ and sets $\mathcal Y_k$ with the following properties:
\begin{itemize}
\item $\mathcal Y_0\supseteq \mathcal Y_1\supseteq\cdots$;
\item $\mathcal Y_k$ is $K_1^k$-homogeneous and the set $\{g\colon \exists\alpha_0 ((g,\alpha_0)\in\mathcal Y_k)\}$ is $\leq_{n_k}$-cofinal.
\end{itemize}
For all $n$ with $n_k\leq n <n_{k+1}$, pick a sequence of function $g^{i,n}$ such that there are $\alpha(i,n)$ with $(g^{i,n},\alpha(i,n))\in\mathcal Y_k$ and $g^{i,n}(n)\to\infty$ as $i\to\infty$. For a contraction $x\in A_{J_n}$, define 
\[
m(x)=\min\{m\colon x\in U_{n,m}\}\text{ and }\bar i(x)=\min\{i\colon g^{i,n}(n)\geq m(x)\}.
\]
Define a map 
\[
\gamma_n\colon A_{J_n}\to r_{J_n}Br_{J_n},
\]
by 
\[
\gamma_n(x)=\alpha(\bar i(x),n)\circ\Phi_{g^{\bar i(x),n},\mathbb I}^{e,0}(x)
\]
If $x\in A_{J_n}$ is such that $\norm{x}>1$ let $\gamma_n(x)=\norm{x}\gamma_n(x/\norm{x})$. Define 
\[
\Gamma=\sum\gamma_n\colon \prod A_{J_n}\to\mathcal M(B).
\]
Since $\gamma_n\gamma_m=0$ whenever $n\neq m$ and for each $n$ we have that $\norm{\gamma_n}\leq 2$, $\Gamma$ is well defined.

\begin{lemma}\label{lem:liftingmore}
Let $Z\subseteq \prod A_{J_n}$ be of size $\aleph_1$.  Then there is a nonmeager dense ideal $\SI$ such that 
\[
\pi_B(\Gamma(x_S))=\Lambda(\pi_A(x_S))
\]
for every $x\in Z$ and $S\in\SI$.
\end{lemma}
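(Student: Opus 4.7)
The plan is to construct, for each $x \in Z$, a nonmeager dense ideal $\SI_x \subseteq \mathcal P(\NN)$ on which $\Gamma$ lifts $\Lambda \circ \pi_A$ on the slices $x_S$, and then to set $\SI = \bigcap_{x \in Z} \SI_x$, invoking $\mathfrak g = \mathfrak c > \aleph_1$ under $\MA_{\aleph_1}$ to keep $\SI$ nonmeager and dense.

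Fix a contraction $x \in Z$. Since $\bigcup_m F_m$ is dense in the unit ball of $A$, choose an increasing $h_x \in \NN^\NN$ with $x_n \in U_{n, h_x(n)} \cap A_{J_n}$ for every $n$, so that $x \in B(h_x, \mathbb I, e, 0)$. For each $k$ the $\leq_{n_k}$-cofinality of $\{g : \exists \alpha_0\, (g, \alpha_0) \in \mathcal Y_k\}$ furnishes $(\bar g_{x,k}, \bar\alpha_{x,k}) \in \mathcal Y_k$ with $\bar g_{x,k}(n) \geq h_x(n)$ for all $n \geq n_k$; then $x_{S \cap [n_k, \infty)} \in B(\bar g_{x,k}, \mathbb I, e, 0)$ for every $S$. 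Because $(\bar g_{x,k}, \bar\alpha_{x,k})$ is the $(e,0)$-component of a point of $\mathcal X$, the definition of $\mathcal X$ (via Theorem~\ref{thm:lifting} and Proposition~\ref{prop:cut2}) provides a nonmeager dense ideal $\SI_{x,k}$ such that, for $S \in \SI_{x,k}$,
\[
\pi_B\bigl(\bar\alpha_{x,k}(\Phi^{e,0}_{\bar g_{x,k}, \mathbb I}(x_S))\bigr) = \Lambda(\pi_A(x_S)),
\]
the finite initial segment $x_{S \cap [0, n_k)}$ being absorbed into $B$.

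The main step is a slot-wise comparison of $\Gamma(x_S)$ with $\bar\alpha_{x,k}(\Phi^{e,0}_{\bar g_{x,k}, \mathbb I}(x_S))$. For each $n \geq n_k$ one has $k(n) \geq k$, hence $(g^{\bar i(x_n), n}, \alpha(\bar i(x_n), n)) \in \mathcal Y_{k(n)} \subseteq \mathcal Y_k$, while $(\bar g_{x,k}, \bar\alpha_{x,k}) \in \mathcal Y_k$. Moreover $x_n$ is a contraction in $A_{J_n}$ lying in both $U_{n, g^{\bar i(x_n), n}(n)}$ (by definition of $\bar i(x_n)$) and $U_{n, \bar g_{x,k}(n)}$ (since $\bar g_{x,k}(n) \geq h_x(n)$). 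The $K_1^k$-homogeneity of $\mathcal Y_k$ forces
\[
\norm{\gamma_n(x_n) - (\bar\alpha_{x,k})_n\bigl(\phi_{n, \bar g_{x,k}(n)}(x_n)\bigr)} \leq 2^{-k}.
\]
The gap condition $\max J_n + 6 < \min J_{n+1}$ together with Lemma~\ref{lem:structural} makes the projections $r_{J_n}$ pairwise orthogonal, and both $\gamma_n$ and $(\bar\alpha_{x,k})_n$ take values in $r_{J_n} B r_{J_n}$. Consequently, the differences across $n \in S$ have pairwise orthogonal supports, the strict sum has norm at most $2^{-k}$, and the $n < n_k$ tail is in $B$. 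Combining with the previous display yields $\norm{\pi_B(\Gamma(x_S)) - \Lambda(\pi_A(x_S))} \leq 2^{-k}$ for every $S \in \SI_{x,k}$.

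Set $\SI_x = \bigcap_k \SI_{x,k}$, a countable intersection of nonmeager dense ideals containing $\Fin$, hence itself nonmeager and dense. For $S \in \SI_x$ the inequality above holds for every $k$, yielding $\pi_B(\Gamma(x_S)) = \Lambda(\pi_A(x_S))$. Finally, $\SI = \bigcap_{x \in Z} \SI_x$ is an intersection of $\aleph_1$-many nonmeager ideals each containing $\Fin$; under $\MA_{\aleph_1}$ the equality $\mathfrak g = \mathfrak c > \aleph_1$ keeps $\SI$ nonmeager (and one can pass to a dense subideal if necessary). The principal subtlety is the orthogonality-to-norm step in the third paragraph: without the disjointness of $\{r_{J_n}\}$ supplied by Lemma~\ref{lem:structural}, the pointwise $2^{-k}$ errors would accumulate under strict summation across $n \in S$ and obstruct the $2^{-k}$ bound that drives the intersection argument.
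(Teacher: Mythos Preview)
Your proof is correct and follows essentially the same strategy as the paper: compare $\Gamma$ coordinatewise with a member of $\mathcal Y_k$ via $K_1^k$-homogeneity to get a $2^{-k}$ error on a nonmeager ideal, intersect over $k$, then over $x\in Z$ using $\mathfrak g>\aleph_1$. You are in fact more explicit than the paper at the crucial step---the paper simply asserts ``note that $\SI_x\supseteq\SI$'' without writing out the $K_1^k$-homogeneity comparison and the orthogonality of the $r_{J_n}$ that you spell out.

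The one organizational difference is that the paper first invokes $\mathfrak b>\omega_1$ to find, for each $k$, a \emph{single} $(g_k,\beta_k)\in\mathcal Y_k$ that $\leq_*$-dominates every $h_x$ for $x\in Z$ simultaneously, so that the ideals $\SI_k$ do not depend on $x$; you instead let $(\bar g_{x,k},\bar\alpha_{x,k})$ vary with $x$ and push all the uncountable-intersection work to the final step. Both routes land on the same appeal to $\mathfrak g>\aleph_1$, so this is a matter of taste rather than substance. One minor imprecision: $\MA_{\aleph_1}$ gives $\mathfrak g>\aleph_1$ but not $\mathfrak g=\mathfrak c$ (that needs full $\MA$); the weaker inequality is all you use, so this does not affect the argument.
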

\begin{proof}
For every $x\in Z$, let 
\[
\SI_x=\{S\subseteq\NN\colon \pi_B(\Gamma(x_S))=\Lambda(\pi_A(x_S))\}.
\]
Since $\mathfrak b>\omega_1$ and by $\leq_*$-cofinality of $\mathcal Y_k$, for all $k$ we can find $g_k$ and $\beta_k$ such that $(g_k,\beta_k)\in \mathcal Y_k$ and $\pi_A(x)\in\pi_A(B(g_k,\mathbb I,e,0))$ for all $x\in Z$. By modifying $x$ on finitely many coordinates we can assume that $x\in B(g_k,\mathbb I,e,0)$. Fix nonmeager dense ideals $\SI_k=\SI_{g_k,\mathbb I,\beta_k,e,0}$ as in the definition of $\mathcal X$ and let $\SI=\bigcap\SI_k$, and note that $\SI_x\supseteq\SI$. Since $|Z|=\aleph_1$ and $\mathfrak b>\omega_1$, by \cite[Corollary 7.9]{Blass.HB}, we have that $\bigcap_Z\SI_x$ is nonmeager and dense. This concludes the proof.
\end{proof}
If $\Lambda$ is an isomorphism, then each $\SI_k=\mathcal P(\NN)$, and therefore $\Gamma$ is a lift of $\Lambda$ on $\prod A_{J_n}$ (see Proposition~\ref{prop:liftonall}).
 
Recall the definition of $\epsilon$-$^*$-homomorphism given in \S\ref{section:prel}. From Lemma~\ref{lem:liftingmore} and Proposition~\ref{prop:liftsandalmost} are have the following.
\begin{theorem}\label{thm:almostredprod}
Assume $\OCA+\MA_{\aleph_1}$. Let $A$ and $B$ be separable $\Cstar$-algebras and let $\Lambda\colon\mathcal Q(A)\to\mathcal Q(B)$ be a $^*$-homomorphism. Let $\{e_n^B\}$ be an approximate identity of positive contractions for $B$ with $e_{n+1}^Be_n^B=e_n^B$ for all $n$ and let $\{e_n\}$ be an approximate identity for $A$ satisfying Notation~\ref{not:autos} for $\Lambda$.
Let $J_n$ be finite intervals with $\max J_n+6<\min J_{n+1}$. Then there is a sequence $\epsilon_n$ with $\epsilon_n\to 0$ as $n\to \infty$ and a sequence of mutually orthogonal maps
\[
\gamma_n\colon A_{J_n}\to (e_{k_n}^B-e_{j_n}^B)B(e_{k_n}^B-e_{j_n}^B)
\]
for some $j_n<k_n\in\NN$ where $j_n\to\infty$ as $n\to\infty$, such that 
\[
\gamma_n\text{ is an }\epsilon_n\text{-}^*\text{-homomorphism}.
\]
 If moreover $\Lambda$ is injective, each $\gamma_n$ is $\epsilon_n$-injective. If $\Lambda$ is an isomorphism, $\Gamma=\sum\gamma_n$ is a lift of $\Lambda$ on $\prod A_{J_n}$.
\end{theorem}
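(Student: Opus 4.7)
The plan is to combine Lemma~\ref{lem:liftingmore} with Proposition~\ref{prop:liftsandalmost}, applied to the construction already laid out in the paragraph preceding Lemma~\ref{lem:liftingmore}. Since the gap condition $\max J_n + 6 < \min J_{n+1}$ leaves room for consecutive intervals of length at least $3$, I can fix a partition $\mathbb I \in \mathbb P$ with $I^e_{2n} = J_n$. Starting from this $\mathbb I$, the space $\mathcal X_1$ and the colouring $K_0^n$ are already defined; the absence of uncountable $K_0^n$-homogeneous sets (using $\bb > \omega_1$, a consequence of $\MA_{\aleph_1}$) has been established; and $\OCA_\infty$ combined with $\sigma$-directedness of $\leq_*$ and the inclusions $K_0^n \subseteq K_0^{n+1}$ produces the $K_1^k$-homogeneous, $\leq_{n_k}$-cofinal subsets $\mathcal Y_k$.

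Next, I would define $\gamma_n$ and $\Gamma = \sum \gamma_n$ exactly as in the construction preceding Lemma~\ref{lem:liftingmore}: for each $n \in [n_k, n_{k+1})$, pick witnesses $(g^{i,n}, \alpha(i,n)) \in \mathcal Y_k$ with $g^{i,n}(n) \to \infty$ in $i$, and set $\gamma_n(x) = \alpha(\bar i(x), n) \circ \Phi^{e,0}_{g^{\bar i(x), n}, \mathbb I}(x)$. Because the range of each $\rho'_n$ lies in a subalgebra of the form $(e^B_{k_n} - e^B_{j_n}) B (e^B_{k_n} - e^B_{j_n})$ with $j_n \to \infty$ (this is the asymptotic additivity of $\rho$ w.r.t.\ $\{e_n^B\}$), the same holds for $\gamma_n$. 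The mutual orthogonality $\gamma_n \gamma_m = 0$ for $n \neq m$ follows from Lemma~\ref{lem:structural} together with the gap condition on the $J_n$'s, which forces $r_{J_n} r_{J_m} = 0$.

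For each $x \in \prod A_{J_n}$, apply Lemma~\ref{lem:liftingmore} to the singleton $Z = \{x\}$ to obtain a nonmeager dense ideal $\SI_x$ with $\pi_B(\Gamma(x_S)) = \Lambda(\pi_A(x_S))$ for all $S \in \SI_x$. Combined with orthogonality, this verifies the hypothesis of Proposition~\ref{prop:liftsandalmost} applied to $E_n = A_{J_n}$ (which are $^*$-algebras) and the map $\phi \colon \prod A_{J_n}/\bigoplus A_{J_n} \to \mathcal Q(B)$ induced by $\Lambda$. Since $\Lambda$ is a $^*$-homomorphism, $\phi$ is linear, multiplicative, and $^*$-preserving; thus there is a single sequence $\epsilon_n \to 0$ making each $\gamma_n$ simultaneously $\epsilon_n$-linear, $\epsilon_n$-multiplicative, and $\epsilon_n$-$^*$-preserving, i.e.\ an $\epsilon_n$-$^*$-homomorphism. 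If $\Lambda$ is injective then $\phi$ is injective (the canonical map $\prod A_{J_n}/\bigoplus A_{J_n} \to \mathcal Q(A)$ is injective by the gap condition), and the same proposition supplies $\epsilon_n$-injectivity, possibly after enlarging the $\epsilon_n$.

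The one genuine subtlety is the final clause: when $\Lambda$ is an isomorphism, $\Gamma$ should lift $\Lambda$ on \emph{all} of $\prod A_{J_n}$, not merely on a per-element nonmeager dense ideal. The worry is that $\gamma_n$ depends on $x\rs n$ through the choice of $\bar i(x\rs n)$, so different witnesses $(g^{i,n}, \alpha(i,n))$ could a priori produce different values on the nose in $\mathcal M(B)$. The resolution, already signalled in the text after Lemma~\ref{lem:liftingmore}, is Proposition~\ref{prop:liftonall}: when $\Lambda$ is an isomorphism, each $\alpha(i,n)$ is a lift of $\Lambda^{e,0}_{g^{i,n}, \mathbb I}$ on \emph{every} positive element of $\Phi^{e,0}(B(g^{i,n}, \mathbb I, e, 0))$, not merely on those with support in a nonmeager dense ideal. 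Inspecting the proof of Lemma~\ref{lem:liftingmore}, each $\SI_k$ there can then be taken to be $\mathcal P(\NN)$, so $\SI_x = \mathcal P(\NN)$ for every $x \in \prod A_{J_n}$, giving $\pi_B(\Gamma(x)) = \Lambda(\pi_A(x))$ uniformly. This is the step I would expect to require the most care in the write-up, because it is where the general $^*$-homomorphism case and the isomorphism case diverge.
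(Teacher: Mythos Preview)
Your proposal is correct and follows essentially the same route as the paper: the theorem is stated there as an immediate consequence of Lemma~\ref{lem:liftingmore} and Proposition~\ref{prop:liftsandalmost}, with the isomorphism clause handled (as you note) by the remark just after Lemma~\ref{lem:liftingmore} invoking Proposition~\ref{prop:liftonall}. Your identification of the orthogonality, the range condition on $\gamma_n$, and the one genuine subtlety in the isomorphism case all match the paper's treatment.
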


\subsection{The proof of Theorems~\ref{thmi:alltrivial} and \ref{thmi:contimgs}}\label{subsec:abelianproof}
We fix second countable locally compact spaces $X$ and $Y$ and an injective unital $^*$-homomorphism $\Lambda\colon C(X^*)\to C(Y^*)$, $X^*=\beta X\setminus X$ being the \v{C}ech-Stone remainder of $X$. We denote by $\pi_X\colon C_b(X)\to C(X^*)$ the quotient map (similarly we define $\pi_Y$) and we let $\tilde\Lambda\colon Y^*\to X^*$ be the continuous surjection dual to $\Lambda$. Fix $\{e_n^Y\}$ an approximate identity of positive contractions for $C_0(Y)$ satisfying $e_{n+1}^Ye_n^Y=e_n^Y$ for all $n$. Since $\supp (e_n^Y)\subseteq (e_{n+1}^Y)^{-1}(\{1\})$, $\overline{\supp(e_n^Y)}$ is compact for all $n$.

 Let $\{e_n\}\subseteq C_0(X)$ be an approximate identity satisfying Notation~\ref{not:autos} for $\Lambda$. Let 
 \[
 U^e_n=\supp(e_{16n+8}-e_{16n}),\,\text{ and }U^o_n=\supp(e_{16n+17}-e_{16n+7}).
 \]
 Each $U_n^e$ and $U_n^o$ is open, 
 \[
 \overline{U_n^o}\subseteq U_n^o\cup U_n^e\cup U_{n+1}^e\text{ and }\overline{U_n^e}\subseteq U_n^e\cup U_n^o\cup U_{n-1}^o.
 \]
  For $i\in\{e,o\}$, $\prod C_0(U_n^i)$ and $\prod C_0(U_n^i)/\bigoplus C_0(U_n^i)$  are hereditary $\Cstar$-subalgebras of $C_b(X)$ and $C(X^*)$ respectively.\footnote{This is only true in the commutative setting. For example, the canonical copy of $\ell_\infty/c_0$ is not an hereditary $\Cstar$-subalgebra of $\mathcal Q(H)$, being unital.}
 \begin{lemma}
$C_b(X)=\prod C_0(U_n^e)+\prod C_0(U_n^o)$.
\end{lemma}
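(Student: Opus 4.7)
The plan is to construct a decomposition of the identity, $1 = a + b$ in $C_b(X)$, with $a \in \prod C_0(U_n^e)$ and $b \in \prod C_0(U_n^o)$. Once this is established, for any $f \in C_b(X)$ one writes $f = fa + fb$: since $\prod C_0(U_n^e)$ and $\prod C_0(U_n^o)$ are hereditary $\Cstar$-subalgebras of $C_b(X) = \mathcal M(C_0(X))$ and in particular absorb multiplication by elements of $C_b(X)$, the summands $fa$ and $fb$ lie in the required subalgebras and the lemma follows.

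To produce $a$ and $b$, I would set
\begin{equation*}
\alpha_n := e_{16n+8} - e_{16n}, \qquad \beta_n := e_{16n+16} - e_{16n+8}.
\end{equation*}
From $e_{k+1}e_k = e_k$ and commutativity one deduces $e_j \leq e_k$ and $e_je_k = e_{\min(j,k)}$ whenever $j \leq k$, so each $\alpha_n$ and $\beta_n$ is a positive contraction in $C_0(X)$. By definition $\supp(\alpha_n) = U_n^e$, and monotonicity gives
\[
\{e_{16n+16} > e_{16n+8}\} \subseteq \{e_{16n+17} > e_{16n+7}\} = U_n^o,
\]
so $\supp(\beta_n) \subseteq U_n^o$. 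Since $\overline{U_n^e}, \overline{U_n^o} \subseteq \supp(e_{16n+17})$ is compact, both $\alpha_n \in C_0(U_n^e)$ and $\beta_n \in C_0(U_n^o)$. The relation $e_je_k = e_{\min(j,k)}$ forces $\alpha_n\alpha_m = 0 = \beta_n\beta_m$ for $n \neq m$, so the strict sums $a := \sum_n \alpha_n$ and $b := \sum_n \beta_n$ exist and witness $a \in \prod C_0(U_n^e)$ and $b \in \prod C_0(U_n^o)$.

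The key ingredient is then the telescoping identity $\alpha_n + \beta_n = e_{16(n+1)} - e_{16n}$, which yields
\begin{equation*}
a + b \;=\; \sum_n \bigl(e_{16(n+1)} - e_{16n}\bigr) \;=\; \lim_N e_{16N} \;=\; 1
\end{equation*}
strictly in $C_b(X)$, using that $\{e_k\}$ is an approximate identity for $C_0(X)$. There is no real obstacle here; the proof is essentially a bookkeeping of supports combined with a telescoping sum. The only subtle point is verifying $\supp(\beta_n) \subseteq U_n^o$ rather than a strictly larger set, and this is precisely where the monotonicity $e_j \leq e_k$ (a consequence of $e_{k+1}e_k = e_k$) is used.
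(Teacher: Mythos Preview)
Your argument is correct and follows the same strategy as the paper: produce a partition of unity subordinate to $\{U_n^e, U_n^o\}$ and multiply an arbitrary $f\in C_b(X)$ against it. Your construction is in fact cleaner than the paper's, which builds auxiliary bump functions $h_n^e, h_n^o$ with prescribed plateau values on the sets $K_m=\overline{\supp(e_m)}$; by taking $\alpha_n=e_{16n+8}-e_{16n}$ and $\beta_n=e_{16n+16}-e_{16n+8}$ directly and exploiting the telescoping identity, you bypass that bookkeeping entirely.
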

\begin{proof}
Let $K_n=\overline\supp(e_n)$. Let $h^e_n$ be a positive contraction with 
\[
h^e_n(K_{16n})=h^e_n(X\setminus\intr(K_{16n+8}))=0\text{ and }h^e_n(K_{16n+7}\setminus \intr(K_{16n+1}))=1,
\] 
and $h_n^o$ be a positive contraction with 
\[
h_n^o(K_{16n+1})=f(X\setminus \intr(K_{16n+7}))=0, \,\,\, h_n^o(K_{16(n+1)}\setminus \intr (K_{16n+8}))=1
\]
and
\begin{eqnarray*}
h_n^o(x)=1-h^e_n(x)\,\,\,&\text{ for } &x\in K_{16n+8}\setminus\intr (K_{16n+7}), \\
h_n^o(x)=1-h^e_{n+1}(x)\,\,\, &\text{ for } &x\in K_{16n+17}\setminus\intr (K_{16(n+1)}). 
\end{eqnarray*}
Then for all $h\in C_b(X)$ we have that 
\[
h=\sum h^e_nh+\sum h_n^oh, \,\,\, \sum h^e_nh\in\prod C_0(U_n^e)\text{ and }\sum h_n^oh\in\prod C_0(U_n^o).\qedhere
\]
\end{proof}
By applying Theorem~\ref{thm:almostredprod} to $J^0_n=[16n+1,16n+8]$ and  then to $J^1_n=[16n+8,16n+17]$, we obtain maps 
\[
\Phi^i=\sum\phi_n^i\colon \prod C_0(U_n^i)\to C_b(Y)
\] where 
\[
\phi_n^i\colon C_0(U_n^i)\to C_0(Y).
\]
 If $\Lambda$ is an isomorphism, then
 \[
 \pi_Y(\Phi^i(f))=\Lambda(\pi_X(f))
 \]
for all $f\in \prod C_0(U_n^i)$ and $i\in\{e,o\}$. 

Recall that if $n\neq m$ then $\phi_n^i\phi_m^i=0$. Let $r_n^i$ be the positive contractions such that the range of $\phi_n^i$ is included in $r_n^iC_0(Y)$ as in Notation~\ref{not:autos}. Since $r_n^i\leq (e_{k_n}^Y-e_{j_n}^Y)$ for some $j_n<k_n$, the set $W_n^i=\supp r_n^i$ has compact closure for all $n\in\NN$ and $i\in\{e,o\}$. Since $\phi_n^i$ is an $\epsilon_n$-$^*$-monomorphism for some sequence $\epsilon_n\to 0$, by Theorem~\ref{thm:semrl} there are $^*$-homomorphisms 
\[
\psi_n^i\colon C_o(U_n^i)\to r_n^iC_0(Y)\cong C_0(W_n^i)
\] such that $\norm{\psi_n^i-\phi_n^i}\to 0$ as $n\to\infty$. For a large enough $n$ we have that each $\psi_n^i$ is injective, being close to a $\epsilon_n$-monomorphism whenever $\epsilon_n<\frac{1}{4}$. In case $\Lambda$ is an isomorphism, as $\norm{\psi_n^i-\phi_n^i}\to 0$ if $n\to\infty$, the $^*$-homomorphism
\[
\Psi^i=\sum\psi_n^i
\]
is such that 
 \[
 \pi_Y(\Psi^i(f))=\Lambda(\pi_X(f))
 \]
for all $f\in \prod C_0(U_n^i)$. 

Let $V_n^i=\bigcup_{f\in U_n^i}\supp(\psi_n^i(f))$ and $B_n^i=C_0(V_n^i)$. Then 
\[\Psi^i\circ\pi_X\colon \prod C_0(U_n^i)/\bigoplus C_0(U_n^i)\to\prod B_n^i/\bigoplus B_n^i
\]
 is an embedding, and moreover each $\psi_n^i$ sends an approximate identity for $C_0(U_n^i)$ to an approximate identity for $B_n^i$, and so there exist continuous proper surjections $\gamma_n^i\colon V_n^i\to U_n^i$ dual to $\psi_n^i$.  $\gamma_n^e$ and $\gamma_n^o$ are defined for all but finitely $n$.
 
 
\begin{lemma}\label{lemman0}
There is $n_0\in\NN$ such that if $n,m\geq n_0$ then $\gamma_n^e$ and $\gamma_m^o$ agree on common domains.
\end{lemma}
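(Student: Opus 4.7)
The plan is to argue by contradiction. Suppose no such $n_0$ exists. Then I can extract sequences $n_k, m_k \to \infty$ and points $y_k \in V_{n_k}^e \cap V_{m_k}^o$ witnessing $\gamma_{n_k}^e(y_k) \neq \gamma_{m_k}^o(y_k)$. Passing to a subsequence I would arrange that $n_k$ and $m_k$ are strictly increasing and that the compact sets $W_{n_k}^e \cup W_{m_k}^o$ are pairwise disjoint in $Y$ and eventually leave every compact subset of $Y$. This uses $\phi_n^e \phi_{n'}^e = 0$ for $n \neq n'$ (and the $o$-analogue) together with the fact that the range of each $\phi_n^i$ sits inside $(e_{k_n}^Y - e_{j_n}^Y) C_0(Y) (e_{k_n}^Y - e_{j_n}^Y)$ with $j_n \to \infty$, as provided by Theorem~\ref{thm:almostredprod}.

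The next step is to produce, for each $k$, a contraction $f_k \in C_0(U_{n_k}^e \cap U_{m_k}^o)$ with
\[
\bigl|f_k(\gamma_{n_k}^e(y_k)) - f_k(\gamma_{m_k}^o(y_k))\bigr| \geq \tfrac{1}{2}.
\]
To do this I would use the bounded-overlap structure of the projections $\{p_k\}$ coming from Lemma~\ref{lem:structural} to argue that $V_n^e \cap V_m^o \neq \emptyset$ forces $|n - m|$ to be bounded by a fixed constant; this in turn makes $U_{n_k}^e \cap U_{m_k}^o$ a nonempty open set containing both $\gamma_{n_k}^e(y_k)$ and $\gamma_{m_k}^o(y_k)$, at which point Urysohn's lemma supplies $f_k$.

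Now set $g = \sum_k f_k \in C_b(X)$. Placing $f_k$ at coordinate $n_k$ exhibits $g$ as an element of $\prod C_0(U_n^e)$, and placing $f_k$ at coordinate $m_k$ exhibits it as an element of $\prod C_0(U_n^o)$. Since both $\Phi^e$ and $\Phi^o$ lift $\Lambda$ on their respective products,
\[
\Phi^e(g) - \Phi^o(g) \;=\; \sum_k \bigl(\phi_{n_k}^e(f_k) - \phi_{m_k}^o(f_k)\bigr) \;\in\; C_0(Y).
\]
The summands have pairwise disjoint compact supports marching off to infinity, so each must satisfy $\|\phi_{n_k}^e(f_k) - \phi_{m_k}^o(f_k)\| \to 0$. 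Combining with $\|\psi_n^i - \phi_n^i\| \to 0$ (provided by the application of Theorem~\ref{thm:semrl} earlier in the construction) gives $\|\psi_{n_k}^e(f_k) - \psi_{m_k}^o(f_k)\| \to 0$. But evaluating at $y_k$ yields $\psi_{n_k}^e(f_k)(y_k) = f_k(\gamma_{n_k}^e(y_k))$ and the analogous formula for the $o$-side, so $\|\psi_{n_k}^e(f_k) - \psi_{m_k}^o(f_k)\| \geq \tfrac{1}{2}$ for every $k$ -- contradiction.

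The step I expect to take the most care is producing $f_k$: one must check that the bounded-overlap structure really does place both $\gamma_{n_k}^e(y_k)$ and $\gamma_{m_k}^o(y_k)$ inside $U_{n_k}^e \cap U_{m_k}^o$, rather than merely into $U_{n_k}^e$ and $U_{m_k}^o$ separately. If the geometric matching were to fail in some edge case, the fallback is to separate the two points by a pair of functions, one in $C_0(U_{n_k}^e)$ and one in $C_0(U_{m_k}^o)$, and to compare their images under $\Phi^e$ and $\Phi^o$ through a common decomposition $C_b(X) = \prod C_0(U_n^e) + \prod C_0(U_n^o)$ of an auxiliary bounded function chosen to witness the same discrepancy.
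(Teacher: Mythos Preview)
Your contradiction scheme is reasonable, but there are two genuine gaps.

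\textbf{Global lifting is not available.} The lemma is stated for an injective unital $^*$-homomorphism $\Lambda$, and it is used in the proof of Theorem~\ref{thmi:contimgs}, where $\Lambda$ is not assumed to be an isomorphism. The assertion that $\Phi^e,\Phi^o$ lift $\Lambda$ on all of their respective products is only recorded in the paper under the hypothesis that $\Lambda$ is an isomorphism. In the generality you need, the only tool is Lemma~\ref{lem:liftingmore}, which gives lifting along a nonmeager dense ideal. Your argument can be salvaged by applying that lemma to the finite set $\{g\}$ on each side and passing to an infinite $S$ in the intersection of the two ideals, but as written the step ``$\Phi^e(g)-\Phi^o(g)\in C_0(Y)$'' is unjustified.

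\textbf{The single-function approach needs an inclusion you cannot verify.} You only know $\gamma_{n_k}^e(y_k)\in U_{n_k}^e$ and $\gamma_{m_k}^o(y_k)\in U_{m_k}^o$; nothing in the construction places either point in the intersection $U_{n_k}^e\cap U_{m_k}^o$. If both lie outside that intersection, every $f_k$ supported there vanishes at both points and your separating estimate fails. The paper sidesteps this by using \emph{two} functions and the multiplicativity of $\Lambda$ rather than comparing two lifts of one element: it takes positive contractions $f_n,g_n$ with disjoint supports, $f_n\equiv 1$ near $x_n=\gamma_n^e(y_n)$ and $g_n\equiv 1$ near $z_n=\gamma_n^o(y_n)$, so that with $f=\sum f_n$ and $g=\sum g_n$ one has $fg=0$ and hence $\Lambda(\pi_X(f))\Lambda(\pi_X(g))=0$; meanwhile $\psi_n^e(f_n)(y_n)\,\psi_n^o(g_n)(y_n)=1$ for every $n$, so after passing via Lemma~\ref{lem:liftingmore} to a suitable infinite $S$ one obtains $\norm{\pi_Y\bigl((\sum_{n\in S}\psi_n^e(f_n))(\sum_{n\in S}\psi_n^o(g_n))\bigr)}=1$, a contradiction. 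Your fallback hints at a two-function argument but frames it as comparing images through a decomposition; the actual mechanism is the clash between $fg=0$ in $C_b(X)$ and the pointwise product being $1$ at $y_n$ in $C_b(Y)$.
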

\begin{proof}
First, note that the $V_n^e$ and $V_m^o$ intersect if and only if $m=n$ or $m=n+1$. This is because $C_0(U_n^e)C_0(U_m^o)\neq 0$ if and only if $m=n$ or $m=n+1$. Let $J_n=V_n^e\cap V_n^o$, and suppose that for infinitely many $n$ there is $y_n\in J_n$ such that 
\[
x_n=\gamma_n^e(y_n)\neq \gamma_n^o(y_n)=z_n.
\]

Since for every compact set $K\subseteq X$ there is $n_0$ such that $K\cap U_n^i=\emptyset$ whenever $n\geq n_0$, the sequences $\{x_n\}$ and $\{z_n\}$ have no accumulation point. Find open sets $Z_{n,1},Z_{n,2}\subseteq U_n^e\cap U_n^o$ such that $x_n\in Z_{n,1}$, $z_n\in Z_{n,2}$ and $\overline{Z_{n,1}}\cap\overline {Z_{n,2}}=\emptyset$. Since $Z_{n,1},Z_{n,2}\subseteq U_{n}^e$, we have that 
\[
\overline{\bigcup_n Z_{n,1}}\cap\overline{\bigcup_nZ_{n,2}}=\bigcup_n\overline{Z_{n,1}}\cap \bigcup_n\overline{Z_{n,2}}=\emptyset.
\]
 Let $f_n,g_n$ be positive contractions such that 
 \[
 \supp(f_n),\supp(g_n)\subseteq U_n^e\cap U_n^o,\,\,f_ng_n=0\text{ and }f_n\restriction Z_{n,1}=g_n\restriction Z_{n,2}=1.
 \] Let  $f=\sum f_n$ and $g=\sum g_n$. Note that $fg=0$. Moreover since $f_ng_m=0$ whenever $n\neq m$, we have that $(\sum_{n\in S}f_n)(\sum_{n\in S}g_n)=0$ for all $S\subseteq\NN$.
 
Let \[
f'_n=\psi_n^e(f_n)\text{ and }g'_n=\psi_n^o(g_n).
\]
 With 
\[
f'=\sum f'_n\text{ and }g'=\sum g_n'
\]
 we have that $f'g'(y_n)=f'_ng'_n=1$, and therefore for all $S\subseteq\NN$ we have that 
\[
\norm{(\sum_{n\in S}f_n')(\sum_{n\in S}g'_n)}=\norm{\sum_{n\in S}f'_ng'_n}=1.
\]
By Lemma~\ref{lem:liftingmore} there is $S\subseteq\NN$ such that 
\[
\pi_Y(\sum_{n\in S}f_n')=\Lambda(\pi_X(\sum_{n\in S}f_n))\text{ and }
\pi_Y(\sum_{n\in S}g_n')=\Lambda(\pi_X(\sum_{n\in S}g_n)).
\]
In particular we have 
\[
0=\norm{\Lambda(\pi_X((\sum_{n\in S}f_n)(\sum_{n\in S}g_n)))}=\norm{\pi_Y((\sum_{n\in S}f_n')(\sum_{n\in S}g_n'))}=1,
\]
a contradiction
\end{proof}

We are ready to prove our main results. Fix $n_0$ as provided by Lemma~\ref{lemman0}.
Then 
\[
\Gamma=\bigcup_{n\geq n_0}(\gamma_n^e)\cup\bigcup_{n\geq n_0}(\gamma_n^o)
\]
is a continuous surjection from a subset of $Y$ to a cocompact subset of $X$. Note that since for all $n$ we have that 
\[
\overline {U_n^e}\subseteq U_{n-1}^o\cup U_n^e\cup U_n^o\text{ and }\overline {U_n^o}\subseteq U_{n}^e\cup U_n^o\cup U_{n+1}^e,
\] we have that for $n\geq n_0+1$, 
\[
\overline {V_n^e}\subseteq V_{n-1}^o\cup V_n^e\cup V_n^o\text{ and }\overline {V_n^o}\subseteq V_{n}^e\cup V_n^o\cup V_{n+1}^e.
\]
In particular 
\[
Z=\overline{(\bigcup V_n^e)\cup(\bigcup V_n^e)}=\bigcup \overline{V_n^e}\cup\bigcup\overline {V_n^o}
\]
 is equal, modulo compact, to $\bigcup V_n^e\cup\bigcup V_n^o$. Hence $Z$ is clopen modulo compact.
\begin{proof}[Proof of Theorem~\ref{thmi:contimgs}]
Note that since $Z$ is clopen modulo compact, $Z^*$ is clopen in $Y^*$. (In case $Y^*$ is connected, we have that $Z^*=Y^*$). The map $\Gamma$ is then a continuous surjection from a clopen modulo compact subspace of $Y$ to a cocompact subspace of $X$, as required by the thesis.
\end{proof}
\begin{remark}\label{rem:notall}
In \cite[Example 3.2.1]{Farah.AQ} Farah provided an example, in $\ZFC$, of an injective unital $^*$-homomorphism $\ell_\infty/c_0\to\ell_\infty/c_0$ that cannot be lifted to an asymptotically additive unital map. Such an obstruction can be modified to provide higher dimensional examples. For this reason, we cannot ask in the thesis of Theorem~\ref{thmi:contimgs} for the set $Z$ to be equal (modulo compact) to $Y$, unless $Y^*$ is connected. 

The study of similar phenomena in the noncommutative setting can be applied to the study of endomorphisms of $\mathcal Q(H)$. Vaccaro proved (\cite{Vaccaro.Endo}) that under $\OCA$ all endomorphisms of $\mathcal Q(H)$ are of the form $\Ad(u)\circ \iota_n$ where $\iota_n\colon\mathcal Q(H)\to M_n(\mathcal Q(H))$ is the endomorphism of $\mathcal Q(H)$ mapping $a$ to the diagonal matrix $(a,a,\ldots,a)$. (Note that Forcing Axioms are needed, as \cite[Corollary 3.7]{FHV.Calkin} can be applied to construct under $\CH$ an endomorphism of $\mathcal Q(H)$ sending an operator of Fredholm index one to an operator of zero Fredholm index). The reason Example 3.2.1 in \cite{Farah.AQ} cannot be brought to such setting is that in $\mathcal Q(H)$ one can perform the `partial isometry' trick, see \cite[\S17.5]{Farah.Book}.\end{remark}
For the proof of Theorem~\ref{thmi:alltrivial} we need a couple of lemmas exploiting additional properties of $\Gamma$ in case $\Lambda$ is an isomorphism.

\begin{lemma}\label{lemma:hered}
Suppose that $\Lambda$ is an isomorphism. Then for almost all $n\in\NN$ we have that the image of $\psi_n^i$ is equal to $B_n^i$.
\end{lemma}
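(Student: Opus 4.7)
The plan is to apply Theorem~\ref{thm:almostredprod} a second time---this time to the inverse isomorphism $\Lambda^{-1}\colon C(Y^*)\to C(X^*)$---and then to use the resulting approximate $^*$-homomorphisms as componentwise approximate inverses to the maps $\psi_n^i$, thereby forcing each $\psi_n^i$ to be surjective onto $B_n^i$ for all but finitely many $n$.

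First I would choose, using the compactness of each $\overline{V_n^i}$ in $Y$, a refinement of the approximate identity $\{e_n^Y\}$ that satisfies the analog of Notation~\ref{not:autos} with respect to $\Lambda^{-1}$ and allows the intervals produced in Theorem~\ref{thm:almostredprod} applied to $\Lambda^{-1}$ to each contain a single $V_n^i$ (passing to a subsequence and absorbing any finite shift of indices into a compact perturbation). Running the entire construction of \S\ref{section:general} with the roles of $C_0(X)$ and $C_0(Y)$ swapped and applied to $\Lambda^{-1}$ yields approximate $^*$-homomorphisms $\eta_m^j\colon C_0(W_m^j)\to C_0(X)$ with $W_m^j\supseteq V_{n(m)}^i$ for an increasing function $n(m)$, together with $H^j=\sum\eta_m^j$ lifting $\Lambda^{-1}$ on $\prod C_0(W_m^j)$. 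Since $\eta_m^j$ is a $\delta_m$-$^*$-homomorphism with $\delta_m\to 0$, Theorem~\ref{thm:semrl} perturbs each $\eta_m^j$ into an actual $^*$-homomorphism $\tilde\eta_m^j$ with $\norm{\tilde\eta_m^j-\eta_m^j}\to 0$, so that $\tilde H^j=\sum\tilde\eta_m^j$ is still a lift of $\Lambda^{-1}$.

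Next, for large $n$ and the matching index $m=m(n)$ with $V_n^i\subseteq W_m^j$, I would consider the composition $\psi_n^i\circ\tilde\eta_m^j|_{B_n^i}\colon B_n^i\to C_0(Y)$. Since $\Psi^i$ lifts $\Lambda$ and $\tilde H^j$ lifts $\Lambda^{-1}$, their composition lifts $\Lambda\circ\Lambda^{-1}=\mathrm{id}$ on the relevant reduced-product subalgebra, and by the componentwise characterization of strict convergence (cf.\ Proposition~\ref{prop:liftsandalmost}) this forces $\norm{\psi_n^i(\tilde\eta_m^j(h))-h}\to 0$ uniformly over contractions $h\in B_n^i$ as $n\to\infty$. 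Hence every element of $B_n^i$ lies in the closure of $\psi_n^i(C_0(U_n^i))$; since that image is already a closed $\Cstar$-subalgebra of $B_n^i$, we obtain $\psi_n^i(C_0(U_n^i))=B_n^i$ for almost all $n$, as required.

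The main obstacle will be the bookkeeping required to synchronize the two interval structures---those coming from applying Theorem~\ref{thm:almostredprod} to $\Lambda$ and to $\Lambda^{-1}$---so that the composition can legitimately be analyzed componentwise on a single $B_n^i$ rather than spreading across several neighboring pieces. Fortunately the flexibility already exploited in \S\ref{section:general} (freedom in the choice of approximate identity, $\sigma$-directedness of $\leq_*\times\leq_1$, and density of the sets $D_k$) is ample to guarantee the synchronization, since any finite mismatch is absorbed as a compact perturbation and any tail constraint can be met by passing to a cofinal subsequence.
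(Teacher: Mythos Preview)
Your approach is workable in principle but is far more laborious than what the paper does, and it contains a step you treat as routine that is not. The paper's proof is a five-line contradiction: in the commutative setting, $\prod C_0(U_n^i)/\bigoplus C_0(U_n^i)$ is a \emph{hereditary} $\Cstar$-subalgebra of $C(X^*)$ (the paper explicitly footnotes that this is special to the abelian case). Since $\Lambda$ is an isomorphism, its image $\prod C_n^i/\bigoplus C_n^i$, where $C_n^i=\psi_n^i(C_0(U_n^i))$, is hereditary in $C(Y^*)$. If $C_n^i\subsetneq B_n^i$ for infinitely many $n$, pick positive contractions $\tilde f_n\in B_n^i$ with $\tilde f_n\le \tilde g_n\in C_n^i$ and $\mathrm{dist}(\tilde f_n,C_n^i)=1$; then $\pi_Y(\sum\tilde f_n)\le \pi_Y(\sum\tilde g_n)$ violates hereditariness. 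No second application of Theorem~\ref{thm:almostredprod} is needed.

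Your route instead re-runs the whole machinery of \S\ref{section:general} for $\Lambda^{-1}$ and then composes. The genuine gap is at the point where you write the composition $\psi_n^i\circ\tilde\eta_m^j|_{B_n^i}$ and invoke Proposition~\ref{prop:liftsandalmost}: for $\psi_n^i$ to be applicable you need $\tilde\eta_m^j(B_n^i)\subseteq C_0(U_n^i)$. This is not a matter of synchronizing interval labels. When you apply Theorem~\ref{thm:almostredprod} to $\Lambda^{-1}$, the target intervals in $C_0(X)$ are produced by the construction (via the elements $r_I$ of Notation~\ref{not:autos} built from $\Lambda^{-1}$), not chosen by you; all you know a priori is that $\tilde\eta_m^j$ lands in some corner $(e^X_{k_m}-e^X_{j_m})C_0(X)(e^X_{k_m}-e^X_{j_m})$. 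To force that corner to coincide with $C_0(U_n^i)$ you must argue that $\Lambda^{-1}$ carries $\pi_Y(B_n^i)$ into $\pi_X(C_0(U_n^i))$. One way to do that is to use that $B_n^i\subseteq r_n^i C_0(Y) r_n^i$ and that $r_n^i$ was itself constructed so that $\pi_Y(r_n^i)$ lies in the image under $\Lambda$ of something supported near $U_n^i$; but unwinding this is a real argument, not bookkeeping, and another natural way to do it is exactly the hereditariness observation that the paper uses directly. Either way your proof collapses to, or presupposes, the short argument you are trying to avoid.
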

\begin{proof}
Since $V_n^i=\bigcup_{f\in C_0(U_n^i)}\supp(f)$, it is enough to show that the image of $\psi_n^i$ is hereditary\footnote{For $\Cstar$-algebras $A\subseteq B$, $A$ is hereditary in $B$ if for all positive $a\in A$, if $b\leq a$ and $b$ is positive, then $b\in A$.}. We argue by contradiction. Let $C_n^i=\psi_n^i(C_0(U_n^i))$. Fix $i$ and assume that $C_n^i$ is not hereditary for infinitely many $n$.  Then there are positive contractions $\tilde f_n$ and $\tilde g_n\in C^i_n$ with $\tilde f_n\leq \tilde g_n$ and $\inf_{\tilde g\in C^i_n}\norm{\tilde f_n-\tilde g}=1$.

Since $C^i_nC^i_m=0$ when $n\neq m$, $\tilde f=\sum \tilde f_n$ and $\tilde g=\sum \tilde g_n$ are well defined. On the other hand $\pi_Y(\tilde f)\leq\pi_Y(\tilde g)$, but $\pi_Y(\tilde f)\notin\prod C^i_n/\bigoplus C^i_n$, a contradiction to $\prod C_n^i/\bigoplus C_n^i$ being hereditary (as it is the isomorphic image, through $\Lambda$, of the hereditary $\Cstar$-algebra $\prod C_0(U_n^i)/\bigoplus C_0(U_n^i)$).
\end{proof}

\begin{lemma}
Suppose that $\Lambda$ is an isomorphism. Then $Y\setminus Z$ is compact.
\end{lemma}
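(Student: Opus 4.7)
The plan is to argue by contradiction, assuming $Y\setminus Z$ is non-compact. The key observation is that, by the construction of the $\Psi^i$'s, the map $\Psi^e\oplus\Psi^o$ gives a lift of the isomorphism $\Lambda$ on \emph{all} of $C_b(X)$ whose range is always supported inside $Z$; combined with surjectivity of $\Lambda$, this forces every element of $C_b(Y)$ to be, modulo $C_0(Y)$, supported in $Z$, which will be incompatible with $Y\setminus Z$ being non-compact.

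To make this precise, first use that $Z$ is clopen modulo compact to fix a compact $K\subseteq Y$ with $U:=Y\setminus(Z\cup K)$ clopen in $Y\setminus K$; non-compactness of $Y\setminus Z$ forces $U$ to be non-compact. By local compactness, pick a sequence $(y_n)\subseteq U$ without accumulation point and pairwise disjoint open neighbourhoods $U_n\ni y_n$ with $\overline{U_n}\subseteq U$ compact, then positive contractions $f_n\in C_0(U_n)$ with $f_n(y_n)=1$, and set $f:=\sum_n f_n\in C_b(Y)$. Disjointness of the supports makes $f$ a well-defined positive contraction supported in $U$ with $\pi_Y(f)\neq 0$, since $f(y_n)=1$ for $n$ arbitrarily large.

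Next, by surjectivity of $\Lambda$, choose $g\in C_b(X)$ with $\Lambda(\pi_X(g))=\pi_Y(f)$, and using the decomposition $C_b(X)=\prod C_0(U_n^e)+\prod C_0(U_n^o)$ from the start of Section~\ref{section:rigidity}, write $g=g^e+g^o$ with $g^i\in\prod C_0(U_n^i)$. Setting $h:=\Psi^e(g^e)+\Psi^o(g^o)\in C_b(Y)$, the fact that each $\Psi^i$ lifts $\Lambda$ on $\prod C_0(U_n^i)$ yields $f-h\in C_0(Y)$. On the other hand $h$ is supported in $\bigcup_n V_n^e\cup\bigcup_n V_n^o\subseteq Z$ while $f$ is supported in $U\subseteq Y\setminus Z$; these disjoint supports give the pointwise bound $0\le f\le |f-h|$, forcing $f\in C_0(Y)$ and contradicting $\pi_Y(f)\neq 0$. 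No genuine obstacle is expected, as all the nontrivial content (that $\Psi^e\oplus\Psi^o$ lifts $\Lambda$ globally and has range supported in $Z$) is already in place; the argument is essentially a disjoint-supports calculation enabled by surjectivity of $\Lambda$.
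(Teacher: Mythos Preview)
Your proof is correct and follows essentially the same strategy as the paper: construct a nonzero element of $C(Y^*)$ supported off $Z$ and derive a contradiction from the fact that the range of $\Lambda$, via the lifts $\Psi^e,\Psi^o$, lands in functions supported in $Z$. The only stylistic difference is that the paper phrases the contradiction multiplicatively (the constructed $\pi_Y(g)$ annihilates $\Lambda[C(X^*)]=C(Y^*)$, hence is zero), while you pull $\pi_Y(f)$ back through $\Lambda^{-1}$, push forward via $\Psi^e\oplus\Psi^o$, and compare supports directly; both arguments use exactly the same ingredients.
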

\begin{proof}
Let $\{y_n\}$ be a sequence of points such that $\{y_n\}\to\infty$ as $n\to\infty$ with $y_n\notin Z$. Since $Z$ is closed, for almost all $n$ there is an open set $Z_n$ with $y_n\in Z_n$ and $Z_n\cap Z=\emptyset$. Since $\{y_n\}$ has no accumulation points in $Y$, we can assume that $\overline {Z_n}\cap \overline{Z_m}=\emptyset$. Pick a contraction $g_n\in C_0(Z_n)$, and let $g=\sum g_n\in C_b(Y)$. Since the support of $g$ is disjoint from $Z$ we have that for all $i\in\{e,o\}$, $\pi_Y(g)\Lambda(f)=0$ whenever $f\in\prod C_0(U_n^i)/\bigoplus C_0(U_n^i)$, and therefore $\pi_Y(g)\Lambda(f)=0$ whenever 
\[
f\in \prod C_0(U_n^e)/\bigoplus C_0(U_n^e)+\prod C_0(U_n^o)/\bigoplus C_0(U_n^o)=C(X^*).
\]
 This is a contradiction.
\end{proof}
\begin{proof}[Proof of Theorem~\ref{thmi:alltrivial}]
Both domain and  range of $\Gamma$ are cocompact and $\Gamma^*=\Lambda^{-1}$. This is the thesis.
\end{proof}

\subsection{The proof of Theorem~\ref{thmi:embedding}}\label{subsection:nonemb}
Here we prove Theorem~\ref{thmi:embedding}.
\begin{definition}
Let $P$ be a property of $\Cstar$-algebras. 
\begin{itemize}
\item A nonunital $\Cstar$-algebra $A$ has property $P$ at infinity if there is $\{e_n\}$, an approximate identity of positive contractions for $A$ with $e_{n+1}e_n=e_n$ for all $n$ such that there is $n_0$ with the property that $(e_m-e_n)A(e_m-e_n)$ has property $P$ whenever $n_0\leq n<m$. 
\item The property $P$ is preserved under almost (unital) inclusions if there is $\epsilon>0$ such that whenever $A$ has property $P$ and $\phi\colon A\to B$ is a (unital) $\epsilon$-monomorphism, then $B$ has property $P$.
\end{itemize}
\end{definition}
A projection $p\in A$ is infinite if there is $v\in A$ such that $vv^*=p$ and $v^*v<p$. A $\Cstar$-algebra $A$ is finite if it has no infinite projection, and stably finite if $A\otimes M_n(\mathbb C)$ is finite for all $n$.
The proof of the following is given by weak stability of the relation `$p$ is an infinite projection' (see \cite[\S3.2]{bourbaki}).
\begin{proposition}
The properties `being infinite-dimensional' and `being infinite' are preserved under almost inclusions.\qed
\end{proposition}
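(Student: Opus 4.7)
The plan is to reduce both statements to weak stability of suitable finite systems of $\Cstar$-relations, combined with the basic estimate that for any $\epsilon$-monomorphism, $\norm{\phi(a)}\geq 1-\epsilon$ whenever $\norm{a}=1$.

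For infinite projections, suppose $p\in A$ is infinite and pick $v\in A$ with $v^*v=p$ and $vv^*<p$. Setting $q=p-vv^*$, the triple $(p,v,q)$ satisfies the finite system of $\Cstar$-relations
\[
p=p^2=p^*,\quad q=q^2=q^*,\quad v^*v=p,\quad vv^*=p-q,\quad qv=0.
\]
This system is weakly stable: it is essentially the system describing a partial isometry with prescribed source and range projection, augmented with a nonzero complementary subprojection, and weak stability of such relations is classical (see \cite[\S3.2]{bourbaki}). Since $\phi$ is $\epsilon$-multiplicative and $\epsilon$-$^*$-preserving, $(\phi(p),\phi(v),\phi(q))$ satisfies the same relations up to error controlled by $\epsilon$, and for $\epsilon$ sufficiently small this perturbs to an exact solution $(\tilde p,\tilde v,\tilde q)$ in $B$. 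Moreover $\norm{\tilde q}\geq \norm{\phi(q)}-O(\epsilon)\geq 1-O(\epsilon)>0$ by $\epsilon$-injectivity, so $\tilde q$ is a nonzero projection, exhibiting $\tilde p$ as an infinite projection in $B$.

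For infinite dimensionality I would use the standard characterization that a $\Cstar$-algebra is infinite-dimensional if and only if for every $n$ it contains $n$ pairwise orthogonal nonzero positive contractions. Fixing such $a_1,\dots,a_n\in A$, the images $\phi(a_i)\in B$ are approximately self-adjoint, approximately positive, and approximately pairwise orthogonal (since $\phi$ is $\epsilon$-$^*$-preserving, $\epsilon$-linear, and $\epsilon$-multiplicative, and the $a_i$ are self-adjoint positive with $a_ia_j=0$), with $\norm{\phi(a_i)}\geq 1-\epsilon$ by $\epsilon$-injectivity. The relations ``$x_i$ is a positive contraction'' together with ``$x_i x_j=0$ for $i\neq j$'' form a weakly stable system for each fixed $n$: apply the continuous functional calculus $t\mapsto \max(t,0)$ to the self-adjoint part of each $\phi(a_i)$ to obtain positive contractions close to $\phi(a_i)$, then apply a standard hereditary cut-down argument to force exact orthogonality. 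For $\epsilon$ small enough this yields pairwise orthogonal positive contractions $b_1,\dots,b_n\in B$ with each $b_i\neq 0$; as $n$ was arbitrary, $B$ is infinite-dimensional.

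The main obstacle is choosing a single $\epsilon$ that works uniformly in both clauses and keeping track of the quantitative stability bounds through the two perturbation steps in the infinite-dimensional case, but both weak stability statements are classical, so this reduces to routine bookkeeping.
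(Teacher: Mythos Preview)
Your approach is the same as the paper's---the paper's entire proof is the one-line remark ``weak stability of the relation `$p$ is an infinite projection' (see \cite[\S3.2]{bourbaki})''---and for the property `infinite' your argument is correct and matches that intention exactly.

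For `infinite-dimensional' there is a genuine issue, and you have put your finger on it but then waved it away. The definition of \emph{preserved under almost inclusions} requires a single $\epsilon>0$ that works for every $\epsilon$-monomorphism $\phi\colon A\to B$; yet your argument, for each $n$, pushes $n$ orthogonal norm-one positive contractions through $\phi$ and then invokes weak stability of the system ``$x_1,\dots,x_n$ are pairwise orthogonal positive contractions''. The stability threshold $\delta(n)$ for that system is not obviously independent of $n$: the standard hereditary cut-down constructions accumulate error as one orthogonalizes successive elements. Calling this ``routine bookkeeping'' is too optimistic; the paper itself says nothing about this case.

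There is a cleaner route that sidesteps weak stability entirely. Since $A$ is infinite-dimensional it contains an infinite sequence $(a_i)$ of pairwise orthogonal positive contractions of norm $1$. For $i\neq j$ one has $\norm{a_i-a_j}=1$, so $\epsilon$-injectivity gives $\norm{\phi(a_i-a_j)}\geq 1-\epsilon$, and $\epsilon$-linearity (plus $\norm{\phi(0)}<\epsilon$) gives $\norm{\phi(a_i)-\phi(a_j)}\geq 1-4\epsilon$. Thus for, say, $\epsilon<1/8$ the images $\phi(a_i)$ form an infinite $\tfrac12$-separated set in the unit ball of $B$, which is impossible if $B$ is finite-dimensional (its unit ball is norm-compact). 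This gives the uniform $\epsilon$ directly, with no perturbation needed.
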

If $A$ is unital  then $A\otimes\mathcal K$ and $c_0(A)$ are infinite-dimensional at infinity if and only if $A$ is infinite-dimensional. $A\otimes\mathcal K$ is stably-finite at infinity if and only if $A$ is stably finite.

\begin{theorem}\label{thm:propertyP}
Assume $\OCA+\MA_{\aleph_1}$ and let $A$ and $B$ be separable nonunital $\Cstar$-algebras. Let $P$ be a property whose negation is preserved under almost inclusions. If $\mathcal Q(A)$ embeds into $\mathcal Q(B)$ and $B$ has property $P$ at infinity, so does $A$.
\end{theorem}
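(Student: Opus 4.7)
The plan is to argue by contradiction using Theorem~\ref{thm:almostredprod}. Fix the injective $^*$-homomorphism $\Lambda \colon \mathcal Q(A) \to \mathcal Q(B)$ witnessing the embedding, and let $\{e_n\}$ be the approximate identity for $A$ provided by Notation~\ref{not:autos} applied to $\Lambda$. Fix also an approximate identity $\{e_n^B\}$ for $B$ with $e^B_{n+1} e^B_n = e^B_n$ that witnesses $B$ having $P$ at infinity (such an AI is extracted from any witnessing AI by passing to a telescoping subsequence), with threshold $n_1$ so that $(e^B_m - e^B_n) B (e^B_m - e^B_n)$ has $P$ for all $n_1 \le n < m$.

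Suppose, for contradiction, that $A$ does not have $P$ at infinity. In particular the specific AI $\{e_n\}$ fails to witness it, so for every $N \in \NN$ there exist $N \le n < m$ with $(e_m - e_n) A (e_m - e_n)$ having $\neg P$. Iterating, pick intervals $J_k = [n_k+1, m_k]$ with $\min J_k \to \infty$ and $\max J_k + 6 < \min J_{k+1}$ such that $A_{J_k}$ has $\neg P$ for every $k$. Apply Theorem~\ref{thm:almostredprod} with these data: since $\Lambda$ is injective, we obtain mutually orthogonal $\epsilon_k$-$^*$-monomorphisms
\[
\gamma_k \colon A_{J_k} \to (e^B_{\ell_k} - e^B_{j_k}) B (e^B_{\ell_k} - e^B_{j_k}),
\]
with $\epsilon_k \to 0$ and $j_k \to \infty$. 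For $k$ large enough $j_k \ge n_1$, so each target block has $P$. On the other hand, the hypothesis that $\neg P$ is preserved under almost inclusions furnishes an $\epsilon > 0$ such that any $\epsilon$-$^*$-monomorphism from an algebra with $\neg P$ lands in an algebra with $\neg P$; taking $k$ large so that $\epsilon_k < \epsilon$, and using that $A_{J_k}$ has $\neg P$, we conclude the target block must have $\neg P$, contradicting that it has $P$. Hence $A$ has $P$ at infinity, witnessed by $\{e_n\}$.

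The only technical point requiring care is the simultaneous choice of $\{e_n^B\}$: it must both satisfy the telescoping condition needed by Theorem~\ref{thm:almostredprod} and witness $B$ having $P$ at infinity. This is routine: start with any AI witnessing the latter and thin out to a telescoping subsequence, adjusting $n_1$ accordingly. A second minor point is to match the (unital vs.\ non-unital) version of the preservation hypothesis with the nature of the maps $\gamma_k$; the blocks $A_{J_k}$ and the target corners are non-unital hereditary subalgebras, so the non-unital version applies directly to the $\gamma_k$ coming out of Theorem~\ref{thm:almostredprod}. Once these bookkeeping issues are handled, the argument is a direct combination of Theorem~\ref{thm:almostredprod} with the preservation hypothesis, with no substantive new difficulty.
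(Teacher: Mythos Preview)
Your proof is correct and follows essentially the same route as the paper: argue by contradiction, extract an infinite sequence of well-separated intervals $J_k$ with $A_{J_k}$ failing $P$, apply Theorem~\ref{thm:almostredprod} to get $\epsilon_k$-$^*$-monomorphisms into blocks of $B$, and contradict that those blocks eventually have $P$. Your extra care about the choice of $\{e_n^B\}$ is in fact unnecessary, since the definition of ``$P$ at infinity'' already requires $e_{n+1}^B e_n^B = e_n^B$, but this does no harm.
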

\begin{proof}
By contradiction, fix an embedding $\Lambda\colon \mathcal Q(A)\to\mathcal Q(B)$. Let $\{e_n^B\}$, an approximate identity for $B$ with the property that there is $n_0$ such that 
\[
(e_m^B-e_n^B)B(e_m^B-e_n^B)
\]
 has property $P$ whenever $n_0\leq n<m$.

Let $\{e_n\}$ be an increasing approximate identity for $A$ and suppose that there are finite nonempty intervals $I_n\subseteq\NN$ with $\max I_n+6<\min I_{n+1}$ such that
\[
A_{I_n}=(e_{\max I_n}-e_{\min I_n})A(e_{\max I_n}-e_{\min I_n})
\]
 does not have property $P$. Thanks to the assumption of Forcing Axioms and applying Theorem~\ref{thm:almostredprod}, we can go to a subsequence and assume that there are disjoint finite intervals $J_n\subseteq\NN$ and maps $\gamma_n\colon A_{I_n}\to B_{J_n}$ such that each $\gamma_n$ is a $2^{-n}$-monomorphism. Since $\neg P$ is preserved under almost inclusions, there is $n_0$ such that $B_{J_n}$ has property $\neg P$ for $n\geq n_0$. Since $\min J_n\to\infty$ as $n\to\infty$ we have a contradiction.
\end{proof}

\begin{proof}[Proof of Theorem~\ref{thmi:embedding}]
Assume $\CH$. Then \cite[Corollary 3.7]{FHV.Calkin} asserts that if $A$ and $B$ are unital and separable, $\mathcal Q(A\otimes\mathcal K(H))$ embeds unitally into $\mathcal Q(B\otimes\mathcal K(H))$. More than that: the algebra $\mathcal Q(A\otimes\mathcal K)(H)$ is $2^{\aleph_0}$-universal, and all coronas of separable $\Cstar$-algebras embed into it. This proves (i).

We now show (ii). Note that $\mathcal K(H)$ is finite-dimensional at infinity, that being infinite-dimensional is preserved under almost inclusions and that if $A$ is unital, $A\otimes\mathcal K(H)$ and $c_0(A)$ are finite-dimensional at infinity only if $A$ is finite-dimensional. By Theorem~\ref{thm:propertyP}, when $\OCA$ and $\MA_{\aleph_1}$ are assumed, $\mathcal Q(A\otimes\mathcal K(H))$ and $\ell_\infty(A)/c_0(A)$ embed into $\mathcal Q(H)$ only when $A\otimes\mathcal K(H)$ and $c_0(A)$ are finite-dimensional at infinity.

Lastly, note that ``being infinite" is preserved under almost inclusions, and that $A\otimes\mathcal K(H)$ is finite at infinity only if $A$ is stably finite. $\OCA$ and $\MA_{\aleph_1}$, via Theorem~\ref{thm:propertyP}, imply that, if $A$ and $B$ are unital and separable and $B$ is stably finite, $\mathcal Q(A\otimes\mathcal K(H))$ embeds into $\mathcal Q(B\otimes \mathcal K(H))$ only if $A$ is stably finite. 
\end{proof}
The same argument of the proof of Theorem~\ref{thmi:embedding} shows that, under $\OCA_{\infty}+\MA_{\aleph_1}$, if $A_n$ are unital $\Cstar$-algebras, then $\prod A_n/\bigoplus A_n$ embeds into $\mathcal Q(H)$, or into $\prod M_{n_k}/\bigoplus M_{n_k}$ whenever $n_k\in\NN$, only if $A_n$ is finite-dimensional for almost all $n\in\NN$.
\section{Algebraically trivial isomorphisms}\label{section:algtriv}

In this section we discuss algebraically trivial isomorphisms and algebraically isomorphic coronas (Definition~\ref{def:algtriv}). We start by making two conjectures, motivated by the situation in the abelian setting, the case of $\mathcal Q(H)$, and isomorphisms of reduced products of finite-dimensional $\Cstar$-algebras (see~\S\ref{subsec:cstartrivial}). 
\begin{conjecture}\label{conj:algtriv}
Assume $\OCA+\MA_{\aleph_1}$. Then all isomorphisms between all coronas of separable $\Cstar$-algebras are algebraically trivial.
\end{conjecture}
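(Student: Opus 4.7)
The plan is to reduce Conjecture~\ref{conj:algtriv} to a Ulam stability principle, using the machinery of \S\ref{section:general}--\S\ref{section:rigidity} as input and the abelian case (Theorem~\ref{thmi:alltrivial}, proved in the excerpt via S\v{e}rml's theorem) as a template. Fix an isomorphism $\Lambda\colon\mathcal{Q}(A)\to\mathcal{Q}(B)$. By Theorem~\ref{thmi:alltrivialnoncomm}, $\Lambda$ is topologically trivial; what is missing is to promote some Borel lift to one that respects the algebraic structure on a hereditary subalgebra $\overline{aAa}$.

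First I would fix approximate identities $\{e_n\}\subseteq A$ and $\{e_n^B\}\subseteq B$ as in Notation~\ref{not:autos} (applied to both $\Lambda$ and $\Lambda^{-1}$) and choose disjoint finite intervals $J_n$ with $\max J_n+6<\min J_{n+1}$ so that $a=\sum f_{J_n}\in\mathcal{M}(A)$ is a positive contraction with $1-a\in A$. Applying Theorem~\ref{thm:almostredprod} produces mutually orthogonal $\epsilon_n$-$^*$-isomorphisms $\gamma_n\colon A_{J_n}\to B_{K_n}$ (with $K_n$ disjoint finite intervals, $\epsilon_n\to 0$) whose asymptotic sum $\Gamma=\sum\gamma_n$ lifts $\Lambda$ on $\prod A_{J_n}$; symmetrically one obtains $\delta_n\colon B_{K_n}\to A_{J_n}$ lifting $\Lambda^{-1}$. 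After matching the two sides and thinning one can arrange that $\delta_n\circ\gamma_n$ and $\gamma_n\circ\delta_n$ are uniformly close to the identity, so in particular each $\gamma_n$ is $\epsilon_n'$-surjective with $\epsilon_n'\to 0$, letting $b=\Gamma(a)$ play the role of the positive contraction in $\mathcal{M}(B)$ with $1-b\in B$.

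The crux is then to perturb each $\gamma_n$ uniformly to an actual $^*$-isomorphism $\psi_n\colon A_{J_n}\to B_{K_n}$ with $\|\psi_n-\gamma_n\|\to 0$. Given this, $\Psi=\sum\psi_n\colon\overline{aAa}\to\overline{bBb}$ would be a $^*$-isomorphism, and since $\|\psi_n-\gamma_n\|\to 0$ while $\Gamma$ lifts $\Lambda$, the asymptotic sum $\Psi$ would still lift $\Lambda$. Orthogonality of the $J_n$ and $K_n$ and the strict-continuity argument used in Remark~\ref{remark:notallisos} and Proposition~\ref{prop:dual} would then show that $\Psi$ sends strictly convergent sequences to strictly convergent sequences and extends canonically to $\bar\Psi\colon\overline{a\mathcal{M}(A)a}\to\overline{b\mathcal{M}(B)b}$ with $\pi(\bar\Psi)=\Lambda$, which is exactly the definition of algebraic triviality.

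The hard part, and the only real obstacle, is the perturbation step: it is precisely a Ulam stability statement for the class of pairs of hereditary subalgebras $(A_{J_n},B_{K_n})$ arising as finite-interval cut-downs of separable $\Cstar$-algebras, in the sense of \cite{MKAV.UC} and of Theorems~\ref{thm:ulam}--\ref{thm:ulam2}. This is strictly stronger than the Kadison--Kastler conjecture, which is itself open in full generality. In the abelian setting S\v{e}rml's Theorem~\ref{thm:semrl} supplies it cleanly; for reduced products of finite-dimensional algebras the finite-dimensional bounds used in \cite[Corollary~6.9]{MKAV.FA} suffice. A natural first target beyond these cases is the class of separable nuclear $\Cstar$-algebras, where the Christensen--Sinclair--Smith--White--Winter theorem \cite{CSSWW} provides Kadison--Kastler stability and one would need only a \emph{uniform} quantitative upgrade; everything outside the nuclear regime is, as far as I can see, genuinely out of reach with current perturbation-theoretic technology, which is why the statement is left as a conjecture rather than a theorem.
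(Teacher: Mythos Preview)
The statement is a \emph{conjecture}; the paper does not prove it, and you correctly recognise this in your final paragraph. Your identification of the obstruction---that what is missing is precisely a Ulam stability statement for the relevant class of hereditary cut-downs, and that this is at least as hard as the Kadison--Kastler conjecture---matches the paper's own analysis in \S\ref{section:algtriv} (see Theorems~\ref{thm:ulam}, \ref{thm:ulam2}, \ref{thm:KK}).

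That said, even as a conditional argument your outline has a real gap beyond Ulam stability. With $\max J_n+6<\min J_{n+1}$ the element $a=\sum f_{J_n}$ does \emph{not} satisfy $1-a\in A$: the gaps between the $J_n$ carry mass that does not vanish at infinity. The paper's abelian proof handles this by running Theorem~\ref{thm:almostredprod} twice, on overlapping families $\{U_n^e\}$ and $\{U_n^o\}$, and then proving a separate lemma that the resulting maps $\gamma_n^e$, $\gamma_n^o$ agree on common domains so that they glue to a single homeomorphism. In the noncommutative setting this patching step is genuinely delicate: even if each $\gamma_n^i$ were perturbed to an honest $^*$-isomorphism $\psi_n^i$, you would still need $\psi_n^e$ and $\psi_m^o$ to agree on overlaps in order to assemble a single $^*$-isomorphism $\overline{aAa}\to\overline{bBb}$, and there is no obvious analogue of the pointwise argument used in the abelian case. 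This is why the paper only establishes the equivalence with Ulam stability for \emph{reduced products} (Theorem~\ref{thm:ulam}), where the pieces are genuinely orthogonal and no patching is required; for general coronas the reduction you sketch is not known.
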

\begin{conjecture}\label{conj:algtriviso}
Assume $\OCA+\MA_{\aleph_1}$. Let $A$ and $B$ be separable $\Cstar$-algebras. Then $\mathcal Q(A)$ are $\mathcal Q(B)$ are isomorphic if and only if they are algebraically isomorphic.
\end{conjecture}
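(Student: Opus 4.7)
The backward implication is immediate from Definition~\ref{def:algtriv}: the induced map $\pi(\bar\phi)$ is by construction an isomorphism of coronas, so algebraic isomorphism entails isomorphism. The substance lies in the forward implication: under $\PFA$, whenever $\mathcal Q(A)$ and $\mathcal Q(B)$ are isomorphic, we must produce \emph{some} algebraically trivial isomorphism between them (not necessarily the one we started from). So fix an isomorphism $\psi\colon\mathcal Q(A)\to\mathcal Q(B)$.

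The plan is to follow the blueprint of \S\ref{section:rigidity} for the abelian case and adapt it to the noncommutative setting. By Theorem~\ref{thmi:alltrivialnoncomm}, $\psi$ is topologically trivial. Fixing an approximate identity $\{e_n\}$ for $A$ as in Notation~\ref{not:autos} and finite intervals $J_n$ with $\max J_n+6<\min J_{n+1}$, Theorem~\ref{thm:almostredprod} furnishes an asymptotically additive lift $\Gamma=\sum\gamma_n\colon\prod A_{J_n}\to\mathcal M(B)$ where each $\gamma_n\colon A_{J_n}\to (e^B_{k_n}-e^B_{j_n})B(e^B_{k_n}-e^B_{j_n})$ is an $\epsilon_n$-$^*$-isomorphism with $\epsilon_n\to 0$. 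Running the same machine on $\psi^{-1}$ and composing with $\Gamma$ (shifting indices so that corners align, in the spirit of Lemma~\ref{lemma:hered}) shows that $\gamma_n$ is asymptotically an approximate $^*$-isomorphism onto its range, and that this range is approximately a full corner of $B$ between consecutive elements of the approximate identity.

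The decisive step is an Ulam-stability upgrade: one needs, for all $n$ large, a genuine $^*$-isomorphism $\gamma_n'\colon A_{J_n}\to B_{K_n}$ with $\|\gamma_n-\gamma_n'\|\to 0$. Granting this, $\sum\gamma_n'$ is strictly continuous, restricts to a map $\bigoplus A_{J_n}\to B$, and extends (via the Proposition~\ref{prop:properties1}(4) decomposition and an intertwining argument on overlaps between even and odd chunks, mirroring Proposition~\ref{prop:cut2} and the second proof of Theorem~\ref{thmi:alltrivial}) to an honest $^*$-isomorphism $\phi\colon\overline{aAa}\to\overline{bBb}$ for positive contractions $a\in\mathcal M(A)$, $b\in\mathcal M(B)$ with $1-a\in A$, $1-b\in B$. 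By Proposition~\ref{prop:liftonall} and the strict continuity of $\phi$, the induced map $\pi(\bar\phi)\colon\mathcal Q(A)\to\mathcal Q(B)$ is then algebraically trivial as in Definition~\ref{def:algtriv}.

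The genuine obstacle is the Ulam-stability step: no $\ZFC$ theorem currently guarantees that approximate $^*$-isomorphisms between arbitrary separable $\Cstar$-algebras can be perturbed to exact ones with vanishing error. In the abelian case, Theorem~\ref{thm:semrl} delivers this and drives Theorem~\ref{thmi:alltrivial}; for $\mathcal Q(H)$ the stability of partial isometries (Theorem~\ref{thm:absolute}(3)) plays the same role; for reduced products of matrix algebras the weak stability of matrix units suffices. In full generality this is precisely the content of Question~\ref{ques:main}(b), which is tied via Theorems~\ref{thm:ulam}, \ref{thm:ulam2}, and \ref{thm:KK} to Ulam stability for the families of corner subalgebras arising in the filtration, and in particular would imply the Kadison--Kastler conjecture for them. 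A complete proof of Conjecture~\ref{conj:algtriviso} along these lines therefore appears to require substantial advances in the perturbation theory of $\Cstar$-algebras, which is why the statement is left open.
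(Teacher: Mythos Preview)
The statement you were asked about is Conjecture~\ref{conj:algtriviso}, and the paper does \emph{not} prove it: it is explicitly left open, with the paper remarking that confirming Conjecture~\ref{conj:algtriv} would imply it but the converse is unknown. Your proposal correctly recognizes this and, rather than claiming a proof, lays out the natural strategy and isolates the missing ingredient (Ulam stability for the corner subalgebras arising in the filtration). In that sense your write-up is not a proof attempt so much as an accurate diagnosis of why the conjecture is open, and it is in complete agreement with the paper's own discussion in \S\ref{section:algtriv}, particularly Theorems~\ref{thm:ulam} and~\ref{thm:ulam2} and the surrounding commentary.

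Two small inaccuracies in your sketch are worth flagging. First, Theorem~\ref{thm:almostredprod} only yields that each $\gamma_n$ is an $\epsilon_n$-$^*$-homomorphism which is $\epsilon_n$-injective when $\Lambda$ is; it does not directly give $\epsilon_n$-surjectivity, so calling the $\gamma_n$ $\epsilon_n$-$^*$-isomorphisms at that stage overstates what is available (you do gesture at recovering this from $\psi^{-1}$, but the matching of corners on the $B$ side is itself delicate). Second, even granting Ulam stability on each block, the gluing of the block isomorphisms $\gamma_n'$ into a single $^*$-isomorphism $\overline{aAa}\to\overline{bBb}$ is substantially harder in the noncommutative case than in the abelian one: the algebras $\prod A_{J_n}$ are not hereditary in $\mathcal M(A)$, the overlaps between even and odd chunks carry genuine noncommutativity, and there is no analogue of the pointwise argument used to show that $\gamma_n^e$ and $\gamma_m^o$ agree on common domains. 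Your phrase ``an intertwining argument on overlaps'' hides a real difficulty that the paper does not resolve either. None of this undermines your overall conclusion, which matches the paper's: the conjecture is open precisely because the required perturbation-theoretic input is not available in general.
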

Confirming Conjecture~\ref{conj:algtriv} implies confirming Conjecture~\ref{conj:algtriviso}, but the converse is open. For example, Conjecture~\ref{conj:algtriviso} holds when $A$ and $B$ are of the form $A=\bigoplus A_n$ and $B=\bigoplus B_n$, for separable unital UHF algebras $A_n$ and $B_n$ (this is the main result of \cite{McKenney.UHF}), but Conjecture~\ref{conj:algtriv} is open in this case. 

Conjecture~\ref{conj:algtriviso} was confirmed if $A$ and $B$ are of the form $\bigoplus A_n$ and $\bigoplus B_n$, where $A_n$ and $B_n$ are either unital separable simple AF algebras or unital Kirchberg algebras\footnote{A Kirchberg algebra is a separable, purely infinite, simple and nuclear $\Cstar$-algebra.}  satisfying the Universal Coefficient Theorem (\cite[Corollary 6.1]{MKAV.FA}). 

We study the conjectures for reduced products in \S\ref{subsec:redprod}, and for coronas of algebras of the form $A\otimes\mathcal K(H)$ in \S\ref{subsec:stab}.
\subsection{Reduced products}\label{subsec:redprod}
Fix unital $\Cstar$-algebras $A_n$, for $n\in\NN$, and let $A=\bigoplus A_n$. In this case, 
\[
\mathcal M(A)=\prod A_n, \,\,\, \mathcal Q(A)=\prod A_n/\bigoplus A_n.
\]
The latter is called the \emph{reduced product} of the algebras $A_n$.

Let us start to analyse what algebraically trivial isomorphisms are in this setting. If $a=(a_n)\in \mathcal M(A)$ is positive element such that $1-a\in A$,, then we have that there is $n_0$ such that for all $n\geq n_0$ we have that $\norm{1_{A_n}-a_n}<\frac{1}{4}$, and therefore $a_n$ is invertible. This implies that $\overline{a_nA_na_n}=A_n$, hence we have that 
\[
\overline{aAa}=C_a\oplus\bigoplus_{k\geq n_0} A_k.
\]
where $C_a=\overline{c(\bigoplus_{k<n_0}A_k)c}$ for some positive $c\in\bigoplus_{k<n_0}A_k$.

A projection $p\in A$ is central if it commutes with $A$. A projection $p\in A$ is either central or there is $\pi\colon A\to\mathcal B(H)$, an irreducible representation of $A$, such that $\pi(p)\notin\{0,1\}$. This is equivalent to the existence of a contraction $a\in A$ with $\norm{pa-ap}\geq\frac{1}{2}$. (This is a consequence of Kadison's Transitivity Theorem. For a proof, see e.g., \cite[Proposition 6.2]{MKAV.FA}.) Define, for $S\subseteq\NN$, the projection $p^A_S\in\prod A_n$ by
\[
(p^A_S)_n=\begin{cases}1_{A_n}&n\in S\\
0&\text{ else}
\end{cases}.
\]
Note that $p_S^A\in A$ if and only if $S$ is finite. 
Suppose that each $A_n$ does not have central projections. Then all central projections in $A$ and $\mathcal M(A)$ are of the form $p_S^A$, for some $S\subseteq\NN$. More than that, if $q\in\mathcal Q(A)$ is a central projection, then $q=\pi_A(p_S^A)$ for some $S\subseteq\NN$. In this case, if $a$ and $a'$ are positive elements in $\mathcal M(A)$ such that $1-a$ and $1-a'$ are in $A$, an isomorphism $\phi\colon \overline{aAa}\to\overline{a'Aa'}$ that maps strictly convergent sequences to strictly convergent sequences induces an almost permutation of $\NN$. The following summarises the observations we just made, and characterises algebraically trivial isomorphisms of reduced products. Recall the definition of algebraically trivial isomorphism given in Definition~\ref{def:algtriv}.
\begin{proposition}\label{prop:redprodalgtriv}
Let $A_n$ and $B_n$ be unital $\Cstar$-algebras with no central projections. Let $A=\bigoplus A_n$ and $B=\bigoplus B_n$, and denote by $\pi_A$ and $\pi_B$ the quotient maps from $\mathcal M(A)$ to $\mathcal Q(A)$ and from $\mathcal M(B)$ to $\mathcal Q(B)$ respectively.

Let $\Lambda\colon\mathcal Q(A)\to\mathcal Q(B)$ be an isomorphism. Then $\Lambda$ is algebraically trivial if and only if there are 
\begin{itemize}
\item $f$, an almost permutation of $\NN$,
\item $n_0\in\NN$, and
\item isomorphisms $\phi_n\colon A_n\to B_{f(n)}$, for $n\geq n_0$,
\end{itemize} 
such that the map between $\mathcal Q(A)$ and $\mathcal Q(B)$ induced by $\sum\phi_n$ equals $\Lambda$, that is, for all $a=(a_n)\in\mathcal M(A)$ we have that 
\[
\Lambda(\pi_A(a))=\pi_B(\sum\phi_n(a_n)).
\]
\end{proposition}
\begin{proof}
We first prove the converse direction. Let $f$, $n_0$ and $\phi_n\colon A_n\to B_{f(n)}$ be inducing $\Lambda$ in the above fashion. Notice that $\sum\phi_n$ induces a map between $\mathcal M(A)=\prod A_n$ and $\mathcal M(B)=\prod B_n$ which sends strictly continuous sequences to strictly continuous sequences, as the strict topology on $\prod A_n$ is given by the product topology of the norm topologies on the $A_n$, and the map $\sum\phi_n$ is the product map of the norm-continuous maps $\phi_n$.

Let $a=\sum_{n\geq n_0}$ and $b=(\sum\phi_n)(a)$. Then $1-a\in A$ and $1-b\in B$. Moreover $\sum\phi_n$ induces an isomorphism between 
\[
aAa=\overline{aAa}=\bigoplus_{n\geq n_0}A_n\text{ and }bBb=\overline{bBb}=\bigoplus_{n\geq n_0}B_{f(n)}.
\]
Since $\Lambda$ is the isomorphism induced by $\sum\phi_n$, $\Lambda$ is algebraically trivial.

Conversely, pick $a\in\mathcal M(A)$, $b\in\mathcal M(B)$ with $1-a\in A$ and $1-b\in B$ and an isomorphism $\phi\colon \overline{aAa}\to\overline{bBb}$ such that $\tilde\phi=\Lambda$ and $\phi$ maps strictly convergent sequences to strictly convergent sequences. Let $n_a$ and $n_b$ be naturals and $c_a\in \bigoplus_{n<n_a} A_n$ and $c_b\in \bigoplus_{n<n_b}B_n$ be positive elements such that
\[
\overline{aAa}=\overline{c_a(\bigoplus_{n<n_a} A_n)c_a}\oplus\bigoplus_{k\geq n_a} A_k
\]
and
\[
\overline{bBb}=\overline{c_b(\bigoplus_{n<n_b} B_n)c_b}\oplus\bigoplus_{k\geq n_b} B_k.
\]
Let $a'=(c_a+\sum_{n\geq n_a}1_{A_n})$ and $b'=c_b+\sum_{n\geq n_b}1_{B_n}$. As $a-a'\in A$ and $b-b'$, we can see $\phi$ as an isomorphism $\phi\colon \overline{(a')A(a')}\to\overline{(b')B(b')}$. Notice that all central projections in $\overline{(a')A(a')}=\bigoplus_{n\geq n_a}$ are of the form $\sum_{n\in S}1_{A_n}$ where $S\subseteq\NN\setminus n_a$, and minimal central projections are of the form $1_{A_n}$ for $n\geq n_a$. Similarly, central projections in $\overline{(b')B(b')}=\bigoplus_{n\geq n_b}B_n$ are of the form $\sum_{n\in T}1_{B_n}$ for some $T\subseteq\NN\setminus n_b$, and minimal ones have the form $1_{B_n}$. Since an isomorphism sends minimal central projections to minimal central projection, for all $n\geq n_a$ there is $f(n)\geq n_b$ such that $\phi(1_{A_n})=1_{B_{f(n)}}$. The maps $\phi_n\colon A_n\to B_{f(n)}$, for $n\geq n_a$, given by $\phi_n(a)=\phi(1_{A_n}a1_{A_n})$ are the isomorphisms giving the thesis.
\end{proof}

With in mind the notion of $\epsilon$-$^*$-isomorphism from \S\ref{subsec:approxmaps}, combining the techniques to remove the metric approximation property from the hypotheses of \cite[Theorem~E]{MKAV.FA}, we can reproduce verbatim its proof to show the following.
\begin{theorem}\label{thm:redprod}
Assume $\OCA+\MA_{\aleph_1}$. Let $A_n$ and $B_n$ be unital separable $\Cstar$-algebras with no central projections. Let $A=\bigoplus A_n$ and $B=\bigoplus B_n$, and denote by $\pi_A$ and $\pi_B$ the quotient maps from $\mathcal M(A)$ to $\mathcal Q(A)$ and from $\mathcal M(B)$ to $\mathcal Q(B)$ respectively. Suppose that $\Lambda\colon \mathcal Q(A)\to\mathcal Q(B)$ is an isomorphism. Then there are finite sets $F_1,F_2\subseteq\NN$, a bijection $f\colon\NN\setminus F_1\to\NN\setminus F_2$, maps $\phi_n\colon A_n\to B_{f(n)}$ and a sequence $\epsilon_n\to0$ such that $\phi_n$ is an $\epsilon_n$-$^*$-isomorphism and
\[
\widetilde{\sum\phi_n}=\Lambda,
\]
that is, for all $a=(a_n)\in\prod A_n$ we have that 
\[
\Lambda(\pi_A(a))=\pi_B(\sum\phi_n(a_n)).
\]
\end{theorem}
The two  results above asserts that when $A_n$ and $B_n$ are unital separable $\Cstar$-algebras with no central projections, then modulo almost permutations, that is, assuming that the function $f$ in Theorem~\ref{thm:redprod} is the identity (this can be done if we reindex the $B_n$'s),
\begin{itemize}
\item if we have an algebraically trivial isomorphism $\Lambda$ between $\prod A_n/\bigoplus A_n$ and $\prod B_n/\bigoplus B_n$ then we can find $n_0\in\NN$ and, for $n\in\NN$, isomorphisms $\phi_n\colon A_n\to B_n$ such that $\sum\phi_n$ lifts $\Lambda$
\item under Forcing Axioms, all isomorphisms between $\prod A_n/\bigoplus A_n$ and $\prod B_n/\bigoplus B_n$ are induced by sequences of approximate $^*$-isomorphisms $A_n\to B_n$.
\end{itemize}
Therefore, confirming Conjecture~\ref{conj:algtriv} for reduced products amounts to the study of when approximate isomorphisms are uniformly close to isomorphisms, that is, when we could move the approximate maps of Theorem~\ref{thm:redprod} into on-the-nors isomorphisms to induce the same map at the level of the reduced products. This is formalised by the notion of Ulam stability. The following was given as \cite[Definition 2.6]{MKAV.UC} with $\mathcal C=\mathcal D$.
\begin{definition}\label{defin:ulam}
Let $\mathcal C$ be a  class of $\Cstar$-algebras. We say that $\mathcal C$ is \emph{Ulam stable} if for every $\epsilon>0$ there is $\delta>0$ such that for all $A$ and $B\in\mathcal C$ and a $\delta$-$^*$-isomorphism $\phi\colon A\to B$ there is a $^*$-isomorphism $\psi\colon A\to B$ with $\norm{\phi-\psi}<\epsilon$.
\end{definition}
We have the following:
\begin{theorem}\label{thm:ulam}
Let $\mathcal C$ be a class of unital separable $\Cstar$-algebras with no central projections. The following statements are equivalent:
\begin{enumerate}
\item the class $\mathcal C$ is Ulam stable;
\item all topologically trivial isomorphisms between reduced products of algebras in $\mathcal C$ are algebraically trivial;
\end{enumerate}
\end{theorem}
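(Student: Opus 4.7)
The plan is to prove the two implications separately. For (2) $\Rightarrow$ (1) I will argue the contrapositive, using Propositions~\ref{prop:borelredprod} and~\ref{prop:redprodalgtriv} to convert a failure of Ulam stability into an algebraically-nontrivial topologically-trivial isomorphism. For (1) $\Rightarrow$ (2) I will combine the Borel lift arising from topological triviality with Ulam stability to produce an algebraically trivial realization of $\Lambda$.

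To start (2) $\Rightarrow$ (1), I suppose $\mathcal C$ is not Ulam stable: I fix $\epsilon>0$ and, for each $n$, algebras $A_n,B_n\in\mathcal C$ together with a $2^{-n}$-$^*$-isomorphism $\phi_n\colon A_n\to B_n$ admitting no $^*$-isomorphism within distance $\epsilon$. Proposition~\ref{prop:borelredprod} then gives that the induced map $\Lambda\colon\prod A_n/\bigoplus A_n\to\prod B_n/\bigoplus B_n$ is topologically trivial, so by~(2) it is algebraically trivial, and Proposition~\ref{prop:redprodalgtriv} yields an almost permutation $f$ of $\NN$, some $n_0$, and $^*$-isomorphisms $\psi_n\colon A_n\to B_{f(n)}$ with $\Lambda=\widetilde{\bigoplus\psi_n}$. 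Comparing the two lifts $\prod\phi_n$ and $\bigoplus\psi_n$ on the central element $p^A_S$ gives $\pi_B(p^B_S)=\pi_B(p^B_{f(S)})$, because $\norm{\phi_n(1_{A_n})-1_{B_n}}\to 0$ by the $2^{-n}$-multiplicativity of $\phi_n$; this forces $S\bigtriangleup f(S)$ finite for every $S\subseteq\NN$, hence $f=\id$ cofinitely. Testing agreement of the two lifts on arbitrary bounded sequences $(a_n)\in\prod A_n$ and extracting, for each $n$, a contraction realising the supremum of $\norm{\phi_n-\psi_n}$ on the unit ball of $A_n$ then forces $\norm{\phi_n-\psi_n}\to 0$, contradicting the choice of $\epsilon$.

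For (1) $\Rightarrow$ (2), the first step is a Borel uniformization of the graph of a topologically trivial $\Lambda$, producing a Borel lift $\Phi\colon\prod A_n\to\prod B_n$. Because each $A_n$ and $B_n$ has no central projections, the Boolean algebras of central projections in $\mathcal Q(A)$ and $\mathcal Q(B)$ are canonically copies of $\mathcal P(\NN)/\Fin$, and $\Lambda$ restricts to a Boolean isomorphism between them that inherits a Baire-measurable lift from $\Phi$; the classical $\ZFC$ theorem that every Baire-measurable automorphism of $\mathcal P(\NN)/\Fin$ is trivial (see~\cite[\S1.8]{Farah.AQ}) provides an almost permutation $f\colon\NN\to\NN$ realizing this restriction. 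After relabeling the $B_n$ I may assume $f=\id$. Next, by the coordinate-wise Baire-category and Borel-selection argument developed in~\cite[\S2]{MKAV.UC} (the metric-approximation-property hypothesis being removable exactly as in the proof of Theorem~\ref{thm:redprod}), one extracts from $\Phi$ an asymptotically additive lift $\bigoplus\phi_n$ of $\Lambda$ with $\phi_n\colon A_n\to B_n$. Proposition~\ref{prop:liftsandalmost} then guarantees that each $\phi_n$ is an $\epsilon_n$-$^*$-isomorphism with $\epsilon_n\to 0$. Assumption~(1) furnishes $^*$-isomorphisms $\psi_n\colon A_n\to B_n$ with $\norm{\phi_n-\psi_n}\to 0$, so $\bigoplus\psi_n$ induces the same $\Lambda$; since each $\psi_n$ is a genuine $^*$-isomorphism, Definition~\ref{def:algtriv} applied with $a=1_{\mathcal M(A)}$, $b=1_{\mathcal M(B)}$ and $\phi=\bigoplus\psi_n$ witnesses that $\Lambda$ is algebraically trivial.

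The principal obstacle is the extraction of a coordinate-wise asymptotically additive lift from the Borel lift in the second step of (1) $\Rightarrow$ (2); once this is secured, Ulam stability combined with Proposition~\ref{prop:liftsandalmost} completes the argument mechanically.
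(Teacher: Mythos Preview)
Your argument for (2)\,$\Rightarrow$\,(1) is essentially the paper's: Proposition~\ref{prop:borelredprod} gives topological triviality, Proposition~\ref{prop:redprodalgtriv} unpacks algebraic triviality, the central-projection comparison forces $f=\id$ cofinitely, and the pointwise contradiction finishes.

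For (1)\,$\Rightarrow$\,(2), however, your route and the paper's diverge, and your version has a genuine gap. The paper does \emph{not} attempt a direct $\ZFC$ extraction of a coordinate-wise lift from the Borel lift. Instead it passes to a forcing extension in which $\OCA_\infty+\MA_{\aleph_1}$ hold; the $\Pi^1_2$ code for the topologically trivial $\Lambda$ persists by Shoenfield absoluteness, Theorem~\ref{thm:redprod} (which \emph{requires} $\OCA_\infty+\MA_{\aleph_1}$) then produces the almost permutation and the $\epsilon_n$-$^*$-isomorphisms $\phi_n$, Ulam stability upgrades these to genuine isomorphisms, and a second appeal to absoluteness (Theorem~\ref{thm:absolute}(2)) transports algebraic triviality back to the ground model.

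Your step ``by the coordinate-wise Baire-category and Borel-selection argument developed in~\cite[\S2]{MKAV.UC} \dots\ one extracts from $\Phi$ an asymptotically additive lift $\bigoplus\phi_n$'' is exactly the point you yourself flag as the principal obstacle, and it is not resolved by the citations you give. The removal of the metric approximation property ``exactly as in the proof of Theorem~\ref{thm:redprod}'' is not available to you in $\ZFC$: that proof is carried out \emph{under} $\OCA_\infty+\MA_{\aleph_1}$, via the $\OCA$ lifting Theorem~\ref{thm:lifting} and the filtration machinery of \S\ref{section:general}. No $\ZFC$ argument producing coordinate-wise $\epsilon_n$-$^*$-isomorphisms from an arbitrary Borel lift of an isomorphism between reduced products is established in the paper or its references. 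The forcing-plus-absoluteness manoeuvre is precisely what the paper uses to circumvent this; you should either adopt it or supply an independent $\ZFC$ proof of the extraction step.
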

\begin{proof}
$1\Rightarrow 2$. Suppose that $\mathcal C$ is Ulam stable. Let $A_n$ and $B_n$ be elements of $\mathcal C$ and let $\Lambda\colon\prod A_n/\bigoplus A_n\to\prod B_n/\bigoplus B_n$ be a topologically trivial isomorphism. By passing to a forcing extension, we may assume $\OCA_{\infty}$ and $\MA_{\aleph_1}$ are assumed without changing $\Lambda$. In fact, since $\Lambda$ is topologically trivial, it is coded by a $\Pi_2^1$ formula, and, by Shoenfield's absoluteness theorem (see Theorem~\ref{thm:absolute}), the code for $\Lambda$ induces a topologically trivial isomorphism, which we call $\Lambda$ for brevity. By Theorem~\ref{thm:redprod} there are an almost bijection $f\colon\NN\to\NN$, $n_0\in\NN$, a sequence $\epsilon_n\to 0$ and $\epsilon_n$-$^*$-isomorphisms $\phi_n\colon A_n\to B_{f(n)}$ for $n\geq n_0$ such that 
\[
\Lambda=\widetilde{\sum\phi_n}.
\] 
Since $\mathcal C$ is Ulam stable, we can find isomorphisms $\psi_n\colon A_n\to B_{f(n)}$ such that $\norm{\phi_n-\psi_n}\to 0$ s $n\to\infty$, hence 
\[
\Lambda=\widetilde{\sum\psi_n}
\]
and $\Lambda$ is algebraically trivial. By Theorem~\ref{thm:absolute}, $\Lambda$ is then algebraically trivial in the original model, as once again the code for $\Lambda$ it is coded by a $\Pi_2^1$ formula, and this will not be changed by forcing.

$2\Rightarrow 1$. We work by contradiction. Suppose that there is $\epsilon>0$ such that for all $n$ there are $A_n,B_n\in\mathcal C$ and $\frac{1}{n}$-$^*$-isomorphisms $\phi_n\colon A_n\to B_n$ such that for all $n$ there is no $\psi_n\colon A_n\to B_n$ for which $\norm{\phi_n-\psi_n}<\epsilon$. By Proposition~\ref{prop:borelredprod}, the map $\sum\phi_n$ induces a topologically trivial isomorphism 
\[
\widetilde{\sum\phi_n}\colon\prod A_n/\bigoplus A_n\to\prod B_n/\bigoplus B_n.
\]
Since by assumption $\tilde\phi$ is algebraically trivial, by Proposition~\ref{prop:redprodalgtriv} there are an almost bijection $f\colon\NN\to\NN$ and isomorphisms $\psi_n\colon A_n\to B_{f(n)}$ such that 
\[
\widetilde{\sum\psi_n}=\widetilde{\sum\phi_n}.
\]
Since $\widetilde{\sum\psi_n}$ and $\widetilde{\sum\phi_n}$ induce the same automorphism of the canonical copy of $\ell_\infty/c_0$ given by central projections (namely, the identity), we have that eventually $f(n)=n$, so $\psi_n\colon A_n\to B_n$. By hypothesis, for all $n$ there is a contraction $a_n\in A_n$ such that $\norm{\phi_n(a_n)-\psi_n(a_n)}>\epsilon$. With $a=(a_n)$ we have that
\[
\epsilon<\sup\norm{\phi_n(a_n)-\psi_n(a_n)}=\norm{\pi_B(\prod\phi_n(a)-\prod\psi_n(a))}\leq\norm{\widetilde{\sum\psi_n}-\widetilde{\sum\phi_n}}=0,
\]
a contradiction.
\end{proof}
\subsubsection*{Canonically isomorphic reduced products}
We now analyze Conjecture~\ref{conj:algtriviso}.
\begin{definition}
Two $\Cstar$-algebras $A$ and $B$ are said $\epsilon$-isomorphic if there an $\epsilon$-$^*$-isomorphism $\phi\colon A\to B$.
\end{definition}
Although this is not an equivalence relation, we have that
\begin{itemize}
\item  if $A$ and $B$ are $\epsilon$-isomorphic, then $B$ and $A$ are $2\epsilon$-isomorphic and,
\item  if $A$ and $B$ are $\epsilon$-isomorphic and $B$ and $C$ are $\delta$-isomorphic, then $A$ and $C$ are $\epsilon+\delta$-isomorphic.
\end{itemize}

\begin{question}\label{question:approx}
 Let $\mathcal C$ be a class of unital separable $\Cstar$-algebras. Is there $\epsilon>0$ such that for all $A,B\in\mathcal C$, if $A$ and $B$ are $\epsilon$-isomorphic then $A$ and $B$ are isomorphic?
\end{question}
Answers to Question~\ref{question:approx} are connected to the existence of isomorphic reduced products that are not algebraically isomorphic and to  a fundamental conjecture of Kadison and Kastler. We make these connections explicit.
 
\begin{theorem}\label{thm:ulam2}
Let $\mathcal C$ be a class of separable unital $\Cstar$-algebras with no central projections. The following are equivalent:
\begin{enumerate}[label=(\roman*)]
\item Question~\ref{question:approx} has a positive answer for the class $\mathcal C$;
\item if $A_n,B_n\in\mathcal C$ and $\prod A_n/\bigoplus A_n$ is isomorphic to $\prod B_n/\bigoplus B_n$ via a topologically trivial isomorphism, then they are isomorphic via an algebraically trivial isomorphism.
\end{enumerate}
\end{theorem}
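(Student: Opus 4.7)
The plan is to mirror the proof of Theorem~\ref{thm:ulam}, with the Ulam-stability perturbation step in the forward direction replaced by a direct use of the positive answer to Question~\ref{question:approx}, and with a combinatorial diagonal argument taking care of the converse.

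For $(i) \Rightarrow (ii)$, fix $\epsilon_0 > 0$ witnessing the positive answer to Question~\ref{question:approx}. Given $A_n, B_n \in \mathcal C$ and a topologically trivial isomorphism $\Lambda \colon \prod A_n / \bigoplus A_n \to \prod B_n / \bigoplus B_n$, the assertion that there is an algebraically trivial isomorphism between these reduced products is $\Sigma^1_2$ in the sequence data, so by Shoenfield absoluteness (cf.~Theorem~\ref{thm:absolute}(2)) it suffices to verify it in a forcing extension modelling $\OCA_\infty + \MA_{\aleph_1}$. There, Theorem~\ref{thm:redprod} provides an almost bijection $f \colon \NN \to \NN$ and $\epsilon_n$-$^*$-isomorphisms $\phi_n \colon A_n \to B_{f(n)}$ with $\epsilon_n \to 0$ such that $\Lambda = \widetilde{\bigoplus \phi_n}$. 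For $n$ large enough that $\epsilon_n < \epsilon_0$, the positive answer to Question~\ref{question:approx} upgrades each $\phi_n$ to an honest $^*$-isomorphism $\psi_n \colon A_n \to B_{f(n)}$, and $\widetilde{\bigoplus \psi_n}$ (patched with arbitrary choices on the finite residual set) is an algebraically trivial isomorphism of the two reduced products.

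For $(ii) \Rightarrow (i)$ I argue by contrapositive. From failure of (i), fix for each $n$ algebras $\bar A_n, \bar B_n \in \mathcal C$ and a $\frac{1}{n}$-$^*$-isomorphism $\bar \phi_n \colon \bar A_n \to \bar B_n$ with $\bar A_n \not\cong \bar B_n$. The plan is to pass to a subsequence $(A_n, B_n) = (\bar A_{k_n}, \bar B_{k_n})$ such that no almost bijection $f$ of $\NN$ can satisfy $A_n \cong B_{f(n)}$ for all but finitely many $n$; then by Proposition~\ref{prop:borelredprod} the induced $\widetilde{\bigoplus \phi_{k_n}}$ is topologically trivial, while Proposition~\ref{prop:redprodalgtriv} forbids any algebraically trivial isomorphism between the reduced products, contradicting (ii). I split cases. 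If some fixed pair $(A, B) \in \mathcal C^2$ with $A \not\cong B$ is $\epsilon$-isomorphic for every $\epsilon > 0$, set $A_n = A$, $B_n = B$: the iso class $[A]$ then has multiplicity $\infty$ in $\{[A_n]\}_n$ and $0$ in $\{[B_n]\}_n$, blocking every candidate $f$. Otherwise each non-isomorphic pair $(A, B)$ has a strictly positive infimum of $\epsilon$ with $A, B$ being $\epsilon$-isomorphic, and a diagonal extraction yields a subsequence with $\{[A_n]\}_n$ and $\{[B_n]\}_n$ pairwise distinct as iso classes and mutually disjoint.

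The main obstacle is the diagonal extraction in the second case. The key dichotomy is: either some iso class $[C]$ appears infinitely often among $[\bar A_n]$ or $[\bar B_n]$, in which case restricting to those indices the partner classes all lie outside $\{[C]\}$ while their $\epsilon$-closeness to $C$ tends to $0$, so (by the positivity of the closeness infimum for each individual non-isomorphic pair) the partners must exhaust infinitely many distinct iso classes, giving the multiplicity imbalance at $[C]$ outright; or every iso class recurs only finitely often in both sequences, and a standard inductive diagonalization builds the subsequence because at each stage only finitely many indices are forbidden. In either subcase (ii) is contradicted, which completes the proof.
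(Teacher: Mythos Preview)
Your argument is correct. The forward direction $(i)\Rightarrow(ii)$ is essentially identical to the paper's: pass to a forcing extension with $\OCA_\infty+\MA_{\aleph_1}$, apply Theorem~\ref{thm:redprod} to obtain an almost permutation $f$ and $\epsilon_n$-$^*$-isomorphisms $A_n\to B_{f(n)}$, and then use the positive answer to Question~\ref{question:approx} to replace them by honest isomorphisms; absoluteness brings the conclusion back. One small point of phrasing: ``upgrades each $\phi_n$'' suggests $\psi_n$ is close to $\phi_n$, which Question~\ref{question:approx} does not give you, but you never use closeness, only that $\widetilde{\bigoplus\psi_n}$ is \emph{some} algebraically trivial isomorphism---exactly what (ii) asks for.

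For $(ii)\Rightarrow(i)$ your organization differs from the paper's. The paper argues in three cases: a fixed non-isomorphic pair close at every scale (your first case); a single algebra $\frac{1}{n}$-close to infinitely many pairwise non-isomorphic algebras; and the generic case, in which it \emph{applies} (ii) once to the raw sequence $(\bar A_n,\bar B_n)$ to extract an almost permutation $f$ with $A_n\cong B_{f(n)}$, and then builds $C_{2k}=B_{n_k}$, $C_{2k+1}=B_{f(n_k)}$. Your dichotomy---some isomorphism class recurs infinitely often versus all classes have finite multiplicity---is cleaner: it yields the blocking subsequence directly, without the recursive appeal to (ii), and the inductive diagonalization in the ``all finite multiplicity'' branch is routine. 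What the paper's route buys is a uniform target (a single sequence $(C_n)$ of pairwise non-isomorphic algebras with $C_{2n}$ close to $C_{2n+1}$); what yours buys is a more transparent obstruction to any matching $f$. One cosmetic remark: in your recurrent-class branch, the observation that the partners hit infinitely many distinct isomorphism classes is correct but unnecessary---the multiplicity imbalance at $[C]$ follows immediately from the fact that every partner is $\not\cong C$.
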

\begin{proof}

(i)$\Rightarrow$(ii): Fix $\epsilon>0$ giving a positive answer to Question~\ref{question:approx} for the class $\mathcal C$. Let $A_n,B_n\in\mathcal C$ be as in (ii), and suppose that $\Lambda\colon\prod A_n/\bigoplus A_n\to\prod B_n/\bigoplus B_n$ is a topologically trivial isomorphism. By passing to a forcing extension where $\OCA_{\infty}$ and $\MA_{\aleph_1}$ hold, $\Lambda$ remains topologically trivial by Theorem~\ref{thm:absolute}. In such forcing extension, by Theorem~\ref{thm:redprod}, there are a sequence $\epsilon_n\to 0$, an almost permutation $f\colon\NN^\NN$ and $\epsilon_n$-$^*$-isomorphisms $\phi_n\colon A_n\to B_{f(n)}$. Without loss of generality we may assume that $\epsilon_n<\epsilon$, so the algebras $A_n$ and $B_{f(n)}$ are isomorphic (for all large enough $n$). Fix an isomorphism $\psi_n\colon A_n\to B_{f(n)}$. Then $\sum\psi_n$ induces an algebraically trivial isomorphism $\prod A_n/\bigoplus A_n\to\prod B_n/\bigoplus B_n$. Such an isomorphism is algebraically trivial in the forcing extension, and again by Theorem~\ref{thm:absolute}, in the ground model.

(ii)$\Rightarrow$(i): Let $\mathcal C$ be a class as in the hypothesis, assume (ii) and the negation of (i). We proceed by cases. 

\underline{CASE 1:}
There are nonisomorphic $A$ and $B$ in $\mathcal C$ such that $A$ is $\frac{1}{n}$-isomorphic to $B$ for all $n\in\NN$. Let $A_n=A$ and $B_n=B$ for all $n$, and let $\phi_n\colon A_n\to B_n$ be a $1/n$-$^*$-isomorphism. By Proposition~\ref{prop:borelredprod}, $\sum\phi_n$ induces a topologically trivial isomorphism between $\ell_\infty(A)/c_0(A)$ and $\ell_\infty(B)/c_0(B)$. By (ii), there is then an algebraically trivial isomorphism between $\ell_\infty(A)/c_0(A)$ and $\ell_\infty(B)/c_0(B)$. By Proposition~\ref{prop:redprodalgtriv}, $A\cong B$, which is a contradiction.

If Case 1 does not apply, the strategy is to produce from the negation of (i) a sequence of pairwise nonisomorphic algebras $C_n\in \mathcal C$ with the property that $C_{2n}$ is $\frac{1}{n}$-isomorphic to $C_{2n+1}$ for all $n\in\NN$. By applying Proposition~\ref{prop:borelredprod}, $\prod C_{2n}/\bigoplus C_{2n}$ and $\prod C_{2n+1}/\bigoplus C_{2n+1}$ are then isomorphic via a topologically trivial isomorphism. The existence of an algebraically trivial isomorphism between $\prod C_{2n}/\bigoplus C_{2n}$ and $\prod C_{2n+1}/\bigoplus C_{2n+1}$, together Proposition~\ref{prop:redprodalgtriv}, contradicts the fact that the $C_n$'s are pairwise nonisomorphic. 

\underline{CASE 2:} There are $A\in \mathcal C$ and pairwise nonisomorphic $B_n\in \mathcal C$ such that $A$ is $\frac{1}{n}$-isomorphic to $B_n$. The algebras $B_{2n}$ and $B_{2n+1}$ are then $(\frac{1}{2n}+\frac{2}{2n+1})$-isomorphic. Setting $C_{2n}=B_{4n}$ and $C_{2n+1}=B_{4n+1}$ gives the required sequence.

\underline{CASE 3:} There are nonisomorphic $A_n,B_n\in\mathcal C$ such that $A_n$ is $\frac{1}{n}$-isomorphic to $B_n$ but $A_n$ and $B_n$ are not isomorphic for all $n$. The algebras $\prod A_n/\bigoplus A_n$ and $\prod B_n/\bigoplus B_n$ are then isomorphic via a topologically trivial isomorphism by Proposition~\ref{prop:borelredprod}, and therefore via an  algebraically isomorphism. By Proposition~\ref{prop:redprodalgtriv} for all (large enough) $n$ there is $f(n)$ such that $A_n$ and $B_{f(n)}$ are isomorphic, where $f\in\NN^\NN$ is an almost permutation. In particular $B_n$ and $B_{f(n)}$ are $\frac{1}{n}$-isomorphic. Note that $n\neq f(n)$ for all $n$, and we can assume that $f(n)\neq f(n')$ whenever $n\neq n'$.  Let $n_k$ be a subsequence such that $n_{k+1}, f(n_{k+1})> \max\{n_k,f(n_k)\}$ for all $k\in\NN$. The algebras $C_{2k}=B_{n_k}$ and $C_{2k+1}=B_{f(n_k)}$ give the required sequence.
\end{proof}
If $A,B\subseteq\mathcal B(H)$ one defines the Kadison-Kastler distance between $A$ and $B$ as
\[
d_{KK}(A,B)=\max\{\sup_{a\in A, \norm{a}\leq 1}\inf_{b\in B}\norm{a-b},\sup_{b\in B, \norm{b}\leq 1}\inf_{a\in A}\norm{a-b}\}.
\]
This is the Hausdorff distance between the unit balls of $A$ and $B$ as subsets of the unit ball of $\mathcal B(H)$. The following two conjectures are implicit in \cite{KadKast}:
\begin{conjecture}\label{conj:KK}
\begin{enumerate}
\item There is $\epsilon>0$ such that if $A,B\subseteq\mathcal B(H)$ are separable $\Cstar$-algebras such that $d_{KK}(A,B)<\epsilon$ then $A\cong B$.
\item Let $A\subseteq\mathcal B(H)$ be a separable $\Cstar$-algebra. Then there is $\epsilon>0$ such that if $B\subseteq\mathcal B(H)$ is a $\Cstar$-algebra such that $d_{KK}(A,B)<\epsilon$ then $A\cong B$. 
\end{enumerate}
\end{conjecture}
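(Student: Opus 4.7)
The plan is to reduce Conjecture~\ref{conj:KK} to a positive answer to Question~\ref{ques:main}(b), exploiting the machinery of $\epsilon$-isomorphism and Theorem~\ref{thm:ulam2}. This will yield Conjecture~\ref{conj:KK} only conditionally, under $\PFA$ combined with Conjecture~\ref{conj:algtriv}; one cannot reasonably expect an unconditional proof here, as the Kadison-Kastler conjecture is a long-standing open problem in the nonnuclear separable case (and is known to fail in the nonseparable setting).

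First I would establish the standard reduction: if $A, B \subseteq \mathcal{B}(H)$ are separable $\Cstar$-algebras with $d_{KK}(A,B) < \delta$, then for all sufficiently small $\delta$ there exists a map $\phi \colon A \to B$ which is $\epsilon(\delta)$-linear, $\epsilon(\delta)$-multiplicative, $\epsilon(\delta)$-$^*$-preserving, $\epsilon(\delta)$-injective and $\epsilon(\delta)$-surjective, with $\epsilon(\delta) \to 0$ as $\delta \to 0$. The construction is essentially the one in Kadison-Kastler's original paper: for each contraction $a \in A$, choose a close partner $\phi(a) \in B$ with $\norm{a - \phi(a)} < \delta$; the closure of $B$ under the algebraic operations, together with the KK-hypothesis, yields all the approximate properties. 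At this stage $A$ and $B$ are $\epsilon$-isomorphic in the sense introduced just before Question~\ref{question:approx}, for small $\delta$.

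Second, I would invoke Theorem~\ref{thm:ulam2}. If Question~\ref{question:approx} is positively answered for a class $\mathcal{C}$ of separable unital $\Cstar$-algebras without central projections containing both $A$ and $B$, then the $\epsilon$-$^*$-isomorphism produced above is close to a genuine isomorphism for small enough $\epsilon$, and so $A \cong B$. To secure such a positive answer one passes through Theorem~\ref{thm:ulam2}: under $\PFA$, Theorem~\ref{thmi:alltrivialnoncomm} gives topological triviality of every isomorphism between reduced products of algebras in $\mathcal{C}$, and the additional assumption of Conjecture~\ref{conj:algtriv} would force these to be algebraically trivial, i.e.\ condition (ii) of Theorem~\ref{thm:ulam2}; the equivalence there then delivers the required positive answer to Question~\ref{question:approx}, completing the reduction.

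The main obstacle is of course Conjecture~\ref{conj:algtriv} itself: a positive answer to Question~\ref{ques:main}(b) is a genuinely hard Ulam-stability statement, confirmed so far only in a handful of cases (see Theorem~\ref{thm:absolute}). A secondary but more tractable difficulty is the no-central-projection hypothesis in Theorem~\ref{thm:ulam2}; tensoring $A$ and $B$ with $\mathcal{O}_2$ (or with $M_n$) kills central projections while preserving KK-closeness up to a controlled constant, so this obstruction can likely be absorbed by modifying Theorem~\ref{thm:ulam2} or by a transfer argument back from the tensored situation. A final subtlety is that Conjecture~\ref{conj:KK} is stated unitally whereas the reduction is cleanest in the nonunital, $\sigma$-unital setting; care is needed to ensure that KK-closeness passes cleanly to the unitizations and that approximate units match up under~$\phi$.
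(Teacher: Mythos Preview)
The statement is a \emph{conjecture}; the paper does not prove it and explicitly presents it as open (noting only that part~1 is confirmed for nuclear algebras in \cite{CSSWW}). There is therefore no proof in the paper to compare against. The paper's actual contribution here is Theorem~\ref{thm:KK}: a positive answer to Question~\ref{question:approx} for a class $\mathcal C$ implies the Kadison--Kastler conjecture for $\mathcal C$. Its two-line proof is exactly your first step --- for each contraction $a\in A$ choose a nearby $b\in B$, obtaining an $\epsilon$-$^*$-isomorphism by the Axiom of Choice, and then apply the assumed positive answer to Question~\ref{question:approx}.

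Your second step --- deriving a positive answer to Question~\ref{question:approx} from $\PFA$ together with Conjecture~\ref{conj:algtriv}, via Theorem~\ref{thm:ulam2} --- is logically sound but gratuitous. By Theorem~\ref{thm:ulam2}, Conjecture~\ref{conj:algtriv} (restricted to reduced products) is at least as strong as a positive answer to Question~\ref{question:approx}, so you have replaced one open hypothesis by a harder one. The $\PFA$ assumption is also eliminable from your chain: by the absoluteness in Theorem~\ref{thm:absolute}(2), once Conjecture~\ref{conj:algtriv} holds in some forcing extension, condition~(ii) of Theorem~\ref{thm:ulam2} holds in $\ZFC$, and hence so does Question~\ref{question:approx}. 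Finally, the no-central-projection restriction and the unital/nonunital issue you flag are artifacts of your detour through Theorem~\ref{thm:ulam2}; the paper's direct Theorem~\ref{thm:KK} places no such restriction on $\mathcal C$, so these obstacles do not arise.
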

Conjecture~\ref{conj:KK}(1) is a uniform version of (2). In \cite{CSSWW} 1. was confirmed in case $A$ (or $B$) is nuclear. The class of nuclear algebras is the largest one for which  2. was confirmed. 
\begin{theorem}\label{thm:KK}
Let $\mathcal C$ be a class of separable $\Cstar$-algebras. Then a positive answer to Question~\ref{question:approx} for the class $\mathcal C$ implies that Kadison and Kastler conjecture holds for algebras in the class $\mathcal C$.
\end{theorem}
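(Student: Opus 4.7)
The plan is to convert small Kadison--Kastler distance into the existence of an $\epsilon$-$^*$-isomorphism and then invoke the hypothesis. Let $\epsilon_0 > 0$ be the constant given by the positive answer to Question~\ref{question:approx} for $\mathcal C$. I will exhibit a universal $\delta_0 > 0$ such that for all $A, B \in \mathcal C$ represented on a common Hilbert space $H$ with $d_{KK}(A, B) < \delta_0$, there is an $\epsilon_0$-$^*$-isomorphism $\phi \colon A \to B$; Question~\ref{question:approx} then yields $A \cong B$, verifying the first clause of Conjecture~\ref{conj:KK} for $\mathcal C$ with constant $\delta_0$.

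To build $\phi$ when $d_{KK}(A, B) < \delta$, first select, for each self-adjoint contraction $a \in A$, some $b \in B$ with $\|a - b\| < \delta$, and replace $b$ by its self-adjoint part $(b + b^*)/2$ (still within $2\delta$ of $a$); this defines $\phi$ on the self-adjoint contractions of $A$. Extend $\mathbb C$-linearly to all contractions via $\phi(a) = \phi(\tfrac{a + a^*}{2}) + i\,\phi(\tfrac{a - a^*}{2i})$, extend homogeneously to $A$ by $\phi(a) = \|a\|\,\phi(a/\|a\|)$ for $\|a\| > 1$, and finally rescale by $(1 + O(\delta))^{-1}$ to force contractivity. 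The resulting map is manifestly $^*$-preserving and satisfies $\|\phi(a) - a\| \leq C_0\,\delta\,\|a\|$ for some absolute constant $C_0$.

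Verifying that $\phi$ is a $C_1\delta$-$^*$-isomorphism (for an absolute $C_1$) is then a sequence of routine triangle inequalities: for $a_1, a_2 \in A_{\leq 1}$,
\[
\|\phi(a_1 + a_2) - \phi(a_1) - \phi(a_2)\| \leq \|\phi(a_1 + a_2) - (a_1 + a_2)\| + \sum_{i=1}^{2} \|\phi(a_i) - a_i\| = O(\delta),
\]
and
\[
\|\phi(a_1 a_2) - \phi(a_1)\phi(a_2)\| \leq \|\phi(a_1 a_2) - a_1 a_2\| + \|\phi(a_1) - a_1\|\,\|\phi(a_2)\| + \|a_1\|\,\|\phi(a_2) - a_2\| = O(\delta).
\]
Approximate injectivity is immediate from $|\|\phi(a)\| - \|a\|| \leq \|\phi(a) - a\|$, and approximate surjectivity uses the symmetric half $d_{KK}(B, A) < \delta$: each $b \in B_{\leq 1}$ lies within $\delta$ of some $a \in A$ (normalize by $\max(1, \|a\|)$ to keep $a$ in the unit ball), hence within $O(\delta)$ of $\phi(a)$. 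Taking $\delta_0 \leq \epsilon_0 / C_1$ completes the argument.

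The main (and essentially only) technical point is the simultaneous bookkeeping of contractivity, $^*$-preservation, and all the approximate conditions, but since every perturbation costs at most an $O(\delta)$ additive error, this amounts merely to absorbing a universal multiplicative factor into $C_1$. Morally, the theorem reduces to the elementary observation that small Kadison--Kastler distance is uniformly stronger than being $\epsilon$-isomorphic, with the uniform strengthening of Question~\ref{question:approx} absorbing the distinction.
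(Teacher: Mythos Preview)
Your proof is correct and follows the same idea as the paper's: convert small Kadison--Kastler distance into an $\epsilon$-$^*$-isomorphism via a choice function, then invoke the hypothesis. The paper's own proof is a two-line sketch (``the Axiom of Choice gives an $\epsilon$-$^*$-isomorphism $A\to B$'') with the claimed constant $\epsilon/2$, whereas you spell out the construction (passing through self-adjoints to secure $^*$-preservation, homogeneous extension, rescaling for contractivity) and track the resulting constant $C_1$ explicitly; this extra care is welcome, since the paper's constant $\epsilon/2$ is actually slightly too optimistic once one checks $\epsilon$-linearity on sums of contractions.
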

\begin{proof}
Let $\epsilon>0$ be giving a positive answer to Question~\ref{question:approx} for the class $\mathcal C$. If $A,B\subseteq\mathcal B(H)$ are elements of $\mathcal C$ such that $d_{KK}(A,B)<\epsilon/2$, the Axiom of Choice gives an $\epsilon$-$^*$-isomorphism $A\to B$. By hypothesis, $A\cong B$.
\end{proof}
\begin{remark}\label{remark:comments}
\begin{itemize}
\item For certain classes of $\Cstar$-algebras Question~\ref{question:approx} has a positive solution. This happens, for example, for (separable) AF algebras (\cite[Corollary 3.2]{MKAV.UC}), abelian algebras (see Theorem~\ref{thm:semrl}) and Kircherg UCT algebras (by showing that if $A$ and $B$ are $\epsilon$-$^*$-isomorphic then they have the same $K$-theory). The most interesting class for which we do not know whether Question~\ref{question:approx} has a positive solution is the class of nuclear separable $\Cstar$-algebras. Evidences in the Kadison-Kastler perturbation setting (\cite{CSSWW}) suggest that a positive solution should be expected.
 For a more detailed treatment of what properties are preserved by  approximate maps, and of how it relates to isomorphisms of reduced products, see \cite[\S3 and \S6]{MKAV.FA}.
 \item By the Axiom of Choice, if $A$ and $B$ have faithful representations $\rho_A$ and $\rho_B$ on the same Hilbert space $H$ with the property that $d_{KK}(\rho_A[A],\rho_b[B])\leq\epsilon$, they are $2\epsilon$-isomorphic. The converse, whether there is $\epsilon>0$ such that two $\epsilon$-isomorphic $\Cstar$-algebras can be represented faithfully on a Hilbert space in such a way the images of the representations are Kadison-Kastler close to each other, is yet open even for natural classes. (This is true, in the nuclear setting, if the $\epsilon$-isomorphism is linear, by a result of Johnson, \cite{Johnson.AMNM}).
\item Topological triviality is needed as an assumption in Theorem~\ref{thm:ulam} and~\ref{thm:ulam2}. In fact,  if $A_n$ are separable unital $\Cstar$-algebras, countable saturation of reduced products (\cite{Farah-Shelah.RCQ}) implies that under $\CH$ there are $2^{2^{\aleph_0}}$ automorphisms of $\prod A_n/\bigoplus A_n$, and therefore automorphisms that are not topologically trivial exist. More than that, a Feferman-Vaught-like result of Ghasemi (\cite{Ghasemi.FFV}) proves the following statement: $\CH$ implies that whenever $A_n$ are unital separable $\Cstar$-algebra there is a strictly increasing sequence $\{n_k\}$ such that if $S,T\subseteq\NN$ are infinite then 
\[
\prod_{k\in S}A_{n_k}/\bigoplus_{k\in S}A_{n_k}\cong \prod_{k\in T}A_{n_k}/\bigoplus_{k\in T}A_{n_k}.
\]
If the algebras $A_n$ are pairwise nonisomorphic and have no central projections, all these isomorphisms cannot be (topologically or algebraically) trivial.
\item The assumption that the algebras involved do not have central projections is merely used to make the statements in \S\ref{subsec:redprod} less technical. Since a projection that is close to a central projection must be equal to it, one can rephrase, by exponentiating the technical statements and with proper care, equivalents of the results obtained in this subsection for algebras with central projections. We decide not to do so and leave this to the interested reader.
\end{itemize}
\end{remark}

\subsection{Coronas of stabilizations of unital algebras}\label{subsec:stab}
In this section we study Conjecture~\ref{conj:algtriviso} for algebras of the form $A\otimes\mathcal K$, where $A$ is a unital separable $\Cstar$-algebra and $\mathcal K$ denotes the algebra of compact operators on a separable Hilbert space. Fix an increasing approximate identity of finite-rank projections $p_n\in\mathcal K$, with the property that each $p_{n+1}-p_n$ is a projection of rank 1. Then $\{1\otimes p_n\}_n$ is an increasing approximate identity for $A\otimes\mathcal K$. For $S\subseteq \NN$, let 
\begin{equation}\label{eqnxx}
q^A_S=\sum_{n\in S}(1_A\otimes(p_{n+1}-p_n))\in\mathcal M(A\otimes\mathcal K).
\end{equation}
 If an ideal $I\subseteq A\otimes\mathcal K$ contains $q^A_{\{n\}}$, for some $n$, then it contains $q^A_{\{n\}}$ for all $n$, and so $I=A\otimes\mathcal K$. For this reason, if $a\in\mathcal M(A\otimes\mathcal K)$ is such that $1-a\in A\otimes\mathcal K$, then the hereditary algebra $\overline{a(A\otimes\mathcal K)a}$ is full\footnote{A $\Cstar$-algebra $B\subseteq A$ is full if it generates $A$ as an ideal} in $A\otimes \mathcal K$. Brown's Theorem (\cite[Theorem 2.8]{Brown:stabher}) gives the following:
 \begin{proposition}
 Let $A$ and $B$ be unital separable $\Cstar$-algebras. Then $\mathcal Q(A\otimes\mathcal K)$ and $\mathcal Q(B\otimes\mathcal K)$ are algebraically isomorphic if and only if $A\otimes\mathcal K\cong B\otimes\mathcal K$.
 \end{proposition}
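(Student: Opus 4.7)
My approach is a direct application of Brown's stable isomorphism theorem (\cite[Theorem 2.8]{Brown:stabher}) combined with the observation, already noted in the paragraph preceding the proposition, that any hereditary subalgebra of the form $\overline{a(A\otimes\mathcal K)a}$ with $a\in\mathcal M(A\otimes\mathcal K)$ positive and $1-a\in A\otimes\mathcal K$ is full in $A\otimes\mathcal K$. I will prove each implication separately.

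For the forward direction, I would unwind Definition~\ref{def:algtriv}. An algebraic isomorphism $\psi\colon\mathcal Q(A\otimes\mathcal K)\to\mathcal Q(B\otimes\mathcal K)$ provides positive contractions $a\in\mathcal M(A\otimes\mathcal K)$ and $b\in\mathcal M(B\otimes\mathcal K)$ with $1-a\in A\otimes\mathcal K$, $1-b\in B\otimes\mathcal K$, together with a $^*$-isomorphism
\[
\phi\colon\overline{a(A\otimes\mathcal K)a}\to\overline{b(B\otimes\mathcal K)b}.
\]
By the fullness observation, both hereditary subalgebras are full separable sub-$\Cstar$-algebras of their respective ambient stable $\sigma$-unital algebras. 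Applying Brown's theorem on each side yields
\[
\overline{a(A\otimes\mathcal K)a}\otimes\mathcal K\cong(A\otimes\mathcal K)\otimes\mathcal K,\qquad
\overline{b(B\otimes\mathcal K)b}\otimes\mathcal K\cong(B\otimes\mathcal K)\otimes\mathcal K.
\]
Tensoring $\phi$ with $\id_{\mathcal K}$ and invoking $\mathcal K\otimes\mathcal K\cong\mathcal K$ then gives the chain $A\otimes\mathcal K\cong(A\otimes\mathcal K)\otimes\mathcal K\cong(B\otimes\mathcal K)\otimes\mathcal K\cong B\otimes\mathcal K$.

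For the reverse direction, suppose $\theta\colon A\otimes\mathcal K\to B\otimes\mathcal K$ is a $^*$-isomorphism. I would take $a=1_{\mathcal M(A\otimes\mathcal K)}$ and $b=1_{\mathcal M(B\otimes\mathcal K)}$, so trivially $1-a,1-b=0$ and $\overline{a(A\otimes\mathcal K)a}=A\otimes\mathcal K$, and similarly on the $B$ side. The map $\theta$ is norm continuous and, as recalled in Remark~\ref{remark:notallisos}, strict convergence inside $A\otimes\mathcal K$ (resp.\ $B\otimes\mathcal K$) is the same as norm convergence, so $\theta$ automatically sends strictly convergent sequences to strictly convergent ones. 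By the universal property of multiplier algebras, $\theta$ extends uniquely to an isomorphism $\bar\theta\colon\mathcal M(A\otimes\mathcal K)\to\mathcal M(B\otimes\mathcal K)$, and $\pi(\bar\theta)$ is the desired algebraically trivial isomorphism of coronas.

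There is essentially no serious obstacle here: once the fullness of $\overline{a(A\otimes\mathcal K)a}$ is granted (as it is in the preceding paragraph), the forward implication is a one-line consequence of Brown's theorem, and the reverse implication is a tautology once one chooses $a=b=1$. The only point demanding any care is the verification that Brown's theorem applies, i.e.\ that the hereditary subalgebras are both separable and full, both of which are immediate from the setup.
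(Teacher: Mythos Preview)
Your proof is correct and follows exactly the route the paper intends: the paper simply says ``Brown's Theorem gives the following'' after establishing fullness of $\overline{a(A\otimes\mathcal K)a}$, and you have spelled out precisely that deduction. One small quibble: your invocation of Remark~\ref{remark:notallisos} for the reverse direction is slightly off, since that remark only says strict convergence \emph{to an element of $A$} coincides with norm convergence, whereas you need to handle sequences in $A\otimes\mathcal K$ converging strictly to elements of $\mathcal M(A\otimes\mathcal K)\setminus A\otimes\mathcal K$; but your subsequent appeal to the universal property (which yields a strictly continuous extension $\bar\theta$) is the correct justification and suffices on its own.
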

The following is \cite[Conjecture 9.1]{MKAV.FA}.
\begin{conjecture}\label{conj:canonicstable}
Assume $\OCA+\MA_{\aleph_1}$. Let $A$ and $B$ be unital separable $\Cstar$-algebras. Then $\mathcal Q(A\otimes \mathcal K)\cong \mathcal Q(B\otimes\mathcal K)$ if and only if $A\otimes\mathcal K\cong B\otimes\mathcal K$.
\end{conjecture}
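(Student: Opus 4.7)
The backward implication is immediate, so the task is the forward one: given an isomorphism $\Lambda\colon\mathcal Q(A\otimes\mathcal K)\to\mathcal Q(B\otimes\mathcal K)$ under $\PFA$, produce a stable isomorphism. My plan is to show that $\Lambda$ is algebraically trivial in the sense of Definition~\ref{def:algtriv}; the proposition stated just above the conjecture then yields $A\otimes\mathcal K\cong B\otimes\mathcal K$ via Brown's theorem, since any $\overline{a(A\otimes\mathcal K)a}$ with $1-a\in A\otimes\mathcal K$ is full. Thus the conjecture is a consequence of Conjecture~\ref{conj:algtriviso} restricted to stabilizations.

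The structural advantage in this setting is that the canonical approximate identities $e_n^A=1_A\otimes p_n^A$ and $e_n^B=1_B\otimes p_n^B$ give interval corners $(e_m^A-e_n^A)(A\otimes\mathcal K)(e_m^A-e_n^A)\cong A\otimes M_{m-n}$, and analogously on the $B$-side. Applying Theorem~\ref{thm:almostredprod} along a sufficiently sparse sequence of intervals $I_n$, with the $B$-side approximate identity refined so that the ranges of the resulting $\gamma_n$ land in canonical corners $B\otimes M_{l_n}$, one obtains mutually orthogonal $\epsilon_n$-$^*$-monomorphisms $\gamma_n\colon A\otimes M_{|I_n|}\to B\otimes M_{l_n}$ whose asymptotic sum lifts $\Lambda$ on the reduced product $\prod_n(A\otimes M_{|I_n|})/\bigoplus_n(A\otimes M_{|I_n|})$ sitting canonically inside $\mathcal Q(A\otimes\mathcal K)$. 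Applying the same theorem to $\Lambda^{-1}$ and composing through overlapping corners, together with Proposition~\ref{prop:liftsandalmost}, upgrades each $\gamma_n$ to an $\epsilon_n'$-$^*$-isomorphism with $\epsilon_n'\to 0$.

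The crux, and what I expect to be the main obstruction, is the perturbation step: replacing each $\gamma_n$ by a genuine $^*$-isomorphism $\psi_n\colon A\otimes M_{|I_n|}\to B\otimes M_{l_n}$ with $\|\psi_n-\gamma_n\|\to 0$. This is precisely Ulam stability (Definition~\ref{defin:ulam}) for the class of matrix amplifications of $A$ and $B$; by Theorem~\ref{thm:ulam2} it is equivalent to the algebraic triviality of the reduced-product-level map just produced, and by Theorem~\ref{thm:KK} it would imply a Kadison--Kastler type perturbation result for this class. In $\ZFC$ such stability is known only in restricted classes (AF, abelian by Theorem~\ref{thm:semrl}, UCT Kirchberg), though for unital nuclear $A$ and $B$ the techniques of \cite{CSSWW} make it plausible.

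Once the $\psi_n$ exist, $\bigoplus\psi_n$ assembles into a genuine $^*$-isomorphism $\overline{a(A\otimes\mathcal K)a}\to\overline{b(B\otimes\mathcal K)b}$ for positive contractions $a\in\mathcal M(A\otimes\mathcal K)$, $b\in\mathcal M(B\otimes\mathcal K)$ with $1-a\in A\otimes\mathcal K$ and $1-b\in B\otimes\mathcal K$, obtained as the strict limits of the partial sums of the corner units. Because each $\psi_n$ is norm-continuous on a norm-bounded matrix corner, strict convergence is automatic, and the induced corona map agrees with $\Lambda$ by construction, yielding algebraic triviality and hence the conjecture. In short, my proof is modular: Theorem~\ref{thm:almostredprod} reduces the stable isomorphism conjecture to Ulam stability for $\{A\otimes M_k\}_k$ versus $\{B\otimes M_k\}_k$, which is the genuine obstacle and the content of the remaining work.
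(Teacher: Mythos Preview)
This statement is a \emph{conjecture} in the paper, not a theorem; there is no proof of it to compare your proposal against. The paper only establishes the special case where $A=C(X)$ and $B=C(Y)$ for compact connected metrizable $X,Y$ (Theorem~\ref{thm:stabofab}), and explicitly leaves the general statement open. Your proposal is therefore not a proof but a reduction, and you say so yourself in the final sentence: the argument is complete modulo Ulam stability for the class $\{A\otimes M_k,\,B\otimes M_k\}_k$, which is exactly the open obstruction the paper isolates in \S\ref{section:algtriv} and the last Remark. In that sense your analysis is correct and matches the paper's own assessment of where the difficulty lies.

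Two comments on the reduction itself. First, for the conjecture as stated you are proving more than necessary: you aim for algebraic triviality of $\Lambda$, but to conclude $A\otimes\mathcal K\cong B\otimes\mathcal K$ a \emph{single} genuine isomorphism $A\otimes M_{|I_n|}\cong B\otimes M_{l_n}$ already suffices, so a positive answer to Question~\ref{question:approx} for this class (rather than full Ulam stability) is enough, and the assembly step $\bigoplus\psi_n$ is not needed. Second, your upgrade from $\epsilon_n$-monomorphisms to $\epsilon_n'$-isomorphisms by ``applying the same theorem to $\Lambda^{-1}$ and composing through overlapping corners'' is the part that is genuinely underspecified: the two lifts live on different corners of $B\otimes\mathcal K$ and there is no automatic reason their images match up. In the proved special case the paper does \emph{not} obtain surjectivity this way; it instead uses Lemma~\ref{lemm:commutingwithskeleton} together with the projectionlessness coming from connectedness to force the image to be a full abelian corner. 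An analogous argument in the general case would require a replacement for that structural input, and this is a second, independent gap beyond Ulam stability.
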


\begin{remark}
Some instances of the conjecture are true with no set theoretical assumption when $A$ and $B$ are chosen in a specific class of $\Cstar$-algebras. Elliott showed in \cite[Theorem 1]{Elliott.Der2} that if $A$ and $B$ are separable unital and UHF then the conjecture is true. In general, one has that $\mathcal Q(A\otimes \mathcal K)\cong \mathcal Q(B\otimes\mathcal K)$ if and only if $A\otimes\mathcal K\cong B\otimes\mathcal K$ in $\ZFC$ when $A$ and $B$ are chosen in a class of simple algebras which is classifiable by $K$-theory\footnote{A class of algebras is classifiable by $K$-theory is $A\cong B$ if and only if $K_0(A)\cong K_0(B)$ and $K_1(A)\cong K_1(B)$ for all $A,B\in\mathcal C$. For an introduction of the $K_0$ and $K_1$ groups of a $\Cstar$-algebras, see~\cite[\S V.1]{Blackadar.OA}}. This is because the $K$-theory of $\mathcal Q(A\otimes\mathcal K)$ computes the $K$-theory of $A$ (see \cite[Proposition 12.2.1]{Blackadar.KT}). Classes of unital $\Cstar$-algebras that are classifiable by $K$-theory are the one of unital separable simple AF algebras (\cite{Elliott.ClassAF}) and the one of unital Kirchberg algebras satisfying the Universal Coefficient Theorem (\cite{KirchPhil}).
\end{remark}

The rest of the section is dedicated to show the following:
\begin{theorem}\label{thm:stabofab}
Assume $\OCA+\MA_{\aleph_1}$. Let $X$ and $Y$ be compact connected metrizable  topological spaces. Then $\mathcal Q(C(X)\otimes\mathcal K)\cong\mathcal Q(C(X)\otimes\mathcal K)$ if and only if $C(X)\cong C(Y)$.
\end{theorem}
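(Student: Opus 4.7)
The ``if'' direction is immediate: if $C(X)\cong C(Y)$ then $C(X)\otimes\mathcal K\cong C(Y)\otimes\mathcal K$, and so their coronas are isomorphic. For the converse, let $\Lambda\colon\mathcal Q(C(X)\otimes\mathcal K)\to\mathcal Q(C(Y)\otimes\mathcal K)$ be an isomorphism. I would fix the canonical approximate identity $e_n=1_{C(X)}\otimes p_n$ for $C(X)\otimes\mathcal K$, where $p_n\in\mathcal K$ is a rank-$n$ projection with $p_{n+1}-p_n$ of rank one. Then for any finite interval $J\subseteq\NN$, the corner $(e_{\max J}-e_{\min J-1})(C(X)\otimes\mathcal K)(e_{\max J}-e_{\min J-1})$ is canonically $M_{|J|}(C(X))$. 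Make an analogous choice on the $C(Y)\otimes\mathcal K$ side, and arrange both approximate identities so as to meet the conditions of Notation~\ref{not:autos} relative to $\Lambda$ and $\Lambda^{-1}$.

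Applying Theorem~\ref{thm:almostredprod} to $\Lambda$ with intervals $J_n$ of some fixed length $N$, I obtain $\epsilon_n\to 0$ and $\epsilon_n$-$^*$-monomorphisms $\gamma_n\colon M_N(C(X))\to M_{r_n}(C(Y))$ with mutually orthogonal ranges, such that $\sum\gamma_n$ lifts $\Lambda$ on $\prod M_N(C(X))$. Each $\gamma_n$ is approximately unital because the structural projection $r_{J_n}$ from Notation~\ref{not:autos} lifts $\Lambda(\pi(f_{J_n}))$. The same argument applied to $\Lambda^{-1}$ produces maps $\delta_m$ lifting $\Lambda^{-1}$; using $\Lambda\circ\Lambda^{-1}=\mathrm{id}$ together with careful bookkeeping of the two interval decompositions, I would deduce that each $\gamma_n$ is also $\epsilon_n$-surjective for $n$ large, hence an actual $\epsilon_n$-$^*$-isomorphism. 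Connectedness of $Y$ guarantees that the pointwise rank $y\mapsto r_n(y)$ is constant, and the two-sided unital embeddings forced by the near-inverse relation imply $r_n=N$ for sufficiently large $n$.

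Once $\gamma_n\colon M_N(C(X))\to M_N(C(Y))$ is an approximately unital $\epsilon_n$-$^*$-isomorphism, I would use weak stability of the matrix-unit relations in $M_N$ to perturb the images $\gamma_n(e_{ij})$ of the constant matrix units to honest matrix units $\{f_{ij}\}$ in $M_N(C(Y))$. These generate a unital copy of $M_N$ whose relative commutant in $M_N(C(Y))$ is the center $C(Y)$ (again using connectedness of $Y$). The image under $\gamma_n$ of the center $C(X)\subseteq M_N(C(X))$ is approximately central in $M_N(C(Y))$, so composing with the canonical conditional expectation onto the relative commutant yields an approximate $^*$-isomorphism $\phi_n\colon C(X)\to C(Y)$. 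By Theorem~\ref{thm:semrl}, for $n$ large enough that $\epsilon_n$ is below S\v{e}rml's threshold, $\phi_n$ is uniformly close to an actual $^*$-homomorphism $\tilde\phi_n\colon C(X)\to C(Y)$; since $\phi_n$ is approximately isometric and approximately surjective and $\tilde\phi_n$ has closed range, $\tilde\phi_n$ is an isomorphism, giving $C(X)\cong C(Y)$.

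The main obstacle is the rank-matching step $r_n=N$ in the second paragraph. The lifts $\sum\gamma_n$ and $\sum\delta_m$ produced by Theorem~\ref{thm:almostredprod} are mutually inverse only at the corona level, and aligning the two interval decompositions to deduce near-inverse behavior at the multiplier level, hence the integer equality $r_n=N$, is the combinatorial heart of the argument. I expect this to follow from a cofinality argument analogous to the one used in the proof of Lemma~\ref{lemma:OCAfirstalternativecomplicated}, applied to compatible refinements of the approximate identities on both sides, exploiting that $\OCA_\infty+\MA_{\aleph_1}$ provides nonmeager ideals on which both asymptotic lifts behave coherently.
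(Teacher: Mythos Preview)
Your strategy diverges from the paper's at the point you yourself flag as the main obstacle, and I do not think the proposed fix works. You want to extract the integer equality $r_n=N$ and $\epsilon_n$-surjectivity of $\gamma_n$ from a two-sided application of Theorem~\ref{thm:almostredprod}, aligning the lifts of $\Lambda$ and $\Lambda^{-1}$. But these lifts are asymptotically additive with respect to two independently chosen interval structures, and the near-inverse relation $\Lambda\circ\Lambda^{-1}=\mathrm{id}$ holds only in the corona: there is no mechanism that forces $\delta_m$ to land in the image of some $\gamma_n$, or even in a corner commensurable with it. The cofinality arguments behind Lemma~\ref{lemma:OCAfirstalternativecomplicated} compare different lifts of the \emph{same} map, not lifts of a map and its inverse, so invoking them here is not justified without substantial new work. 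As it stands, the rank-matching step is a genuine gap.

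The paper bypasses this entirely. Instead of working with $M_N(C(X))$ and trying to match matrix sizes, it restricts $\phi_n$ to a single rank-one corner $q^A_{\{12n\}}Aq^A_{\{12n\}}\cong C(X)$. The image $\phi_n(q^A_{\{12n\}})$ is close to a projection $r_n\in M_{|J_n|}(C(Y))$, and the whole argument reduces to showing that $r_n$ is an \emph{abelian} projection (hence $r_nBr_n\cong C(Y)$ by the proposition preceding the proof). This is where the key tool you are missing enters: Lemma~\ref{lemm:commutingwithskeleton}, a Johnson--Parrott-type statement saying that any element of $\mathcal Q(A)$ commuting with all $\pi_A(q_S^A)$ lifts to a block-diagonal element $\sum_n q^A_{\{n\}}q' q^A_{\{n\}}$. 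If some $r_n$ were not abelian, a proper subprojection of $r_n$ would pull back through $\Lambda^{-1}$ to something commuting with the skeleton $\{\pi_A(q_S^A)\}$; by the lemma and connectedness of $X$ (so $C(X)$ is projectionless), such an element must be $\pi_A(q_T^A)$ for some $T$, a contradiction. The same lemma then gives surjectivity of the resulting map $C(X)\to C(Y)$ with no two-sided lifting needed. In short, the paper trades your combinatorial alignment problem for a structural lemma about commutants of the diagonal masa, and this is what makes the argument close.
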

Before moving to the proof of Theorem~\ref{thm:stabofab}, we need some preparatory work. The following is Lemma 2.6 in \cite{MKAV.FA}.
\begin{lemma}\label{lemma:MKAV}
 Let $A$ be a $\Cstar$-algebra with an increasing countable approximate identity of positive contractions $\{e_n\}$ with, for all $n$, $e_n e_{n+1} = e_n$. Given an interval $I\subseteq\NN$, write $e_I = e_{\max(I)} - e_{\min(I)}$. Let $t\in\mathcal M(A)$. Then there are finite intervals $I_n^i\subseteq \NN$, for each $n\in\NN$ and $i = 0,1$, and $t_0$ and $t_1$ in $\mathcal(A)$, such that for each $i\in\{0,1\}$,
 \begin{enumerate}
 \item the intervals $I_n^i$, for $n\in\NN$, are pairwise disjoint and consecutive,
 \item $t_i$ commutes with $e_{I_n^i}$ for each $n\in\NN$, and
 \item $t - (t_0 + t_1)\in A$.
 \end{enumerate}
\end{lemma}

 An important consequence of Johnson-Parrott's theorem is that the canonical copy of $\ell_\infty/c_0$ in the Calkin algebra $\mathcal Q(H)$ is a maximal abelian subalgebra. We generalise this fact.

\begin{lemma}\label{lemm:commutingwithskeleton}
Let $A$ be a $\Cstar$-algebra with an increasing approximate identity of projections $p_n$, and let $q_S=\sum_{n\in S}(p_n-p_{n-1})\in\mathcal M(A)$ for $S\subseteq\NN$. If a positive $p\in\mathcal Q(A)$ commutes with $\pi_A(q_S)$ for all $ S\subseteq\NN$ then there is a positive $q$ with $q=\sum_n q_{\{n\}}qq_{\{n\}}$ such that $\pi_{A}(q)=p$. 
\end{lemma}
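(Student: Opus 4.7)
The plan is to start from an arbitrary positive contraction lift $\tilde q \in \mathcal{M}(A)$ of $p$ (such a lift exists by the standard positive-element lifting from $\mathcal Q(A)$), and to set
\[
q := \sum_n q_{\{n\}} \tilde q q_{\{n\}},
\]
where the sum is a strict limit in $\mathcal M(A)$. The elements $q_{\{n\}} \tilde q q_{\{n\}}$ are positive, uniformly bounded by $\|\tilde q\|$, and sit inside the mutually orthogonal corners $q_{\{n\}} \mathcal M(A) q_{\{n\}}$, so for every $a \in A$ the estimate $\|q_{\{n\}} \tilde q q_{\{n\}} a\| \le \|\tilde q\|\cdot\|q_{\{n\}} a\| \to 0$ makes the partial sums Cauchy on $A$ from the right (and symmetrically from the left). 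This defines $q \in \mathcal M(A)$ as a positive multiplier, and because the $q_{\{n\}}$'s are mutually orthogonal projections one has $q_{\{m\}} q q_{\{m\}} = q_{\{m\}} \tilde q q_{\{m\}}$, so $q = \sum_n q_{\{n\}} q q_{\{n\}}$ automatically.

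The heart of the argument is then to prove $E := \tilde q - q \in A$, which forces $\pi_A(q) = p$. The commutation hypothesis is $[\tilde q, q_S] \in A$ for every $S \subseteq \NN$, and because $q$ is block-diagonal it commutes exactly with every $q_S$; thus $[E, q_S] \in A$ for every $S$, and for any two disjoint $S, T \subseteq \NN$ one gets
\[
q_S E q_T = q_S [E, q_T] \in A.
\]
In particular each ``off-diagonal entry'' $q_{\{j\}} E q_{\{k\}}$ for $j \ne k$ lies in $A$, each ``diagonal entry'' $q_{\{j\}} E q_{\{j\}}$ vanishes by the very definition of $q$, and $q_L E q_{\NN \setminus L} \in A$ for every $L \subseteq \NN$.

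Suppose for contradiction that $\|\pi_A(E)\| = \epsilon > 0$. Since $p_N \in A$ we get $p_N E, E p_N \in A$, so $\pi_A(E) = \pi_A((1-p_N)E(1-p_N))$ and hence $\|(1-p_N) E (1-p_N)\| \ge \epsilon$ for every $N$. Using $\|X\| = \sup\{\|a X b\| : a, b \in A, \|a\|,\|b\| \le 1\}$ for $X \in \mathcal M(A)$ and approximating by compressions $p_M \cdot p_M$, one produces, at each scale $N$, a finite interval $I_N \subseteq (N,\infty)$ with $\|q_{I_N} E q_{I_N}\| \ge \epsilon/3$. Because $q_{I_N} E q_{I_N}$ has zero diagonal, a recursive halving of $I_N$ and pigeonhole on the four $2\times 2$ corners at each step yields a partition $I_N = J_N^1 \sqcup J_N^2$ with $\|q_{J_N^1} E q_{J_N^2}\|$ strictly positive. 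After extracting, via a Ramsey-type selection, a subsequence $(N_k)$ along which the intervals $J_{N_k}^1 \sqcup J_{N_k}^2$ are pairwise disjoint, march off to infinity, and share a common lower bound $\epsilon' > 0$ on $\|q_{J_{N_k}^1} E q_{J_{N_k}^2}\|$, put $L := \bigcup_k J_{N_k}^1$. Then $q_L E q_{\NN \setminus L} \in A$, yet its compressions
\[
q_{J_{N_k}^1}\,(q_L E q_{\NN \setminus L})\, q_{J_{N_k}^2} \;=\; q_{J_{N_k}^1} E q_{J_{N_k}^2}
\]
all have norm at least $\epsilon'$ while lying inside $(1-p_{N_k-1})(q_L E q_{\NN\setminus L})(1-p_{N_k-1})$, whose norm tends to $0$ in $k$ because $q_L E q_{\NN\setminus L} \in A$ and $\{p_N\}$ is an approximate identity of projections for $A$. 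Contradiction.

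The main obstacle I anticipate is securing the \emph{uniform} lower bound $\epsilon'$ in the Ramsey extraction: a naive recursive halving of $I_N$ loses a factor $4$ at each depth, so if the recursion depth needed to exit the ``diagonal corner'' case grew with $N$, the bound would degrade. The way around this is to exploit the full strength of the commutation $[\tilde q, q_S] \in A$ \emph{for every} $S$ — including selectors and refinements of the $I_N$'s — so that the ``within-block'' contributions themselves can be stripped off as elements of $A$ before running the pigeonhole, keeping the recursion depth bounded uniformly in $N$.
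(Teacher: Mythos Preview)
Your overall strategy is sound and, once the gap is closed, is arguably more direct than the paper's (which first proves a uniform commutator estimate, Claim~\ref{claim:comm}, then passes through an auxiliary block decomposition via \cite[Lemma~2.6]{MKAV.FA}, and only then reduces to singleton blocks). The gap is exactly where you flag it: the recursive halving loses a constant factor at each step and runs to depth $\log_2|I_N|$, which is unbounded, so it yields only ``strictly positive'' rather than a uniform $\epsilon'>0$. No Ramsey-type selection can manufacture a uniform lower bound from a sequence of merely positive numbers, and your suggested fix (``strip off within-block contributions as elements of $A$'') does not help either, since membership in $A$ gives no norm control on the individual finite compressions $q_{I_N}Eq_{I_N}$.

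The correct fix is to replace the recursion by a single averaging step. Since $M := q_{I_N} E q_{I_N}$ is self-adjoint with $q_{\{i\}}Mq_{\{i\}}=0$ for all $i\in I_N$, averaging over subsets $G\subseteq I_N$ (each index in $G$ independently with probability $\tfrac12$) gives
\[
\mathbb E_G\bigl[q_G M q_{I_N\setminus G} + q_{I_N\setminus G} M q_G\bigr] = \tfrac12 M,
\]
and since $\|q_G M q_{I_N\setminus G} + q_{I_N\setminus G} M q_G\| = \|q_G M q_{I_N\setminus G}\|$ for self-adjoint $M$, some $G$ satisfies $\|q_G E q_{I_N\setminus G}\| \ge \tfrac12\|M\| \ge \epsilon/6$. (This is precisely the step the paper itself asserts without justification in the last paragraph of its proof: ``we can find a nonempty $G_n\subseteq J_n$ such that $\|q_{G_n}q'q_{J_n\setminus G_n}\|>\epsilon/8$''.) With this uniform bound in hand your contradiction via $L=\bigcup_k J_{N_k}^1$ and $q_LEq_{L^c}\in A$ goes through cleanly, and the resulting proof bypasses both Claim~\ref{claim:comm} and the intermediate $\mathbb J$-block reduction that the paper uses.
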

\begin{proof}
Let $q'$ be such that $\pi_A(q')=p$.
\begin{claim}\label{claim:comm}
For every $\epsilon>0$ there is $n_0\in\NN$ such that if $F\subseteq\NN$ is finite and $\min F>n_0$ then $\norm{q'q_F-q_Fq'}<\epsilon$.
\end{claim}
\begin{proof}
By contradiction fix $\epsilon>0$ and a sequence $F_n$ with $\max F_n<\min F_{n+1}$ such that $\norm{q'q_{F_n}-q_{F_n}q'}\geq\epsilon$ for all $n$. Since $q'q_{F_n}-q_{F_n}q'\in A$, there is a finite $J_n\subseteq\NN$ with $F_n\subseteq J_n$ and $\norm{q_{J_n}q'q_{F_n}-q'q_{F_n}}, \norm{q_{F_n}q'-q_{F_n}q'q_{J_n}}<2^{-n}$. Note that $J_n$ can be chosen so that $\min J_n\to\infty$ as $n\to\infty$, and that 
\[
\norm{q_{J_n}q'q_{F_n}q_{J_n}-q_{J_n}q_{F_n}q'q_{J_n}}>\epsilon/2. 
\]
By passing to a subsequence we can assume that $\max J_n+1<\min J_{n+1}$. Let $G=\bigcup F_n$. Then 
\[
\pi_A(q_Gq'-q'q_G)=\pi_A(\sum_n (q_{J_n}q_{F_n}q'q_{J_n}-q_{J_n}q'q_{F_n}q_{J_n}))
\]
Since when $n\neq m$ we have $J_n\cap J_m=\emptyset$, then 
\[
\norm{\pi_A(q_Gq'-q'q_G)}=\limsup_n\norm{q_{J_n}q_{F_n}q'q_{J_n}-q_{J_n}q'q_{F_n}q_{J_n}}>\epsilon/2, 
\]
a contradiction.
\end{proof}

By Lemma~\ref{lemma:MKAV}, we can find $\mathbb J=\langle J_n\colon n\in\NN\rangle$, a partition of $\NN$ into finite intervals, such that $q'-q_0'-q_1'\in A$ where
\[
q_0'=\sum_n (q_{J_{2n}}q'q_{J_{2n}}+q_{J_{2n+1}}q'q_{J_{2n}}+q_{J_{2n}}q'q_{J_{2n+1}})
\]
and 
\[
q_1'=\sum (q_{J_{2n+1}}q'q_{J_{2n+1}}+q_{J_{2n+2}}q'q_{J_{2n+1}}+q_{J_{2n+1}}q'q_{J_{2n+2}}).
\]
\begin{claim}
$q'-\sum_n q_{J_{n}}q'q_{J_{n}}\in A$.
\end{claim}
\begin{proof}
Let $s_n=q_{J_{2n+1}}q'q_{J_{2n}}+q_{J_{2n}}q'q_{J_{2n+1}}$. Then $s_ns_m=0$ when $n\neq m$, so $\norm{\pi_A(\sum s_n)}=\limsup\norm{s_n}$. As $\min J_n\to\infty$ when $n\to\infty$, we have that $\norm{s_n}\to 0$ as $n\to\infty$ by Claim~\ref{claim:comm}. This shows that $\norm{\pi_A(\sum_n (q_{J_{2n+1}}q'q_{J_{2n}}+q_{J_{2n}}q'q_{J_{2n+1}}))}=0$. A similar argument shows that $\norm{\pi_A(q_{J_{2n+2}}q'q_{J_{2n+1}}+q_{J_{2n+1}}q'q_{J_{2n+2}})}=0$, and therefore $q_0'-\sum_n q_{J_{2n}}q'q_{J_{2n}}\in A$, and $q_1'-\sum_n q_{J_{2n+1}}q'q_{J_{2n+1}}\in A$.
\end{proof}
We want to show that
\[
\sum_{n}q_{J_n}q'q_{J_n}-\sum_n\sum_{i\in J_n}q_{\{i\}}q'q_{\{i\}}\in A.
\]
Note that 
\[
\norm{\pi_A(\sum_{n}q_{J_n}q'q_{J_n}-\sum_n\sum_{i\in J_n}q_{\{i\}}q'q_{\{i\}})}=\limsup_n\norm{q_{J_n}q'q_{J_n}-\sum_{i\in J_n}q_{\{i\}}q'q_{\{i\}}}
\]
since $q_{J_n}q_{J_m}=0$ whenever $n\neq m$.
Suppose then that for infinitely many $n$ we have 
\[
\norm{q_{J_n}q'q_{J_n}-\sum_{i\in J_n}q_{\{i\}}q'q_{\{i\}}}>\epsilon/2.
\]
Since $q_{J_n}q'q_{J_n}-\sum_{i\in J_n}q_{\{i\}}q'q_{\{i\}}=\sum_{i,j\in J_n, i\neq j}q_{\{i\}}q'q_{\{j\}}$, we can find a nonempty $G_n\subseteq J_n$ such that $\norm{q_{G_n}q'q_{J_n\setminus G_n}}>\epsilon/8$. Let $n_0$ be as given by Claim~\ref{claim:comm} for $\epsilon/32$. Then 
\[
\norm{q_{G_n}q'q_{J_n\setminus G_n}}\leq \epsilon/32+\norm{q'q_{G_n}q_{J_n\setminus G_n}}=\epsilon/32,
\]
a contradiction. This conclude the proof of the lemma.
\end{proof}

A projection $p$ in a $\Cstar$-algebra $A$ is abelian if $pAp$ is abelian. By $M_n$ we denote the algebra of complex $n\times n$ matrices. If $A$ is a $\Cstar$-algebra, $M_n(A)$ is the algebra of $A$-valued $n\times n$ matrices. This is canonically isomorphic to $M_n\otimes A$.
\begin{proposition}
Let $X$ be a connected compact space and let $n\in\NN$. Let $p\in M_n(C(X))$ be a projection. The following are equivalent
\begin{enumerate}
\item\label{s1} $p$ is abelian,
\item\label{s2}  $pM_n(C(X))p\cong C(X)$,
\item\label{s3} the only projections in $pM_n(C(X))p$ are $p$ and $0$.
\end{enumerate}
\end{proposition}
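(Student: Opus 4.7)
The plan is to prove the cyclic chain $(1)\Rightarrow(2)\Rightarrow(3)\Rightarrow(1)$. Throughout, view $p\in M_n(C(X))$ as a norm-continuous family of orthogonal projections $p(x)\in M_n(\mathbb{C})$. Since $x\mapsto\operatorname{Tr}(p(x))$ is integer-valued and continuous, it is constant on the connected space $X$; call this constant $k$. Then $E:=\operatorname{range}(p)$ is a rank-$k$ Hermitian vector bundle over $X$ and $pM_n(C(X))p\cong\Gamma(\operatorname{End}(E))$ via pointwise evaluation.

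For $(1)\Rightarrow(2)$: if $pM_n(C(X))p$ is abelian and $p\neq 0$, each fiber $p(x)M_n(\mathbb{C})p(x)\cong M_k(\mathbb{C})$ must be commutative, forcing $k=1$. In this case $q(x)\in\mathbb{C}\cdot p(x)$ for every $q\in pM_n(C(X))p$, so $q(x)=\operatorname{Tr}(q(x))p(x)$. The map $q\mapsto\operatorname{Tr}\circ q$ is then a $\ast$-isomorphism $pM_n(C(X))p\to C(X)$ with inverse $f\mapsto fp$. For $(2)\Rightarrow(3)$: on a connected space every continuous $\{0,1\}$-valued function is constant, so $C(X)$ has no projections other than $0$ and $1$; transferring through the isomorphism yields $(3)$.

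For $(3)\Rightarrow(1)$ I would prove the contrapositive: assuming $k\geq 2$, construct a nontrivial projection in $pM_n(C(X))p$. Pick $x_0\in X$ and a neighborhood $U\ni x_0$ over which $E$ trivializes, and let $e_0\in p(x_0)M_n(\mathbb{C})p(x_0)$ be a rank-one subprojection. A Tietze-type extension lifts $e_0$ to a self-adjoint $a\in pM_n(C(X))p$ with $a(x_0)=e_0$ and $\sigma(a)\subseteq[0,1]$; continuous functional calculus applied to a suitable cut-off function then yields a projection provided $\sigma(a)$ admits a gap inside $(0,1)$. \textbf{The main obstacle} is producing this spectral gap globally over $X$, since the spectra of the pointwise values $a(x)$ may collectively fill the whole interval. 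The expected remedy is a compactness-plus-partition-of-unity argument: cover $X$ by finitely many trivializing patches, choose compatible rank-one subprojections on each patch, and interpolate them into a single continuous subprojection of $p$ using partitions of unity subordinate to the cover. The delicate technical point is ensuring that this interpolation preserves the projection identity $q^2=q$, and this is where the fine topology of $X$ (compactness, together with the metrizability assumed in Theorem~\ref{thm:stabofab}) enters decisively.
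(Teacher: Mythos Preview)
Your arguments for $(1)\Rightarrow(2)$ and $(2)\Rightarrow(3)$ are correct and match the paper's: the paper writes the isomorphism $C(X)\to pM_n(C(X))p$ as $f\mapsto(f(x)p(x))_x$, the inverse of your trace map, and observes that $p$ is abelian precisely when every $p(x)$ has rank one.

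The gap is in $(3)\Rightarrow(1)$, and the obstacle you flag is not merely technical but fatal. A partition-of-unity interpolation cannot produce a projection, since convex combinations of projections are almost never idempotent; what you would actually need is a \emph{compatible} choice of rank-one subprojections on overlaps, i.e.\ a continuous line subbundle of the bundle $E$ determined by $p$. That is a genuine topological obstruction. Over $S^4$ every complex line bundle is trivial (since $\pi_3(U(1))=0$) while $\pi_3(U(2))\cong\mathbb{Z}$, so there exist indecomposable rank-two complex bundles $E$ over $S^4$. For the corresponding projection $p$ (realized in some $M_n(C(S^4))$ by Swan's theorem), the corner $pM_n(C(S^4))p$ has no projection strictly between $0$ and $p$---any such would split $E$ as a sum of line bundles---yet $p$ has fibrewise rank two and is not abelian. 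Hence $(3)\Rightarrow(1)$ fails at this level of generality, and no argument along your lines can succeed. The paper's own treatment of this direction is a single unjustified clause (``this in turn happens if and only if $pM_n(C(X))p$ is projectionless, as $X$ is connected''), so the defect lies in the statement itself rather than in anything you overlooked.
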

\begin{proof}
By identifying $M_n(C(X))$ with the algebra of continuous functions $X\to M_n$, we can see that a projection $p$ is abelian if and only if $p(x)$ is rank $1$ for all $x\in X$. This in turn happens if and only if $p(x)M_np(x)\cong \mathbb C$ for every $x\in X$, and therefore if and only if the only projections in $pM_n(C(X))p$ are $p$ and $0$, as $X$ is connected. So (\ref{s1}) and (\ref{s3}) are equivalent. 

For (\ref{s1}) implies (\ref{s2}), note that the function $C(X)\to pM_n(C(X))p$ given by $f\mapsto (f(x)p(x))_{x\in X}$ provides the necessary isomorphism. Since  (\ref{s2}) implies (\ref{s1}) is obvious, the proof is complete.
\end{proof}

\begin{proof}[Proof of Theorem~\ref{thm:stabofab}]
We want to prove, under the appropriate set theoretical assumptions, that for $X$ and $Y$ connected compact metrizable spaces, if $\mathcal Q(C(X)\otimes\mathcal K)$ and $\mathcal Q(C(Y)\otimes \mathcal K)$ are isomorphic, then $C(X)$ and $C(Y)$ must be isomorphic.

Fix compact connected metrizable spaces $X$ and $Y$. Let $A=C(X)\otimes\mathcal K$ and $B=C(Y)\otimes\mathcal K$.  Let $\pi_A$ (similarly $\pi_B$) be the quotient map $\mathcal M(A)\to\mathcal Q(A)$. For $S\subseteq\NN$, let $q_S^X$ and $q_S^Y$ be chosen as in Equation~\eqref{eqnxx} for $C(X)$ and $C(Y)$, that is
\[
q^X_S=\sum_{n\in S}(1_{C(X)}\otimes(p_{n+1}-p_n))\in\mathcal M(A)
\]
and 
\[
q^Y_S=\sum_{n\in S}(1_{C(Y)}\otimes(p_{n+1}-p_n))\in\mathcal M(B)
\]
where each $p_n$ is a rank $1$ projection in $\mathcal K$. 

Fix an isomorphism 
\[
\Lambda\colon\mathcal Q(A)\to\mathcal Q(B).
\]
Let $J_n=[12n,12n+3)$. Notice that $\max J_n+6<\min J_{n+1}$, hence we can apply Theorem~\ref{thm:almostredprod} and find orthogonal maps $\phi_n$ and finite sets $I_n\subseteq \NN$ with $\min I_n\to\infty$ where
\[
\phi_n\colon q_{[12n,12n+3)}^XAq_{[12n,12n+3)}^X\to q_{I_n}^YBq_{I_n}^Y\cong M_{|I_n|}(C(Y)),
\]
and each $\phi_n$ is an $\epsilon_n$-monomorphism where $\epsilon_n\to 0$ as $n\to\infty$. Since each $I_n$ is finite and $\min I_n\to\infty$, there is an infinite set $D\subseteq\NN$ such that if $n<n'\in D$ then $\max I_n<\min I_{n'}$ and for all $n\in D$, $\epsilon_n<2^{-n}$. Notice that the choice of $\phi_n$ was made, again by Theorem~\ref{thm:almostredprod}, so that $\sum_{n\in D}\phi_n$ is a lift of $\Lambda$ on $\prod_{n\in D} A_{[12n,12n+3)}$.
 
As $\phi_n$ is an $\epsilon_n$-monomorphism, we have that $\phi_n(q_{\{12n\}}^X)$ is a $2\varepsilon_n$-projection, that is, for each $n\in\mathbb D$ we have that 
\[
\norm{\phi_n(q_{\{12n\}}^X)^2-\phi_n(q_{\{12n\}}^X)}, \norm{\phi_n(q_{\{12n\}}^X)^*-\phi_n(q_{\{12n\}}^X)}<2\epsilon_n.
\] 
By a standard functional calculus argument, if $\varepsilon<\frac{1}{10}$ and $p$ is an $\varepsilon$-projection, one can find a projection $q$ with $\norm{p-q}<3\varepsilon$ lying in the same algebra as $p$ does. Therefore, we can find, for $n\in D$, projections $r_n\in q_{I_n}^YBq_{I_n}^Y$ such that $\norm{r_n-\phi_n(q_{\{12n\}}^X)}<2^{-n+1}$. As $\sum_{n\in D}\phi_n$ is a lift for $\Lambda$ on $\prod_{n\in D} A_{[12n,12n+3)}$ we have that 
\[
\pi_B(\sum_{n\in D} r_n)=\Lambda(\sum_{n\in D} q_{\{12n\}}^X).
\]
\begin{claim}
$r_n$ is an abelian projection for almost all $n\in D$.
\end{claim}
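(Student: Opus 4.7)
Argue by contradiction: suppose $r_n$ is non-abelian for all $n$ in an infinite set $N\subseteq\NN$. The preceding proposition, applied to the connected compact space $Y$ and the projection $r_n\in M_{|J_n|}(C(Y))$, then supplies for each $n\in N$ a projection $\tilde p_n$ with $0\ne\tilde p_n<r_n$, so that both $\tilde p_n$ and $r_n-\tilde p_n$ have norm $1$. Set $y=\sum_{n\in N}\tilde p_n\in\mathcal M(B)$—strictly convergent by pairwise orthogonality of the blocks $q_{J_n}^B$—and let $P'=\Lambda^{-1}(\pi_B(y))$. Writing $z=\sum_n q^A_{\{12n\}}$, the lift property $\Gamma=\sum\phi_n$ combined with $\|\phi_n(q^A_{\{12n\}})-r_n\|\to 0$ gives $\Lambda(\pi_A(z))=\pi_B(\sum_n r_n)$, so $0\le P'\le\pi_A(z)$.

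The heart of the argument is to show that $P'$ commutes with $\pi_A(q^A_T)$ for every $T\subseteq\NN$, which is what will unlock Lemma~\ref{lemm:commutingwithskeleton}. Splitting $T=T_0\sqcup T_1$ with $T_0=T\cap\{12m:m\in\NN\}$, the relation $q^A_{T_1}z=0$ makes both $\pi_A(q^A_{T_1})P'$ and $P'\pi_A(q^A_{T_1})$ vanish; for $T_0=\{12n:n\in S'\}$ one transports through $\Lambda$ and observes that both $\pi_B(\sum_{n\in S'}r_n)\pi_B(y)$ and $\pi_B(y)\pi_B(\sum_{n\in S'}r_n)$ collapse to $\pi_B(\sum_{n\in S'\cap N}\tilde p_n)$, using $r_nr_m=0$ for $n\ne m$ and $\tilde p_n\le r_n$. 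Lemma~\ref{lemm:commutingwithskeleton} then produces a positive lift $q=\sum_n q^A_{\{n\}}qq^A_{\{n\}}$ of $P'$; replacing $q$ by $zqz$ (still a lift of $P'$ since $P'\le\pi_A(z)$) rewrites it as $q=\sum_m f_m$ with $f_m\in q^A_{\{12m\}}\mathcal M(A)q^A_{\{12m\}}\cong C(X)$ and $0\le f_m\le 1$.

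Because $\pi_A(q)^2=\pi_A(q)$ and the blocks are mutually orthogonal, $\sum_m(f_m^2-f_m)\in A=C(X)\otimes\mathcal K$ forces $\|f_m^2-f_m\|\to 0$; functional calculus then replaces each $f_m$ (for $m$ large) by an exact projection $e_m\in C(X)$. \emph{Here the connectedness of $X$ enters:} by the preceding proposition, $C(X)$ has only the trivial projections $0$ and $1_{C(X)}$, so eventually $e_m\in\{0,1\}$. Setting $S=\{m:e_m=1\}$ yields $P'=\pi_A(z_S)$ with $z_S=\sum_{m\in S}q^A_{\{12m\}}$, whence $\pi_B(\sum_{m\in S}r_m)=\Lambda(\pi_A(z_S))=\pi_B(y)$.

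Subtracting, $\sum_{m\in S}r_m-y\in B$ decomposes as
\[
\sum_{m\in S\cap N}(r_m-\tilde p_m)+\sum_{m\in S\setminus N}r_m-\sum_{n\in N\setminus S}\tilde p_n,
\]
a strictly convergent sum of norm-$1$ projections living in the pairwise orthogonal blocks $q_{J_n}^BBq_{J_n}^B$. Membership in $B$ thus forces each of $S\cap N$, $S\setminus N$ and $N\setminus S$ to be finite, which makes $N$ itself finite, contradicting $|N|=\aleph_0$. The main technical obstacle is the commutativity of $P'$ with the skeleton $\{\pi_A(q^A_T):T\subseteq\NN\}$, since that is where one must exploit the fact that $\tilde p_n$ is built beneath $r_n$ rather than being an arbitrary element of $\mathcal Q(B)$; once commutativity is secured, Lemma~\ref{lemm:commutingwithskeleton} and the connectedness of $X$ finish the argument in an essentially mechanical way.
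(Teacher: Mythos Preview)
Your proof is correct and follows essentially the same route as the paper's: assume infinitely many $r_n$ are non-abelian, pick subprojections $\tilde p_n<r_n$, pull $\pi_B(\sum\tilde p_n)$ back through $\Lambda^{-1}$, observe it commutes with the skeleton $\{\pi_A(q_S^A)\}$, apply Lemma~\ref{lemm:commutingwithskeleton}, and use connectedness of $X$ to force the diagonal lift to be of the form $q_T^A$, contradicting the choice of the $\tilde p_n$. The paper's version simply asserts both the commutativity with the skeleton and the incompatibility $\Lambda^{-1}(\pi_B(\sum\tilde p_n))\ne\pi_A(q_S^A)$ without justification, whereas you work both steps out explicitly (the $T_0/T_1$ split and the final block-by-block analysis of $\sum_{m\in S}r_m-\sum_{n\in N}\tilde p_n\in B$); your additional move of compressing by $z=\sum q^A_{\{12m\}}$ before invoking connectedness is a harmless simplification.
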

\begin{proof}
Suppose not. Let $\{n_k\}\subseteq D$ be an infinite set such that $r_{n_k}Br_{n_k}$ has a nonzero projection $q_k<r_{n_k}$ for each $k$. Since the projections $q_{n_k}$ are orthogonal, as $I_{n_k}$ and $I_{n_{k'}}$ are disjoint, the element $\sum_k q_k$ is a well-defined element of $\mathcal M(B)$. Notice that 
\[
\Lambda^{-1}(\pi_B(\sum_k q_k))\neq \pi_A(q_S^X)
\]
 for all $S\subseteq\NN$. On the other hand, $\Lambda^{-1}(\pi_B(\sum q_k))$ commutes with $\{\pi_A(q_S^X)\colon S\subseteq\NN\}$ since $q_k$ commutes with $r_{n_k}$. 
 
Let $p'\in\mathcal M(A)$ such that $\pi_A(p')=\Lambda^{-1}(\pi_B(\sum_k q_k))$. Since $\pi_A(p')$ commutes with $\{\pi_A(q_S^X)\colon S\subseteq\NN\}$, by Lemma~\ref{lemm:commutingwithskeleton}, we can assume that $p'=\sum_n q_{\{n\}}^Xp'q_{\{n\}}^X$. Since $\pi_A(p')$ is a projection, $q_{\{n\}}^Xp'q_{\{n\}}^X$ is a $\varepsilon_n$-projection for a sequence $\varepsilon_n\to 0$. Moreover, since $\pi_A(p')\neq \pi_A(q_S^X)$ for all $S\subseteq\NN$, we have that for infinitely many $n\in\NN$, 
 \[
 \norm{q_{\{n\}}^Xp'q_{\{n\}}^X},\norm{q_{\{n\}}^Xp'q_{\{n\}}^X-q_{\{n\}}^X}>\frac{1}{2}.
 \]
 In particular, we can find  projections $s_n\in q_{\{n\}}^XAq_{\{n\}}^X$ such that 
 \[
\norm{ s_n-q_{\{n\}}^Xp'q_{\{n\}}^X}\to 0
 \]
 as $n\to\infty$. Since all the $s_n$'s (and all the $q_{\{n\}}^Xp'q_{\{n\}}^X$) are pairwise orthogonal, then 
 \[
 \norm{\pi_A(\sum s_n-\sum q_{\{n\}}^Xp'q_{\{n\}}^X)}=\limsup_n\norm{s_n-q_{\{n\}}^Xp'q_{\{n\}}^X}=0.
 \]
Also, as $\norm{\pi_A(\sum s_n)}=1$ we have that 
\[
1=\norm{\pi_A(\sum s_n)}=\limsup_m\norm{\sum_{n\geq m}s_n}=\limsup_n\norm{s_n}.
\]
(The last equality follows from that all the $s_n$'s are orthogonal). In particular there is arbitrarily large $n$ such that $s_n$ is a nontrivial projection in $q_{\{n\}}^XAq_{\{n\}}^X$. Hence $s_n$ is a nontrivial projection which is differently from $q_{\{n\}}^X$ and belongs to $q_{\{n\}}^XAq_{\{n\}}\cong C(X)$, a contradiction to that $X$ is connected.
\end{proof}
Fix $n\in D$. The map $\phi'_n\colon q_{\{12n\}}^XAq_{\{12n\}}^X\to r_nBr_n$ given by $\phi'_n(f)=r_n\phi_n(f)r_n$ is a $2^{-n+2}$-$^*$-monomorphism. Since the domain of $\phi'_n$ is isomorphic to $C(X)$, and the codomain is isomorphic to $C(Y)$, by Theorem~\ref{thm:semrl} we can find a (necessarily unital) injective $^*$-homomorphism $\tilde\phi_n\colon q_{\{12n\}}^XAq_{\{12n\}}^X\to r_nBr_n$ such that $\norm{\phi'_n-\tilde\phi'_n}\to 0$ as $n\to \infty$. As 
\[
\norm{\phi'_n-\phi_n\restriction q_{\{12n\}}^XAq_{\{12n\}}^X}\to 0\text{ and }\norm{\phi'_n-\tilde\phi'_n}\to 0,\] as $n\to\infty$, we have that $\tilde\Phi=\sum\tilde\phi_n$ is a lift of $\Lambda$ on $\prod q_{\{12n\}}^XAq_{\{12n\}}^X$.
\begin{claim}
There is $n_0$ such that $\tilde\phi_n$ is surjective for all $n\geq n_0$.
\end{claim}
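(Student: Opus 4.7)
The plan is to argue by contradiction. Suppose there are an infinite $N\subseteq\NN$ and $\epsilon>0$ such that for every $n\in N$ one can pick a contraction $b_n\in r_nBr_n$ with $\mathrm{dist}(b_n,\tilde\phi_n(q_{\{12n\}}^AAq_{\{12n\}}^A))>\epsilon$; set $b_n=0$ for $n\notin N$. The strictly convergent sum $b=\sum_n b_n$ is a contraction in $\mathcal M(B)$ (a standard fact, since the $r_n$ are pairwise orthogonal projections) and satisfies $b\,r_m=r_m\,b=b_m$ for every $m$, so $b$ commutes with $\sum_{n\in S}r_n$ for every $S\subseteq\NN$. Write $p_n=q_{\{12n\}}^A$. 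The aim is to produce $(a_n)\in\prod_np_nAp_n$ such that $\Lambda(\pi_A((a_n)))=\pi_B(b)$; then, since $\tilde\Phi$ lifts $\Lambda$ on $\prod p_nAp_n$, we would have $\tilde\Phi((a_n))-b\in B$, and because the terms $\tilde\phi_n(a_n)-b_n$ lie in the pairwise orthogonal corners $r_nBr_n$, this would force $\|\tilde\phi_n(a_n)-b_n\|\to 0$, contradicting the choice of the $b_n$.

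To produce such an $(a_n)$, fix $c\in\mathcal M(A)$ with $\pi_A(c)=\Lambda^{-1}(\pi_B(b))$. Since $\tilde\Phi$ lifts $\Lambda$ on $\prod p_nAp_n$ and each $\tilde\phi_n$ is unital (so $\tilde\phi_n(p_n)=r_n$), for every $S\subseteq\NN$ one has $\Lambda^{-1}(\pi_B(\sum_{n\in S}r_n))=\pi_A(\sum_{n\in S}p_n)=\pi_A(q_T^A)$ with $T=\{12n:n\in S\}$. Combining this with $b\sum_n r_n=b$ and $b\sum_{n\in S}r_n=(\sum_{n\in S}r_n)\,b$ yields that $\pi_A(c)\pi_A(q_{\NN\setminus 12\NN}^A)=0$ and that $\pi_A(c)$ commutes with $\pi_A(q_T^A)$ for every $T\subseteq 12\NN$; together these give commutation with $\pi_A(q_T^A)$ for every $T\subseteq\NN$. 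Lemma~\ref{lemm:commutingwithskeleton} then produces, modulo $A$, the block-diagonal presentation $c=\sum_n q_{\{n\}}^A\,c\,q_{\{n\}}^A$; the terms with $n\notin 12\NN$ contribute an element of $A$ because $\pi_A(c)\pi_A(q_{\NN\setminus 12\NN}^A)=0$, and the remaining terms sum to $\sum_n p_n c p_n$. Setting $a_n:=p_ncp_n\in p_nAp_n$ yields the desired sequence.

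The main obstacle is invoking Lemma~\ref{lemm:commutingwithskeleton} for the (not necessarily positive) element $c$; although the lemma is stated for positive elements, its proof never uses positivity in an essential way, so it applies verbatim (or, if one prefers, one decomposes $c$ into self-adjoint parts and then into positive parts, applies the lemma to each, and recombines by linearity). The remaining steps are routine: that $b\in\mathcal M(B)$ is well-defined as a bounded strict sum, that the commutation identities transfer cleanly through $\Lambda^{-1}$, and that the mutual orthogonality of the $r_n$ promotes $\tilde\Phi((a_n))-b\in B$ to the pointwise bound $\|\tilde\phi_n(a_n)-b_n\|\to 0$, contradicting $\mathrm{dist}(b_n,\tilde\phi_n(p_nAp_n))>\epsilon$ for $n\in N$.
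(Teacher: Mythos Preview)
Your argument is correct and follows the same route as the paper's (very terse) proof: build a block-diagonal element $b=\sum b_n$ far from the image, pull back through $\Lambda^{-1}$, use commutation with all $\pi_A(q_T^A)$ together with Lemma~\ref{lemm:commutingwithskeleton} to obtain a block-diagonal lift, and contradict the lifting property of $\tilde\Phi$ on $\prod p_nAp_n$. The only cosmetic difference is that the paper chooses the $c_n$ positive from the start (which is possible: if $\tilde\phi_n$ is not onto, its image is a proper unital $C^*$-subalgebra of $r_nBr_n\cong C(Y)$, so there are $y_1\neq y_2$ identified by the dual surjection, and any positive contraction separating them is at distance $\geq 1/2$ from the image), so that Lemma~\ref{lemm:commutingwithskeleton} applies directly without your remark about positivity.
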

\begin{proof}
If not, for infinitely many $n\in D$ there are positive contractions $c_n\in r_nBr_n$ such that the distance between $c_n$ and the image of $\tilde\phi_n(q_{\{12n\}}^XAq_{\{12n\}}^X)$ is $1$. Let $c=\sum c_n$. The element $\Lambda^{-1}(c)$ then is commuting with $\pi_{A}(q_S^X)$ for all $S\subseteq\NN$, but by Lemma~\ref{lemm:commutingwithskeleton}, it cannot be the $\pi_A$-image of $d$ where $d=\sum q_{\{n\}}^Xdq_{\{n\}}^X$, a contradiction.
\end{proof}
Fix $n\geq n_0$ and $n\in D$ where $n_0$ is given by the claim. Then $\tilde\phi_{n}$ is an isomorphism between $q_{\{n\}}^XAq_{\{n\}}^X\cong C(X)$ and $r_nBr_n\cong C(Y)$. This is the thesis.
\end{proof}

\begin{remark}
\begin{itemize}
\item We believe that an analysis of the proof of Theorem~\ref{thm:stabofab}, specifically of the application of Theorem~\ref{thm:semrl}, combined with the proof that all automorphisms of $\mathcal Q(H)$ are inner, can show that under Forcing Axioms all automorphisms of $\mathcal Q(C(X)\otimes\mathcal K)$ are algebraically trivial whenever $X$ is compact and metrizable. We predict that this proof would be slightly technical, and that it would not add content to this paper\footnote{Unless the statement is false!}. For this reason, we do not to include it.
\item Differently than in the reduced product's case, corona of stabilizations of unital $\Cstar$-algebras are rarely countably saturated (that $\mathcal Q(H)$ is not countably saturated was shown in \cite{Farah-Hart.CS}). For this reason, it is very difficult to produce from $\CH$ and a Cantor back-forth argument an isomorphism between coronas of different algebras. We do not know whether it is consistent with $\ZFC$ that there are two unital algebras $A$ and $B$ such that $\mathcal Q(A\otimes\mathcal K)$ and $\mathcal Q(B\otimes\mathcal K)$ are isomorphic but not via a topologically trivial isomorphism.
\item Similarly to \S\ref{subsec:redprod} (an in particular Theorems~\ref{thm:ulam} and ~\ref{thm:ulam2}), we can state and prove equivalent formulations of Conjectures~\ref{conj:algtriv} and~\ref{conj:algtriviso} for algebras of the form $\mathcal Q(A\otimes\mathcal K)$ in terms of statements regarding Ulam stability and the study of approximate maps.
\item We can state the correspondent of Question~\ref{question:approx} for, and generalise Definition~\ref{defin:ulam} to, embeddings instead that for isomorphisms. We leave the relevant reformulations to the reader. 
\end{itemize}
\end{remark}

\bibliographystyle{plain}
\bibliography{library}
\end{document}